\documentclass[reqno,11pt]{amsart}
\usepackage{amsthm,amsfonts,amssymb,euscript,mathrsfs,graphics,color,amsmath,amssymb,latexsym,marginnote, hyperref}%,mathabx}
\usepackage{soul}

\usepackage{amsthm,amsfonts,amssymb,euscript,mathrsfs,graphics,color,amsmath,amssymb,latexsym,marginnote, url}
\usepackage{hyperref}

\usepackage[margin=1.4in]{geometry}
\usepackage{booktabs}
\numberwithin{equation}{section}

\newcommand{\bydef}{\stackrel{\mbox{\tiny\textnormal{\raisebox{0ex}[0ex][0ex]{def}}}}{=}}

\def\hat{\widehat}

\def\bar{\overline}
\def\R{{\mathbb R}}

\newcommand{\ba}{\bar{a}}

\newcommand{\hpi}{\hat{\pi}}

\usepackage{graphicx}
\usepackage{float} % for [H]

\def\R{{\bf R}}

\def\bar{\overline}

\def\R{\mathbb{R}}

\definecolor{bluegreen}{rgb}{0.0, 0.3, 0.9}

\newtheorem{theorem}{Theorem}[section]
\newtheorem{lemma}[theorem]{Lemma}

\newtheorem{corollary}[theorem]{Corollary}

\newtheorem{remark}[theorem]{Remark}

\title{Global Bifurcation and the Constructive Existence of Overhanging Periodic Steady Water Waves}

\author{Matthieu Cadiot}
\address{CMAP, CNRS, Ecole polytechnique, Institut Polytechnique de Paris, 91120 Palaiseau, France}
\email{matthieu.cadiot@polytechnique.edu}

\author{Susanna V. Haziot}
\address{Department of Mathematics, Fine Hall, Princeton University, Princeton, NJ 08544, USA}
\email{susanna.haziot@princeton.edu}

\begin{document}

\begin{abstract}
We provide a constructive existence proof for overhanging periodic gravity water waves with constant vorticity. By introducing a conformal mapping formulation, we parameterize the fluid domain to accommodate multi-valued surface heights, bypassing coordinate singularities that arise in traditional domain-flattening frameworks. We operate at fixed, macroscopic physical parameters and employ a global bifurcation framework, supplemented by a computer-assisted proof based on the Newton-Kantorovich theorem, to constructively prove the existence of  the branch of exact solutions. 

Crucially, this allows us to provide the first rigorous existence proof of overhanging periodic gravity water waves obtained along a finite-depth global bifurcation branch from the flat state at fixed $O(1)$ physical parameters. Moreover, our analysis confirms the existence of a global solution branch that transitions to overhanging profiles and resolves the conjecture of Constantin, Strauss, and Varvaruca \cite{constantin_global} regarding the topological termination of this branch via physical self-intersection at the trough line. 
\end{abstract}

\maketitle
\tableofcontents
	
	\section{Introduction}	

Recent years have witnessed remarkable breakthroughs in the analytical theory of extreme water waves. Notably, Chen, Walsh, and Wheeler \cite{chen2025extreme} demonstrated the existence of overturning fronts in internal waves using a domain-flattening, Dubreil-Jacotin-type formulation. Similarly, Dávila, del Pino, Musso, and Wheeler \cite{davila2026overhanging} provided a highly sophisticated proof of existence for overhanging solitary waves via singular perturbation. In the periodic setting, Hur and Wheeler \cite{hur2022overhanging} constructed overhanging and touching waves with constant vorticity by perturbing from exact zero-gravity/Crapper-type solutions for sufficiently small gravity. Their result establishes overhanging periodic gravity waves in a perturbative regime, but does not determine the geometry of the finite-depth global bifurcation branch from flat water at fixed $O(1)$ physical parameters.

While these milestone works definitively established the existence of extreme profiles in their respective contexts, their specific analytical methodologies encounter hard boundaries when attempting to explicitly track the geometry of periodic surface gravity waves. The coordinate singularity induced by domain-flattening techniques precludes the parameterization of the overhang, while the asymptotic nature of singular perturbation techniques limits their application to specific solitary-wave regimes.

By contrast, the global bifurcation framework, combined with a conformal mapping parameterization, has been successfully used to construct solution continua for periodic waves with constant vorticity, general vorticity, and stratified regimes \cite{constantin_global, haziot2021stratified, wahlen2024large}. Despite these successes, these global methods often reveal multiple potential termination alternatives, such as overturning, corner formation, or other degeneracies, leaving the specific limiting behavior of a given branch analytically unresolved. In this work, we focus on the periodic surface gravity wave problem with constant vorticity. By integrating a rigorous parameter continuation framework with a computer-assisted proof based on the Newton-Kantorovich theorem, we demonstrate that, within our specific parameter configuration, the solution branch terminates at an overhanging profile with self-intersection on the trough line. This allows us to resolve the topological termination conjecture formulated by Constantin, Strauss, and Varvaruca \cite{constantin_global}, providing the first conclusive existence proof for overhanging periodic surface gravity waves obtained along a finite-depth global bifurcation branch from the flat state at fixed macroscopic physical parameters. While other physical parameter regimes may lead to alternative limiting behaviors, such as the formation of a wave of greatest height, our constructive analysis confirms that the overhanging transition, followed by a self-intersection, is the physically realized outcome in the regime under consideration.

	\subsection{Presentation of the problem}
We consider steady, two-dimensional, periodic gravity water waves with constant vorticity $\gamma \in \mathbb{R}$, propagating with a wave speed $c$ over a flat bed. In a frame of reference moving with the wave, the fluid domain $\mathcal{D}$ and the free surface $\mathcal{S}$ are stationary. The physical coordinates are $(X,Y)$, with the impermeable bottom at $Y=0$ and the free surface at $Y = \eta(X)$. 

Let $\mathbf{U}(X,Y) = (U(X,Y), V(X,Y))$ denote the relative velocity field, $P(X,Y)$ the pressure, and $g$ the gravitational constant. The fluid dynamics are governed by the steady incompressible Euler equations:
\begin{equation}\label{euler_steady}
U_X + V_Y = 0, \qquad U U_X + V U_Y = -P_X, \qquad U V_X + V V_Y = -P_Y - g \qquad \text{in } \mathcal{D}.
\end{equation} 
On the free surface $\mathcal{S}$, the pressure is equal to the constant atmospheric pressure $P_{\text{atm}}$, and the kinematic boundary condition $\mathbf{U} \cdot \mathbf{N} = 0$ requires that $\mathcal{S}$ is a streamline. Furthermore, the velocity field satisfies the constant vorticity condition $V_X - U_Y = \gamma$ throughout $\mathcal{D}$.

By incompressibility, there exists a relative stream function $\Psi(X,Y)$ such that $U = \Psi_Y$ and $V = -\Psi_X$. To work with a purely harmonic function, we define the modified stream function $\zeta(X,Y) \bydef \Psi(X,Y) + \frac{\gamma}{2} Y^2$. Consequently, $\Delta \zeta = 0$ in $\mathcal{D}$, and the physical velocities are recovered via $U = \zeta_Y - \gamma Y$ and $V = -\zeta_X$.

Because $\mathcal{S}$ is a streamline, the kinematic condition dictates that $\zeta - \frac{\gamma}{2} Y^2 = m$ on $\mathcal{S}$, where $m$ is the constant relative mass flux. Integrating the momentum equations \eqref{euler_steady} yields Bernoulli's equation, $\frac{1}{2}(U^2 + V^2) + P + gY + \gamma \Psi = E$. Evaluating this on the free surface and defining the modified Bernoulli constant $Q \bydef 2(E - P_{\text{atm}} - \gamma m)$, the dynamic boundary condition reduces to:
\begin{equation}\label{eq: BC dynamic physical}
U^2 + V^2 = Q - 2gY \qquad \text{on } \mathcal{S}.
\end{equation}

We now map one physical period of $\mathcal{D}$ to the fixed rectangular domain $\Omega \bydef \{(x,y) \in \mathbb{R}^2 : -\pi < x < \pi, ~ -h < y < 0\}$ via a conformal transformation $X+iY = \xi(x,y) + i\eta(x,y)$. This maps the flat bottom $y=-h$ to $Y=0$, and $y=0$ to $\mathcal{S}$. By conformality, $\xi$ and $\eta$ satisfy the Cauchy--Riemann equations ($\xi_x = \eta_y$, $\xi_y = -\eta_x$) and are harmonic in $\Omega$. 

The composition of $\zeta$ with the conformal map remains harmonic in $\Omega$. On the flat boundary $y=0$, the kinematic condition becomes $\zeta - \frac{\gamma}{2}\eta^2 = m$, which differentiates to $\zeta_x = \gamma \eta \eta_x$. Using the Cauchy--Riemann equations, the relative velocity field evaluated in conformal coordinates, $(u,v) \bydef (U,V)$, is explicitly given by
\begin{equation}\label{velocity_conformal}
(u,v) = \left( \frac{\eta_y(\zeta_y - \gamma \eta \eta_y)}{\eta_x^2 + \eta_y^2}, \, \frac{-\eta_x(\zeta_y - \gamma \eta \eta_y)}{\eta_x^2 + \eta_y^2} \right).
\end{equation}
Summing their squares yields the conformal kinetic energy $u^2 + v^2 = (\zeta_y - \gamma \eta \eta_y)^2 / (\eta_x^2 + \eta_y^2)$. Substituting this into \eqref{eq: BC dynamic physical} and multiplying by the Jacobian $(\eta_x^2 + \eta_y^2)$ yields the exact conformal dynamic boundary condition. 

To physically close the system, we enforce $2\pi$-periodicity, set the mean water depth to $h$, impose a zero-drift horizontal gauge to eliminate translational invariance, and assume crest-symmetry. Collecting these constraints, the full fluid dynamics system is governed by the following elliptic boundary value problem:
\begin{subequations}\label{eq : equations for zeta and eta}
	\begin{align}
	\Delta\zeta&=0&\qquad&\text{in }\Omega, \label{eq:pde_zeta}\\
	\Delta\eta&=0&\qquad&\text{in }\Omega, \label{eq:pde_eta}\\
	\zeta-\frac{\gamma}{2}\eta^2-m&= 0&\qquad&\text{on } y=0, \label{eq:pde_kinematic}\\
	(\zeta_y-\gamma\eta\eta_y)^2-(Q-2g\eta)(\eta_x^2+\eta_y^2)&=0&\qquad&\text{on } y=0, \label{eq:pde_dynamic}\\
	\eta=0, \quad \zeta&=0&\qquad&\text{on }y=-h. \label{eq:pde_bottom}
	\end{align}
	
	\noindent We require that the fluid variables are $2\pi$-periodic and symmetric with respect to $x$:
	\begin{equation}\label{eq:symmetry_block}
	\eta(\pm x+2\pi, y) = \eta(x,y), \quad \zeta(\pm x+2\pi, y) = \zeta(x,y), \quad \xi(x+2\pi, y) = \xi(x,y) + 2\pi,
	\end{equation}
	and prescribe the physical depth and gauge conditions:
	\begin{equation}\label{eq:gauge_block}
	\frac{1}{2\pi}\int_{-\pi}^{\pi} \eta(x,0) \, dx = h, \qquad \frac{1}{2\pi}\int_{-\pi}^{\pi} (\xi(x,0)-x) \, dx = 0.
	\end{equation}
\end{subequations}

To guarantee that solutions to \eqref{eq : equations for zeta and eta} correspond to physically valid, non-degenerate waves with a single crest per period, we define an open admissible set of profiles. First, to ensure a single crest and trough while allowing for overhanging geometries, the vertical elevation must be strictly monotonic on the half-period:
\begin{equation}\label{eq : monotonicity_block}
\eta_x(x, 0) < 0 \qquad \text{for all } x \in (0, \pi).
\end{equation}
Given this monotonicity and the crest-symmetry, global injectivity of the conformal mapping is guaranteed entirely by preventing the free surface from crossing the vertical axes of symmetry \cite{constantin_global}:
\begin{equation}\label{eq:injectivity_condition}
0 < \xi(x, 0) < \pi \qquad \text{for all } x \in (0, \pi).
\end{equation} 
Finally, the mapping must remain locally invertible and the free surface must be free of stagnation points. From the dynamic boundary condition, these non-degeneracy requirements are:
\begin{equation}\label{eq : nondegeneracy_block}
\eta_x^2 + \eta_y^2 \geq \epsilon > 0 \quad \text{in } \overline{\Omega}, \qquad \text{and} \qquad Q - 2g\eta \geq \delta > 0 \quad \text{on } y=0.
\end{equation}

In the subsequent computer-assisted proof, we trace a continuous global branch of solutions to \eqref{eq : equations for zeta and eta}. A critical component of this proof consists of rigorously verifying that these strict open conditions are preserved along the entirety of the continuation curve, which eventually terminates precisely when the injectivity condition \eqref{eq:injectivity_condition} fails, resulting in a self-intersecting overhanging profile.

\subsection{Notation and function spaces}\label{sec:notation_spaces}

Because we seek periodic water waves that are symmetric with respect to the crest, we derive an equivalent zero finding problem on Fourier coefficients (cf. Section \ref{sec : single wave and fourier analysis}), for which we restrict our analysis to strictly real, even Fourier sequences. In practice, we equivalently solve for Fourier coefficients $a = (a_n)_{n\in \mathbb{Z}}$, allowing to reconstruct $\eta$ as in \eqref{eq:fourier_series_eta_and_zeta}.  We define the classical Wiener algebra $\ell^1_e$ and the weighted subspace $X_e$ corresponding to continuously differentiable profiles:
\begin{align*}
\ell^1_e &\bydef \left\{ a = (a_n)_{n \in \mathbb{Z}} : a_n \in \mathbb{R}, \,\, a_{-n} = a_n, \text{ and } \|a\|_{\ell^1} < \infty \right\}, &\quad \|a\|_{\ell^1} &\bydef  \sum_{n \in \mathbb{Z}} |a_n|, \\
X_e &\bydef \left\{ a \in \ell^1_e : \|a\|_X < \infty \right\}, &\quad \|a\|_X &\bydef  \sum_{n\in \mathbb{Z}}^\infty (1+|n|) |a_{n}|.
\end{align*}
Note that $\ell^1_e$ and $X_e$ are the even restrictions of the Banach spaces $\ell^1 = \left\{ a = (a_n)_{n \in \mathbb{Z}} : \|a\|_{\ell^1} < \infty \right\}$ and~$X = \left\{ a \in \ell^1(\mathbb{Z}) : \|a\|_X < \infty \right\}$. 
As demonstrated below in Lemma \ref{lem:Banach_algebra}, $X$ forms a Banach algebra under the discrete convolution, which is required to evaluate nonlinear boundary terms. 

Finally, because the Bernoulli flux parameter $Q$ is an unknown active variable, we define the augmented state vector $U = (Q, a)$. The exact steady water wave solutions correspond to the roots of an abstract zero-finding problem, $F(U) = 0$, which encapsulates the nonlinear boundary conditions and will be explicitly derived in Section \ref{sec : zero_finding_problem}. This problem maps the domain $H_1$ into the codomain $H_0$, where these product Banach spaces are defined as:
\begin{align*}
    H_1 &\bydef \mathbb{R} \times X_e, \qquad \text{with norm} \qquad \|U\|_{H_1} \bydef |Q| + \|a\|_X, \\
    H_0 &\bydef \mathbb{R} \times \ell^1_e, \qquad \text{with norm} \qquad \|U\|_{H_0} \bydef |Q| + \|a\|_{\ell^1}.
\end{align*}
Throughout this paper, all local isolating neighborhoods and analytic contraction mapping arguments are established strictly with respect to the $H_1$ topology. Furthermore, for any Banach spaces $X$ and $Y$, we denote the space of bounded linear operators from $X$ to $Y$ by $\mathcal{B}(X,Y)$, equipped with the standard induced operator norm, and we abbreviate $\mathcal{B}(X) \bydef \mathcal{B}(X,X)$.

\subsection{Statement of the main results}
In this section, we formally state the main existence theorems of the paper.
% We emphasize that the theorems presented herein are mathematically exact statements regarding the infinite-dimensional boundary value problem \eqref{eq : equations for zeta and eta}. While the proofs rely on computer-assisted estimates, the underlying methodology is fundamentally analytic. We utilize a topological fixed-point argument—specifically, the Newton-Kantorovich approach—within the augmented Banach space $H_1$. The computational machinery is strictly relegated to evaluating explicit, finite-dimensional bounds on the residual and the Fréchet derivative of our nonlinear operators. Because these rigorous bounds satisfy the prescribed contraction criteria, the existence of a unique, exact, and classical solution within a specific topological neighborhood is guaranteed analytically.
A central focus of our analysis is the rigorous geometric characterization of the wave profiles. By reconstructing the physical spatial coordinates via the Cauchy--Riemann boundary relation $\xi_x(x,0) =  \eta_y(x,0)$, we can formally track the geometry of the free surface. Specifically, a wave ceases to be a single-valued graph over the horizontal axis—thereby becoming physically overhanging—if and only if there exists a conformal coordinate $x^*$ such that $\xi_x(x^*,0) < 0$. 

Our first main result establishes the existence of an isolated, highly nonlinear water wave. Notably, we rigorously prove that this wave is strictly overhanging yet entirely devoid of stagnation points. Because the crest physically bulges over the trough, we classify this profile as an ``$\Omega$"-shaped wave. A representation of an approximation of the solution is provided in Figure \ref{fig : omega wave}, and a more detailed presentation of the result is given in Theorem \ref{th : existence single wave overhanging 085}.

\begin{theorem}[Existence of an isolated $\Omega$-shaped wave]\label{thm : main result single wave}
	Fix the physical parameters to the dimensionless gravitational constant $g=1$, constant vorticity $\gamma = -5$, and mean water depth $h=2$. 
	
	For the relative mass flux $m = -0.85$, there exists a unique, exact, and infinitely differentiable $2\pi$-periodic solution $(\tilde{Q}, \tilde{\eta}) \in \R \times C^\infty[-\pi,\pi]$ to the water wave equations \eqref{eq : equations for zeta and eta}. Furthermore, this exact solution strictly satisfies the non-degeneracy conditions \eqref{eq : nondegeneracy_block} and constitutes a physically valid, strictly overhanging, and injective wave.
\end{theorem}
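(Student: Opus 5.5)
The plan is a Newton--Kantorovich (radii polynomial) computer-assisted proof for the Fourier zero-finding problem $F(U)=0$ on $H_1=\mathbb{R}\times X_e$. The first step is to reduce \eqref{eq : equations for zeta and eta} to boundary data, as announced for Section \ref{sec : single wave and fourier analysis}. Harmonicity in $\Omega$, the bottom conditions \eqref{eq:pde_bottom} and crest-symmetry force
\[
\eta(x,y)=\sum_n a_n \frac{\sinh(n(y+h))}{\sinh(nh)}e^{inx}
\]
(with the $n=0$ mode linear in $y+h$). Likewise $\zeta$ is determined on $y=0$ by the kinematic condition $\zeta=m+\frac{\gamma}{2}\eta^2$. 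The normal derivatives $\eta_y,\zeta_y$ on $y=0$ are then Fourier multipliers of the type $n\coth(nh)$ applied to $a$ and to $a*a$. The dynamic condition \eqref{eq:pde_dynamic} becomes a polynomial convolution equation in $a$. Adjoining the depth constraint \eqref{eq:gauge_block}, which fixes $a_0$ and is balanced by the unknown $Q$, gives $F:H_1\to H_0$. The map $F$ is a polynomial in convolutions composed with order-one multipliers. Since $X$ is a Banach algebra (Lemma \ref{lem:Banach_algebra}) and the multipliers map $X$ to $\ell^1$ boundedly, $F$ is smooth, and its second derivative is bounded on balls by explicit constants.

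Next I would compute numerically, for $g=1$, $\gamma=-5$, $h=2$, $m=-0.85$, an approximate solution $\bar U=(\bar Q,\bar a)$ with $\bar a_n=0$ for $|n|>N$. I would also build an approximate inverse $A$ of $DF(\bar U)$. This $A$ combines a numerical inverse of the $N\times N$ Galerkin block with the inverse of the diagonal leading-order multiplier on the tail, which behaves like $|n|^{-1}$ and so maps $\ell^1$ into $X$. Using interval arithmetic, I would then bound four quantities: $Y_0\ge\|AF(\bar U)\|_{H_1}$, which is a finite computation because $F(\bar U)$ has finite support; $Z_0\ge\|I-ADF(\bar U)\|$; $Z_1$, which collects the tail couplings; and $Z_2(r)$, a bound on $\|A(DF(U)-DF(\bar U))\|/r$ for $U$ in the ball of radius $r$. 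If some $r>0$ satisfies $Y_0+(Z_0+Z_1)r+Z_2r^2<r$ with $Z_0+Z_1+Z_2r<1$, the contraction argument gives a unique zero $\tilde U$ with $\|\tilde U-\bar U\|_{H_1}\le r$.

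\textbf{Main obstacle.} I expect the hardest step to be controlling $Z_1$, the tail part of the Jacobian. The linearization of the dynamic condition contains variable-coefficient terms such as $\bar\eta_y\,\partial_y$ and $(\bar Q-2g\bar\eta)\partial_x$ acting with full order one. For the tail to be a small perturbation of the diagonal multiplier, the leading coefficient must be well approximated by a constant, or its variation must be absorbed into $A$. For this strongly nonlinear wave that may fail. If it does, I would first try to factor out the variable leading symbol, multiplying $A$ by the inverse of the approximate coefficient function as a convolution operator. As a fallback, I would increase $N$ and accept a large but finite computation.

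Regularity and geometry follow a posteriori. Bootstrapping the elliptic, analytic structure, or the exponential decay from analyticity of the boundary problem, gives $\tilde\eta\in C^\infty$. The conditions \eqref{eq : nondegeneracy_block}, \eqref{eq : monotonicity_block} and \eqref{eq:injectivity_condition}, and overhanging, are checked with $C^1$ error bounds: uniform errors in $\eta,\eta_x,\eta_y,\xi$ are at most a multiple of $r$ because the $X$-norm controls the $C^1$ norms. I would verify on the numerical profile, with interval enclosures on a fine grid plus derivative bounds, that:
\begin{itemize}
\item $\min(\bar\eta_x^2+\bar\eta_y^2)$ and $\min(\bar Q-2g\bar\eta)$ exceed these errors;
\item $\bar\eta_x<0$ on $(0,\pi)$ with margin, handling the endpoints via the second derivative;
\item $0<\bar\xi<\pi$ on $(0,\pi)$ with margin;
\item some $x^*$ has $\bar\xi_x(x^*,0)=\bar\eta_y(x^*,0)<-Cr$.
\end{itemize}
The last item gives $\xi_x(x^*,0)<0$ for the exact solution, hence a strictly overhanging profile.
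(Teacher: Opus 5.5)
Your architecture matches the paper's: the same reduction to $F:H_1\to H_0$, Newton--Kantorovich in $H_1$, elliptic regularity, and interval-arithmetic geometric checks. Your direct grid check of $\bar Q-2g\bar\eta$ and $\bar\eta_x^2+\bar\eta_y^2$ is an acceptable substitute for Lemma~\ref{lem : proof of non-degeneracy}. The real gap is the approximate inverse, at the obstacle you flagged. Your first remedy points the right way (it is the paper's key device), but as stated it misses two things.

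On the tail, $D_x=\pm i\hat D_y$ for $\pm k>K_1$. So the principal part of $D_af(\bar U)$ is $M_{v_4+iv_3}\hat D_y$ on positive frequencies and $M_{v_4-iv_3}\hat D_y$ on negative ones, where $v_4\pm iv_3$ represents $-2(\bar Q-2g\bar\eta)(\bar\eta_y\pm i\bar\eta_x)$. For this wave it varies at order one in modulus and phase; the phase of $\bar\eta_y+i\bar\eta_x$ leaves $(-\pi/2,\pi/2)$ exactly where $\bar\eta_y<0$.

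\textbf{(i)} No single convolution operator inverts this leading symbol. You need $w\approx(v_4+iv_3)^{-1}$ on $k>K_1$ and its conjugate $w^*$ on $k<-K_1$, as in \eqref{def : A infinity}.

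\textbf{(ii)} Your $A$ is block-diagonal: Galerkin inverse plus tail inverse, with couplings left in the bound. Then the cross-cutoff blocks $A_N\pi^{\le N}DF(\bar U)\pi^{>N}$ and $A_{\mathrm{tail}}\pi^{>N}DF(\bar U)\pi^{\le N}$ are not small. They come from an order-one operator, so after the $|k|^{-1}$ normalization their size is set by how far $v_4\pm iv_3$ is from a constant, independently of $N$. Your fallback of enlarging $N$ therefore cannot help.

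The paper fixes this by letting $A_\infty$ act on all input modes and using the Schur-complement form \eqref{def : approx inverse}, $A=A_K-A_KDF(\bar U)A_\infty+A_\infty$. Here $A_K$ approximately inverts $\pi^{\le K_1}\bigl(DF(\bar U)-DF(\bar U)A_\infty DF(\bar U)\bigr)\pi^{\le K_1}$. The off-diagonal blocks then reduce to two terms:
\begin{itemize}
\item[(a)] $-A_\infty DF(\bar U)\pi^{\le K_1}$, which is the bound $Z_{11}$ and is small because $w*(iv_3+v_4)\approx e_0$;
\item[(b)] $A_KDF(\bar U)N_\infty$, where $N_\infty=A_\infty DF(\bar U)\pi^{>K_1}-\pi^{>K_1}$, bounded by $Z_{12}Z_\infty$.
\end{itemize}

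A second, smaller gap is at the endpoints of the monotonicity check. Near $x=0$ and $x=\pi$ the derivative $\tilde\eta_x$ vanishes, so its sign must come from an enclosure of $\tilde\eta_{xx}$. But $\|\tilde U-\bar U\|_{H_1}\le r$ controls only $\sum(1+|n|)|\tilde a_n-\bar a_n|$, that is, $C^1$ errors. It gives nothing on $\tilde\eta_{xx}-\bar\eta_{xx}$, so ``handling the endpoints via the second derivative'' needs an extra argument.

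The paper supplies it in Lemma~\ref{lem : enclosure second derivative of U}. The dynamic condition yields $D_x^2\tilde a=T(\tilde U)*D_x\tilde a+(D_y\tilde a)*H_hT(\tilde U)$, where $T$ is built from first-order quantities and the convolution inverse of $(\tilde Q-2g\tilde a)^2*\bigl((D_x\tilde a)^2+(D_y\tilde a)^2\bigr)$. That inverse comes from a Neumann series preconditioned with $|w|^2$, so the $C^1$-level error $r$ suffices. Alternatively, you could run the contraction in a norm controlling two derivatives, such as an exponentially weighted $\ell^1$ norm. That choice has to be built into the setup rather than recovered afterwards.
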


\begin{figure}[H]
\caption{Visualization (on two periods) of the $\Omega$-shaped wave proven in Theorem \ref{thm : main result single wave}}
    \centering
    \begin{minipage}{0.8\linewidth}
        \centering
\includegraphics[width=\linewidth]{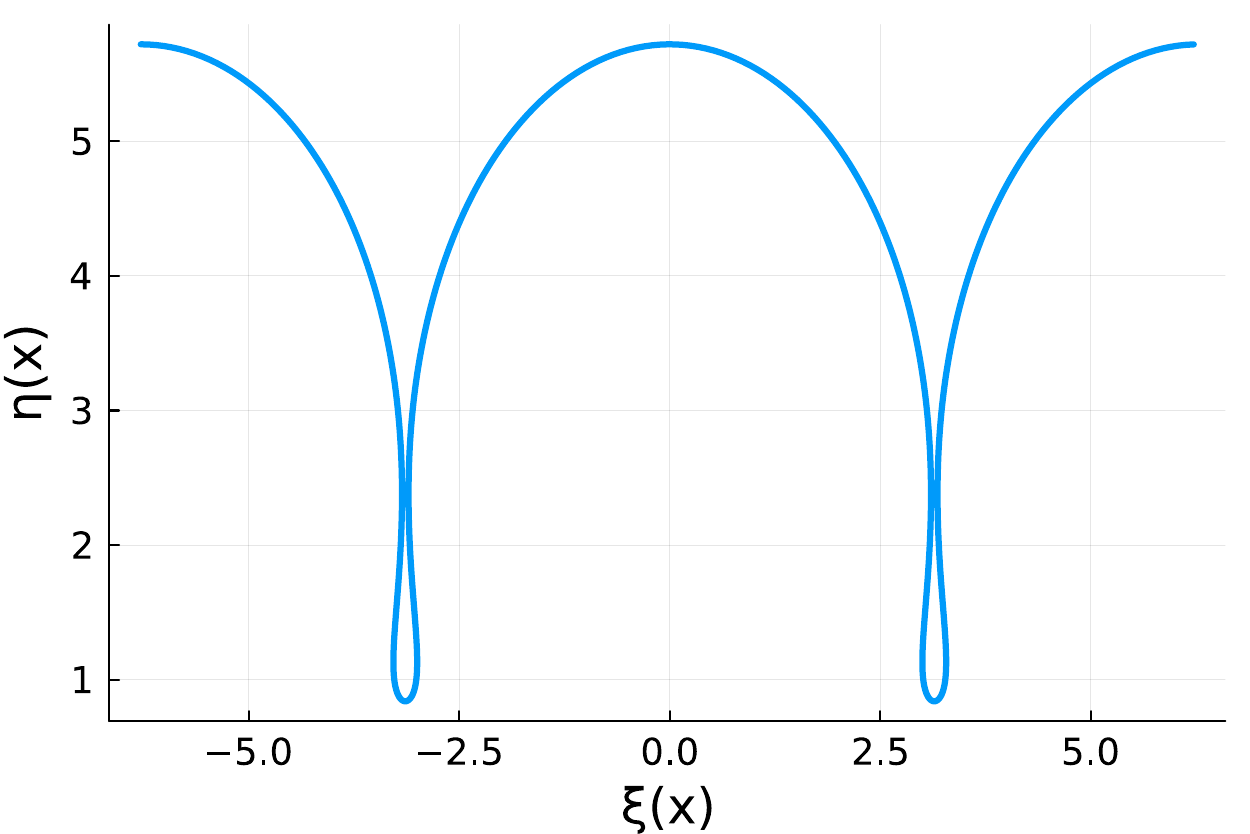}
    \end{minipage}
    \label{fig : omega wave}
\end{figure}

While isolating a single extreme wave is of significant physical interest, our analytical framework allows us to  extend this result into a global continuation argument. Our second main result constructively proves the existence of a continuous global branch of exact steady waves in $H_1$, parameterized by the mass flux $m$. Crucially, we rigorously track the topological evolution of the free surface along this branch, partially resolving the conjecture formulated in \cite{constantin_global} regarding the limiting geometries of water waves. A representation of the global branch is provided in Figure \ref{fig : branch of solutions}. A detailed presentation of the result is exposed in Section \ref{sec : existence proofs results}.

\begin{theorem}[Global bifurcation and topological transitions]\label{thm : main result global branch}
	Let $g=1$, $\gamma=-5$, and $h=2$. There exists a continuous, infinitely differentiable branch of exact $2\pi$-periodic water wave solutions, parameterized by the relative mass flux $m \in [-1, \bar{m}]$, where $\bar{m} \approx -0.061$ is the local bifurcation point. 
	
	The physical geometry of the free surface undergoes a rigorous sequence of topological transitions as the branch is traversed from the trivial state:
	\begin{enumerate}
		\item \textbf{Local Bifurcation \& Graphs:} The branch bifurcates from the flat-water state at $m = \bar{m}$. For fluxes near the bifurcation point, the exact wave profiles are strictly monotonic graphs over the horizontal axis.
		\item \textbf{Transition to Overhanging Waves:} There exists a strictly bounded critical flux $\tilde{m}_1 \in [-0.698, -0.6707]$ at which the wave profile loses global monotonicity in the horizontal coordinate. For all $m \in [-1, \tilde{m}_1)$, the waves are strictly overhanging.
		\item \textbf{The Non-Physical Limit:} As the wave becomes increasingly overhanging, there exists a second critical flux $\tilde{m}_2 \in [-0.8884, -0.8781]$ at which the physical injectivity condition \eqref{eq:injectivity_condition} fails. For $m < \tilde{m}_2$, the free surface self-intersects across the trough line, rendering the extreme end of the mathematical branch physically invalid.
	\end{enumerate}
\end{theorem}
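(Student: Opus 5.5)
The proof is a rigorous validated continuation in $m$ for the zero-finding problem $F(U,m)=0$ on $H_1=\mathbb{R}\times X_e$, glued to an analytic local bifurcation argument near $\bar m$.

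\textbf{Step 1: local bifurcation.} Linearizing \eqref{eq : equations for zeta and eta} at the flat state, with $\eta=y+h$ and $\zeta$ the linear shear profile determined by $m$, the $n=1$ Fourier mode gives a dispersion relation. From it I would locate $\bar m$, enclosed in a small interval via interval arithmetic. At $\bar m$ I would verify the Crandall--Rabinowitz hypotheses: a one-dimensional kernel spanned by $\cos x$, and transversality. This produces a local curve of small-amplitude solutions, and for these the profile is a graph with $\xi_x(\cdot,0)>0$ and $\eta_x(\cdot,0)<0$ on $(0,\pi)$ by perturbation of $\cos x$. I would make the curve quantitative through a Newton--Kantorovich argument in an amplitude-parameterized (blown-up) formulation, so that it overlaps with the first validated segment of the numerical branch.

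\textbf{Step 2: validated continuation on $[-1,m_0]$.} I would cover the interval by a mesh $m_0>m_1>\dots>m_K=-1$. On each $[m_{k+1},m_k]$ I would take the linear interpolant $\bar U(s)$ of numerical Fourier approximations truncated at mode $N$. Using an approximate inverse $A$, block-diagonal with a finite matrix and a diagonal tail acting as the inverse of the dominant Dirichlet-to-Neumann symbol $n\coth(nh)$ (which is why the domain space carries weight $1+|n|$), I would compute uniform-in-$s$ bounds $Y_0$ on $\|AF(\bar U(s))\|$, $Z_1$ on $\|I-ADF(\bar U(s))\|$, and $Z_2(r)$ on the Lipschitz constant of $ADF$ on $B_r$. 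All of these use the Banach algebra property of $X$ (Lemma~\ref{lem:Banach_algebra}) for the quadratic and cubic boundary nonlinearities.

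The radii polynomial then gives a unique smooth curve of zeros within distance $r_k$ of the interpolant. Consecutive segments are glued by uniqueness in overlapping balls at the mesh points, so the pieces form one continuous branch. Smoothness in $x$ follows by bootstrapping, since $F=0$ implies analytic decay of the coefficients. Along each segment I would check \eqref{eq : monotonicity_block} and \eqref{eq : nondegeneracy_block} on the enclosures. Because $\eta_x(x,0)\sim -c\,x$ near $x=0$ and near $x=\pi$, the endpoints need a separate second-derivative sign check, and I would handle them that way.

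\textbf{Step 3: topological transitions.} Overhanging corresponds to $\min_x \xi_x(x,0)=\min_x \eta_y(x,0)<0$, and injectivity to $0<\xi(x,0)<\pi$ on $(0,\pi)$. On segments with $m>-0.6707$ I would prove $\eta_y(x,0)>0$ for all $x$. For $m<-0.698$ I would exhibit a point $x^*$ with an enclosure of $\eta_y(x^*,0)$ strictly negative. The intermediate value theorem applied to the continuous function $m\mapsto\min_x\xi_x$ then yields $\tilde m_1$ in the stated interval.

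Injectivity would be handled the same way. I would prove $\xi(x,0)<\pi$ on segments with $m>-0.8781$, with the uniform tail-controlled bounds checked on a fine subdivision of $x$. For $m<-0.8884$ I would exhibit some $x$ with $\xi(x,0)>\pi$, which gives $\tilde m_2$. Theorem~\ref{thm : main result single wave} then appears as the slice $m=-0.85$, where $\xi$ is negative somewhere yet stays inside $(0,\pi)$.

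\textbf{Main obstacle.} I expect the hardest part to be obtaining $Z_1<1$ uniformly near $m\approx-1$. There the wave is extreme, the Fourier coefficients decay slowly, and the Jacobian $\eta_x^2+\eta_y^2$ becomes small near the overhang, so $N$ must be large and the mesh fine. To keep the tail bounds tight I would first try a smarter approximate inverse that includes the multiplication by $(Q-2g\eta)$ in the tail operator. I would also check that the lower bound $\epsilon$ on $\eta_x^2+\eta_y^2$ survives in the interior of $\overline\Omega$ and not only on the boundary, using the maximum principle for the harmonic function $\log|\eta_x^2+\eta_y^2|$.
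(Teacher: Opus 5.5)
There is a genuine gap in Step 2. The approximate inverse you describe cannot give $Z_1<1$ on the overhanging part of the branch, and that is exactly where items 2 and 3 live.

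The principal part of $D_af(\bar U)$ is not the Dirichlet-to-Neumann map. It is the variable-coefficient first-order operator $M_{v_3}D_x+M_{v_4}D_y$; the vorticity term $M_{v_1}\mathcal{C}[\bar a]$ is of order zero by Lemma~\ref{lem : operator C}. On frequencies $k>K$ one has $D_x=i\hat D_y$, so up to exponentially small terms this operator is $M_{iv_3+v_4}\hat D_y$. Here $iv_3+v_4$ is the Fourier sequence of $\sigma=-2(Q-2g\eta)(\xi_x+i\eta_x)$, a multiple of the complex tangent of the free surface. On $k<-K$ one gets $\overline{\sigma}$ instead.

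Suppose the tail of $A$ is $\hat D_y^{-1}$ composed with $c\,M_{\hat\rho}$, where $c\in\mathbb{C}$ and $\hat\rho$ is the Fourier sequence of a positive even function $\rho$. This covers $\rho\equiv1$ (your diagonal tail) and $\rho=(Q-2g\eta)^{-1}$ (your fallback). Testing $I-AD_UF(\bar U)$ on $e_n$ with $n\to\infty$ gives
$Z_1\geq\|e_0-c\,\hat\rho*(iv_3+v_4)\|_{\ell^1}\geq\sup_x|1-c\rho\sigma(x)|$.
No choice of $N$, $K$ or mesh improves this bound.

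Crest symmetry gives $\rho\sigma(-x)=\overline{\rho\sigma(x)}$, hence $\mathrm{Re}(c\rho\sigma(x))+\mathrm{Re}(c\rho\sigma(-x))=-4\,\mathrm{Re}(c)\,\rho(Q-2g\eta)\,\xi_x(x)$. The condition $|1-c\rho\sigma|<1$ everywhere forces $\mathrm{Re}(c\rho\sigma)>0$ everywhere, which would force $\xi_x$ to have a fixed sign, i.e.\ no overhang. So your $Z_1\geq 1$ for every $m<\tilde m_1$. The obstruction is that the tangent turns through more than a half-turn; it is not slow Fourier decay near $m=-1$, where the paper's $Z_1$ is about $5\times10^{-2}$.

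The missing idea is a complex weight applied separately on the two half-lines of frequencies:
$\hat A_\infty=\hat\pi^{k>K_1}\hat D_y^{-1}M_w+\hat\pi^{k<-K_1}\hat D_y^{-1}M_{w^*}$, with $w$ finitely supported and $w*(iv_3+v_4)\approx e_0$. Such a $w$ exists because $\sigma$ never vanishes (Lemma~\ref{lem : A infinity properties}).

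Even with that tail, a block-diagonal assembly does not close. The principal part is first order with non-constant coefficients, so $M_{iv_3+v_4}\hat D_y$ maps modes just above the truncation level to modes just below it with no gain. The coupling $A_K\pi^{\leq K}D_UF(\bar U)\pi^{>K}$ is therefore of order one and does not shrink as $K$ grows.

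The paper handles this with the Schur-complement form $A=A_K-A_KDF(\bar U)A_\infty+A_\infty$, where $A_K$ approximates the inverse of $\pi^{\leq K_1}(DF(\bar U)-DF(\bar U)A_\infty DF(\bar U))\pi^{\leq K_1}$. Then:
\begin{itemize}
\item the low rows of the tail columns become $A_KDF(\bar U)N_\infty$, with $N_\infty=A_\infty DF(\bar U)\pi^{>K_1}-\pi^{>K_1}$ small;
\item the tail rows of the low columns reduce to the small leakage term $Z_{11}$ of Lemma~\ref{lem : Z1 bound}.
\end{itemize}

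The rest of your plan is compatible with the paper up to inessential choices: linear rather than Chebyshev interpolation in $m$, and checking monotonicity directly rather than via Corollary~\ref{cor:global_monotonicity}. You should also record $\xi>0$ on $(0,\pi)$, for instance from Lemma~\ref{lem:crest_injectivity}.

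One smaller point concerns the transitions. The intermediate value theorem only produces some crossing in each window. To get a single transition, with graphs on all of $(\tilde m_1,\bar m]$ as in Theorem~\ref{th : the branch exists}, the paper also checks the sign of $\partial_m\xi_x$ (resp.\ $\partial_m\xi$) on the transition windows. It does this using the enclosure of $\partial_m\tilde U$ from Lemma~\ref{lem : enclosure of DmU(m)}.
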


\begin{figure}[H]
  \centering
  \includegraphics[width=0.9\linewidth]{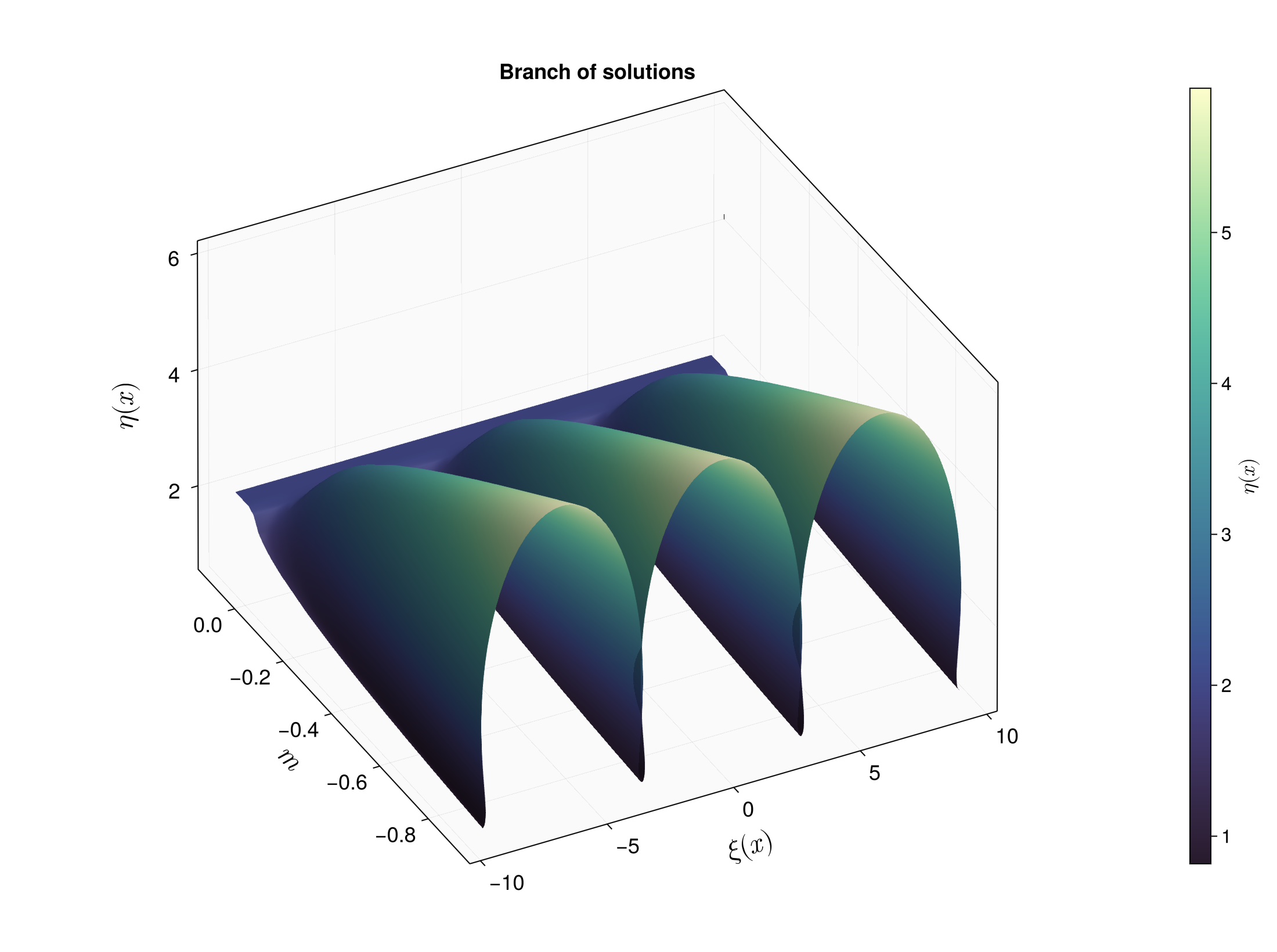}
  \caption{Visualization (on three periods) of the global branch of periodic water waves proven in Theorem \ref{thm : main result global branch}.}
  \label{fig : branch of solutions}
\end{figure}

% \mc{useful or not ?}

% \begin{remark}
%     Note that there is nothing decisive about the specific values of the parameters $\gamma$ and $h$. In principle, the presented methodology applies to any values of $\gamma$ and $h$, as long as the solutions remain smooth. 
% \end{remark}

% The remainder of this paper is dedicated to the rigorous functional analysis required to establish these theorems. In the subsequent sections, we derive the analytic operator bounds, prove the invertibility of the linearized systems, and explicitly construct the topological isolating neighborhoods that mathematically validate the global branch.

\subsection{Past work}
\subsubsection{Steady water waves}
The study of extreme water waves traces its origins to Stokes \cite{stokes1847theory}, who conjectured the existence of a branch of nontrivial periodic waves limiting to an extreme "wave of greatest height," characterized by a stagnation point at its crest and a singular interior corner angle of 120 degrees. The existence of this extreme wave was rigorously established by Toland \cite{toland1978existence}, and the 120 degree crest angle conjecture for extreme irrotational waves was subsequently confirmed by Amick, Fraenkel, and Toland \cite{amick1982stokes} alongside Plotnikov \cite{plotnikov1982justification, plotnikov1983proof}. Within the irrotational regime, the fluid geometry is highly rigid; as proved by Spielvogel \cite{spielvogel1970variational}, the horizontal velocity is strictly non-vanishing inside the fluid, ensuring that all interior streamlines remain simple graphs. However, the introduction of vorticity fundamentally alters this landscape. Numerical calculations by Simmen and Saffman \cite{simmen1985steady} for periodic waves in infinite depth with large adverse vorticity demonstrated that solution branches can terminate at "touching waves," where the free surface self-intersects tangentially, bypassing the traditional corner singularity entirely.

The possibility of overturning and overhanging wave profiles has been recognized computationally since the late 1970s. Holyer \cite{holyer1979large} provided the first computations of overhanging internal waves along an interface separating immiscible fluid layers by employing computer algebra to solve recurrence relations for high-order Fourier series coefficients. Subsequent numerical continuation studies—initiated from a flat laminar flow—further captured overhanging internal waves (Meiron and Saffman, \cite{meiron1983overhanging}; Pullin and Grimshaw, \cite{pullin1988finite}; Turner and Vanden-Broeck, \cite{turner1988broadening}). For free-surface flows, Teles da Silva and Peregrine \cite{telesdasilva1988steep} computed overhanging periodic waves with constant vorticity, predicting a striking fluid domain geometry where a nearly circular constant vorticity region is perched atop a body of nearly laminar flow. Extensive numerical investigations into these overturning profiles have continued over the following decades, including works by Dias and Vanden-Broeck \cite{dias2003internal}, Maklakov and Sharipov \cite{maklakov2018almost}, Maklakov \cite{maklakov2020note}, Hur and Wheeler \cite{hur2020exact}, Guan et al. \cite{guan2021local}, and Dyachenko and Hur \cite{dyachenko2019stokes}.

Analytically, a major breakthrough was achieved by Constantin, Varvaruca, and Strauss \cite{constantin_global}, who developed a global bifurcation framework that accommodates overturning geometries to construct families of periodic waves with constant vorticity. This approach has since been generalized to domains with arbitrary vorticity (Wahl'{e}n and Weber, \cite{wahlen2024large}), internal waves with linear density stratification (Haziot, \cite{haziot2021stratified}), solitary waves with constant vorticity (Haziot and Wheeler, \cite{haziot2023large}), and solitary waves carrying point or hollow vortices (Chen, Varholm, Walsh, and Wheeler, \cite{chen2025vortex}). Furthermore, Chen, Walsh, and Wheeler \cite{chen2025extreme} recently provided a rigorous proof that extreme hydrodynamic bores must overturn by developing novel geometric analysis techniques to exclude double-stagnation degeneracies. Despite these advancements, a rigorous proof of overturning along these global solution branches has remained elusive within the standard periodic framework, primarily due to the analytic difficulty of definitively excluding alternative geometric singularities, such as corner formation.

To bypass the challenges of tracking a global continuum, several authors have pursued direct perturbative constructions of overhanging waves. Hur and Wheeler \cite{hur2022overhanging} constructed periodic overhanging and touching waves with constant vorticity by perturbing from the explicit zero-gravity Crapper-type solutions; their result applies for sufficiently small gravitational effects and produces waves near this special integrable regime. Related constructions, including the works of Akers et al. \cite{akers2014gravity}, Cordoba et al. \cite{cordoba2016existence}, and de Carvalho \cite{decarvalho2023gravity}, similarly exploit perturbative proximity to explicit or singular limiting configurations. In the solitary-wave setting, Davila, del Pino, Musso, and Wheeler \cite{davila2026overhanging} employed a delicate gluing method to construct overhanging
constant-vorticity gravity waves in a small-gravity regime, matching a nearly circular rotating fluid region to a laminar shear flow through a thin neck. Their work provided the first rigorous verification of the geometric structure numerically predicted by Teles da Silva and Peregrine \cite{telesdasilva1988steep}, successfully matching a nearly circular constant vorticity region to a laminar flow across a thin neck. However, a fundamental limitation of these direct perturbative methods is that they operate exclusively within the weak gravity regime. The existence of such waves along a finite-depth global bifurcation branch from flat water at fixed $O(1)$ parameters remained open.

The present work addresses a different problem. Rather than perturbing from an explicit zero-gravity solution or a singular small-gravity configuration, we fix macroscopic physical parameters, specifically $g=1$, $h=2$, and $\gamma=-5$, and rigorously validate the global
bifurcation branch emanating from the flat-water state. This allows us to prove, constructively and with explicit bounds, that this particular branch transitions from classical graph profiles to overhanging profiles and ultimately loses physical injectivity through self-intersection at the trough line.

\subsubsection{Computer Assisted Proofs}

Proving the existence of solutions to nonlinear PDEs in a constructive and rigorous manner is, in general, an extremely challenging, if not impossible, task. This difficulty stems naturally from the fact that the solutions belong to infinite-dimensional spaces. Over the past decades, computer-assisted proof techniques have emerged as a powerful framework for overcoming this obstacle and establishing rigorous existence results for nonlinear equations.

Their effectiveness has been demonstrated across a broad range of applications, including the Feigenbaum conjectures \cite{MR648529}, the existence of chaos and a global attractor for the Lorenz equations \cite{MR1276767,MR1701385,MR1870856}, Wright's conjecture \cite{MR3779642}, and chaotic dynamics in the Kuramoto--Sivashinsky equation \cite{MR4113209}. In fluid dynamics, computer-assisted proofs have become extremely valuable for the rigorous validation of singular phenomena. Notable examples include splash singularities in the water wave equations \cite{MR2881486, MR3223860}, turning waves for the Muskat problem \cite{MR3215843}, finite-time blow-up for the three-dimensional Euler equations \cite{MR4977334}, and imploding solutions for three-dimensional compressible fluids \cite{MR4862915}. We refer the interested reader to the review articles \cite{nakao_numerical, gomez_cap, jb_rigorous_dynmamics, koch_computer_assisted} and the monograph \cite{plum_numerical_verif} for further background on computer-assisted proof techniques.

The objective of the present work is to combine computer-assisted methods with nonlinear analysis to rigorously construct a global branch of steady periodic gravity water waves, culminating in an overhanging wave whose profile self-intersects along the trough line.

Our proof is fundamentally grounded in functional analysis. More precisely, we apply a Newton--Kantorovich-type fixed-point theorem to the zero-finding problem
$
F(U)=0, \quad  F : H_1 \to H_0
$
where the nonlinear operator $F$ and the Banach spaces $H_0, H_1$ are defined below. By expressing the contraction mapping conditions as polynomial inequalities, involving norms of infinite-dimensional objects, the original infinite-dimensional existence problem is reduced to verifying a finite collection of explicit strict algebraic inequalities.

Each of these inequalities is established through a decomposition into a finite-dimensional component and an infinite-dimensional remainder. The finite-dimensional estimates are computed rigorously using interval arithmetic \cite{Moore_interval_analysis}, while the infinite-dimensional tail is controlled analytically.

Newton--Kantorovich methods have proved highly successful for the constructive validation of continuous branches of solutions. In particular, the uniform contraction mapping theorem (stated below as Theorem~\ref{th : radii polynomial continuation}) provides a framework for rigorously continuing solution branches. This methodology has been applied, for example, to branches of steady states in PDEs \cite{cont_equilibria_pde, cont_global_bif_diag} and to branches of homoclinic connections in ODEs \cite{cont_suspension_bridge}. In the present work, we build upon the continuation framework developed in \cite{breden_polynomial_chaos}, where solution branches are represented by Chebyshev interpolants. By exploiting the spectral accuracy of Chebyshev expansions, this approach yields highly accurate constructive existence proofs, even for long solution branches. It has already led to several notable results, including the resolution of Marchal's conjecture in the three-body problem \cite{henot_marchal} and the constructive existence of solitary waves for the nonlocal quasilinear Whitham equation \cite{whitham_cadiot}.

\subsection{Plan of the paper}

The remainder of this paper is organized as follows. Section~\ref{sec: qualitative} establishes the qualitative geometric properties of the fluid domain, specifically strict vertical monotonicity and crest-axis injectivity. This defines an open admissible set of physically valid wave profiles, analytically locking these geometric traits to streamline the functional analytic framework required for our existence proofs. Section~\ref{sec : single wave and fourier analysis} introduces the Fourier-analytic formulation of the physical problem. By passing to the Fourier domain in the horizontal conformal variable, we reduce the full two-dimensional boundary value problem to a one-dimensional algebraic zero-finding problem $F(U) = 0$ restricted to the free surface, providing the functional setting needed to track the solution continua.

Section~\ref{sec: CA bounding} develops the analytical framework for the existence proof of a single wave (fixed parameters) using a Newton-Kantorovich approach. We explicitly construct an approximate inverse for the linearization $DF(\bar{U})$ around an approximate solution $\bar{U}$ and derive quantitative bounds to guarantee the existence of an exact solution in a vicinity of $\bar{U}$. Section~\ref{sec : local bif} focuses on the analytical formulation of the local branch of solutions. By introducing a desingularized scaling operator at the pitchfork bifurcation, we mathematically prove the emergence of non-trivial, monotonic wave profiles from the flat-water state in a constructive way. Building on this, Section~\ref{sec : global bif} extends the methodology to establish the global branch. We develop a rigorous parameter continuation framework, grounded in uniform contraction mapping principles, to continuously track the solution space macroscopically as the relative mass flux varies.

Section~\ref{sec : geometric properties branch} outlines the formal verification of the geometric properties along the branch. This provides the analytical techniques needed to guarantee the preservation of smoothness and non-degeneracy, and establishes the criteria used to definitively prove precise topological transitions into overhanging and self-intersecting regimes. Section~\ref{sec : existence proofs results} then synthesizes our framework to present the main constructive existence theorems. Here, we provide the mathematical proofs for the constructive existence of an isolated extreme overhanging wave, followed by the complete constructive demonstration of the global branch's evolution from its bifurcation to its topological termination.

Finally, the appendices provide the necessary supplementary technical estimates. Appendix~\ref{sec : appendix computational} outlines the rigorous arithmetic procedures and supremum norm evaluations that formally complete the analytic proofs. Appendix~\ref{sec : appendix proofs} collects the detailed analytic derivations, including the Taylor expansions of the desingularized operators and the bounds for the difference of products required to rigorously close the Newton-Kantorovich framework.

\section{Qualitative theory}\label{sec: qualitative}
Before executing the  continuation to trace the global branch of solutions, we must first establish the foundational geometric and topological rules that govern the fluid domain. In this section, we define an open ``admissible set" of physically valid wave profiles and analytically prove a suite of qualitative properties—specifically, strict vertical monotonicity and crest-axis injectivity. By demonstrating that these geometric traits are topologically locked and cannot be lost without violating the underlying elliptic partial differential equations, we drastically reduce the computational burden on our computer-assisted proof. The rigorous numerical algorithm will ultimately only need to verify the scalar bounds for non-stagnation and trough-axis injectivity, while the pure analysis presented here automatically guarantees the structural integrity of the wave's shape.

\subsection{Monotonicity}
The cornerstone of our geometric analysis is ensuring that the wave profile does not develop interior flat spots or degenerate curvatures at the symmetry axes. In this subsection, we prove that any non-stagnating wave that is monotonically decreasing from crest to trough must be \textit{strictly} monotonic. By applying the elliptic Maximum Principle and Hopf's Boundary Lemma to the harmonic conformal vertical velocity, we show that a loss of strict monotonicity leads to a direct physical contradiction. This property acts as an analytical one-way trap that protects the shape of the wave along the global branch.
\begin{theorem}[Preservation of Strict Vertical Monotonicity]\label{thm:hw_conformal_monotonicity}
	Let $(\xi, \eta)$ be a solution to \eqref{eq : equations for zeta and eta} corresponding to a non-flat wave. Assume the mapping is non-degenerate ($\eta_x^2 + \eta_y^2 > 0$ in $\overline{\mathcal{R}}$) and satisfies the non-stagnation condition $Q - 2g\eta > 0$ on the free surface $y=0$.
	
	If the wave profile satisfies $\eta_x \leq 0$ for $x \in (0, \pi)$, then the profile is strictly monotonic on the entire half-period, satisfying:
	\begin{enumerate}
		\item[(i)] $\eta_x < 0$ for all $x \in (0, \pi)$,
		\item[(ii)] $\eta_{xx}(0,0) < 0$ (strict local maximum at the crest),
		\item[(iii)] $\eta_{xx}(\pi,0) > 0$ (strict local minimum at the trough).
	\end{enumerate}
\end{theorem}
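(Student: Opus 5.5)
We apply the maximum principle and Hopf's lemma to the harmonic function $w \bydef \eta_x$ on the half-rectangle $\mathcal{R}^+ \bydef \{0<x<\pi,\,-h<y<0\}$, which we take to be the half-period of $\Omega$ denoted $\mathcal{R}$ in the statement.

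\textbf{Sign of $w$ on the boundary.} Since $\eta$ is harmonic in $\Omega$, so is $w=\eta_x$. By the evenness and $2\pi$-periodicity in \eqref{eq:symmetry_block}, $\eta$ is even about both $x=0$ and $x=\pi$, so $w$ is odd about these lines and vanishes on $x=0$ and $x=\pi$. On the bottom $y=-h$ we have $\eta\equiv 0$ by \eqref{eq:pde_bottom}, hence $w=0$ there. On the top $y=0$ the hypothesis gives $w\le 0$. Since the wave is non-flat, $w$ cannot vanish identically on the top: otherwise $\eta(\cdot,0)$ would be constant, and then $\eta\equiv y+h$ by uniqueness of the Dirichlet problem. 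The weak maximum principle therefore gives $w\le 0$ in $\mathcal{R}^+$, and the strong maximum principle upgrades this to $w<0$ in the open set $\mathcal{R}^+$.

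\textbf{Strict monotonicity on the surface (i).} Suppose $w(x_0,0)=0$ for some $x_0\in(0,\pi)$. Then $(x_0,0)$ is a boundary maximum of $w$, and Hopf's lemma gives $w_y(x_0,0)>0$, that is $\eta_{xy}(x_0,0)>0$. To reach a contradiction we use the boundary conditions. Since $\eta_x(\cdot,0)\le 0$ has an interior zero at $x_0$, that zero is a local maximum in $x$, so $\eta_{xx}(x_0,0)=0$. Differentiating the dynamic condition \eqref{eq:pde_dynamic} in $x$ and evaluating at $x_0$, where $\eta_x=\eta_{xx}=0$, leaves
\[
2(\zeta_y-\gamma\eta\eta_y)(\zeta_{xy}-\gamma\eta_x\eta_y-\gamma\eta\eta_{xy}) = -2g\eta_x\eta_y^2+(Q-2g\eta)\,2\eta_y\eta_{xy} = 2(Q-2g\eta)\eta_y\eta_{xy}.
\]
We combine this with the kinematic relation $\zeta_x=\gamma\eta\eta_x$ from \eqref{eq:pde_kinematic} and harmonicity, which gives $\zeta_{xy}=-\zeta_{xx}$ on $y=0$ wait, more precisely $\zeta_{xx}=\gamma(\eta_x^2+\eta\eta_{xx})$, which vanishes at $x_0$. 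We would then use harmonic conjugates: since $\zeta$ is harmonic, $\zeta_y$ is the Dirichlet-to-Neumann image of $\zeta$ on the strip.

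This is the main obstacle. The contradiction is not purely local, and we suspect the intended route is a second maximum-principle argument. Specifically, we would show that the auxiliary quantity $v$, the conformal vertical velocity proportional to $-\eta_x(\zeta_y-\gamma\eta\eta_y)$, has a definite-sign structure, and apply Hopf's lemma to a suitable harmonic combination such as $\zeta_x-\gamma\eta\eta_x+\gamma\eta_x(y+h)$-type expressions. As a first attempt we would try the harmonic function $\zeta_x$ paired with $\eta_x$ through the relation $\zeta_x=\gamma\eta\eta_x$ on the top, and compare Hopf derivatives using non-degeneracy, namely $\zeta_y-\gamma\eta\eta_y\neq 0$, which follows from $Q-2g\eta>0$ and $\eta_y\neq 0$ at $x_0$.

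\textbf{Crest and trough (ii)--(iii).} At the corner points $(0,0)$ and $(\pi,0)$, $w$ vanishes on two orthogonal sides, so we would use the Serrin corner lemma, or reflect $w$ oddly across $x=0$. After reflection, $w$ is harmonic near the segment $x=0$, negative for $x>0$ and positive for $x<0$. Hopf's lemma on the vertical side then gives $w_x(0,y)<0$ for $y<0$. Extending this to $y=0$ requires the same boundary-condition argument as in (i), applied to $\eta_{xx}(0,0)$. We would take the analogous step at $x=\pi$ to obtain $\eta_{xx}(\pi,0)>0$.
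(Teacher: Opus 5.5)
Your proposal has a genuine gap in step (i). It comes from applying Hopf's lemma to $\eta_x$: for that function no contradiction is available.

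At a surface zero $x_0$, Hopf gives $\eta_{xy}(x_0,0)>0$. Finish the computation you started. At $x_0$ you have $\eta_x=0$, $\zeta_x=\gamma\eta\eta_x=0$, $\eta_y\neq0$ by non-degeneracy, and $\Phi=\zeta_y-\gamma\eta\eta_y$ satisfies $\Phi^2=(Q-2g\eta)\eta_y^2$ with $\Phi\neq0$. The $x$-derivative of \eqref{eq:pde_dynamic} then reduces exactly to $\zeta_{xy}\eta_y=\zeta_y\eta_{xy}$. This only fixes the Neumann datum $\zeta_{xy}(x_0,0)$ and is perfectly compatible with $\eta_{xy}>0$. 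So pointwise boundary information cannot contradict Hopf for $\eta_x$.

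Your corner plan for (ii)--(iii) has the same problem. Odd reflection gives $\eta_{xx}(0,y)<0$ only for $y<0$, and reaching $y=0$ needs the same missing argument. The fallback ideas (pairing $\zeta_x$ with $\eta_x$, unspecified harmonic combinations) are not carried out.

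The missing idea is the one you mention but do not execute, and it is the paper's route: apply the maximum principle and Hopf to the vertical velocity $v=V\circ(\xi+i\eta)$.
\begin{itemize}
\item Constant vorticity and incompressibility give $\Delta V=(\gamma+U_Y)_X-(U_X)_Y=0$, and harmonicity survives the conformal change of variables.
\item $v$ vanishes on $x=0$, $x=\pi$ and $y=-h$. On $y=0$ it equals $\eta_x$ times the nonvanishing fixed-sign factor $\Phi/(\eta_x^2+\eta_y^2)$.
\item So after a sign normalization $v\ge0$, $v>0$ inside, and Hopf gives $v_y(x_0,0)<0$.
\item At $x_0$ you have $\xi_y=-\eta_x=0$, so the chain rule with $V_Y=-U_X$ gives $v_y=-u_x$ there, up to that sign.
\item Differentiating Bernoulli $u^2+v^2=Q-2g\eta$ along the surface at $x_0$, where $v=\eta_x=0$ and $u\neq0$, forces $u_x=0$, hence $v_y=0$.
\end{itemize}

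With $v=-\zeta_X$ one computes $v_y(x_0,0)=(\zeta_y\eta_{xy}-\zeta_{xy}\eta_y)/\eta_y^2$. So the identity you derived is exactly $v_y(x_0,0)=0$. Your computation already contains the contradiction, but it contradicts Hopf for $v$, not for $\eta_x$.

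For (ii)--(iii), the paper likewise applies Serrin's edge lemma to $v$ at the corners. It then transfers the sign using $v_x=-\eta_{xx}|\Phi|/(\eta_x^2+\eta_y^2)$ at $x=0,\pi$, valid after the sign normalization because $\eta_x=0$ there removes the derivative of the prefactor.
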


\begin{proof}
	Let $(u,v)$ denote the physical fluid velocity evaluated in the conformal variables, as defined in \eqref{velocity_conformal}. By the physical steady Euler equations \eqref{euler_steady}, the velocity field is incompressible ($U_X + V_Y = 0$) and has constant vorticity ($V_X - U_Y = \gamma$). Consequently, the physical vertical velocity is strictly harmonic: $\Delta V = (V_X)_X + (V_Y)_Y = (\gamma + U_Y)_X + (-U_X)_Y = 0$. Because $\xi+i\eta$ is a conformal map, the composition $v(x,y)$ remains strictly harmonic in the right-half conformal domain $\mathcal{R}^+ = \{ (x,y) : 0 < x < \pi, ~ -h < y < 0 \}$.
	
By the even symmetry of the profile \eqref{eq:symmetry_block}, $v(0,y) = 0$ and $v(\pi,y) = 0$. On the flat bottom $y=-h$, the boundary conditions \eqref{eq:pde_bottom} yield $v(x,-h) = 0$. 

On the free surface $y=0$, we differentiate the kinematic boundary condition $\zeta - \frac{\gamma}{2}\eta^2 = m$ with respect to $x$ to obtain $\zeta_x - \gamma\eta\eta_x = 0$. Using the explicit physical velocity definitions $U = \zeta_Y - \gamma Y$ and $V = -\zeta_X$, we evaluate the normal cross-velocity term along the boundary. Applying the standard chain rule $\zeta_x = \zeta_X \xi_x + \zeta_Y \eta_x$, we obtain:
\begin{equation*}
	u\eta_x - v\xi_x = (\zeta_Y - \gamma\eta)\eta_x - (-\zeta_X)\xi_x = \zeta_X \xi_x + \zeta_Y \eta_x - \gamma\eta\eta_x = \zeta_x - \gamma\eta\eta_x.
\end{equation*}
Because this quantity vanishes entirely on the free surface, we recover the strict geometric tangency relation:
\begin{equation}
	v = u \frac{\eta_x}{\xi_x} \qquad \text{on } y=0.
\end{equation}
Alternatively, using the explicit velocity formula \eqref{velocity_conformal}, on the free surface we have
\begin{equation}\label{eq:v_tangent}
v = -\eta_x \frac{\zeta_y - \gamma \eta \eta_y}{\eta_x^2 + \eta_y^2}.
\end{equation}

Moreover, by the dynamic boundary condition \eqref{eq : equations for zeta and eta},
\begin{equation}
(\zeta_y - \gamma \eta \eta_y)^2 = (Q - 2g\eta)(\eta_x^2 + \eta_y^2).
\end{equation}
The non-stagnation condition $Q - 2g\eta > 0$ and the local non-degeneracy of the conformal map imply that $\zeta_y - \gamma \eta \eta_y$ is nonzero on the free surface, hence has a fixed sign on the connected interval $0 < x < \pi$. Since \(\zeta_y-\gamma\eta\eta_y\) has a fixed nonzero sign on the free surface, after possibly
replacing \(v\) by \(-v\) and relabeling, we may assume
\[
v=-\eta_x\frac{|\zeta_y-\gamma\eta\eta_y|}{\eta_x^2+\eta_y^2}
\qquad\text{on }y=0.
\]
Thus, after possibly replacing $v$ by $-v$, the assumption $\eta_x \le 0$ implies $v \ge 0$ on $y = 0$.

By the Maximum Principle, since the chosen harmonic function $v$ or $-v$ is nonnegative on all boundaries of $\mathcal{R}^+$, it must follow that $v \geq 0$ uniformly in $\overline{\mathcal{R}^+}$. Furthermore, because the wave is not entirely flat, $v > 0$ strictly in the interior.
	
	\textbf{Step 1: Strict Interior Monotonicity.} 
Assume for contradiction that $\eta_x(x_0, 0) = 0$ at an interior point $x_0 \in (0, \pi)$. By \eqref{eq:v_tangent}, this implies $v(x_0, 0) = 0$. Since $v \geq 0$ globally, this boundary point is a global minimum. By the Hopf Boundary Lemma, the outward normal derivative must be strictly negative:
\begin{equation}\label{eq:hw_hopf}
    v_y(x_0, 0) < 0.
\end{equation}

To evaluate $v_y$, we apply the multivariable chain rule to the physical vertical velocity: $v_y = V_X \xi_y + V_Y \eta_y$. At $x_0$, since $\eta_x = 0$, the Cauchy--Riemann equations dictate $\xi_y = 0$ and $\eta_y = \xi_x$. Thus, $v_y = V_Y \xi_x$. 
By the physical incompressibility condition in the steady Euler equations \eqref{euler_steady}, we have $V_Y = -U_X$. Therefore:
\begin{equation}\label{eq:vy_ux}
    v_y = -U_X \xi_x = \pm(-u_x),
\end{equation}
 where the last equality follows from the fact that $u_x = U_X \xi_x + U_Y \eta_x$, which reduces to $U_X \xi_x$ at $x_0$ because $\eta_x = 0$.

The dynamic boundary condition evaluated on the free surface \eqref{eq: BC dynamic physical} dictates that $u^2 + v^2 + 2g\eta = Q$. Differentiating this relation with respect to $x$ along the boundary yields:
\begin{equation*}
    2u u_x + 2v v_x + 2g\eta_x = 0.
\end{equation*}
Evaluating this exactly at $x_0$, where $v = 0$ and $\eta_x = 0$, the equation simplifies to $2u u_x = 0$. Because the wave is free of stagnation points, $u \neq 0$, which strictly forces $u_x = 0$. 

Substituting $u_x = 0$ into \eqref{eq:vy_ux} yields $v_y = 0$. This directly contradicts the strict negativity required by the Hopf Boundary Lemma in \eqref{eq:hw_hopf}. Thus, our assumption is false, concluding that $\eta_x < 0$ for all $x \in (0, \pi)$.
	
	\textbf{Step 2: Strict Corner Curvature.}
	At the crest $C = (0,0)$, the domain $\mathcal{R}^+$ forms a $90^\circ$ corner. The harmonic function $v$ is non-negative in $\mathcal{R}^+$ and vanishes uniformly along the vertical axis $x=0$. By Serrin's Edge Point Lemma, the directional derivative entering the domain along the inward normal (the positive $x$-direction) is strictly positive, yielding $v_x(C) > 0$.
	
	At the crest and trough, symmetry gives \(\eta_x=0\). Differentiating the preceding identity
with respect to \(x\), the terms involving derivatives of
\[
\frac{|\zeta_y-\gamma\eta\eta_y|}{\eta_x^2+\eta_y^2}
\]
vanish because they are multiplied by \(\eta_x\). Note that the absolute value is smooth since $\zeta_y-\gamma\eta\eta_y$ never vanishes.  Hence, at both symmetry points,
\[
v_x=-\eta_{xx}\frac{|\zeta_y-\gamma\eta\eta_y|}{\eta_x^2+\eta_y^2}.
\]
The prefactor is strictly positive by non-stagnation and local non-degeneracy. Serrin's edge
lemma gives \(v_x(0,0)>0\) at the crest, so \(\eta_{xx}(0,0)<0\). At the trough, the inward
direction is \(-\partial_x\), so Serrin gives \(-v_x(\pi,0)>0\), equivalently \(v_x(\pi,0)<0\).
Therefore \(\eta_{xx}(\pi,0)>0\).
\end{proof}

\subsection{Injectivity}
With strict vertical monotonicity established, we can mathematically rule out one of the two possible modes of surface self-intersection. A highly nonlinear periodic wave could theoretically fold over itself by crossing either the crest axis ($X=0$) or the trough axis ($X=\pi$). Here, we use the Cauchy--Riemann equations and the Hopf boundary lemma to prove that a strictly monotonic wave can never cross its own crest axis. This analytic guarantee effectively halves the computational requirement for proving injectivity: our numerical continuation algorithm is entirely freed from checking the left boundary, and must only monitor the horizontal distance to the trough axis.
\begin{lemma}[Injectivity at the Crest Axis]\label{lem:crest_injectivity}
	Assume the conformal mapping $\xi + i\eta$ satisfies the symmetry and boundary conditions of \eqref{eq : equations for zeta and eta}. If the free surface is strictly monotonic, meaning $\eta_x(x,0) < 0$ for all $x \in (0,\pi)$, then the wave cannot self-intersect across the crest axis $X=0$. Specifically:
	\begin{equation}
		\xi(x,0) > 0 \qquad \text{for all } x \in (0, \pi).
	\end{equation}
\end{lemma}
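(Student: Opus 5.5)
The plan is to study the harmonic function $\xi$ on the half-rectangle $\mathcal{R}^+=\{0<x<\pi,\,-h<y<0\}$ via the maximum principle, as was done for $v$ in Theorem~\ref{thm:hw_conformal_monotonicity}. The boundary data on $\partial\mathcal{R}^+$ come from the symmetry and bottom conditions.

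\textbf{Step 1: boundary values on the side and bottom edges.} Even symmetry of $\eta$ in $x$ and the Cauchy--Riemann equations make $\xi-x$ odd in $x$. Combined with $\xi(x+2\pi,y)=\xi(x,y)+2\pi$, this gives
\[
\xi(0,y)=0,\qquad \xi(\pi,y)=\pi \qquad \text{for } -h\le y\le 0 .
\]
On the bottom, $\eta(x,-h)=0$ gives $\xi_y=-\eta_x=0$ there. I would like $\xi(x,-h)>0$, or better, to avoid needing it, so I will use $\eta_x$ instead of $\xi$.

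\textbf{Step 2: sign of $\eta_x$ in the interior.} The function $w=\eta_x$ is harmonic in $\mathcal{R}^+$, and its boundary values are as follows:
\begin{itemize}
\item on $x=0$ and $x=\pi$ it vanishes, by evenness and periodicity;
\item on $y=-h$ it vanishes, since $\eta(x,-h)=0$;
\item on $y=0$ it satisfies $w<0$ for $x\in(0,\pi)$, by hypothesis.
\end{itemize}
The strong maximum principle then gives $\eta_x<0$ throughout $\mathcal{R}^+$. By the Cauchy--Riemann equations, $\xi_y=-\eta_x>0$ in $\mathcal{R}^+$. So for each fixed $x\in(0,\pi)$, the map $y\mapsto\xi(x,y)$ is strictly increasing on $[-h,0]$.

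\textbf{Step 3: positivity on the bottom.} It remains to show $\xi(x,-h)\ge 0$ for $x\in(0,\pi)$; Step 2 then yields $\xi(x,0)>\xi(x,-h)\ge 0$. On $y=-h$ we have $\xi_x=\eta_y$. The function $\eta$ is harmonic in $\mathcal{R}^+$, vanishes on the bottom, and is positive inside: this follows either from the maximum principle using $\eta(x,0)>0$, or from $\eta_y=\xi_x$ together with the mean depth $h>0$. By Hopf's lemma, $\eta_y(x,-h)>0$ for $x\in(0,\pi)$. Hence $\xi(\cdot,-h)$ is strictly increasing from $\xi(0,-h)=0$, so $\xi(x,-h)>0$.

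\textbf{Main obstacle.} The delicate point is justifying $\eta>0$ on the surface, which is needed to apply Hopf on the bottom. I would avoid this dependence: on the bottom, $\xi(x,-h)$ is monotone with endpoint values $0$ and $\pi$ and cannot change direction without $\eta_y$ vanishing. To exclude that, I would use the non-degeneracy $\eta_x^2+\eta_y^2>0$ in $\overline{\mathcal{R}}$. Since $\eta_x=0$ on the bottom, this forces $\eta_y\neq 0$ there, so $\eta_y$ has a fixed sign on $y=-h$. Because $\xi(\pi,-h)-\xi(0,-h)=\pi>0$, that sign must be positive. This gives Step 3 cleanly without needing positivity of $\eta$.
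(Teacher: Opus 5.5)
Your argument is correct, and it takes a different route from the paper. The paper applies the minimum principle directly to $\xi$ on $\mathcal{R}^+$. It asserts $\xi\ge 0$ on the two side edges and on the bottom. Then, if $\xi(x_0,0)\le 0$ at a minimizing point of the top edge, Hopf gives $\xi_y(x_0,0)<0$, i.e. $\eta_x(x_0,0)>0$, contradicting monotonicity.

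You instead apply the maximum principle to the harmonic function $\eta_x$, which vanishes on the sides and bottom and is negative on the top. This gives $\xi_y=-\eta_x>0$ throughout the half-cell, so $\xi(x,0)>\xi(x,-h)$, and you then treat the bottom trace separately. Your route gains two things:
\begin{itemize}
\item a stronger interior statement: $\xi$ is strictly increasing in $y$ on all of $\mathcal{R}^+$;
\item an actual proof of the bottom inequality $\xi(x,-h)\ge 0$. The paper's argument needs this too but only asserts it ``by the flat bottom boundary condition''. It is not automatic, since $\xi(x,-h)=x+2\sum_{n\ge 1}a_n\sin(nx)/\sinh(nh)$.
\end{itemize}
The paper's route is shorter once that bottom fact is granted.

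Some smaller remarks.

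Your final version of Step 3 uses $\eta_x^2+\eta_y^2>0$ on the bottom. That is part of \eqref{eq : nondegeneracy_block} and of the paper's standing assumptions, but not among the lemma's stated hypotheses. You can drop it using Step 2 alone. Since $\eta_x(x,-h+t)<0$ for $t\in(0,h)$ and $\eta_x(x,-h)=0$, the one-sided derivative gives $\xi_{xx}(x,-h)=\eta_{xy}(x,-h)\le 0$. So $\xi(\cdot,-h)$ is concave on $[0,\pi]$ with endpoint values $0$ and $\pi$, hence $\xi(x,-h)\ge x>0$. This even yields $\xi(x,0)>x$.

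Your first idea for Step 3, via positivity of $\eta$, is unjustified as written. Nothing gives $\eta(x,0)>0$, and a maximum principle on $\mathcal{R}^+$ would also need sign information on the side edges. You were right to discard it.

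In Step 1, the Cauchy--Riemann equations determine $\xi$ only up to an additive constant. It is the horizontal gauge in \eqref{eq:gauge_block} that makes $\xi-x$ exactly odd, and hence gives $\xi(0,y)=0$ and $\xi(\pi,y)=\pi$.
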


\begin{proof}
	Let $\mathcal{R}^+ = (0,\pi) \times (-h,0)$ be the right half of the conformal domain. The real part of the mapping, $\xi$, is harmonic in $\mathcal{R}^+$. By the assumed symmetry and the flat bottom boundary condition, we have $\xi \geq 0$ on the left ($x=0$), right ($x=\pi$), and bottom ($y=-h$) boundaries. 
	
	Suppose, for contradiction, that $\xi(x_0, 0) \leq 0$ for some $x_0 \in (0, \pi)$. Since $\xi \geq 0$ on the other three boundaries, $(x_0, 0)$ is a global minimum of $\xi$ on $\overline{\mathcal{R}^+}$. 
	
	By the Hopf boundary lemma, the outward normal derivative at this minimum must be strictly negative, yielding $\xi_y(x_0, 0) < 0$. Applying the Cauchy--Riemann equation $\xi_y = -\eta_x$, we obtain:
	\begin{equation}
		\eta_x(x_0, 0) > 0.
	\end{equation}
	This strictly contradicts the monotonicity assumption. Therefore, $\xi(x,0) > 0$ for all $x \in (0,\pi)$.
\end{proof}

\subsection{The Admissible Set and Openness}
To utilize these geometric properties in a rigorous continuation argument, we must translate them into a formal topological framework. Our computer-assisted proof provides a rigorous enclosure of the true branch of solutions within a tight mathematical neighborhood (an $\epsilon$-ball) around a numerically computed approximation. In that spirit, our physical constraints must define an \textit{open set} in the function space. In this subsection, we formally define the admissible set of non-stagnating, injective, and strictly monotonic waves, and verify that these conditions possess the necessary topological "wobble room" to survive small perturbations in the $C^2$ norm.
\begin{lemma}[Openness of the Admissible Set]\label{lem:openness}
	Let $X = C^2_{\text{even}}(\mathbb{T})$ denote the Banach space of even, $2\pi$-periodic, twice continuously differentiable functions. The admissible set $\mathcal{U} \subset X$ consisting of wave profiles $\eta$ and their conjugate horizontal mappings $\xi$ satisfying:
	\begin{enumerate}
		\item[(i)] Non-stagnation: $Q - 2g\eta(x,0) > 0$ for all $x$,
		\item[(ii)] Injectivity: $0 < \xi(x,0) < \pi$ for all $x \in (0, \pi)$,
		\item[(iii)] Strict Monotonicity: $\eta_x(x,0) < 0$ for all $x \in (0, \pi)$, with $\eta_{xx}(0,0) < 0$ and $\eta_{xx}(\pi,0) > 0$,
	\end{enumerate}
	is an open subset of $X$ in the $C^2$ topology. 
\end{lemma}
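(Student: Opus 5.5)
The plan is to show that each of the three conditions defining $\mathcal{U}$ survives a small $C^2$ perturbation. We work with the free-surface trace $\eta(\cdot,0)$ as the variable. The conjugate $\xi(\cdot,0)$ is produced by the bounded linear map $\eta(\cdot,0)\mapsto \xi(\cdot,0)-x$: in Fourier variables this is multiplication of the $n$-th coefficient by $\coth(nh)$ composed with the Hilbert-type conjugation. I will take $Q$ to vary jointly with $\eta$ (small perturbations of $(Q,\eta)$), and I will use that $\xi(\cdot,0)$ depends continuously on $\eta$ in $C^0$ whenever $\eta$ is perturbed in $C^2$. This follows from $\|\xi-x\|_{C^0}\lesssim \sum |n\,\hat\eta_n|\coth(|n|h)/|n| \lesssim \|\eta\|_{C^{1,\alpha}}\le C\|\eta\|_{C^2}$, after bounding the Fourier tail of a $C^2$ function (the $\coth$ factor is bounded for $n\neq 0$).

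\textbf{Non-stagnation (i).} The map $(Q,\eta)\mapsto \min_x (Q-2g\eta(x,0))$ is continuous in the $C^0$ norm on a compact torus, so a strictly positive minimum persists under perturbation. This condition is immediate.

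\textbf{Monotonicity (iii), the delicate step.} The inequality $\eta_x<0$ holds only on the open interval $(0,\pi)$, and $\eta_x$ vanishes at the endpoints by evenness. A pointwise strict inequality on a noncompact interval is not open by itself, so I will use the curvature conditions to handle the endpoints. Since $\eta_{xx}(0,0)<0$, continuity gives $\delta>0$ and $c>0$ with $\eta_{xx}\le -c$ on $[0,\delta]$; likewise $\eta_{xx}\ge c$ on $[\pi-\delta,\pi]$. On the compact middle interval $[\delta,\pi-\delta]$ we have $\eta_x\le -c'$ for some $c'>0$. Now take $\tilde\eta$ with $\|\tilde\eta-\eta\|_{C^2}<\min(c,c')/2$. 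Then $\tilde\eta_{xx}<-c/2$ on $[0,\delta]$, and $\tilde\eta_x(0)=0$ by evenness, so integrating gives $\tilde\eta_x(x)<-cx/2<0$ for $x\in(0,\delta]$. The symmetric argument at $\pi$ uses $\tilde\eta_x(\pi)=0$, which follows from evenness and periodicity. On the middle interval, $\tilde\eta_x<-c'/2$ directly. The curvature inequalities at $0$ and $\pi$ themselves persist by $C^0$ closeness of second derivatives.

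\textbf{Injectivity (ii), the main obstacle.} This condition has the same endpoint degeneracy, since $\xi(0,0)=0$ and $\xi(\pi,0)=\pi$, and it involves the nonlocal conjugate. I would first try to avoid it for the lower bound: for any perturbed profile already in (iii), Lemma~\ref{lem:crest_injectivity} gives $\xi>0$ on $(0,\pi)$. The upper bound $\xi<\pi$ I handle with the same endpoint mechanism. Set $w=\pi-\xi$, which is harmonic, nonnegative on the sides and bottom of $\mathcal{R}^+$, and positive on the top interior. By the argument of Lemma~\ref{lem:crest_injectivity} applied at the trough corner, we get $w_x(\pi,0)=-\xi_x(\pi,0)<0$. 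This reduces to $\xi_x(\pi,0)=\eta_y(\pi,0)>0$, which holds by the non-degeneracy $\eta_x^2+\eta_y^2>0$ (so $\eta_y\neq0$ where $\eta_x=0$), together with the sign forced by positivity near the corner. Given $\xi_x(\pi,0)>0$, this strict derivative sign persists on a neighborhood $[\pi-\delta,\pi]$ under perturbations controlling $\xi_x$ in $C^0$. Such control holds because $\xi_x=\eta_y$ on the boundary and $\eta_y(\cdot,0)$ is the Dirichlet-to-Neumann image of $\eta(\cdot,0)$, bounded from $C^{1,\alpha}$ (hence from $C^2$) into $C^0$. On $[\delta,\pi-\delta]$ the bound $\xi<\pi$ holds with a margin and persists by $C^0$ continuity, and near $x=0$ we have $\xi$ close to $0$, so it is far from $\pi$. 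This DtN continuity step, where $C^2$ control of $\eta$ is used to control $\xi_x$ uniformly, is the point I expect to require the most care.
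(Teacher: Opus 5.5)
Your treatment of (i) and (iii) is the paper's argument essentially verbatim: curvature margins on $[0,\delta]$ and $[\pi-\delta,\pi]$, integration from the vanishing slope at the symmetry axes, and a uniform slope margin in the middle.

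The genuine difference is (ii). The paper dispatches it by asserting that, by compactness, $\xi$ stays bounded away from $0$ and $\pi$. But $\xi(0,0)=0$ and $\xi(\pi,0)=\pi$, so (ii) has the same endpoint degeneracy as (iii), and that sentence does not prove openness. Your route repairs this. The crest side is inherited from the already-proved openness of (iii) through Lemma~\ref{lem:crest_injectivity}. The trough side comes from a strict sign $\xi_x(\pi,0)=\eta_y(\pi,0)>0$ that survives $C^2$ perturbations.

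Be clear about where that strict sign comes from. It does not come from a Hopf/corner argument: at the corner $\partial_x$ is tangent to the top edge, and positivity of $\pi-\xi$ alone allows a vanishing derivative there. It comes solely from the non-degeneracy $\eta_x^2+\eta_y^2>0$, which is not among (i)--(iii). This condition belongs to the paper's admissible set \eqref{eq : nondegeneracy_block}, and it is genuinely needed. Nothing in (i)--(iii) forces $\eta_y(\pi,0)\neq 0$. If $\eta_y(\pi,0)=0$, then $\eta+\epsilon\cos x$ has $\tilde\xi_x(\pi,0)=-\epsilon\coth h<0$, so $\tilde\xi>\pi$ just left of the trough for every $\epsilon>0$.

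Likewise, you tacitly use $\tilde\xi(\pi,0)=\pi$, i.e.\ perturbations preserving $a_0=h$, as the periodicity in \eqref{eq:symmetry_block} requires. Adding a constant $\epsilon$ to $\eta$ shifts $\xi(\cdot,0)$ by $\epsilon x/h$ and breaks (ii) near $x=\pi$. So openness should be stated on the slice $a_0=h$, where $F$ lives anyway.

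With these two provisos your argument is complete, whereas the paper's one-line treatment of (ii) is not. Finally, the step you flag as delicate is elementary. By Cauchy--Schwarz and Parseval, $\sum_{n\neq 0}|n|\coth(|n|h)|\hat\eta_n|\le C\|\eta_{xx}\|_{L^2}$. Hence $\eta\mapsto\eta_y(\cdot,0)$ is bounded from $C^2$ into $C^0$ without any H\"older theory.
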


\begin{proof}
	Because the domain $[0, \pi]$ is compact, the continuous functions $(Q - 2g\eta)$ and $\xi$ are strictly bounded away from their respective critical values. Therefore, conditions (i) and (ii) are robust to sufficiently small perturbations in the $C^0$ norm. 
	
	To verify that condition (iii) defines an open set, we examine the interior and the boundaries separately. By the strict curvature assumptions at the symmetry axes, there exist constants $k_0, k_1 > 0$ such that $\eta_{xx}(0,0) = -k_0$ and $\eta_{xx}(\pi,0) = k_1$. By the continuity of the second derivative, there exists a sufficiently small $\delta > 0$ such that $\eta_{xx} \leq -k_0/2$ on $[0, \delta]$ and $\eta_{xx} \geq k_1/2$ on $[\pi-\delta, \pi]$. 
	
	On the remaining compact interior interval $[\delta, \pi-\delta]$, the strict monotonicity assumption guarantees the existence of a constant $c > 0$ such that $\eta_x \leq -c$. 
	
	Consider a perturbation $\tilde{\eta} \in X$ such that $\|\tilde{\eta}\|_{C^2} < \min\{c/2, k_0/2, k_1/2\}$. 
	On the interior interval $[\delta, \pi-\delta]$, the perturbed slope satisfies:
	\begin{equation}
		(\eta + \tilde{\eta})_x \leq -c + \frac{c}{2} < 0.
	\end{equation}
	On the boundary neighborhood $[0, \delta]$, the perturbed curvature satisfies $(\eta + \tilde{\eta})_{xx} \leq -k_0/2 + k_0/2 \leq 0$. Since the perturbed profile remains even, its slope at the crest is identically zero: $(\eta + \tilde{\eta})_x(0) = 0$. Because the slope starts at zero and is strictly decreasing on $(0, \delta]$, we have $(\eta + \tilde{\eta})_x < 0$ on $(0, \delta]$. 
	
	An identical integration argument applies to the trough neighborhood $[\pi-\delta, \pi]$, guaranteeing the slope remains strictly negative. Thus, the perturbed profile strictly satisfies condition (iii), confirming that $\mathcal{U}$ is open in the $C^2$ topology.
\end{proof}

\subsection{Global preservation}
By combining the "closedness" property provided by the Maximum Principle (Theorem \ref{thm:hw_conformal_monotonicity}) with the "openness" of the admissible set (Lemma \ref{lem:openness}), we construct a standard topological connectedness argument. We prove that if a continuous branch begins with strictly monotonic waves, it is mathematically impossible for the solutions to lose this strict geometric structure unless the branch first hits a stagnation or self-intersection limit. This corollary serves as the formal mathematical license for our rigorous numerical algorithm to completely ignore the verification of monotonicity for high-amplitude waves.
\begin{corollary}[Global Preservation of Monotonicity]\label{cor:global_monotonicity}
	Let $\mathscr{C}$ be a continuous branch of periodic water wave solutions bifurcating from small-amplitude, strictly monotonic waves. As long as the solutions along $\mathscr{C}$ strictly satisfy the non-stagnation condition ($Q - 2g\eta > 0$ on $y=0$) and the injectivity condition ($\xi(x,0) < \pi$ for $x \in (0,\pi)$), every wave profile on $\mathscr{C}$ remains strictly monotonic on the half-period. 
\end{corollary}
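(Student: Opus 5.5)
We argue by continuity and connectedness along the branch. Parametrize $\mathscr{C}$ by a continuous map $s \mapsto (Q(s),\eta(s),\xi(s))$, $s\in[0,1]$, with values in the $C^2$ topology. Here $s=0$ is a small-amplitude, strictly monotonic wave, and along the whole branch we assume non-stagnation and the trough-side injectivity $\xi(x,0)<\pi$. Define
\[
S \bydef \{ s\in[0,1] : \eta(s) \text{ satisfies condition (iii) of Lemma \ref{lem:openness}} \}.
\]
By hypothesis $0\in S$. Since $[0,1]$ is connected, it suffices to show that $S$ is both relatively open and relatively closed; it then follows that $S=[0,1]$.

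\textbf{Openness.} Take $s_0\in S$. By Lemma \ref{lem:crest_injectivity}, strict monotonicity gives $\xi(s_0)(x,0)>0$ on $(0,\pi)$. Together with the standing hypotheses $\xi<\pi$ and $Q-2g\eta>0$, this places the solution at $s_0$ in the admissible set $\mathcal{U}$. Lemma \ref{lem:openness} says $\mathcal{U}$ is open in $C^2$, and the branch is continuous in $C^2$. Hence $\eta(s)$ satisfies (iii) for all $s$ near $s_0$, and $S$ is open. If the branch is only known to be continuous in $H_1$, we first upgrade to $C^2$ continuity. Elliptic regularity for the real-analytic solutions (or, in the paper's setting, the weighted Fourier norms) gives this.

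\textbf{Closedness.} Let $s_k\in S$ with $s_k\to s_*$. Each $\eta(s_k)$ has $\eta_x(x,0)<0$ on $(0,\pi)$. Passing to the $C^1$ limit, $\eta(s_*)_x(x,0)\le 0$ on $(0,\pi)$. Along the branch, non-stagnation $Q-2g\eta>0$ holds, and so does the non-degeneracy $\eta_x^2+\eta_y^2>0$ of the mapping, which is part of the standing nondegeneracy \eqref{eq : nondegeneracy_block}. The limit is non-flat, since the branch consists of nontrivial waves away from the bifurcation point. Theorem \ref{thm:hw_conformal_monotonicity} therefore applies at $s_*$. It upgrades the weak inequality to (i)–(iii): $\eta_x<0$ on $(0,\pi)$, $\eta_{xx}(0,0)<0$ and $\eta_{xx}(\pi,0)>0$. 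So $s_*\in S$, and $S$ is closed.

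\textbf{Main obstacle.} The delicate point is closedness, in two places. First, we must make sure the limiting solution stays in the class where Theorem \ref{thm:hw_conformal_monotonicity} applies: non-flat, with a non-degenerate conformal map. Second, we must make sure the weak inequality genuinely passes to the limit. This needs $C^1$ (indeed $C^2$, for the curvature statements) convergence up to the boundary. For the non-flatness, note that a limit of nontrivial waves on the branch is itself a point of the branch. The branch meets the flat state only at the bifurcation endpoint, and there the local bifurcation analysis directly gives strictly monotonic profiles of $\cos x$ type. For the non-degeneracy $\eta_x^2+\eta_y^2>0$, I would include it among the standing hypotheses along $\mathscr{C}$, as in \eqref{eq : nondegeneracy_block}, since it is verified for the branch in the computer-assisted part.
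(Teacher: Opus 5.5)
Your proposal is correct and follows essentially the paper's own argument. The set of strictly monotonic waves is relatively open by the $C^2$-openness in Lemma~\ref{lem:openness}; it is relatively closed because $\eta_x\le 0$ passes to the limit and Theorem~\ref{thm:hw_conformal_monotonicity} then restores strict monotonicity; connectedness of the branch concludes. Your added remarks on non-flatness of the limit, the non-degeneracy $\eta_x^2+\eta_y^2>0$, and the need for $C^2$ rather than only $H_1$ continuity of the branch make explicit hypotheses that the paper's proof uses without comment.
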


\begin{proof}
	Let $\mathcal{U} \subset \mathscr{C}$ denote the subset of solutions on the continuation branch whose profiles are strictly monotonic. By the asymptotic properties of the local bifurcation curve, $\mathcal{U}$ is non-empty. 
	
	By Lemma \ref{lem:openness}, the condition of strict monotonicity (including the strict corner curvatures) defines an open set in the $C^2$ topology. Therefore, $\mathcal{U}$ is relatively open in $\mathscr{C}$. 
	
	Conversely, suppose a sequence of strictly monotonic solutions in $\mathcal{U}$ converges in $C^2$ to a limiting wave on $\mathscr{C}$. By hypothesis, this limiting wave remains strictly within the bounds of non-stagnation and non-self-intersection. Because the limit of monotonic functions is monotonic ($\eta_x \leq 0$), we may apply Theorem \ref{thm:hw_conformal_monotonicity} to the limiting wave. The theorem analytically forces the limit to be strictly monotonic. Therefore, $\mathcal{U}$ contains all of its limit points within the admissible bounds, meaning $\mathcal{U}$ is relatively closed in $\mathscr{C}$.
	
	Because the continuous branch $\mathscr{C}$ is a connected topological space, the only non-empty subset that is both relatively open and relatively closed is the entire space itself. Thus, $\mathcal{U} = \mathscr{C}$, proving that strict monotonicity cannot be lost anywhere along the valid branch.
\end{proof}

\section{The Fourier-Analytic Formulation}\label{sec : single wave and fourier analysis}

\subsection{Fourier reduction}
We begin by establishing the functional analytic framework required for the rigorous numerical continuation. Our immediate goal is to reduce the full 2D boundary value problem \eqref{eq : equations for zeta and eta} to a 1D system of algebraic equations defined solely on the free surface $y=0$. We achieve this by passing to the Fourier domain in the horizontal conformal variable and analytically solving the boundary value problem in the vertical direction.

To strictly enforce the harmonicity requirements \eqref{eq:pde_zeta}--\eqref{eq:pde_eta} and the flat bottom boundary conditions \eqref{eq:pde_bottom}, we expand the conjugate functions $\eta$ and $\zeta$ using separation of variables:
\begin{equation}\label{eq:fourier_series_eta_and_zeta}
	\eta(x,y) = \sum_{n \in \mathbb{Z}} a_n e^{inx} \psi_n(y) \qquad \text{and} \qquad \zeta(x,y) = \sum_{n \in \mathbb{Z}} b_n e^{inx} \psi_n(y),
\end{equation}
where the vertical basis functions are given explicitly by
\begin{equation}
	\psi_n(y) := \begin{cases}
		\displaystyle \frac{y+h}{h} & \text{if } n = 0, \\[8pt]
		\displaystyle \frac{\sinh(|n|(y+h))}{\sinh(|n|h)} & \text{if } |n| \geq 1.
	\end{cases}
\end{equation}

By construction, these basis functions automatically satisfy the interior Laplace equations and vanish at the conformal bottom $y=-h$. Furthermore, because $\psi_n(0)=1$ for all $n \in \mathbb{Z}$, the unknowns $a_n$ and $b_n$ correspond exactly to the Fourier coefficients of the boundary traces $\eta(x,0)$ and $\zeta(x,0)$. This effectively collapses the spatial domain, allowing us to reconstruct the entire 2D fluid state from the 1D surface sequences $a = \{a_n\}$ and $b = \{b_n\}$.

The zero-mode coefficients $a_0$ and $b_0$ carry direct physical interpretations. The coefficient $a_0$ captures the mean vertical elevation of the free surface, which is rigidly fixed by the gauge constraint \eqref{eq:gauge_block} such that $a_0 = h$. Physically, the zero-mode coefficient $b_0$ captures the uniform background current relative to the moving frame. By introducing a linear vertical profile to the stream function, $b_0$ establishes the constant horizontal base flow upon which the wave kinematics and the constant vorticity $\gamma$ are superimposed.

The primary algebraic challenge in our computer-assisted proof is evaluating the nonlinear dynamic boundary condition \eqref{eq:pde_dynamic} in Fourier space. To ensure this pointwise evaluation is well-defined, we seek our profiles in the weighted Banach algebra $X_e$ introduced in Section \ref{sec:notation_spaces} (where we will establish in Lemma \ref{lem : solutions are C infinity} that solutions found within this space actually exhibit strong decay in their Fourier tails). 

While $X_e$ captures our strictly even boundary profiles, evaluating the spatial derivatives (such as $\eta_x$) naturally maps these profiles into the complimentary space of strictly odd sequences. We therefore define the corresponding closed odd subspace:
\begin{equation}\label{def : banach space X odd}
X_o \bydef \left\{ a \in \ell^1 : a_n \in i\mathbb{R}, \, a_{-n} = -a_n \text{ for all } n \in \mathbb{Z}, \text{ and } \|a\|_X < \infty \right\}. 
\end{equation}

To rigorously evaluate the nonlinear terms in the boundary conditions (such as the squared gradients), the sequence space $X$ must form a Banach algebra under discrete convolution. This property ensures that the pointwise product of two functions in our space remains within the space, thereby preserving the required regularity. We formally verify this in the following lemma. 

\begin{lemma}[Banach Algebra Property]\label{lem:Banach_algebra}
    The space $(X,*)$ is a Banach algebra under the discrete convolution operator, defined for $a,b \in X$ by:
    \begin{align*}
        (a*b)_k \bydef \sum_{n \in \mathbb{Z}} a_{k-n}b_n \qquad \text{for all } k \in \mathbb{Z}.
    \end{align*}
    Furthermore, the even and odd closed subspaces satisfy the standard parity convolution relations: $X_e * X_e \subset X_e$, $X_o * X_e \subset X_o$, and $X_o * X_o \subset X_e$.
\end{lemma}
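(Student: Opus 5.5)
The plan is to prove submultiplicativity of the weighted norm $\|a\|_X = \sum_n (1+|n|)|a_n|$ with respect to discrete convolution, and then verify the parity relations by direct index manipulation. The key pointwise estimate is the elementary inequality
\[
1+|k| \le (1+|k-n|)(1+|n|) \qquad \text{for all } k,n\in\mathbb{Z},
\]
which follows from the triangle inequality $|k|\le |k-n|+|n|$ and the trivial fact that $1+|k-n|+|n| \le 1+|k-n|+|n|+|k-n||n|$. With this weight inequality in hand, the standard argument for weighted $\ell^1$ algebras applies.

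For $a,b\in X$, the series defining $(a*b)_k$ converges absolutely, since $\sum_n |a_{k-n}||b_n| \le \|a\|_{\ell^1}\|b\|_{\ell^1}$. By the weight inequality and Tonelli's theorem for nonnegative double series,
\[
\|a*b\|_X \le \sum_{k}\sum_n (1+|k-n|)|a_{k-n}|\,(1+|n|)|b_n| = \|a\|_X\|b\|_X .
\]
Bilinearity, associativity and commutativity of $*$ are inherited from $\ell^1(\mathbb{Z})$, where absolute convergence justifies rearrangement. $X$ is complete, being isometric to the weighted space $\ell^1$ with weight $1+|n|$. Hence $(X,*)$ is a (commutative) Banach algebra; the unit $e_0$ has norm $1$.

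For the parity relations, I substitute $n\mapsto -n$ in the sum defining $(a*b)_{-k}$, which gives $(a*b)_{-k} = \sum_n a_{-k+n}b_{-n}$. If $a_{-j}=\sigma_a a_j$ and $b_{-j}=\sigma_b b_j$ with $\sigma\in\{\pm1\}$, this yields $(a*b)_{-k}=\sigma_a\sigma_b (a*b)_k$. The value set follows similarly: real times real gives real, imaginary times real gives imaginary, and imaginary times imaginary gives real, since $i\mathbb{R}\cdot i\mathbb{R}\subset\mathbb{R}$. Combined with the norm bound, this gives $X_e*X_e\subset X_e$, $X_o*X_e\subset X_o$ and $X_o*X_o\subset X_e$. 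Closedness of these subspaces is clear, because coordinatewise conditions are preserved under $\|\cdot\|_X$-limits.

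No step is a genuine obstacle; the only point requiring care is the weight inequality, which controls the derivative-weighted norm of the product.
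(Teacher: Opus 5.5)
Your proof is correct and follows the paper's argument: the same weight inequality $1+|k|\le(1+|n|)(1+|k-n|)$, followed by decoupling the nonnegative double sum to obtain $\|a*b\|_X\le\|a\|_X\|b\|_X$. Your explicit substitution $n\mapsto -n$ for the parity relations, together with your remarks on absolute convergence and completeness, simply spell out details the paper treats as immediate.
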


\begin{proof}
    Let $a,b \in X$. Taking the norm of the convolution yields:
    \begin{align*}
        \|a*b\|_X &= \sum_{k \in \mathbb{Z}} (1+|k|) \left| \sum_{n \in \mathbb{Z}} a_{k-n}b_{n} \right| \\
        &\leq \sum_{k \in \mathbb{Z}} \sum_{n \in \mathbb{Z}} \frac{1+|k|}{(1+|n|)(1+|k-n|)} (1+|k-n|)|a_{k-n}| (1+|n|)|b_{n}|.
    \end{align*}
    Observe that by the standard triangle inequality $|k| = |n + (k-n)| \leq |n| + |k-n|$, we can bound the fractional weight:
    \begin{align*}
        1+|k| \leq 1+|n|+|k-n| \leq 1+|n|+|k-n|+|n||k-n| = (1+|n|)(1+|k-n|).
    \end{align*}
    Therefore, the fractional multiplier is bounded above by $1$ for all $k,n \in \mathbb{Z}$. Applying this bound and utilizing the change of variables $m = k-n$, we can cleanly decouple the sums:
    \begin{align*}
        \|a*b\|_X &\leq \sum_{k \in \mathbb{Z}} \sum_{n \in \mathbb{Z}} (1+|k-n|)|a_{k-n}| (1+|n|)|b_{n}| \\
        &= \left( \sum_{m \in \mathbb{Z}} (1+|m|)|a_{m}| \right) \left( \sum_{n \in \mathbb{Z}} (1+|n|)|b_{n}| \right) \\
        &= \|a\|_X \|b\|_X.
    \end{align*}
    This establishes the sub-multiplicative inequality $\|a*b\|_X \leq \|a\|_X \|b\|_X$, confirming that $X$ is a Banach algebra. The parity subspace relations follow immediately from the linear combinations of standard even and odd symmetries inside the convolution sum.
\end{proof}

\subsection{Operators representation in Fourier coefficients}

To evaluate the nonlinear boundary conditions—particularly the squared gradient terms in the dynamic condition \eqref{eq:pde_dynamic}—we must compute the spatial derivatives of the fluid variables exactly on the free surface $y=0$. By leveraging the analytic extension provided by \eqref{eq:fourier_series_eta_and_zeta}, we can capture the nonlocal fluid interactions within the bulk of the domain $\Omega$ through exact, purely algebraic operations in the sequence space $X$.

Differentiating the series expansion \eqref{eq:fourier_series_eta_and_zeta} term-by-term with respect to $y$ and evaluating at the surface yields:
\begin{align*}
    \partial_y\eta(x,0) = \frac{a_0}{h} +  \sum_{n \in \mathbb{Z}\setminus \{0\}} a_n e^{inx} |n| \frac{\cosh(|n|h)}{\sinh(|n|h)} = \frac{a_0}{h} + \sum_{n \in \mathbb{Z}\setminus \{0\}} a_n e^{inx} |n| \coth(|n|h).
\end{align*}
 Therefore, we define the exact multiplier sequence $(\omega_n)_{n \in \mathbb{Z}}$ as:
\begin{align}\label{def : omega k}
    \omega_n \bydef \begin{cases}
        \displaystyle \frac{1}{h} & \text{if } n=0, \\[8pt]
        |n| \coth(|n|h) & \text{if } n \neq 0.
    \end{cases} 
\end{align}
This sequence acts as the symbol for the linear operator $D_y$, defined element-wise on the Fourier coefficients by:
\begin{align}\label{eq: Dy definition}
    (D_y a)_n \bydef \omega_n a_n \qquad \text{for all } n \in \mathbb{Z}.
\end{align}
The operator $D_y$ is exactly the periodic Dirichlet-to-Neumann (DtN) map for a flat strip of depth $h$. While the DtN map is a complex, nonlocal pseudo-differential operator in the physical domain, our choice of conformal coordinates diagonalizes it, reducing the boundary derivative to a local Fourier multiplier.

Similarly, differentiating \eqref{eq:fourier_series_eta_and_zeta} with respect to $x$ at $y=0$ yields the standard horizontal derivative operator $D_x$, defined by the purely imaginary symbol:
\begin{align}\label{eq: Dx definition}
    (D_x a)_n \bydef in a_n \qquad \text{for all } n \in \mathbb{Z}.
\end{align}

Because the symbols of both $D_x$ and $D_y$ grow linearly with $|n|$ as $|n| \to \infty$, these are unbounded linear operators on $X$, reflecting the standard loss of one derivative associated with spatial differentiation. However, within our spectral computational framework, defining these exact algebraic multipliers allows us to evaluate the full boundary conditions on $\Gamma$ without any spatial discretization error.

\subsection{The Zero-finding Problem}\label{sec : zero_finding_problem}

We are now positioned to reformulate the physical boundary value problem as an algebraic zero-finding problem suitable for rigorous numerical continuation. By applying the Fourier multiplier operators $D_x$ and $D_y$ and utilizing the discrete convolution product, we map the sequence of free surface Fourier coefficients to a sequence of residuals quantifying the pointwise violation of the boundary conditions. Finding a valid physical water wave is thereby transformed into finding the zeros of a nonlinear operator $F(U) = 0$.

Evaluating the kinematic and dynamic boundary conditions \eqref{eq:pde_kinematic} and \eqref{eq:pde_dynamic} in the sequence space $X$, and representing all pointwise multiplications via the discrete convolution operator $*$, we obtain:
\begin{align}
    b &= \frac{\gamma}{2} (a * a) + m e_0, \label{eq:b_fourier} \\
    (D_y b - \gamma a * D_y a)^{*2} &- (Q - 2ga) * \left( (D_x a)^{*2} + (D_y a)^{*2} \right) = 0, \label{eq:bernoulli_fourier}
\end{align}
where $e_0 \in X$ is the standard identity sequence defined by $1$ for $n=0$ and $0$ otherwise, and the notation $(\cdot)^{*2}$ denotes the convolution square (e.g., $a^{*2} = a * a$). By substituting \eqref{eq:b_fourier} into \eqref{eq:bernoulli_fourier} to algebraically eliminate the stream function coefficients $b$, we derive a single governing equation $f(a) = 0$ for the surface elevation, where:
\begin{align}\label{def : f_of_a}
    f(a) \bydef \left(m D_y e_0 + \frac{\gamma}{2} D_y(a^{*2}) - \gamma a * D_y a\right)^{*2} - (Q - 2ga) * \left( (D_x a)^{*2} + (D_y a)^{*2} \right). 
\end{align}

To strictly enforce symmetric wave profiles, we restrict the domain of $f$ to the even subspace $X_e$. Because the multiplier operators $D_x$ and $D_y$ consume one spatial derivative (their symbols scale linearly with $|n|$), they act as bounded linear maps from $X_e$ into the standard sequence space $\ell^1_e$. Since $\ell^1_e$ is itself a Banach algebra under convolution, $f$ is a well-defined, bounded map from $X_e$ to $\ell^1_e$. This rigorous mapping between functional spaces is critical for the implementation of the Newton-Kantorovich theorem, presented in Theorem \ref{th : radii polynomial}.

Following Constantin et al.\ \cite{constantin_global}, we must also enforce the mean water depth constraint \eqref{eq:gauge_block}. In Fourier space, this integral condition drastically simplifies to the single algebraic constraint $a_0 - h = 0$. Because this introduces an additional scalar equation to our root-finding problem, we elevate the Bernoulli constant $Q$ to an unknown variable to ensure the overall system remains square and well-posed, while the physical parameters $h, \gamma,$ and $m$ remain fixed.

To solve the system simultaneously, we define the augmented state vector $U = (Q, a)$ and consider the nonlinear operator $F : H_1 \to H_0$ defined compactly by
\begin{equation}\label{def : zero finding F}
    F(U) = \begin{pmatrix}
    a_0 - h \\
    f(Q, a)
    \end{pmatrix},
\end{equation}
where $f(Q, a)$ is the exact Bernoulli boundary residual defined in \eqref{def : f_of_a}.

To facilitate our analysis, we introduce the scaling operator $\mathcal{W} : H_1 \to H_0$ defined by
\begin{align*}
    \mathcal{W}(Q,a) \bydef (Q, \hat{\mathcal{W}}a), \qquad \text{where } \quad (\hat{\mathcal{W}}a)_n \bydef (1+|n|)a_n \quad \text{for all } n \in \mathbb{Z}.
\end{align*}
By definition of the sequence spaces, this establishes the norm equivalences
\begin{align}\label{def : W and hat W}
    \|U\|_{H_1} = \|\mathcal{W}U\|_{H_0} \qquad \text{and} \qquad \|a\|_X = \|\hat{\mathcal{W}}a\|_{\ell^1} \qquad \text{for all } U \in H_1, \, a \in X_e.
\end{align}
In functional analytic terms, $\mathcal{W} : H_1 \to H_0$ and $\hat{\mathcal{W}} : X_e \to \ell^1_e$ act as isometric isomorphisms. These exact isometries will be utilized to balance the function spaces during the construction of the Fréchet derivatives in the subsequent Newton-Kantorovich proofs.

\subsection{Geometric Reconstruction and the Overhanging Criterion}

Once a rigorous sequence of Fourier coefficients $a \in X_e$ has been computed, we can explicitly reconstruct the physical coordinates of the free surface $\mathcal{S}$. Recall that the free surface is parameterized by the horizontal conformal variable $x \in [-\pi, \pi]$, such that the physical coordinates are given by the mapping $x \mapsto (\xi(x,0), \eta(x,0))$.

By evaluating the vertical conformal mapping $\eta(x,y)$ from \eqref{eq:fourier_series_eta_and_zeta} at the surface $y=0$, and exploiting the even symmetry of the space $X_e$, the vertical profile reduces to the real cosine series:
\begin{equation}\label{eq : geometric eta}
\eta(x,0) = a_0 + 2\sum_{n=1}^{\infty} a_n \cos(nx).
\end{equation}

To recover the horizontal profile $\xi(x,0)$, we utilize the Cauchy-Riemann equations evaluated at the boundary, which yield $\xi_x(x,0) = \eta_y(x,0)$. By the definition of the normal derivative operator $D_y$ in \eqref{eq: Dy definition}, this is identically the Fourier series representation of $D_y a$:
\begin{equation*}
\xi_x(x,0) = (D_y a)(x) = \sum_{n \in \mathbb{Z}} \omega_n a_n e^{inx}.
\end{equation*}
For the zero-mode ($n=0$), our multiplier is defined as $\omega_0 = \frac{1}{h}$, and the zero-finding problem \eqref{def : zero finding F} rigorously enforces the mean-depth constraint $a_0 = h$. Consequently, the mean term of the derivative evaluates to exactly $1$. Expanding the remaining terms yields the strictly real expression:
\begin{equation}\label{eq : geometric xi derivative}
\xi_x(x,0) = 1 + 2\sum_{n=1}^{\infty} n \coth(nh) a_n \cos(nx).
\end{equation}
Integrating \eqref{eq : geometric xi derivative} with respect to $x$ provides the parametric horizontal coordinate of the free surface:
\begin{equation}\label{eq : geometric xi}
\xi(x,0) = x + 2\sum_{n=1}^{\infty} a_n \coth(nh) \sin(nx).
\end{equation}

With the physical coordinates fully reconstructed, we can now formally establish the geometric criterion — introduced in Section~\ref{sec:notation_spaces} — required to rigorously identify an overhanging wave. A standard, non-overhanging water wave satisfies the condition that the free surface can be represented as a single-valued graph over the horizontal axis (i.e., $Y = f(X)$). This requires the horizontal spatial coordinate $\xi(x,0)$ to be strictly monotonically increasing with respect to the conformal variable $x$. Conversely, if this monotonicity is broken, the wave profile physically folds backward upon itself.

\begin{lemma}[Overhanging Criterion]\label{lem : overhanging criterion}
	Let $U = (Q, a) \in H_1$ be a solution to the zero-finding problem $F(U) = 0$ given in \eqref{def : zero finding F}, and let $\eta$ be given as in \eqref{eq:fourier_series_eta_and_zeta}. The physical free surface of the water wave is strictly overhanging if there exists a conformal coordinate $x^* \in [-\pi, \pi]$ such that:
	\begin{equation*}
	\eta_y(x^*,0) < 0.
	\end{equation*}
\end{lemma}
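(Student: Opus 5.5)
The plan is to read the condition through the Cauchy--Riemann identity $\xi_x = \eta_y$ and then argue topologically on the boundary curve $x \mapsto (\xi(x,0),\eta(x,0))$. By the reconstruction \eqref{eq : geometric xi derivative}, $\eta_y(x,0) = \xi_x(x,0) = (D_y a)(x)$ is a continuous (indeed absolutely convergent, since $a \in X_e$ and $D_y a \in \ell^1_e$) real function of $x$. The hypothesis $\eta_y(x^*,0)<0$ then says that $\xi(\cdot,0)$ is strictly decreasing on an interval $(x^*-\delta,x^*+\delta)$. This follows from continuity of $\xi_x$ and the mean value theorem.

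The second step is to show that on that interval the surface cannot be a graph $Y=f(X)$. Suppose it were one. The map $x\mapsto \xi(x,0)$ satisfies $\xi(x+2\pi,0)=\xi(x,0)+2\pi$ by \eqref{eq:symmetry_block}, so as $x$ ranges over $\mathbb{R}$ it is continuous and unbounded in both directions. A graph parametrization requires $x \mapsto \xi(x,0)$ to be injective, since distinct points of the curve would otherwise share an abscissa. A continuous injective map of $\mathbb{R}$ is strictly monotone, and by the periodic drift it must be increasing. This contradicts strict decrease near $x^*$.

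To make the contradiction concrete, it is cleaner to exhibit two distinct parameters with the same $\xi$ directly. Take $x_1<x^*<x_2$ close to $x^*$ with $\xi(x_1,0)>\xi(x^*,0)>\xi(x_2,0)$. Since $\xi(x,0)\to+\infty$ as $x\to+\infty$, the intermediate value theorem gives some $x_3>x_2$ with $\xi(x_3,0)=\xi(x_1,0)$.

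The remaining point is that these two parameters correspond to distinct surface points, that is, $\eta(x_1,0)\neq\eta(x_3,0)$, or more generally that the curve is not simply retracing itself. I expect this to be the main subtlety, since "overhanging" must mean a genuine vertical line meets $\mathcal S$ twice at different heights. If the heights happened to coincide, I would instead use $\xi(x_1',0)$ for nearby $x_1'$: one gets a whole interval of abscissae each hit at least twice. Equality of heights for all of them would force the curve to be non-injective on open arcs. That is excluded because $\xi+i\eta$ is locally injective on the boundary by the non-degeneracy $\eta_x^2+\eta_y^2>0$ in \eqref{eq : nondegeneracy_block}. Where injectivity \eqref{eq:injectivity_condition} holds on a period, the two points are distinct outright. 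So some vertical line $X=\text{const}$ meets the free surface at two distinct points, and the profile is strictly overhanging.
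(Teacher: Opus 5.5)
Your first two paragraphs are the paper's proof. The Cauchy--Riemann identity $\xi_x(x,0)=\eta_y(x,0)$ turns the hypothesis into $\xi_x(x^*,0)<0$, so $x\mapsto\xi(x,0)$ is not increasing and the surface is not a graph over the horizontal axis. The paper stops there. It effectively takes failure of monotonicity of $\xi(\cdot,0)$ as the meaning of ``overhanging'' (see the introduction and the discussion just before the lemma), so at the paper's level your argument is complete.

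Your further step tries to produce a vertical line that meets $\mathcal S$ at two \emph{distinct} points. This targets a real subtlety the paper glosses over, but the justification you give fails.
\begin{itemize}
\item Local injectivity of the boundary curve does not rule out two disjoint parameter arcs with the same image. The curve is an immersion because $\eta_x^2+\eta_y^2=\eta_x^2+\xi_x^2>0$ on $y=0$, but a doubly traversed circle is also an immersion.
\item The arcs you compare, near $x^*$ and near $x_3$, may be far apart, so no local property controls them.
\item \eqref{eq : nondegeneracy_block} is not a hypothesis of the lemma, although it holds for every solution the paper constructs, by Lemma~\ref{lem : proof of non-degeneracy}.
\end{itemize}

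The repair is to localize at a turning point of $\xi(\cdot,0)$.
\begin{itemize}
\item Since $\xi(x^*-2\pi,0)=\xi(x^*,0)-2\pi$ and $\xi(\cdot,0)$ is strictly decreasing near $x^*$, the maximum of $\xi(\cdot,0)$ on $[x^*-2\pi,x^*]$ is attained at an interior point $x_0$.
\item At $x_0$ we have $\eta_y(x_0,0)=\xi_x(x_0,0)=0$. Non-degeneracy then forces $\eta_x(x_0,0)\neq0$, so $\eta(\cdot,0)$ is strictly monotone near $x_0$.
\item If $\xi(\cdot,0)$ is constant on a one-sided neighbourhood of $x_0$, that already gives a vertical segment of the surface.
\item Otherwise, the intermediate value theorem on each side yields small $t,s>0$ with $\xi(x_0-t,0)=\xi(x_0+s,0)$ and $\eta(x_0-t,0)\neq\eta(x_0+s,0)$.
\end{itemize}

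Your fallback through global injectivity only covers profiles in the admissible set. The lemma does not assume this, and it fails for the self-intersecting waves at the end of the branch.
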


\begin{proof}
	By the Cauchy-Riemann equivalence $\xi_x(x,0) = \eta_y(x,0)$, the condition $\eta_y(x^*,0) < 0$ implies that $\xi_x(x^*, 0) < 0$. Therefore, the horizontal mapping $x \mapsto \xi(x,0)$ is not globally monotonically increasing, and the parametric curve $x \mapsto (\xi(x,0), \eta(x,0))$ cannot be expressed as a single-valued function of the horizontal coordinate. This geometric intersection of vertical cross-sections confirms the presence of an overhang.
\end{proof}

\section{Computer-assisted Bounding Framework}\label{sec: CA bounding}

\subsection{A Newton-Kantorovich Approach}\label{ssec : Newton-K single wave}

Our ultimate goal is to rigorously prove the existence of steady periodic water wave solutions to \eqref{eq : equations for zeta and eta}. In our functional analytic framework, this is equivalent to finding a true root $U \in H_1$ such that $F(U) = 0$. Our strategy relies on computing a finite-dimensional numerical approximation $\bar U = (\bar{Q},\bar{a}) \in H_1$ (satisfying the gauge constraint $\bar{a}_0 = h$ exactly, such that $F(\bar{U}) \approx 0$) and subsequently applying a Newton-Kantorovich type theorem. This allows us to rigorously prove that a true, infinite-dimensional solution exists within an explicitly calculable, small neighborhood of $\bar U$.

To transition from the continuous PDE theory to the computer-assisted proof setting, we must bridge the gap between the infinite-dimensional sequence space $X$ and the finite-dimensional arrays manipulable by a computer. We therefore introduce projection operators that decompose any sequence into a finite Galerkin projection (which the computer processes) and an infinite tail (which is handled analytically).

For a given computational truncation dimension $N \in \mathbb{N}$, we define the finite projection operator $\hat{\pi}^{\leq N} : X \to X$ as:
\begin{align*}
\left(\hat{\pi}^{\leq N} a \right)_n \bydef \begin{cases}
a_n & \text{if } |n| \leq N, \\
0 & \text{otherwise}.
\end{cases}
\end{align*}
This operator isolates the low-frequency Fourier modes, effectively defining the finite-dimensional subspace where our numerical root-finding algorithm operates.

Conversely, we define the infinite tail projection $\hat{\pi}^{>N} : X \to X$, which captures the analytic remainder, as:
\begin{align*}
\left(\hat{\pi}^{>N} a \right)_n \bydef \begin{cases}
0 & \text{if } |n| \leq N, \\
a_n & \text{otherwise}.
\end{cases}
\end{align*}

Because subsequent analytic bounds on the convolution of these infinite tails require distinguishing between the positive and negative high-frequency spectra, we further decompose the tail operator into $\hat{\pi}^{n>N} : X \to X$ and $\hat{\pi}^{n<-N} : X \to X$, defined respectively by:
\begin{align*}
\left(\hat{\pi}^{n > N} a \right)_n \bydef \begin{cases}
a_n & \text{if } n > N, \\
0 & \text{otherwise},
\end{cases}
\qquad \text{and} \qquad
\left(\hat{\pi}^{n < -N} a \right)_n \bydef \begin{cases}
a_n & \text{if } n < -N, \\
0 & \text{otherwise}.
\end{cases}
\end{align*}

We naturally extend these sequence projections to the augmented state vector $U = (Q, a)$. Because the Bernoulli constant $Q$ is a finite scalar parameter, it is entirely encapsulated within the finite-dimensional subspace. Thus, we define the full augmented projections $\pi^{\leq N}$ and $\pi^{>N}$ on the product spaces (either $H_1$ or $H_0$) by:
\begin{align*}
\pi^{\leq N} U \bydef \begin{pmatrix} Q \\ \hat{\pi}^{\leq N} a \end{pmatrix} 
\qquad \text{and} \qquad 
\pi^{> N} U \bydef \begin{pmatrix} 0 \\ \hat{\pi}^{> N} a \end{pmatrix}.
\end{align*}
The directional high-frequency projections $\pi^{n > N}$ and $\pi^{n < -N}$ act analogously, mapping the scalar $Q$-component identically to zero.

Let $K_0 \in \mathbb{N}$ denote the computational truncation dimension for our numerical candidate $\bar{U} = (\bar{Q}, \bar{a})$. By construction, the numerical approximation is strictly supported on the finite subset of Fourier frequencies $|n| \leq K_0$. Therefore, $\bar{U}$ satisfies:
\begin{equation}\label{eq : approx sol finite dim}
\bar{U} = \pi^{\leq K_0}\bar{U}, \qquad \text{with the exact gauge constraint} \qquad \bar{a}_0 = h.
\end{equation}

To bridge the gap between this finite-dimensional numerical approximation and a true infinite-dimensional solution, we employ a Newton-Kantorovich approach. Let $A : H_0 \to H_1$ be an injective bounded linear operator strategically constructed to act as an approximate inverse to the Fréchet derivative $DF(\bar{U})$. We seek to determine a rigorous isolation radius $r > 0$ such that the Newton-like operator:
\begin{equation}\label{def : fixed point op T}
T(U) \bydef U - A F(U)
\end{equation}
is a well-defined contraction mapping from the closed topological ball $B_r(\bar{U}) \subset H_1$ into itself. 

By the Banach Fixed Point Theorem, if $T$ maps $B_r(\bar{U})$ into itself and is strictly contractive, then it possesses a unique fixed point $U^*$ within that ball. Furthermore, because the approximate inverse $A$ is injective, the fixed-point condition $T(U^*) = U^*$ directly implies $F(U^*) = 0$, rigorously establishing the existence of a true periodic water wave. To computationally verify these functional analytic conditions, we employ the standard Newton-Kantorovich framework (see, e.g., \cite{ArKoTe05, DaLeMi07, NakPluWat19}), which elegantly reduces the infinite-dimensional topological requirements to a finite set of inequalities.

\begin{theorem}[The Newton-Kantorovich Theorem]\label{th : radii polynomial}
	Let $(X_1,\|\cdot\|_{X_1})$ and $(X_2,\|\cdot\|_{X_2})$ be Banach spaces, and let $F : X_1 \to X_2$ be Fréchet differentiable. Moreover, let $\bar{u} \in X_1$ be an approximate solution, and let ${A} : X_2 \to X_1$ be an injective bounded linear operator. 
	
	Assume there exist non-negative constants $Y$ and $Z_1$, and a non-negative monotonically non-decreasing function $Z_2 : (0, \infty) \to [0,\infty)$, such that:
	\begin{equation}\label{eq : bounds general radii polynomial}
	\begin{aligned}
	\|{A}{F}(\bar{u})\|_{X_1} & \le {Y}, \\
	\|I - {A}D{F}(\bar{u})\|_{\mathcal{B}(X_1)} &\le {Z}_1,\\
	\|{A}\left({D}{F}(\bar{u} + h) - D{F}(\bar{u})\right)\|_{\mathcal{B}(X_1)} &\le {Z}_2(r)r, \qquad \text{for all } h \in {B_r(0)} \subset X_1 \text{ and all } r>0.
	\end{aligned}  
	\end{equation}
	If there exists a radius $r>0$ such that the radii polynomial conditions are satisfied:
	\begin{equation}\label{condition radii polynomial}
	\frac{1}{2}{Z}_2(r)r^2 - (1-{Z}_1)r + {Y} < 0 \qquad \text{and} \qquad {Z}_1 + {Z}_2(r)r < 1,
	\end{equation}
	then there exists a unique $\tilde{u} \in {B_r}(\bar{u}) \subset X_1$ such that ${F}(\tilde{u})=0$. 
\end{theorem}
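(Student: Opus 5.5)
We prove Theorem \ref{th : radii polynomial} by showing that the Newton-like operator $T(u) \bydef u - AF(u)$ from \eqref{def : fixed point op T} is a contraction of the closed ball $\overline{B_r(\bar u)} \subset X_1$ into itself. We then apply the Banach fixed point theorem and use injectivity of $A$ to turn the fixed point into a zero of $F$.

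\textbf{Contraction estimate.} First we bound the derivative of $T$. Since $F$ is Fréchet differentiable, so is $T$, with $DT(u) = I - ADF(u)$. For $u = \bar u + h$ with $\|h\|_{X_1} \le s \le r$, write
\[
DT(\bar u + h) = \bigl(I - ADF(\bar u)\bigr) - A\bigl(DF(\bar u + h) - DF(\bar u)\bigr).
\]
The second and third bounds in \eqref{eq : bounds general radii polynomial} then give
\[
\|DT(\bar u + h)\|_{\mathcal{B}(X_1)} \le Z_1 + Z_2(s)s \le Z_1 + Z_2(r)r.
\]
The last inequality uses that $Z_2$ is non-decreasing. By the mean value inequality on the convex ball, for $u, w \in \overline{B_r(\bar u)}$ we get
\[
\|T(u) - T(w)\|_{X_1} \le \bigl(Z_1 + Z_2(r)r\bigr)\|u - w\|_{X_1}.
\]
The second condition in \eqref{condition radii polynomial} makes the constant $\kappa \bydef Z_1 + Z_2(r)r$ strictly less than $1$.

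\textbf{Self-map.} Next we show $T$ maps the ball into itself, using a sharper radial estimate. For $\|h\|_{X_1} \le r$, write
\[
T(\bar u + h) - \bar u = -AF(\bar u) + \int_0^1 DT(\bar u + th)\,h \, dt.
\]
Here $\|DT(\bar u + th)\| \le Z_1 + Z_2(t\|h\|)\,t\|h\| \le Z_1 + Z_2(r)\,t r$. Integrating in $t$ gives
\[
\|T(\bar u + h) - \bar u\|_{X_1} \le Y + Z_1 r + \tfrac12 Z_2(r) r^2.
\]
This is strictly less than $r$ exactly by the first condition in \eqref{condition radii polynomial}. The factor $\tfrac12$ in that condition comes precisely from this integral.

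\textbf{Conclusion.} Banach's fixed point theorem on the complete set $\overline{B_r(\bar u)}$ yields a unique fixed point $\tilde u$. Since the self-map inequality is strict, $\tilde u$ in fact lies in the open ball. The fixed point satisfies $AF(\tilde u) = 0$, and injectivity of $A$ gives $F(\tilde u) = 0$. For uniqueness of the zero, any zero of $F$ in the ball is a fixed point of $T$, so it coincides with $\tilde u$.

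The main technical point is justifying the mean value and integral identities. The composition $t \mapsto DT(\bar u + th)h$ need only be Bochner- or Riemann-integrable. Under mere Fréchet differentiability, we can avoid integrals entirely. For the contraction step we use the mean value inequality $\|T(u) - T(w)\| \le \sup_{z \in [u,w]} \|DT(z)\| \, \|u - w\|$. For the self-map step we apply it to the scalar function $\phi(t) = \langle \ell, T(\bar u + th) \rangle$, with $\ell$ a norming functional, and bound $\phi'(t) \le Z_1 r + Z_2(r) t r^2$ before integrating the scalar inequality.
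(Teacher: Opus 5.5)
Your proof is correct and follows the same route the paper takes: the paper only sketches this standard result and defers to the cited references. In both, $T(u)=u-AF(u)$ maps the closed ball into itself by the first radii-polynomial inequality and is a contraction by the second, and injectivity of $A$ turns the Banach fixed point into the unique zero. Your radial estimate correctly produces the factor $\tfrac12$ by applying the $Z_2$ hypothesis at radius $tr$ together with monotonicity of $Z_2$, and the norming-functional argument properly avoids any integrability issue under mere Fréchet differentiability.
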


This theorem provides the rigorous functional analytic mechanism to elevate our numerical candidate $\bar{u}$ to an exact, true solution. The proof strategy relies on computing three distinct analytical bounds that characterize the local geometry of the operator $T(u) = u - AF(u)$. 

First, the residual bound $Y$ provides a quantitative measure of the quality of our initial numerical approximation by evaluating how far $\bar{u}$ is from being a true solution. Second, the linear bound $Z_1$ evaluates the quality of $A$ as a quasi-inverse of the Fréchet derivative. By strictly requiring $Z_1 < 1$ (implied by \eqref{condition radii polynomial}), we ensure that the Jacobian is invertible at $\bar{u}$ and that the system is well-conditioned. Finally, the nonlinear curvature bound $Z_2(r)$ captures the local Lipschitz behavior of the derivative, measuring how much $DF$ deviates as one perturbs the state away from the numerical candidate.

These components are synthesized into the so-called radii polynomial conditions \eqref{condition radii polynomial}. The existence of an exact solution is mathematically guaranteed for any radius $r$ where this bounding curve evaluates to a strictly negative number, proving that $T$ maps the closed ball into itself. Simultaneously, the second strict inequality in \eqref{condition radii polynomial} uniformly bounds the Lipschitz constant of $T$ below 1, triggering the Banach Fixed Point Theorem to guarantee that the solution is strictly unique within that neighborhood.

Our objective for the remainder of this section is to explicitly construct the approximate inverse $A$ for our water wave system, and to derive rigorous, computable formulas for the bounds $Y, Z_1,$ and $Z_2(r)$.

\subsection{Construction of the Approximate Inverse}\label{ssec : approximate inverse single wave}

Given the numerical approximation $\bar{U} = (\bar{Q}, \bar{a}) \in H_1$ satisfying \eqref{eq : approx sol finite dim}, our objective is to explicitly construct an injective bounded linear operator $A : H_0 \to H_1$ that acts as an approximate inverse to the Fréchet derivative $DF(\bar{U})$.

Given $a \in X$, we introduce the discrete convolution operator $M_a$ as 
\begin{align}
M_a b \bydef a * b \qquad \text{for all } b \in X. 
\end{align}
By the Banach algebra property established in Lemma \ref{lem:Banach_algebra}, $M_a$ is a bounded linear operator whose induced operator norm satisfies the sub-multiplicative bound
\begin{align}\label{eq : multiplication equality}
\|M_a\|_{\mathcal{B}(X)} = \|a\|_X.
\end{align} 
Representing nonlinear pointwise products as linear mappings on $X$ is the foundational prerequisite for systematically deriving the Jacobian of our system.

Taking the directional Fréchet derivative of $f(Q, a)$ with respect to the surface profile $a$, evaluated at $\bar{U}$, we can decompose the operator $D_a f(\bar{U})$ into four primary linear operators that isolate the geometric and physical interactions of the fluid variables:
\begin{align}\label{def : DF}
D_a f(\bar{U}) = M_{v_1} \mathcal{C}[{\bar{a}}] + M_{v_2} + M_{v_3} D_x + M_{v_4} D_y.
\end{align}
Here, $\mathcal{C}[\bar{a}] : X \to X$ isolates the nonlinear Dirichlet-to-Neumann commutator structure, defined by
\begin{equation}
\mathcal{C}[\bar{a}] \bydef D_y M_{\bar{a}} - M_{\bar{a}} D_y - M_{D_y \bar{a}},
\end{equation}
and the multiplier sequences $v_1, v_2, v_3, v_4 \in X$ are evaluated directly from the steady state variables
\begin{equation}\label{eq : functions v1 to v4 and C}
\begin{aligned}
v_1 &\bydef 2\gamma \left(m D_y e_0 + \frac{\gamma}{2} D_y(\bar{a}^{*2}) - \gamma \bar{a} * D_y\bar{a}\right), \\
v_2 &\bydef 2g \left( (D_x\bar{a})^{*2} + (D_y\bar{a})^{*2} \right), \\
v_3 &\bydef -2(\bar{Q} - 2g \bar{a}) * D_x \bar{a}, \\
v_4 &\bydef -2(\bar{Q} - 2g \bar{a}) * D_y \bar{a}.
\end{aligned}
\end{equation}

The first term, $M_{v_1} \mathcal{C}[\bar{a}]$, captures the linearized influence of vorticity on the dynamic pressure balance at the free surface. The operator $\mathcal{C}[\bar{a}]$ acts as a commutator that reconciles the vertical Dirichlet-to-Neumann operator $D_y$ with the surface multiplication operator $M_{\bar{a}}$. Physically, this commutator accounts for the conformal distortion of the underlying background shear flow. When the wave profile is perturbed, the rotational current is locally compressed or stretched beneath the crests and troughs. The expression $D_y M_{\bar{a}} - M_{\bar{a}} D_y$ precisely captures this geometric deformation of the stream function, while the correction term $M_{D_y \bar{a}}$ accounts for the local rotational tilt of the fluid particles exactly at the boundary.

The multiplier sequence $v_1$ corresponds to the relative tangential velocity of the fluid at the surface. Where this relative velocity is large—indicating the water is moving rapidly relative to the wave frame—even infinitesimal perturbations to the wave's shape induce pronounced dynamic pressure variations, as the fast-moving shear layer is forcefully redirected by the altered surface slope.

The remaining terms govern the classical energy balance of the wave. The bounded multiplier $M_{v_2}$ isolates the variation in potential energy, effectively representing the hydrostatic restoring force of gravity acting on the perturbed surface. Finally, the unbounded operators $M_{v_3} D_x$ and $M_{v_4} D_y$ capture the kinetic energy variations dictated by the perturbed horizontal and vertical velocity gradients, respectively.

Since our numerical approximation $\bar{a}$, satisfying \eqref{eq : approx sol finite dim}, is a finite trigonometric polynomial, the multiplier sequences $v_i$ are also finite trigonometric polynomials. Consequently, the associated pointwise multiplication operators $M_{v_i}$ are strictly bounded. It remains to prove that the nonlinear commutator $\mathcal{C}[\bar{a}]$ is also a bounded linear operator on $X$, a non-trivial requirement given the presence of the unbounded spatial derivative $D_y$.

\begin{lemma}\label{lem : operator C}
	Let the diagonal operator $\mathcal{D} : X \to \ell^1$ be defined by the multiplier sequence:
	\begin{align*}
	(\mathcal{D}a)_k = \mathcal{D}_k a_k \qquad \text{where} \qquad \mathcal{D}_k \bydef |k| - \omega_k,
	\end{align*}
	and $\omega_k$ is the Dirichlet-to-Neumann symbol given in \eqref{def : omega k}.
	If the tail truncation threshold satisfies $K_1 \geq 2K_0$, then we have the rigorous bound:
	\begin{align*}
	\|\mathcal{C}[\bar{a}]\hat{\pi}^{>K_1} - M_{\mathcal{D}\bar{a}}\hat{\pi}^{>K_1}\|_{\mathcal{B}(X)} \leq \frac{4K_0}{1-e^{-2hK_0}}e^{-2hK_0} \|\bar{a}\|_X.
	\end{align*}
	In particular, $\mathcal{C}[\bar{a}] : X \to X$ is a bounded linear operator.
\end{lemma}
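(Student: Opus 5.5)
The plan is to compute the matrix of $\mathcal{C}[\bar a]\hat\pi^{>K_1} - M_{\mathcal{D}\bar a}\hat\pi^{>K_1}$ explicitly in the Fourier basis and bound its $\mathcal{B}(X)$ norm column by column. For $b\in X$ supported on $|n|>K_1$,
\[
(\mathcal{C}[\bar a]b)_k=\sum_{n}\bar a_{k-n}\,b_n\,\bigl(\omega_k-\omega_n-\omega_{k-n}\bigr).
\]
Write $\omega_j=|j|-\mathcal{D}_j$ for all $j\neq 0$, and handle $j=0$ separately, where $\omega_0=1/h$ and $\mathcal{D}_0=-1/h$; this case never occurs for $k$ or $n$ below. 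Since $(M_{\mathcal{D}\bar a}b)_k=\sum_n \mathcal{D}_{k-n}\bar a_{k-n}b_n$, the $\mathcal{D}_{k-n}$ contributions cancel. The difference operator therefore has kernel
\[
\bar a_{k-n}\Bigl[\bigl(|k|-|n|-|k-n|\bigr)+\mathcal{D}_n-\mathcal{D}_k\Bigr],\qquad |n|>K_1,\ |k-n|\le K_0 .
\]
Because $\bar a=\hat\pi^{\le K_0}\bar a$ and $K_1\ge 2K_0$, every nonzero entry has $|n|>2K_0$ and $|k|\ge |n|-K_0>K_0$. In particular $k$ and $n$ have the same sign and both $\mathcal{D}_k$ and $\mathcal{D}_n$ are evaluated at frequencies of modulus larger than $K_0$.

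The second step is the exponential estimate. For $j\ne 0$ we have $\mathcal{D}_j=|j|(1-\coth(|j|h))=-\frac{2|j|e^{-2|j|h}}{1-e^{-2|j|h}}$. The function $t\mapsto \frac{t e^{-2th}}{1-e^{-2th}}$ is decreasing for $t\ge K_0$ in the relevant regime; I would verify this by differentiation, and if necessary use the cruder monotone bound $t e^{-2th}$ times $(1-e^{-2hK_0})^{-1}$. Hence $|\mathcal{D}_n|,|\mathcal{D}_k|\le \frac{2K_0 e^{-2hK_0}}{1-e^{-2hK_0}}$, so $|\mathcal{D}_n-\mathcal{D}_k|\le \frac{4K_0e^{-2hK_0}}{1-e^{-2hK_0}}$. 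The operator norm on $X$ is the supremum over columns $n$ of $\frac{1}{1+|n|}\sum_k(1+|k|)|\text{entry}_{k,n}|$. Using $1+|k|\le (1+|n|)(1+|k-n|)$, exactly as in Lemma~\ref{lem:Banach_algebra}, the column sum is at most $\frac{4K_0e^{-2hK_0}}{1-e^{-2hK_0}}\sum_j(1+|j|)|\bar a_j| = \frac{4K_0e^{-2hK_0}}{1-e^{-2hK_0}}\|\bar a\|_X$, which is the claimed constant.

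The main obstacle is the integer-valued piece $|k|-|n|-|k-n|$. For $n>K_1$ and $j=k-n$ it equals $j-|j|$, which is $0$ for $j\ge0$ but $-2|j|$ for $j<0$. The case $n<-K_1$ is symmetric. This piece is not exponentially small, so I would first recheck the sign conventions in the definition of $\mathcal{C}[\bar a]$ and $\mathcal{D}$, to see whether $M_{\mathcal{D}\bar a}$ is meant to absorb it; for example, $\mathcal{D}$ might be intended to carry a one-sided symbol $\operatorname{sign}(n)\,j-|j|$, split by $\hat\pi^{n>N}$ and $\hat\pi^{n<-N}$, which would explain why those directional projections were introduced. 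If the piece survives, it is a bounded, explicit, finitely banded operator: on $\hat\pi^{n>K_1}$ it is $-2M_{\hat\pi^{n<0}|\cdot|\bar a}$, and similarly on $\hat\pi^{n<-K_1}$. It would then have to be moved into the comparison operator, and the exponential bound above would apply to what remains. In either case the final assertion follows: $\mathcal{C}[\bar a]\hat\pi^{\le K_1}$ is a finite-rank bounded operator, $M_{\mathcal{D}\bar a}$ is bounded because $\mathcal{D}\bar a$ is finitely supported, and the tail difference is bounded, so $\mathcal{C}[\bar a]\in\mathcal{B}(X)$.
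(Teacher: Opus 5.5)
Your kernel computation and exponential estimate are exactly the paper's, and the obstacle you flag is real: it is precisely where the paper's proof breaks.

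The paper splits the difference into two sums.
The first, $\Sigma_{\mathrm{res}}$, carries $\bigl||k|-\omega_k\bigr|+\bigl||n|-\omega_n\bigr|$ and is bounded by the same constant you obtain. The monotonicity you planned to verify holds on all of $(0,\infty)$: $|\mathcal{D}_j|=\frac{2|j|}{e^{2|j|h}-1}$, and $\frac{d}{dt}\frac{t}{e^{2th}-1}$ has the sign of $e^{2th}(1-2th)-1<0$, so no fallback is needed.
The second, $\Sigma_{\mathrm{int}}$, carries $|k|-|n|-\omega_{k-n}-\mathcal{D}_{k-n}=|k|-|n|-|k-n|$. The paper declares it identically zero because $\operatorname{sgn}(k)=\operatorname{sgn}(n)$ gives $|k|-|n|=\operatorname{sgn}(n)(k-n)$. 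But $\operatorname{sgn}(n)(k-n)-|k-n|$ vanishes only when $k-n$ is zero or has the sign of $n$, and equals $-2|k-n|$ otherwise.

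The definitions of $\mathcal{C}[\bar a]$, $\omega_k$ and $\mathcal{D}_k$ are unambiguous, so there is nothing to recheck. $M_{\mathcal{D}\bar a}$ cannot absorb this piece, and the inequality as stated is false for every non-flat $\bar a$. Concretely, take the unit sequence $u=e_n$ with $n>K_1$, and $1\le j\le K_0$. By evenness of $\bar a$, the entry of the difference at $k=n-j$ is
\[
\bigl(\omega_{n-j}-\omega_n-\omega_{-j}-\mathcal{D}_{-j}\bigr)\bar a_{-j}=\bigl(-2j+\mathcal{D}_n-\mathcal{D}_{n-j}\bigr)\bar a_{j} .
\]
Letting $n\to\infty$ in the weighted column sum gives
\[
\bigl\|\mathcal{C}[\bar a]\hat\pi^{>K_1}-M_{\mathcal{D}\bar a}\hat\pi^{>K_1}\bigr\|_{\mathcal{B}(X)}\ \ge\ 2\sum_{j\ge 1}j\,|\bar a_j| ,
\]
and the same holds on $X_e$ using $e_n+e_{-n}$. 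For an overhanging profile this lower bound exceeds $\tanh h$. Indeed, $\xi_x<0$ somewhere forces $1<2\sum_{j\ge1}j\coth(jh)|\bar a_j|\le 2\coth(h)\sum_{j\ge1}j|\bar a_j|$. By contrast, the claimed right-hand side is of order $K_0e^{-2hK_0}\|\bar a\|_X$.

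Your proposed repair is the right one, and your argument proves it with the stated constant. It has a compact form. Since $\mathcal{D}_j-|j|=-\omega_j$ and $(D_x\bar a)_j=ij\bar a_j$, your corrected multiplier on the positive tail (entries $\mathcal{D}_j\bar a_j$ for $j\ge0$ and $(\mathcal{D}_j-2|j|)\bar a_j$ for $j<0$) is exactly $-(D_y\bar a+iD_x\bar a)$. Its mirror image on the negative tail is $-(D_y\bar a-iD_x\bar a)$. The correct statement is therefore
\[
\bigl\|\mathcal{C}[\bar a]\hat\pi^{>K_1}+M_{D_y\bar a+iD_x\bar a}\hat\pi^{n>K_1}+M_{D_y\bar a-iD_x\bar a}\hat\pi^{n<-K_1}\bigr\|_{\mathcal{B}(X)}\le\frac{4K_0e^{-2hK_0}}{1-e^{-2hK_0}}\|\bar a\|_X .
\]
Its remaining kernel is $(\mathcal{D}_n-\mathcal{D}_k)\bar a_{k-n}$, handled exactly as in your second step, and the boundedness of $\mathcal{C}[\bar a]$ is unaffected. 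This is the same $D_y\pm iD_x$ structure that appears in $iv_3+v_4$.

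Downstream, in Lemma~\ref{lem : Z1 bound} the sequence $v_1*(\mathcal{D}\bar a)$ entering $Z_\infty$ must be replaced, branch by branch, by $-v_1*(D_y\bar a\pm iD_x\bar a)$. That term still carries the factor $1/(K_1+1)$, so the mechanism of the estimate survives. Still, both the lemma and the value of $Z_\infty$ need amending.
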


\begin{remark}[Asymptotic Behavior and the Deep-Water Limit]
	This lemma demonstrates that the commutator $\mathcal{C}[\bar{a}]$ asymptotically behaves as a bounded multiplication operator in the high-frequency limit. The physical mechanism driving this is the deep-water limit of short waves. As the frequency $|k| \to \infty$, the wave scale becomes infinitesimally small compared to the fluid depth $h$, causing the symbol $\coth(|k|h)$ to approach $1$ exponentially fast. 
	
	Physically, this indicates that high-frequency surface ripples no longer interact with the rigid sea floor; the non-local Dirichlet-to-Neumann map $D_y$ collapses into the local horizontal derivative $|D_x|$. By carefully canceling these unbounded leading-order terms, the lemma proves that the remaining geometric deformation of the background shear flow is completely localized and bounded.
\end{remark}

\begin{remark}[The Truncation Threshold $K_1$]
	The choice of the secondary truncation threshold $K_1 \geq 2K_0$ is critical for isolating this asymptotic behavior. Because the numerical approximation $\bar{a}$ has a maximum frequency support of $K_0$, discrete convolution with the high-frequency tail (modes where $|k| > K_1$) shifts frequencies by at most $K_0$. 
	
	Setting $K_1 \geq 2K_0$ establishes a strict spectral "buffer zone", guaranteeing that nonlinear interactions involving the infinite-dimensional tail can only produce frequencies $|k| > K_0$. This completely prevents the asymptotic, infinitely-deep-water dynamics of the tail from artificially polluting the low-frequency core of the numerical solution, maintaining the strict validity of the analytic bounds.
\end{remark}

\begin{proof}
	Let $u \in X$. From the definition $\mathcal{C}[\bar{a}] = D_y M_{\bar{a}} - M_{\bar{a}} D_y - M_{D_y \bar{a}}$, the $k$-th Fourier coefficient of the residual operator is
	\begin{align*}
	\left( \mathcal{C}[\bar{a}]\hat{\pi}^{>K_1}u - M_{\mathcal{D}\bar{a}}\hat{\pi}^{>K_1}u \right)_k = \sum_{|n| > K_1} \left( \omega_k - \omega_n - \omega_{k-n} - \mathcal{D}_{k-n} \right) \bar{a}_{k-n} u_n.
	\end{align*}
	Since $\bar{a}_n = 0$ for $|n| > K_0$ (see\ \eqref{eq : approx sol finite dim}), non-zero terms require $|k-n| \leq K_0$. Consequently, $|k| = |n + (k-n)| \geq |n| - |k-n| \geq K_1 - K_0$. Splitting the sequence norm via the triangle inequality yields
	$$\|\mathcal{C}[\bar{a}]\hat{\pi}^{>K_1}u - M_{\mathcal{D}\bar{a}}\hat{\pi}^{>K_1}u\|_X \leq \Sigma_{\text{int}} + \Sigma_{\text{res}},$$ 
	which we bound in two steps.
	
	First, using the algebraic identity $||n| - \omega_n| \leq \frac{2K_0}{1-e^{-2hK_0}}e^{-2hK_0}$ for $|n| \geq K_0$, the finite-depth discrepancy is bounded by
	\begin{align*}
	\Sigma_{\text{res}} &\bydef \sum_{|k| \geq K_1-K_0} (1+|k|) \left| \sum_{|n| > K_1} \left( ||k|-\omega_k| + ||n|-\omega_n| \right) \bar{a}_{k-n} u_n \right| \\
	&\leq \frac{4K_0}{1-e^{-2hK_0}}e^{-2hK_0} \|\bar{a}\|_X \|u\|_X.
	\end{align*}
	
	Second, for the high-frequency interaction, the threshold $K_1 \geq 2K_0$ ensures that the condition $|n| > K_1$ coupled with $|k-n| \leq K_0$ implies $\text{sgn}(k) = \text{sgn}(n)$. This gives $|k| - |n| = \text{sgn}(n)(k-n)$. Substituting $\mathcal{D}_k = |k| - \omega_k$ results in identical cancellation
	\begin{align*}
	\Sigma_{\text{int}} &\bydef \sum_{|k| \geq K_1-K_0} (1+|k|) \left| \sum_{|n| > K_1} \left( |k| - |n| - \omega_{k-n} - \mathcal{D}_{k-n} \right) \bar{a}_{k-n} u_n \right| = 0.
	\end{align*}
	
	Since $\mathcal{C}[\bar{a}]\hat{\pi}^{\leq K_1}$ is a finite-rank (and thereby bounded) linear operator, and the remaining tail $\mathcal{C}[\bar{a}]\hat{\pi}^{>K_1}$ is bounded by the multiplication operator $M_{\mathcal{D}\bar{a}}$ plus the exponentially small residual $\Sigma_{\text{res}}$, we conclude that $\mathcal{C}[\bar{a}] \in \mathcal{B}(X)$.
\end{proof}

To facilitate the explicit construction of the approximate inverse, we introduce the asymptotic vertical derivative operator $\hat{D}_y$, defined element-wise by the diagonal multiplier
\begin{align*}
(\hat{D}_y a)_k \bydef \begin{cases}
\displaystyle \frac{1}{h} a_k & \text{if } k = 0, \\[8pt]
|k| a_k & \text{if } |k| \geq 1.
\end{cases}
\end{align*}
The operator $\hat{D}_y$ captures the high-frequency limit of the Dirichlet-to-Neumann map. Physically, it represents the vertical derivative for a fluid of infinite depth, whereas the exact operator $D_y$ retains the boundary effects of the rigid ocean floor. As $|k| \to \infty$, the exact symbol $\omega_k = |k| \coth(|k|h)$ and its asymptotic counterpart $|k|$ become exponentially close, with their discrepancy decaying at the rate $\mathcal{O}(e^{-2|k|h})$ (as quantified previously in Lemma \ref{lem : operator C}).

By artificially assigning the zero-mode symbol as $1/h$ (matching $\omega_0$), we ensure that $\hat{D}_y$ is boundedly invertible. This allows the high-frequency tail of the approximate inverse to be explicitly constructed using the simple diagonal sequence $1/|k|$. This strategic substitution streamlines the computation of the $Z_1$ linear bound, as the error introduced by swapping the exact finite-depth physics for the asymptotic infinite-depth physics is exponentially suppressed in the tail.

We now construct the analytical tail of the approximate inverse, denoted by $\hat{A}_\infty$. The subsequent lemma establishes that in the high-frequency regime, the linearized operator is strictly dominated by its principal part, enabling us to bound the asymptotic inversion error by a contraction constant $\delta < 1$. 

Analytically, $\hat{A}_\infty$ is constructed by preconditioning the bounded inverse operator $\hat{D}_y^{-1}$ with sequence multiplication operators $M_w$ and $M_{w^*}$. These weights are explicitly designed to neutralize the kinetic energy coefficients $v_3$ and $v_4$ appearing in the Fréchet derivative of Bernoulli's equation. 

The contraction constant $\delta$ quantitatively bounds the defect of $\hat{A}_\infty$ as a true right inverse. It comprises two distinct error sources: the residual $\|w * (iv_3 + v_4) - e_0\|_{\ell^1}$, which measures the algebraic imperfection of the chosen weight $w$ in exactly canceling the physical velocity coefficients, and an exponentially decaying $\mathcal{O}(e^{-2hK_1})$ term, which captures the finite-depth truncation error (the geometric discrepancy between $D_y$ and $\hat{D}_y$).

\begin{lemma}[Asymptotic Approximate Inverse]\label{lem : A infinity properties}
    Let the truncation threshold satisfy $K_1 \geq 2K_0$. Let $w \in X$ be a finite  sequence such that $w = \hat{\pi}^{\leq K_0}w$, and let $w^*$ denote its complex conjugate. We define the high-frequency approximate inverse $\hat{A}_\infty : \ell^1 \to X$ as
    \begin{equation}\label{def : A infinity}
        \hat{A}_\infty \bydef \hat{\pi}^{k < -K_1} \hat{D}_y^{-1} M_{w^*} + \hat{\pi}^{k > K_1} \hat{D}_y^{-1} M_w.
    \end{equation}
    Furthermore, we define the scalar constant $\delta > 0$ by
    \begin{equation}
        \delta \bydef \frac{K_1+2}{K_1+1} \left( \|w * (i v_3 + v_4) - e_0\|_{\ell^1} + \|w * v_4\|_{\ell^1} \frac{2e^{-2hK_1}}{1-e^{-2hK_1}} \right).
    \end{equation}
    Then, the following operator norm bound holds on the infinite tail
    \begin{equation}
        \left\| \hat{A}_\infty \left( M_{v_3} D_x + M_{v_4} D_y \right) \hat{\pi}^{>K_1} - \hat{\pi}^{>K_1} \right\|_{\mathcal{B}(X)} \leq \delta.
    \end{equation}
    In particular, if $\delta < 1$, then by a standard Neumann series argument, the principal linear operator $(M_{v_3} D_x + M_{v_4} D_y)\hat{\pi}^{>K_1} : \hat{\pi}^{>K_1}X \to \ell^1$ is bounded below and possesses a bounded left inverse.
\end{lemma}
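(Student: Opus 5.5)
We argue coefficientwise on the positive and negative tails separately. Fix $u \in X$ with $u = \hat{\pi}^{>K_1}u$, and split $u = u^+ + u^-$ with $u^+ = \hat{\pi}^{n>K_1}u$ and $u^- = \hat{\pi}^{n<-K_1}u$.

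For $n > K_1$ we have $(D_x u^+)_n = i n u_n$ and $(D_y u^+)_n = \omega_n u_n = n u_n + (\omega_n - n)u_n$. Hence
\[
(M_{v_3}D_x + M_{v_4}D_y)u^+ = (i v_3 + v_4) * \hat{D}_y u^+ + v_4 * (D_y - \hat{D}_y)u^+ .
\]
For $n < -K_1$ we have $(D_x u^-)_n = i n u_n = -i|n|u_n$, so
\[
(M_{v_3}D_x + M_{v_4}D_y)u^- = (-iv_3+v_4) * \hat{D}_y u^- + v_4 * (D_y-\hat{D}_y)u^- .
\]
Since $\bar a$ is real and even, $v_3$ is built from $D_x\bar a$, which is odd and purely imaginary; thus $v_3$ is odd and imaginary, and $v_4$ is even and real. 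The symmetry $(-iv_3+v_4)_k = \overline{(iv_3+v_4)_{k}}$ then explains the use of $w^*$ on the negative tail. Moreover, $\|w^*(-iv_3+v_4) - e_0\|_{\ell^1} = \|w*(iv_3+v_4)-e_0\|_{\ell^1}$, provided $w^*$ is understood as the conjugate-reflected sequence, i.e.\ conjugation of the function. We adopt that reading.

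Apply $\hat{A}_\infty$ and treat the positive tail; the negative tail is identical by the conjugation symmetry. By support considerations, since $w$, $v_3$, $v_4$ are supported on $|k| \le 2K_0$ or so, convolving $u^+$ with them lands in frequencies of the same sign once $K_1 \ge 2K_0$. This is the buffer from Lemma~\ref{lem : operator C}; a small loss at the boundary can be absorbed by applying the projection $\hat{\pi}^{k>K_1}$. Write $g = \hat{D}_y u^+$, so $\|g\|_{\ell^1} \le \|u\|_X$. Then
\[
\hat{\pi}^{k>K_1}\hat{D}_y^{-1}\big(w*(iv_3+v_4)*g\big) - u^+ = \hat{\pi}^{k>K_1}\hat{D}_y^{-1}\big((w*(iv_3+v_4)-e_0)*g\big) + \big(\hat{\pi}^{k>K_1} - I\big)u^+ ,
\]
and the last term vanishes. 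To bound the $X$-norm of $\hat{\pi}^{k>K_1}\hat{D}_y^{-1}h$, note that the weight $(1+k)/k \le (K_1+2)/(K_1+1)$ for $k \ge K_1+1$. This gives
\[
\|\hat{\pi}^{k>K_1}\hat{D}_y^{-1}h\|_X \le \frac{K_1+2}{K_1+1}\,\|h\|_{\ell^1},
\]
which produces the prefactor in $\delta$. The first error term is then bounded by $\frac{K_1+2}{K_1+1}\,\|w*(iv_3+v_4)-e_0\|_{\ell^1}\,\|u\|_X$.

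The depth correction uses $|\omega_n - |n|| = |n|\,(\coth(|n|h)-1) = |n|\,\dfrac{2e^{-2|n|h}}{1-e^{-2|n|h}}$. For $|n| > K_1$ this is at most $|n|\,\dfrac{2e^{-2hK_1}}{1-e^{-2hK_1}}$, so
\[
\|(D_y-\hat{D}_y)u^+\|_{\ell^1} \le \frac{2e^{-2hK_1}}{1-e^{-2hK_1}}\,\|u^+\|_X .
\]
The second error term is therefore bounded by $\frac{K_1+2}{K_1+1}\,\|w*v_4\|_{\ell^1}\,\dfrac{2e^{-2hK_1}}{1-e^{-2hK_1}}\,\|u^+\|_X$. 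Summing the $u^+$ and $u^-$ contributions, using $\|u^+\|_X + \|u^-\|_X = \|u\|_X$, gives the claimed bound $\delta$.

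The final claim is a Neumann series argument. If $\delta<1$, then $\hat{A}_\infty L \hat{\pi}^{>K_1}$, where $L$ is the principal part, is invertible on $\hat{\pi}^{>K_1}X$. Consequently $(\hat{A}_\infty L\hat{\pi}^{>K_1})^{-1}\hat{A}_\infty$ is a bounded left inverse, and $L$ is bounded below by $(1-\delta)/\|\hat{A}_\infty\|$.

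The main obstacle is the frequency-sign bookkeeping. We must ensure that the convolution of tail modes with the finite sequences $w$ and $v_i$ does not mix positive and negative frequencies, so that the identity $D_x = \pm i\hat{D}_y$ on each tail is legitimate. We must also ensure that the projection $\hat{\pi}^{k>K_1}$ does not discard mass needed to reconstruct $u^+$. This requires checking carefully that the support of $w*(iv_3+v_4)$ is compatible with $K_1 \ge 2K_0$, and that the projected-away part of the residual only decreases the norm.
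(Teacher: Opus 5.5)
Your proof is correct and follows essentially the paper's route: on each signed tail you rewrite the principal part as $M_{\pm iv_3+v_4}\hat{D}_y$ plus the depth correction $M_{v_4}(D_y-\hat{D}_y)$, pay the weight ratio $\frac{K_1+2}{K_1+1}$, and pass to the negative tail by conjugate-reflection symmetry; your splitting $\|u^+\|_X+\|u^-\|_X=\|u\|_X$ is just the vector form of the paper's block-diagonal maximum. A few minor fixes are needed: the symmetry should read $(-iv_3+v_4)_k=\overline{(iv_3+v_4)_{-k}}$ (with index $k$ on the right it fails, because $iv_3+v_4$ has real coefficients); the first error term should carry $\|u^+\|_X$ rather than $\|u\|_X$; and the buffer $K_1\ge 2K_0$ is needed only to make the cross term $\hat{\pi}^{k<-K_1}\hat{D}_y^{-1}M_{w^*}(M_{v_3}D_x+M_{v_4}D_y)u^+$ and its mirror vanish, not to justify $D_xu^\pm=\pm i\hat{D}_yu^\pm$, which is applied before any convolution.
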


\begin{proof}
Let $B \bydef \left(M_{v_3} D_x + M_{v_4} \hat{D}_y\right) \hat{\pi}^{k>K_1}$. Because $D_x$ and $\hat{D}_y$ act diagonally on the positive tail, we observe the exact identity $D_x\hat{\pi}^{k>K_1} = i\hat{D}_y\hat{\pi}^{k>K_1}$. This implies that:
\begin{align*}
    B = \left(iM_{v_3} \hat{D}_y + M_{v_4} \hat{D}_y\right) \hat{\pi}^{k>K_1} = M_{iv_3+v_4} \hat{D}_y\hat{\pi}^{k>K_1}.
\end{align*}
Physically, this identity reflects the deep-water limit where particle orbits become strictly circular, perfectly coupling the horizontal and vertical velocity components via a Cauchy–Riemann type relation. Consequently, in the high-frequency regime, the 2D advective dynamics collapse into a single complex multiplication operator, illustrating that surface perturbations at these microscopic scales are effectively decoupled from the bottom boundary.

We now factor the principal operator difference to isolate the algebraic discrepancy between the weight $w$ and the combined velocity sequence $iv_3 + v_4$:
\begin{align*}
    \|\hat{\pi}^{k>K_1}\hat{D}_y^{-1}M_w B - \hat{\pi}^{k>K_1}\|_{\mathcal{B}(X)} 
    &= \|\hat{\pi}^{k>K_1}\hat{D}_y^{-1}M_w M_{iv_3+v_4} \hat{D}_y \hat{\pi}^{k>K_1} - \hat{\pi}^{k>K_1}\|_{\mathcal{B}(X)} \\
    &= \|\hat{\pi}^{k>K_1}\hat{D}_y^{-1}(M_w M_{iv_3+v_4} - I) \hat{D}_y \hat{\pi}^{k>K_1}\|_{\mathcal{B}(X)}.
\end{align*}
This transformation decouples the unbounded derivative growth from the convolution error. To bound this operator norm in $X$, we recall from \eqref{def : W and hat W} that $\hat{\mathcal{W}} : X \to \ell^1$ is an isometric isomorphism. Because $\hat{\mathcal{W}}$, $\hat{D}_y$, and $\hat{\pi}^{k>K_1}$ are all diagonal multiplier operators, they mutually commute. Commuting $\hat{\mathcal{W}}$ through $\hat{D}_y^{-1}$ produces the scalar sequence ratio $\frac{1+|k|}{|k|}$, which attains its strict maximum of $\frac{K_1+2}{K_1+1}$ at the truncation boundary $|k| = K_1 + 1$. Using the algebra property \eqref{eq : multiplication equality}, we obtain:
\begin{align*}
    \|\hat{\pi}^{k>K_1}\hat{D}_y^{-1}(M_w M_{iv_3+v_4} - I) \hat{D}_y \hat{\pi}^{k>K_1}\|_{\mathcal{B}(X)} 
    &= \|\hat{\pi}^{k>K_1}\hat{\mathcal{W}}\hat{D}_y^{-1}(M_w M_{iv_3+v_4} - I) \hat{D}_y \hat{\mathcal{W}}^{-1}\hat{\pi}^{k>K_1}\|_{\mathcal{B}(\ell^1)}\\
    &\leq \frac{K_1+2}{K_1+1} \|M_w M_{iv_3+v_4} - I\|_{\mathcal{B}(\ell^1)} \\
    &\leq \frac{K_1+2}{K_1+1} \|w*(iv_3+v_4) - e_0\|_{\ell^1}.
\end{align*}

We now evaluate the total inversion error by reintroducing the exact finite-depth operator $D_y$. Using the triangle inequality, we decompose the error into the algebraic imperfection (bounded above) and the geometric modeling error $\hat{D}_y - D_y$:
\begin{align*}
    &\|\hat{\pi}^{k>K_1}\hat{D}_y^{-1}M_w \left(M_{v_3} D_x + M_{v_4} D_y\right) \hat{\pi}^{k>K_1} - \hat{\pi}^{k>K_1}\|_{\mathcal{B}(X)} \\
    &\leq \|\hat{\pi}^{k>K_1}\hat{D}_y^{-1}M_w B - \hat{\pi}^{k>K_1}\|_{\mathcal{B}(X)} + \|\hat{\pi}^{k>K_1}\hat{D}_y^{-1}M_w \left(B - M_{v_3} D_x - M_{v_4} D_y\right) \hat{\pi}^{k>K_1} \|_{\mathcal{B}(X)} \\
    &\leq \frac{K_1+2}{K_1+1} \|w*(iv_3+v_4) - e_0\|_{\ell^1} + \|\hat{\pi}^{k>K_1}\hat{\mathcal{W}}\hat{D}_y^{-1}M_w M_{v_4}\left(\hat{D}_y - D_y\right)\hat{\mathcal{W}}^{-1} \hat{\pi}^{k>K_1} \|_{\mathcal{B}(\ell^1)} \\
    &\leq \frac{K_1+2}{K_1+1} \|w*(iv_3+v_4) - e_0\|_{\ell^1} + \frac{K_1+2}{K_1+1} \|w*v_4\|_{\ell^1} \|\left(\hat{D}_y - D_y\right)\hat{\mathcal{W}}^{-1} \hat{\pi}^{k>K_1} \|_{\mathcal{B}(\ell^1)}.
\end{align*}
Recalling from Lemma \ref{lem : operator C} that $\left||k| - \omega_k\right| \leq \frac{2|k|}{1-e^{-2h|k|}}e^{-2h|k|}$ for all $k \neq 0$, the application of $\hat{\mathcal{W}}^{-1}$ divides out the leading $|k|$ growth, leaving the exponentially suppressed tail bound:
\begin{equation*}
    \|\left(\hat{D}_y - D_y\right)\hat{\mathcal{W}}^{-1} \hat{\pi}^{k>K_1} \|_{\mathcal{B}(\ell^1)} \leq \frac{2}{1-e^{-2hK_1}}e^{-2hK_1}.
\end{equation*}
Combining these estimates yields the exact definition of $\delta$ for the positive tail.

We now verify that the negative frequency tail obeys an identical bound. On the negative branch, $D_x\hat{\pi}^{k<-K_1} = -i\hat{D}_y\hat{\pi}^{k<-K_1}$, reflecting the physical reversal of particle rotation from counter-clockwise to clockwise. Consequently, the principal operator becomes $M_{-iv_3+v_4}\hat{D}_y$. Because $v_4 \in X_e$ (even, real coefficients) and $v_3 \in X_o$ (odd, purely imaginary coefficients) are derived from real-valued fluid variables, their convolutions satisfy strict complex conjugate symmetries:
\begin{align*}
    \|w*(iv_3+v_4) - e_0\|_{\ell^1} = \|w^* * (-iv_3+v_4) - e_0\|_{\ell^1} \qquad \text{and} \qquad \|w*v_4\|_{\ell^1} = \|w^* * v_4\|_{\ell^1}. 
\end{align*}
Exploiting this conjugate symmetry, an identical functional argument bounds the negative tail error by the same scalar $\delta$.

Finally, we demonstrate that these signed frequency components decouple completely, allowing the total operator norm to be evaluated as a block-diagonal maximum. Defining $P \bydef M_{v_3} D_x + M_{v_4} D_y$, we have:
\begin{align*}
    \|\hat{A}_\infty P\hat{\pi}^{>K_1} - \hat{\pi}^{>K_1}\|_{\mathcal{B}(X)} = \max\left\{ \|\hat{A}_\infty P\hat{\pi}^{k>K_1} - \hat{\pi}^{k>K_1}\|_{\mathcal{B}(X)}, \, \|\hat{A}_\infty P\hat{\pi}^{k<-K_1} - \hat{\pi}^{k<-K_1}\|_{\mathcal{B}(X)}\right\}. 
\end{align*}
Since the velocity coefficients $v_3$ and $v_4$ are derived from quadratic interactions of the numerical state $\bar{a} = \hat{\pi}^{\leq K_0}\bar{a}$, their Fourier support is strictly confined to $|k| \leq 2K_0$. Because the truncation threshold is set to $K_1 \geq 2K_0$, convolution with $v_3$ or $v_4$ cannot shift a frequency across the origin. Therefore:
\begin{align*}
    (M_{v_3} D_x + M_{v_4} D_y)\hat{\pi}^{k>K_1} &= \hat{\pi}^{k>0}(M_{v_3} D_x + M_{v_4} D_y)\hat{\pi}^{k>K_1}, \\
    (M_{v_3} D_x + M_{v_4} D_y)\hat{\pi}^{k<-K_1} &= \hat{\pi}^{k<0}(M_{v_3} D_x + M_{v_4} D_y)\hat{\pi}^{k<-K_1}. 
\end{align*}
Similarly, because $w = \hat{\pi}^{\leq K_0}w$, the inverse operator naturally localizes as
$$\hat{A}_\infty = \hat{\pi}^{k<-K_1}\hat{D}_y^{-1}M_{w^*} \hat{\pi}^{k<0} + \hat{\pi}^{k>K_1}\hat{D}_y^{-1}M_{w} \hat{\pi}^{k>0}.$$

The orthogonality of the half-space projections $\hat{\pi}^{k<0}$ and $\hat{\pi}^{k>0}$ perfectly annihilates the cross-terms in the composition $\hat{A}_\infty P$, confining the operator algebraically to independent, block-diagonal subspaces. This geometric decoupling, combined with the uniform bound $\max \leq \delta < 1$, rigorously guarantees the convergence of the Neumann series, thereby establishing the existence of a bounded left inverse on the infinite tail.
\end{proof}

We now construct the global approximate inverse $A: H_0 \to H_1$. This construction relies on a standard spectral splitting strategy: a rigorous numerical approximation for the low-frequency core and an analytical operator for the infinite-dimensional high-frequency tail.

For the high-frequency regime, we use the asymptotic structure established in Lemma \ref{lem : A infinity properties}. We numerically construct a  sequence $w$ with finite support $K_0$, optimized to minimize the convolution defect $\|w*(iv_3 +v_4)-e_0\|_{\ell^1}$. Effectively, $w$ acts as a localized algebraic preconditioner that forces the principal advective dynamics as close to the identity as possible. This enables the definition of the bounded tail operator $\hat{A}_\infty : \ell^1 \to X$ as in \eqref{def : A infinity}. 

Note that the restriction $\hat{A}_\infty : \ell^1_e \to X_e$ is strictly well-defined. Because the underlying velocity fields ($v_3$ and $v_4$) possess definite parity, the tail operator preserves the even symmetry of the wave profile, allowing us to rigorously restrict our analysis to the subspace of symmetric solutions. We embed this tail operator into the full augmented space via $A_\infty : H_0 \to H_1$, defined by:
\begin{align}\label{eq : def A infinity full space}
A_\infty U \bydef \begin{pmatrix}
0\\
\hat{A}_\infty a
\end{pmatrix}
\qquad \text{for all } U = (Q,a) \in H_0.
\end{align}

To synthesize the global approximate inverse $A : H_0 \to H_1$, we employ a block-matrix Schur complement decomposition, defining:
\begin{align}\label{def : approx inverse}
A \bydef A_K - A_K DF(\bar{U})A_\infty + A_\infty,
\end{align}
where $A_K = \pi^{\leq K_1} A_K \pi^{\leq K_1}$ is a finite-rank operator. Analytically, $A_K$ serves as an approximate inverse of the finite-dimensional Galerkin block $\pi^{\leq K_1} \left( DF(\bar{U}) - DF(\bar{U}) A_\infty DF(\bar{U}) \right)\pi^{\leq K_1}$. In practice, $A_K$ is computed numerically and its action is enclosed rigorously using interval arithmetic \cite{julia_interval, Moore_interval_analysis}. 

We deliberately choose this Schur complement structure to correct for the spectral ``leakage" that occurs when low-frequency and high-frequency modes interact. The cross-term $-A_K DF(\bar{U})A_\infty$ explicitly absorbs the off-diagonal error generated when the analytical tail operator interacts with the low-frequency numerical core. In what follows, we rigorously justify this choice by computing the explicit bounds for the residual operator $I -A DF(\bar{U})$.
\subsection{Analytic Bounds for the Residuals and Derivative}\label{ssec : computation bounds single wave}
With the global approximate inverse $A$ defined above in \eqref{def : approx inverse}, we now quantify its accuracy. Specifically, we compute the $Z_1$ linear bound, which provides a rigorous estimate of the residual error $\|I - ADF(\bar{U})\|_{\mathcal{B}(H_1)}$. This is achieved by partitioning the operator into finite-dimensional  blocks and an analytically controlled infinite-dimensional tail.

To systematically construct this bound, the following lemma introduces scalar constants for each component of the partitioned operator. $Z_\infty$ bounds the asymptotic tail error, encapsulating both the convolution defect of the algebraic weight $w$ and the geometric finite-depth approximation. $Z_{10}$ measures the core Galerkin matrix residual, quantifying the finite-dimensional inversion error on the numerical block. Finally, $Z_{11}$ and $Z_{12}$ capture the spectral ``leakage'' (the cross-terms): $Z_{11}$ bounds the defect introduced when the high-frequency tail operator evaluates the finite-dimensional numerical state, while $Z_{12}$ quantifies the action of the finite-rank inverse $A_K$ on the infinite-dimensional tail.

\begin{lemma}\label{lem : Z1 bound}
    Let $\delta$ be given as in Lemma \ref{lem : A infinity properties}, and let $Z_\infty, Z_{10}, Z_{11}, Z_{12}$ be strictly positive constants satisfying the inequalities:
    \begin{align*}
        Z_\infty &\geq \frac{\|w*(v_1*(\mathcal{D}\bar{a}) + v_2)\|_{\ell^1}}{K_1+1} + \frac{K_1+2}{K_1+1} \|w*v_1\|_{\ell^1}\frac{4K_0e^{-2hK_0}}{1-e^{-2hK_0}} \|\bar{a}\|_X + \delta, \\
        Z_{10} &\geq \|\pi^{\leq K_1} - A_K \pi^{\leq K_1} \left( DF(\bar{U}) - DF(\bar{U})A_\infty DF(\bar{U}) \right)\pi^{\leq K_1}\|_{\mathcal{B}(H_1)}, \\
        Z_{11} &\geq \|(\hat{\pi}^{k \leq K_1 + 4K_0} - \hat{\pi}^{\leq K_1}) \hat{D}_y^{-1} M_w \hat{\pi}^{\leq K_1 + 2K_0} D_a F(\bar{U})\hat{\pi}^{\leq K_1}\|_{\mathcal{B}(X_e)}, \\
        Z_{12} &\geq \|A_K DF(\bar{U})\pi^{>K_1}\|_{\mathcal{B}(H_1)}.
    \end{align*}
    Define the global uniform bound $Z_1$ as:
    \begin{align}
        Z_1 \bydef \max\left\{Z_{10}+Z_{11}, ~ (1+Z_{12})Z_\infty\right\}.
    \end{align}
    Then, the true residual is strictly bounded by $\|I - ADF(\bar{U})\|_{\mathcal{B}(H_1)} \leq Z_1$.
\end{lemma}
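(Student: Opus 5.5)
We would prove the bound by writing $I - ADF(\bar U)$ as a block operator with respect to the splitting $H_1 = \pi^{\leq K_1}H_1 \oplus \pi^{>K_1}H_1$. Since $\|U\|_{H_1}$ is an $\ell^1$-type norm, the operator norm of such a block operator is the maximum over the two input blocks of the sum of the norms of their output components. This is why $Z_1$ is a maximum of two quantities.

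Inserting the definition \eqref{def : approx inverse} gives
\[
I - ADF(\bar U) = \big(I - A_\infty DF(\bar U)\big) - A_K DF(\bar U)\big(I - A_\infty DF(\bar U)\big),
\]
since $A_K DF(\bar U) A_\infty DF(\bar U)$ is exactly the cross correction. Writing $R \bydef I - A_\infty DF(\bar U)$, we get $I - ADF(\bar U) = (I - A_K DF(\bar U))R$. We treat the two input blocks separately.

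\emph{Tail columns.} Here we bound $\|(I-ADF(\bar U))\pi^{>K_1}\|_{\mathcal{B}(H_1)} \le (1+\|A_KDF(\bar U)\pi^{>K_1}\|)\,\|R\pi^{>K_1}\|$. This requires a little care: $R\pi^{>K_1}$ is not supported on the tail, so we use the cruder splitting $\|R\pi^{>K_1}\| + \|A_KDF(\bar U)R\pi^{>K_1}\|$ and check that the low modes of $R\pi^{>K_1}$ are killed. Recall that $A_K=\pi^{\le K_1}A_K\pi^{\le K_1}$, and that $F$ in the tail only involves $D_af$ because the $Q$-derivative $-(D_xa)^{*2}-(D_ya)^{*2}$ is finitely supported. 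The key estimate is $\|R\pi^{>K_1}\|\le Z_\infty$. Using \eqref{def : DF}, we split $D_af(\bar U)\hat\pi^{>K_1}$ into three parts:
\begin{enumerate}
\item the principal part $(M_{v_3}D_x+M_{v_4}D_y)\hat\pi^{>K_1}$, handled by Lemma \ref{lem : A infinity properties}, which contributes $\delta$;
\item the part $M_{v_1}M_{\mathcal D\bar a}\hat\pi^{>K_1}+M_{v_2}\hat\pi^{>K_1}$, where $\hat A_\infty$ contributes $\hat D_y^{-1}M_w$ and the weight ratio $(1+|k|)/(|k|(1+|n|))$ gives the factor $1/(K_1+1)$;
\item the remainder $M_{v_1}(\mathcal C[\bar a]-M_{\mathcal D\bar a})\hat\pi^{>K_1}$, bounded by Lemma \ref{lem : operator C} times $\frac{K_1+2}{K_1+1}\|w*v_1\|_{\ell^1}$.
\end{enumerate}
Summing the three contributions gives exactly the expression defining $Z_\infty$, and multiplying by $(1+Z_{12})$ yields the second entry of the maximum.

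\emph{Finite columns.} For $\pi^{\le K_1}$, we write $R\pi^{\le K_1} = \pi^{\le K_1} - A_\infty DF(\bar U)\pi^{\le K_1}$. The term $A_K DF(\bar U)R\pi^{\le K_1}$ combines with $\pi^{\le K_1}$ into the Galerkin residual controlled by $Z_{10}$. What remains is the leaked piece $-(I-\pi^{\le K_1})A_\infty DF(\bar U)\pi^{\le K_1}$. Because $DF(\bar U)\pi^{\le K_1}$ has support in $|k|\le K_1+2K_0$ and $w$ has support $K_0$, this piece lives on $K_1<|k|\le K_1+4K_0$, and its norm is $Z_{11}$. Together these give $Z_{10}+Z_{11}$.

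The main obstacle is the bookkeeping in the tail column. We must show that the output of $R\pi^{>K_1}$ has no low-frequency component that $A_K$ could act on beyond what $Z_{12}$ already accounts for, and we must track the $X$-weights through $\hat D_y^{-1}$ consistently. We would also check that the $Q$-row contributes nothing to the tail, since its only equation is $a_0-h$.
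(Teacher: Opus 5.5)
Your proposal is correct and is essentially the paper's proof: your $R\pi^{>K_1}$ is exactly $-N_\infty$, bounded by $Z_\infty$ through the same three-way split (Lemma~\ref{lem : A infinity properties}, the $M_{v_1*(\mathcal{D}\bar{a})+v_2}$ piece with the factor $1/(K_1+1)$, and Lemma~\ref{lem : operator C}); the tail column is $-N_\infty + A_K DF(\bar U)N_\infty$; and the finite column splits into the $Z_{10}$ Galerkin residual plus the $Z_{11}$ leakage, with your column-wise maximum being the precise form of the paper's block maximum. The ``main obstacle'' you flag does not arise, since $A_\infty=\pi^{>K_1}A_\infty$ makes $R\pi^{>K_1}$ supported on the tail, which is exactly what lets $Z_{12}$ factor out; note also that, as in the paper, $D_a F(\bar U)\hat{\pi}^{\leq K_1}$ actually reaches $|k|\leq K_1+3K_0$ because $M_{v_1}\mathcal{C}[\bar{a}]$ spreads by $3K_0$, so the inner projection in $Z_{11}$ should be $\hat{\pi}^{\leq K_1+3K_0}$ (the outer window $K_1+4K_0$ is already consistent with this).
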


\begin{proof}
	Let us define the tail residual operator $N_\infty : H_1 \to H_1$ by:
	\begin{align}\label{def : N}
	N_\infty \bydef A_\infty DF(\bar{U})\pi^{>K_1} - \pi^{>K_1}.
	\end{align} 
	Our first goal is to compute a rigorous upper bound for the operator norm $\|N_\infty\|_{\mathcal{B}(H_1)}$.
	
	Recalling the definition of $DF(\bar{U})$ in \eqref{def : DF} and the embedding \eqref{eq : def A infinity full space}, we use Lemma \ref{lem : A infinity properties} to bound the principal part
	\begin{align*}
	\|A_\infty DF(\bar{U})\pi^{>K_1} - \pi^{>K_1}\|_{\mathcal{B}(H_1)} 
	&\leq \|\hat{A}_\infty(M_{v_3} D_x + M_{v_4} D_y)\hat{\pi}^{>K_1} - \hat{\pi}^{>K_1}\|_{\mathcal{B}(X)} + \|\hat{A}_\infty(M_{v_1} \mathcal{C}[\bar{a}] + M_{v_2}) \hat{\pi}^{>K_1}\|_{\mathcal{B}(X)} \\
	&\leq \delta + \|\hat{A}_\infty(M_{v_1} \mathcal{C}[\bar{a}] + M_{v_2}) \hat{\pi}^{>K_1}\|_{\mathcal{B}(X)}.
	\end{align*}
	Applying Lemma \ref{lem : operator C}, which establishes that the high-frequency commutator $\mathcal{C}[\bar{a}]$ is governed by the multiplier $M_{\mathcal{D}\bar{a}}$ up to an exponentially small finite-depth discrepancy, we obtain
	\begin{align*}
	\|\hat{A}_\infty(M_{v_1} \mathcal{C}[\bar{a}] + M_{v_2}) \hat{\pi}^{>K_1}\|_{\mathcal{B}(X)} 
	&\leq \|\hat{A}_\infty(M_{v_1} M_{\mathcal{D}\bar{a}} + M_{v_2}) \hat{\pi}^{>K_1}\|_{\mathcal{B}(X)} + \|\hat{A}_\infty M_{v_1}\|_{\mathcal{B}(X)} \frac{4K_0e^{-2hK_0}}{1-e^{-2hK_0}} \|\bar{a}\|_X \\
	&= \|\hat{A}_\infty M_{v_1*(\mathcal{D}\bar{a}) + v_2} \hat{\pi}^{>K_1}\|_{\mathcal{B}(X)} + \|\hat{A}_\infty M_{v_1}\|_{\mathcal{B}(X)} \frac{4K_0e^{-2hK_0}}{1-e^{-2hK_0}} \|\bar{a}\|_X.
	\end{align*}
	Since $\mathcal{D}\bar{a}$ has Fourier support confined to $K_0$, and both $v_1$ and $v_2$ are supported on $2K_0$, their convolution $v_1 * (\mathcal{D}\bar{a})$ is strictly supported on $3K_0$. We can therefore write
	\begin{align*}
	v_1*(\mathcal{D}\bar{a}) + v_2 = \hat{\pi}^{\leq 3K_0}(v_1*(\mathcal{D}\bar{a}) + v_2).
	\end{align*}
	When the multiplication operator $M_{v_1*(\mathcal{D}\bar{a}) + v_2}$ acts on the high-frequency subspace $\hat{\pi}^{>K_1}$, the lowest frequency it can generate is $K_1-3K_0$. Recalling the definition of $\hat{A}_\infty$ in \eqref{def : A infinity}, we deduce
	\begin{align*}
	\hat{A}_\infty M_{v_1*(\mathcal{D}\bar{a}) + v_2} \hat{\pi}^{k>K_1} 
	&= \left(\hat{\pi}^{k>K_1}\hat{D}_y^{-1}M_{w} + \hat{\pi}^{k<-K_1}\hat{D}_y^{-1}M_{w^*}\right) \hat{\pi}^{k>K_1-3K_0} M_{v_1*(\mathcal{D}\bar{a}) + v_2} \hat{\pi}^{>K_1} \\
	&= \hat{\pi}^{k>K_1}\hat{D}_y^{-1}M_{w}\hat{\pi}^{k>K_1-3K_0} M_{v_1*(\mathcal{D}\bar{a}) + v_2} \hat{\pi}^{>K_1}.
	\end{align*}
	This equality holds because the output range $\hat{\pi}^{k>K_1-3K_0}$ is strictly disjoint from the domain of the negative branch $\hat{\pi}^{k<-K_1}\hat{D}_y^{-1}M_{w^*}$. Specifically, since $w^*$ is supported on $K_0$, the negative branch only interacts with modes $k < -K_1 + K_0$. The condition $K_1 \geq 2K_0$ ensures $K_1 - 3K_0 \geq -K_1 + K_0$, precluding any spectral overlap.
	
	Recalling \eqref{def : W and hat W} and the fact that the diagonal operator $\hat{\mathcal{W}}$ commutes with the projection $\hat{\pi}^{k>K_1}$, we compute the operator norm in $\ell^1$
	\begin{align*}
	\|\hat{A}_\infty M_{v_1*(\mathcal{D}\bar{a}) + v_2} \hat{\pi}^{>K_1}\|_{\mathcal{B}(X)} 
	&= \|\hat{\pi}^{k>K_1}\hat{\mathcal{W}}\hat{D}_y^{-1} M_{w} M_{v_1*(\mathcal{D}\bar{a}) + v_2} \hat{\mathcal{W}}^{-1}\hat{\pi}^{k>K_1}\|_{\mathcal{B}(\ell^1)} \\
	&\leq \|\hat{\pi}^{k>K_1}\hat{\mathcal{W}}\hat{D}_y^{-1}\|_{\mathcal{B}(\ell^1)} \| M_{w} M_{v_1*(\mathcal{D}\bar{a}) + v_2}\|_{\mathcal{B}(\ell^1)} \| \hat{\mathcal{W}}^{-1}\hat{\pi}^{k>K_1}\|_{\mathcal{B}(\ell^1)}.
	\end{align*}
	Because $\hat{\mathcal{W}}\hat{D}_y^{-1}$ and $\hat{\mathcal{W}}^{-1}$ are diagonal, their operator norms are simply the supremum of their multipliers over the tail
	\begin{align*}
	\|\hat{\pi}^{k>K_1}\hat{\mathcal{W}}\hat{D}_y^{-1}\|_{\mathcal{B}(\ell^1)} = \sup_{k \geq K_1 +1} \frac{|k|+1}{|k|} = \frac{K_1+2}{K_1 +1}, \qquad \text{and} \qquad
	\|\hat{\mathcal{W}}^{-1}\hat{\pi}^{k>K_1}\|_{\mathcal{B}(\ell^1)} = \sup_{k \geq K_1 +1} \frac{1}{|k|+1} = \frac{1}{K_1 +2}.
	\end{align*}
	Using the Banach algebra property from Lemma \ref{lem:Banach_algebra}, this simplifies to
	\begin{align*}
	\|\hat{A}_\infty M_{v_1*(\mathcal{D}\bar{a}) + v_2} \hat{\pi}^{>K_1}\|_{\mathcal{B}(X)} \leq \frac{1}{K_1+1} \| M_{w} M_{v_1*(\mathcal{D}\bar{a}) + v_2}\|_{\mathcal{B}(\ell^1)} = \frac{\|w*(v_1*(\mathcal{D}\bar{a}) + v_2)\|_{\ell^1}}{K_1+1}. 
	\end{align*}
	Similarly, bounding the second term yields $\|A_\infty M_{v_1}\|_{\mathcal{B}(X)} \leq \frac{K_1+2}{K_1+1} \|w*v_1\|_{\ell^1}$. Combining all bounds, we confirm
	\begin{align*}
	\|N_\infty\|_{\mathcal{B}(H_1)} \leq \frac{\|w*(v_1*(\mathcal{D}\bar{a}) + v_2)\|_{\ell^1}}{K_1+1} + \frac{K_1+2}{K_1+1} \|w*v_1\|_{\ell^1}\frac{4K_0e^{-2hK_0}}{1-e^{-2hK_0}} \|\bar{a}\|_X + \delta \leq Z_\infty.
	\end{align*}
	
	Finally, we construct the global bound. By decomposing the augmented space into low-frequency and high-frequency subspaces, $H_1 = \pi^{\leq K_1} H_1 \oplus \pi^{> K_1} H_1$, the operator $1$-norm of the full residual is bounded by the maximum of its block-wise components
	\begin{align*}
	\|I - ADF(\bar{U})\|_{\mathcal{B}(H_1)} = \max\left\{ \|\pi^{\leq K_1} \left(I - A DF(\bar{U})\right)\pi^{\leq K_1} \|_{\mathcal{B}(H_1)}, \, \|\pi^{> K_1} \left(I - A DF(\bar{U})\right)\pi^{> K_1}\|_{\mathcal{B}(H_1)} \right\}.
	\end{align*}
	For the finite-dimensional Galerkin block, we substitute the definition of $A$ from \eqref{def : approx inverse} and isolate the leakage
	\begin{align*}
	\|\pi^{\leq K_1} \left(I - A DF(\bar{U})\right)\pi^{\leq K_1}\|_{\mathcal{B}(H_1)} 
	&\leq \|\pi^{\leq K_1} - A_K \pi^{\leq K_1} \left( DF(\bar{U}) - DF(\bar{U})A_\infty DF(\bar{U}) \right)\pi^{\leq K_1}\|_{\mathcal{B}(H_1)} \\
	&\qquad + \|\hat{\pi}^{k > K_1} \hat{D}_y^{-1} M_w D_a F(\bar{U})\hat{\pi}^{\leq K_1}\|_{\mathcal{B}(X)} \\
	&= Z_{10} + \|(\hat{\pi}^{\leq K_1 + 4K_0} - \hat{\pi}^{\leq K_1}) \hat{D}_y^{-1} M_w \hat{\pi}^{\leq K_1 + 2K_0} D_a F(\bar{U})\hat{\pi}^{\leq K_1}\|_{\mathcal{B}(X)} \\
	&= Z_{10} + Z_{11}.
	\end{align*}
	In the third line, the infinite tail projection $\hat{\pi}^{> K_1}$ is reduced to the finite window $\hat{\pi}^{K_1 < k \leq K_1 + 4K_0}$. Because the operator chain $M_w D_a F(\bar{U})$ consists exclusively of convolutions with finite-support sequences, low-frequency modes can only propagate a finite spectral distance into the high-frequency tail, ensuring $Z_{11}$ is strictly computable.
	
	For the high-frequency residual, we evaluate how the approximate inverse acts on the infinite-dimensional tail by using the exact algebraic identity $A_\infty DF(\bar{U})\pi^{> K_1} = \pi^{>K_1} + N_\infty$. This precisely cancels the leading-order identity term
	\begin{align*}
	\|\pi^{> K_1} \left(I - A DF(\bar{U})\right)\pi^{> K_1}\|_{\mathcal{B}(H_1)}
	&= \|\pi^{> K_1} - A_K DF(\bar{U})\pi^{> K_1} + A_K DF(\bar{U}) A_\infty DF(\bar{U})\pi^{> K_1}\\ &\qquad\qquad\qquad\qquad - A_\infty DF(\bar{U})\pi^{> K_1} \|_{\mathcal{B}(H_1)} \\
	&= \|-N_\infty + A_K DF(\bar{U})N_\infty\|_{\mathcal{B}(H_1)} \\
	&\leq (1 + Z_{12})Z_\infty.
	\end{align*}
	Since both diagonal blocks are appropriately bounded by the prescribed constants, taking their maximum validates the global estimate $Z_1$.
\end{proof}

Having established the $Z_1$ bound for the approximate inverse $A$, we now quantify the accuracy of our candidate $\bar{U}$. The following lemma establishes an upper bound $Y$ for the residual of the approximate solution in the $H_1$ norm.

\begin{lemma}\label{lem : Y bound single wave}
	Defining the constant $Y > 0$ as
	\begin{align*}
	Y \bydef \|A_K(F(\bar{U}) - DF(\bar{U})A_\infty F(\bar{U}))\|_{H_1} + \frac{K_1+2}{K_1+1} \left( \|\hat{\pi}^{k>K_1}(w * f(\bar{U}))\|_{\ell^1} + \|\hat{\pi}^{k<-K_1}(w^* * f(\bar{U}))\|_{\ell^1} \right),
	\end{align*}
	we obtain the residual bound $\|AF(\bar{U})\|_{H_1} \leq Y.$
\end{lemma}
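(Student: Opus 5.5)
The plan is to expand $AF(\bar U)$ using the definition \eqref{def : approx inverse},
\[
AF(\bar U) = A_K\bigl(F(\bar U) - DF(\bar U)A_\infty F(\bar U)\bigr) + A_\infty F(\bar U),
\]
and then apply the triangle inequality in $H_1$. The first term is exactly the first summand in the definition of $Y$, so nothing needs to be done there. The content of the lemma is the estimate for $\|A_\infty F(\bar U)\|_{H_1}$.

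By \eqref{eq : def A infinity full space}, the scalar component of $A_\infty F(\bar U)$ vanishes. Its sequence component is $\hat A_\infty f(\bar U)$. Note that $\bar a_0 = h$ exactly, although the gauge row plays no role here in any case. By \eqref{def : A infinity} we split this as
\[
\hat A_\infty f(\bar U) = \hat\pi^{k<-K_1}\hat D_y^{-1}(w^* * f(\bar U)) + \hat\pi^{k>K_1}\hat D_y^{-1}(w*f(\bar U)).
\]
These two pieces have disjoint Fourier supports, so the $X$-norm of the sum is the sum of the $X$-norms.

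Each piece is then converted to an $\ell^1$ norm through the isometry $\hat{\mathcal W}$ of \eqref{def : W and hat W}. Since $\hat{\mathcal W}$, $\hat D_y^{-1}$ and the projections are all diagonal, they commute, and
\[
\|\hat\pi^{k>K_1}\hat D_y^{-1} c\|_X = \|\hat\pi^{k>K_1}\hat{\mathcal W}\hat D_y^{-1} c\|_{\ell^1} \le \sup_{|k|\ge K_1+1}\frac{1+|k|}{|k|}\,\|\hat\pi^{k>K_1}c\|_{\ell^1} = \frac{K_1+2}{K_1+1}\|\hat\pi^{k>K_1}c\|_{\ell^1}.
\]
The same multiplier computation was already used in Lemma \ref{lem : A infinity properties}. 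Taking $c = w*f(\bar U)$, and the analogous identity on the negative tail with $c = w^* * f(\bar U)$, gives the second summand of $Y$.

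There is no real obstacle. The only point to check is that the quantities involved are finite and computable. Since $\bar U = \pi^{\le K_0}\bar U$, the residual $f(\bar U)$ is a finite trigonometric polynomial: it is a product of at most four factors built from $\bar a$, so it is supported on $|k|\le 4K_0$. Hence $w*f(\bar U)$ is supported on $|k|\le 5K_0$. The tail projections therefore act on finitely many modes, and all three norms are finite and can be evaluated with interval arithmetic.

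We also need $f(\bar U)\in\ell^1$ and $A_\infty F(\bar U)\in H_1$ so that the triangle inequality is legitimate. Both are immediate from the finite support and from the boundedness of $\hat A_\infty:\ell^1\to X$ established in Lemma \ref{lem : A infinity properties}. Combining the three estimates yields $\|AF(\bar U)\|_{H_1}\le Y$.
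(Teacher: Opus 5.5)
Your proof is correct and follows the paper's argument. You apply the triangle inequality to $AF(\bar U)=A_K\bigl(F(\bar U)-DF(\bar U)A_\infty F(\bar U)\bigr)+A_\infty F(\bar U)$, then bound $\|A_\infty F(\bar U)\|_{H_1}$ by commuting the diagonal operators, taking the supremum of $(1+|k|)/|k|$ at $|k|=K_1+1$, and using the disjoint supports of the two branches. Your additional remarks on the finite supports of $f(\bar U)$ (at most $4K_0$) and $w*f(\bar U)$ (at most $5K_0$) are accurate; they make explicit why the bound is finite and computable.
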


\begin{proof}
	Recall the definition of the approximate inverse $A = A_K(I - DF(\bar{U})A_\infty) + A_\infty$. Applying this to the nonlinear residual $F(\bar{U})$, we obtain the decomposition
	\begin{align*}
	    \|AF(\bar{U})\|_{H_1} \leq \|A_K(I - DF(\bar{U})A_\infty) F(\bar{U})\|_{H_1} + \|A_\infty F(\bar{U})\|_{H_1}.
	\end{align*}
	To bound the second term, we use the definition of $\hat{A}_\infty$ in \eqref{def : A infinity}. Since $\hat{D}_y^{-1}$ acts as the diagonal multiplier $1/|k|$ for $|k| \geq 1$, and $\hat{\mathcal{W}}$ is the weight $(1+|k|)$, the $H_1$ norm (which includes the $|k|$ growth) simplifies via the supremum of the ratio $\frac{1+|k|}{|k|}$. Evaluating this at the boundary $|k| = K_1+1$, and exploiting the disjoint supports of the positive and negative branches, we obtain
	\begin{align*}
	    \|A_\infty F(\bar{U})\|_{H_1} 
	    &\leq \sup_{|k| > K_1} \left( \frac{1+|k|}{|k|} \right) \left( \|\hat{\pi}^{k>K_1}(w * f(\bar{U}))\|_{\ell^1} + \|\hat{\pi}^{k<-K_1}(w^* * f(\bar{U}))\|_{\ell^1} \right) \\
	    &= \frac{K_1+2}{K_1+1} \left( \|\hat{\pi}^{k>K_1}(w * f(\bar{U}))\|_{\ell^1} + \|\hat{\pi}^{k<-K_1}(w^* * f(\bar{U}))\|_{\ell^1} \right).
	\end{align*}
	Combining this with the finite-dimensional residual bound completes the proof.
\end{proof}

\begin{lemma}[The Nonlinear $Z_2$ Bound]\label{lem : Z2 single wave}
	Let $r >0$ and consider the closed ball $B_r(\bar{U}) \subset H_1$. Let $A : H_0 \to H_1$ be the global approximate inverse defined in \eqref{def : approx inverse}. Suppose $Z_{D^2}$ and $Z_{D^3}(r)$ are strictly positive constants satisfying the uniform bounds:
	\begin{align*}
	Z_{D^2} &\geq 2\gamma^2\max\left\{1, \, \frac{1}{h^2}\right\}\left(2\|\bar{a}\|_{X} + \|\bar{a}\|_{\ell^1}\right)^2 + 6\max\left\{1, \, \frac{1}{h}\right\}|\gamma| \left\|mD_y e_0 + \frac{\gamma}{2} D_y(\bar{a}^2) - \gamma \bar{a}*(D_y\bar{a})\right\|_{\ell^1}\\
	&\qquad + \left(1 + \max\left\{1, \, \frac{1}{h^2}\right\}\right)\left( 4(1 + 2g) \|\bar{a}\|_X + 2\|\bar{Q}e_0 - 2g\bar{a}\|_{\ell^1}\right), \\[8pt]
	Z_{D^3}(r) &\geq 27\frac{|\gamma|^2}{h^2}(\|\bar{a}\|_X +r) + 4g\left(1 + \max\left\{1, \, \frac{1}{h^2}\right\}\right)\\
	&\qquad + (|\bar{Q}| + 2g)\left(1 + \max\left\{1, \, \frac{1}{h^2}\right\}\right) + 3(|\bar{Q}| +r)\left(1 + \max\left\{1, \, \frac{1}{h^2}\right\}\right).
	\end{align*}
	Recalling the  constant $Z_{12}$ introduced in Lemma \ref{lem : Z1 bound}, we define the constant $Z_2(r)$ as
	\begin{align*}
	Z_2(r) \bydef \max\left\{\|\mathcal{W}(A_K - A_K DF(\bar{U})A_\infty + A_\infty)\pi^{\leq K_1} \|_{\mathcal{B}(\ell^1)}, \, (1+Z_{12})\frac{K_1+2}{K_1+1} \|w\|_{\ell^1}\right\} \Big(Z_{D^2} + Z_{D^3}(r) \cdot r\Big).
	\end{align*}
	Then, for any perturbation $V \in H_1$ such that $\|V\|_{H_1} \leq r$, the operator norm of the Fréchet derivative difference is bounded by
	\begin{equation}
	\|A(DF(\bar{U}+V) - DF(\bar{U}))\|_{\mathcal{B}(H_1)} \leq Z_2(r) \cdot r.
	\end{equation}
\end{lemma}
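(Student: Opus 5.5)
The plan is to split the estimate into an operator-norm bound on $A$ acting on the range of the derivative difference, times a bound on $\|D_af(\bar U+V)-D_af(\bar U)\|_{\mathcal B(X_e,\ell^1_e)}$. The first component of $F$ is linear, so only $f$ contributes. Write $V=(q,c)$ with $|q|+\|c\|_X\le r$. Since $f$ is a polynomial of degree three in $(Q,a)$, expanding $Df(\bar U+V)-Df(\bar U)$ in powers of $V$ gives a part linear in $V$ (second derivative) and a part quadratic in $V$ (third derivative). We aim to show
\[
\|Df(\bar U+V)-Df(\bar U)\|_{\mathcal B(H_1,H_0)}\le \big(Z_{D^2}+Z_{D^3}(r)\,r\big)\,r .
\]
The basic tools are the algebra property of Lemma~\ref{lem:Banach_algebra}, the ordinary $\ell^1$ algebra, and the mapping bounds $\|D_xa\|_{\ell^1}\le\|a\|_X$ and $\|D_ya\|_{\ell^1}\le\max\{1,1/h\}\|a\|_X$. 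The latter holds because $|n|\coth(|n|h)\le |n|+1/h\le \max\{1,1/h\}(1+|n|)$, which is where the factors $\max\{1,1/h\}$ and $\max\{1,1/h^2\}$ come from.

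Concretely, set $\Phi(a)=mD_ye_0+\frac\gamma2D_y(a^{*2})-\gamma a*D_ya$, so that $f=\Phi^{*2}-(Qe_0-2ga)*G(a)$ with $G(a)=(D_xa)^{*2}+(D_ya)^{*2}$.
\begin{enumerate}
\item The derivative of $\Phi^{*2}$ is $2\Phi*D\Phi$, and $D\Phi$ is linear in $a$. The difference at $\bar a+c$ versus $\bar a$ is $2(\Phi(\bar a+c)-\Phi(\bar a))*D\Phi(\bar a)+2\Phi(\bar a+c)*(D\Phi(\bar a+c)-D\Phi(\bar a))$. Bounding each factor by the $\ell^1$/$X$ products yields the $\gamma^2(2\|\bar a\|_X+\|\bar a\|_{\ell^1})^2$ and $|\gamma|\,\|\Phi(\bar a)\|_{\ell^1}$ terms linear in $r$, and the $\gamma^2(\|\bar a\|_X+r)/h^2$ terms quadratic in $r$.
\item The Bernoulli product term is handled the same way. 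Its derivative in direction $(p,b)$ is $-(pe_0-2gb)*G(\bar a)-(\bar Qe_0-2g\bar a)*DG(\bar a)b$. Differencing produces $q$ and $c$ multiplying $G$, $DG$ and $D^2G$. These give the $4(1+2g)\|\bar a\|_X+2\|\bar Qe_0-2g\bar a\|_{\ell^1}$ and $(|\bar Q|+2g)$, $3(|\bar Q|+r)$ contributions.
\end{enumerate}
I will not try to match the stated constants exactly. I only need to check that every term is dominated, using $\|\cdot\|_{\ell^1}\le\|\cdot\|_X$ generously.

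For the operator $A$, I use the splitting $H_0=\pi^{\le K_1}H_0\oplus\pi^{>K_1}H_0$. Then $\|A\|_{\mathcal B(H_0,H_1)}$ is at most the maximum of the column norms of the two blocks, by the $\ell^1$-type norm structure.
\begin{itemize}
\item On the low block, $\|A\pi^{\le K_1}\|_{\mathcal B(H_0,H_1)}=\|\mathcal W A\pi^{\le K_1}\|_{\mathcal B(\ell^1)}$ by the isometry \eqref{def : W and hat W}. This is the first entry of the max and is a finite computable matrix norm.
\item On the high block, $A\pi^{>K_1}=A_\infty\pi^{>K_1}-A_KDF(\bar U)A_\infty\pi^{>K_1}$, since $A_K\pi^{>K_1}=0$. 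The norm of $A_\infty\pi^{>K_1}$ from $H_0$ to $H_1$ is at most $\frac{K_1+2}{K_1+1}\|w\|_{\ell^1}$, by the diagonal-multiplier argument from Lemma~\ref{lem : A infinity properties}. Factoring $A_K DF(\bar U)A_\infty=(A_KDF(\bar U)\pi^{>K_1})A_\infty$ contributes an extra $Z_{12}$ factor, which gives $(1+Z_{12})\frac{K_1+2}{K_1+1}\|w\|_{\ell^1}$.
\end{itemize}
One subtlety must be checked: $A_\infty$ lands only in modes $|k|>K_1$. Combining the two blocks then yields $Z_2(r)\,r$.

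The main obstacle is the bookkeeping in the derivative-difference estimate, especially the $\Phi^{*2}$ term. There $D_y$ acts on a product, $D_y(a*c)$, so an $X$ norm is lost. I would handle this by treating $\frac\gamma2D_y(2\bar a*c)-\gamma(c*D_y\bar a+\bar a*D_yc)$ in $\ell^1$, bounding $\|D_y(\bar a*c)\|_{\ell^1}\le\max\{1,1/h\}\|\bar a\|_X\|c\|_X$. This ensures every factor carrying a derivative is measured in $X$ and everything else in $\ell^1$.
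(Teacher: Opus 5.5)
Your argument is, in substance, the paper's proof. You factor out $\|A\|_{\mathcal{B}(H_0,H_1)}=\|\mathcal{W}A\|_{\mathcal{B}(\ell^1)}$ and bound it by the same two-block maximum, using $A_K\pi^{>K_1}=0$ and $A_\infty=\pi^{>K_1}A_\infty$ to get the entry $(1+Z_{12})\frac{K_1+2}{K_1+1}\|w\|_{\ell^1}$. You then estimate the derivative difference with the algebra property and $\|D_ya\|_{\ell^1}\le\max\{1,1/h\}\|a\|_X$. The only difference is that the paper writes $DF(\bar U+V)-DF(\bar U)$ by Taylor's formula with $D^2F(\bar U)$ and $\sup_{B_r(\bar U)}D^3F$, while you telescope exactly. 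For a polynomial map these are equivalent.

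Two points need correcting.

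\textbf{Degree of $f$.} $f$ is not cubic: $\Phi$ is quadratic in $a$ and is squared, so $f$ is quartic. Hence $Df(\bar U+V)-Df(\bar U)$ contains a term cubic in $V$, namely $D^2\Phi(c,c)*D^2\Phi(c,\cdot)$. Your claim that the difference splits into a linear and a quadratic part is false and, followed literally, loses this term. Your telescoped identity keeps it only if you bound $\Phi(\bar a+c)$ including its $\tfrac12 D^2\Phi(c,c)$ part. That is exactly what the factor $(\|\bar a\|_X+r)$ in $Z_{D^3}(r)$ accounts for; the paper obtains it by taking the supremum of $D^3F$ over the whole ball.

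\textbf{The $\gamma^2$ constant.} The term-by-term domination you defer fails for the $\gamma^2$ contribution. Set $\beta=2\|\bar a\|_X+\|\bar a\|_{\ell^1}$ and take $\|b\|_X\le 1$. Your bookkeeping gives
\begin{align*}
&\big\|2(\Phi(\bar a+c)-\Phi(\bar a))*D\Phi(\bar a)b+2\Phi(\bar a+c)*D^2\Phi(c,b)\big\|_{\ell^1}\\
&\qquad\le\Big(2\gamma^2\max\{1,h^{-2}\}\beta^2+6|\gamma|\max\{1,h^{-1}\}\|\Phi(\bar a)\|_{\ell^1}\Big)r+9\gamma^2\max\{1,h^{-2}\}(\beta+r)\,r^2 .
\end{align*}
The $r$-coefficient reproduces the first two terms of $Z_{D^2}$ exactly. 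The $r^2$-coefficient, however, is only bounded by $27\gamma^2\max\{1,h^{-2}\}(\|\bar a\|_X+r)$, which is also what the paper's proof derives. The statement instead writes $27\gamma^2h^{-2}(\|\bar a\|_X+r)$, which is smaller by a factor $h^2$ when $h>1$ (the paper uses $h=2$). At $h=2$, even $9\gamma^2(\beta+r)$ exceeds $27\gamma^2h^{-2}(\|\bar a\|_X+r)$ for every $r$.

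These estimates cannot produce $h^{-2}$. Since $\omega_n/(1+|n|)\to 1$, the factor $\max\{1,1/h\}$ in your $D_y$ bound is sharp at high frequency. This is a defect of the stated constant, which the paper's own proof shares, rather than of your method. You should carry $\max\{1,1/h^2\}$ (that is, enlarge $Z_{D^3}(r)$) rather than assert that every term is dominated.

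The contributions of $(Qe_0-2ga)*G(a)$ are dominated by the stated constants in the paper's regime $g=1$.
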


\begin{proof}
	First, we compute explicit formulas for $D^2F$ and $D^3F$. Recall from \eqref{def : zero finding F} that $F(U) = \begin{pmatrix}
	a_0 - h\\
	f(U)
	\end{pmatrix}.$ Because the first component of $F$ is affine with respect to the fluid depth $h$, it vanishes upon differentiation and does not contribute to $D^2F$ or $D^3F$. Now, observe that 
	\begin{equation}
	f(U) = g(a)^2 - p(Q,a) \qquad \text{for all } a \in X,
	\end{equation}
	where the intermediate mappings $g(a)$ and $p(Q,a)$ are defined as 
	\begin{align*}
	g(a) = m D_y e_0 + \frac{\gamma}{2} D_y(a^2) - \gamma a*(D_ya) \qquad \text{and} \qquad 
	p(Q,a) = (Q-2ga)*\left((D_xa)^2+(D_ya)^2\right),
	\end{align*}
	respectively. Computing the derivatives of these maps yields the desired result. 
	
	We begin by considering $g(a)^2$. By the chain rule, we have
	\begin{align*}
	D (g(a)^2) u &= 2 g(a) Dg(a) u, \\
	D^2 (g(a)^2) (u,v) &= 2 Dg(a)v Dg(a) u + 2 g(a) D^2g(a) (u,v).
	\end{align*}
	Similarly, differentiating a third time yields
	\begin{align*}
	D^3 (g(a)^2) (u,v,w) &= 2 D^2g(a)(v,w) Dg(a) u + 2 D^2g(a)(u,w) Dg(a) v \\
	&\qquad + 2 Dg(a)w D^2g(a) (u,v) + 2 g(a) D^3g(a) (u,v,w).
	\end{align*}
	Notice, however, the explicit derivatives of $g(a)$
	\begin{equation}\label{eq: derivatives of g}
	\begin{aligned}
	Dg(a)u &= \gamma D_y(a*u) - \gamma u*(D_ya) - \gamma a*(D_yu), \\
	D^2g(a)(u,v) &= \gamma D_y(v*u) - \gamma u*(D_yv) - \gamma v*(D_yu), \\
	D^3g(a)(u,v,w) &= 0.
	\end{aligned}
	\end{equation}
	The final term identically vanishes because $g$ is purely quadratic. This yields a substantial simplification: $D^3 (g(a)^2)$ depends exclusively on the interactions of $g$, $Dg$, and $D^2g$. In fact, all three terms in the expansion of $D^3(g(a)^2)$ consist of a product of $Dg$ and $D^2g$.
	
	From the definition of the norm on $X$, we observe the derivative bounds:
	\begin{align}\label{eq : estimates Dx and Dy}
	\|D_xv\|_{\ell^1} \leq \|v\|_X \qquad \text{and} \qquad \|D_yv\|_{\ell^1} \leq \max\left\{1, \,\frac{1}{h}\right\}\|v\|_X \qquad \text{for all } v \in X_e.
	\end{align}
	Combining Lemma \ref{lem:Banach_algebra} with the bounds \eqref{eq : estimates Dx and Dy} for the expressions in \eqref{eq: derivatives of g}, we obtain:
	\begin{equation}
	\|Dg(a)u\|_{\ell^1} \leq 3|\gamma| \max\left\{1, \,\frac{1}{h^2}\right\}(\|\bar{a}\|_X+r) \qquad \text{and} \qquad \|D^2g(a)(u,v)\|_{\ell^1} \leq 3|\gamma|\max\left\{1, \,\frac{1}{h^2}\right\},
	\end{equation}
	for all unit vectors $u,v,w \in B_1(0) \subset X$. Consequently, each component in the expansion of $D^3(g(a)^2)$ is bounded by $18|\gamma|^2 \max\left\{1, \,\frac{1}{h^2}\right\}(\|\bar{a}\|_X+r)$, which yields
	\begin{align}\label{eq : estimate 3rd deriv g}
	\sup_{a \in B_r(\bar{a})} \sup_{u,v,w \in B_1(0)} \|D^3 (g(a)^2) (u,v,w)\|_{\ell^1} \leq 54|\gamma|^2\max\left\{1, \,\frac{1}{h^2}\right\}(\|\bar{a}\|_X +r).  
	\end{align}
	
	Let us now consider $p(Q,a)$. Differentiating with respect to the sequence argument $a$, we find
	\begin{equation}\label{eq : derivates of p}
	\begin{aligned}
	D_a p(Q,a)u &= - 2g\big((D_xa)^2 + (D_ya)^2\big)*u + 2(Q-2g a)\big((D_xa) * (D_xu) + (D_ya)*(D_yu)\big), \\
	D_a^2 p(Q,a)(u,v) &= - 4g\big((D_xa)*(D_xv) + (D_ya)*(D_yv)\big)*u - 4gv*\big((D_xa) * (D_xu) + (D_ya)*(D_yu)\big) \\
	&\qquad + 2(Q-2g a)\big((D_xv) * (D_xu) + (D_yv)*(D_yu)\big), \\
	D_a^3 p(Q,a)(u,v,w) &= - 4g\big((D_xw)*(D_xv) + (D_yw)*(D_yv)\big)*u - 4gv*\big((D_xw) * (D_xu) + (D_yw)*(D_yu)\big) \\
	&\qquad + 2(Q-2g w)\big((D_xv) * (D_xu) + (D_yv)*(D_yu)\big).
	\end{aligned}
	\end{equation}
	Following identical logic as above, we apply Lemma \ref{lem:Banach_algebra} and the derivative bounds \eqref{eq : estimates Dx and Dy} to obtain
	\begin{align}\label{eq : estimate 3rd deriv p with a}
	\sup_{a \in B_r(\bar{a})} \sup_{u,v,w \in B_1(0)} \|D_a^3p(Q,a)(u,v,w)\|_{\ell^1} \leq 8g\left(1 + \max\left\{1, \,\frac{1}{h^2}\right\}\right) + 2(|Q| + 2g)\left(1 + \max\left\{1, \,\frac{1}{h^2}\right\}\right).
	\end{align}
	Additionally, computing the mixed derivatives and derivatives with respect to the scalar $Q$ yields
	\begin{equation}\label{eq : second derivatives of p second}
	\begin{aligned}
	D_Q p(Q,a) \beta &= \beta\big((D_xa)^2+(D_ya)^2\big), \\
	D_Q^2 p(Q,a) (\alpha, \beta) &= 0, \\
	D_Q D_a p(Q,a) (\beta,u) &= 2\beta\big((D_xa)*(D_xu)+(D_ya)*(D_yu)\big), \\
	D_Q D_a^2 p(Q,a) (\beta,u,v) &= 2\beta\big((D_xv)*(D_xu)+(D_yv)*(D_yu)\big).
	\end{aligned}
	\end{equation}
	Consequently, we bound the highest-order mixed derivative as
	\begin{align}\label{eq : estimate third derivative p with Q}
	\sup_{(Q,a) \in B_r(\bar{U})} \sup_{|\beta| \leq 1} \sup_{u,v \in B_1(0)} \|D_Q D_a^2 p(Q,a) (\beta,u,v)\|_{\ell^1} \leq 2(|\bar{Q}| +r)\left(1 + \max\left\{1, \,\frac{1}{h^2}\right\}\right).
	\end{align}
	
	Now, let $V \in H_1$ be a perturbation such that $\|V\|_{H_1} \leq r$. Applying the mean value inequality for Banach spaces, and inserting the bounds from \eqref{eq : estimate 3rd deriv g}, \eqref{eq : estimate 3rd deriv p with a}, and \eqref{eq : estimate third derivative p with Q}, we establish the residual expansion bound
    \small{
	\begin{align*}
	\|A(DF(\bar{U}+V) - DF(\bar{U}))\|_{\mathcal{B}(H_1)} 
	&\leq \sup_{u,v \in B_1(0)}\|AD^2F(\bar{U})(u,v)\|_{H_1} \cdot r + \frac{1}{2}\sup_{U \in B_r(\bar{U})}\sup_{u,v,w \in B_1(0)}\|AD^3F(U)(u,v,w)\|_{H_1} \cdot r^2\\
	&\leq \sup_{u,v \in B_1(0)}\|AD^2F(\bar{U})(u,v)\|_{H_1} \cdot r + \|\mathcal{W}A\| \cdot r^2 \bigg[ 27|\gamma|^2\max\left\{1, \,\frac{1}{h^2}\right\}(\|\bar{a}\|_X +r) \\
	&\qquad + 4g\left(1 + \max\left\{1, \,\frac{1}{h^2}\right\}\right) + (|\bar{Q}| + 2g)\left(1 + \max\left\{1, \,\frac{1}{h^2}\right\}\right) \\
	&\qquad + 3(|\bar{Q}| +r)\left(1 + \max\left\{1, \,\frac{1}{h^2}\right\}\right) \bigg] \\
	&\leq \sup_{u,v \in B_1(0)}\|AD^2F(\bar{U})(u,v)\|_{H_1} \cdot r + Z_{D^3}(r) \cdot r^2.
	\end{align*}
    }
    \normalsize
	To isolate the second derivative term, let $u,v \in X_e$. Using the isometry \eqref{def : W and hat W}, we pull the weight operator out to write
	\begin{align*}
	\|AD^2F(\bar{U})(u,v)\|_{H_1} \leq \|\mathcal{W}A\|_{\mathcal{B}( \ell^1)} \|D^2f(\bar{U})(u,v)\|_{\ell^1}.
	\end{align*}
	The operator norm $\|\mathcal{W}A\|$ is evaluated by decomposing the space into low-frequency and tail components. The numerical inverse $A_K$ handles modes up to $K_1$, whereas the high-frequency defect is analytically controlled by $A_\infty$ and the leakage constant $Z_{12}$
	\begin{align*}
	\|\mathcal{W}A\|_{\mathcal{B}( \ell^1)} &= \max\left\{\|\mathcal{W}(A_K - A_K DF(\bar{U})A_\infty + A_\infty)\pi^{\leq K_1} \|_{\mathcal{B}(\ell^1)}, \, \|\mathcal{W}( - A_K DF(\bar{U})A_\infty + A_\infty)\pi^{> K_1}\|_{\mathcal{B}(\ell^1)}\right\}\\
	&\leq \max\left\{\|\mathcal{W}(A_K - A_K DF(\bar{U})A_\infty + A_\infty)\pi^{\leq K_1} \|_{\mathcal{B}(\ell^1)}, \, (1+Z_{12})\|\mathcal{W}A_\infty\pi^{>K_1}\|_{\mathcal{B}( \ell^1)}\right\}\\
	&\leq \max\left\{\|\mathcal{W}(A_K - A_K DF(\bar{U})A_\infty + A_\infty)\pi^{\leq K_1} \|_{\mathcal{B}( \ell^1)}, \, (1+Z_{12})\frac{K_1+2}{K_1+1} \|w\|_{\ell^1}\right\}.
	\end{align*}
	Note that although $A_\infty$ is primarily defined to invert the high-frequency tail, its deliberate inclusion in the finite-dimensional cross-term $(A_K - A_K DF(\bar{U})A_\infty + A_\infty)\pi^{\leq K_1}$ explicitly cancels the spectral leakage, ensuring a rigorous analytic transition.
	
	To conclude the proof, we must estimate $\|D^2f(\bar{U})(u,v)\|_{\ell^1}$. We first combine our block derivations into an explicit expression for the second derivative. Let $U = (\alpha, u)$ and $V = (\beta, v)$ be vectors in the augmented space $H_1$. Using equations \eqref{eq: derivatives of g}, \eqref{eq : derivates of p}, and \eqref{eq : second derivatives of p second}, we assemble
	\begin{equation}\label{eq : second derivative F}
	\begin{aligned}
	D^2f(\bar{U})(U,V) &= 2 \gamma^2 \big(D_y(\bar{a}*u) - u*(D_y\bar{a}) - \bar{a}*(D_yu)\big)\big(D_y(\bar{a}*v) - v*(D_y\bar{a}) - \bar{a}*(D_yv)\big) \\
	&\qquad + 2 \gamma \left(mD_y e_0 + \frac{\gamma}{2} D_y(\bar{a}^2) - \gamma \bar{a}*(D_y\bar{a})\right) \big(D_y(u*v) - v*(D_yu) - u*(D_yv)\big)\\
	&\qquad - 2(\alpha - 2gu)(\nabla \bar{a} \cdot \nabla v) - 2(\beta - 2gv)(\nabla \bar{a} \cdot \nabla u) - 2(\bar{Q} - 2g\bar{a})(\nabla u \cdot \nabla v),
	\end{aligned}
	\end{equation}
	where we introduce the slight abuse of notation for the functional gradient dot-product 
	\begin{align*}
	\nabla u \cdot \nabla v \bydef (D_xu)*(D_xv) + (D_yu)*(D_yv) \qquad \text{for all } u,v \in X.
	\end{align*}
	Combining the algebra property $\|u*v\|_{\ell^1} \leq \|u\|_{\ell^1}\|v\|_{\ell^1}$ with Lemma \ref{lem:Banach_algebra} and the derivative bounds \eqref{eq : estimates Dx and Dy}, we obtain
	\begin{align*}
	\|D_y(\bar{a}*u) - u*(D_y\bar{a}) - \bar{a}*(D_yu)\|_{\ell^1} &\leq \max\left\{1, \,\frac{1}{h}\right\}(2\|\bar{a}\|_{X} + \|\bar{a}\|_{\ell^1}), \\
	\|\nabla\bar{a} \cdot \nabla v\|_{\ell^1} &\leq \|\bar{a}\|_X\left(1+ \max\left\{1, \,\frac{1}{h^2}\right\}\right),
	\end{align*}
	for all $u,v \in B_1(0) \subset X$. Substituting these bounds directly into expression \eqref{eq : second derivative F} verifies the prescribed estimate for $Z_{D^2}$, which concludes the proof.
\end{proof}
In this section, we have derived explicit, computable formulas to bound the operators required by the Newton-Kantorovich (Theorem \ref{th : radii polynomial}). Because these bounds ultimately reduce to the evaluation of finite-dimensional norms, they can be rigorously verified in practice using interval arithmetic. We demonstrate this procedure in Section \ref{sec : existence proofs results}, where we execute the computer-assisted proof to guarantee the existence of a zero of $F$. Furthermore, by establishing these analytical estimates first in the simpler setting of a single isolated wave, we lay the mathematical groundwork for the continuation arguments in Sections \ref{sec : local bif} and \ref{sec : global bif}. In those sections, we will frequently appeal to these foundational bounds when constructing both the local and global bifurcation branches.

However, before we can begin tracing these branches, we must first establish \textit{a posteriori} criteria to guarantee that any rigorously validated zero of $F$ in the sequence space $H_1$ genuinely represents a physically meaningful water wave. Because the topological preservation guaranteed by Corollary \ref{cor:global_monotonicity} only applies once a valid branch is established, we must manually verify that our initial isolated wave profile satisfies the strict non-degeneracy \eqref{eq : nondegeneracy_block} and monotonicity \eqref{eq : monotonicity_block} inequalities. The formulation of these rigorous \textit{a posteriori} verification bounds is the subject of the following subsections.

\subsection{A posteriori verification of non-degeneracy conditions}

Suppose that we have successfully applied Theorem \ref{th : radii polynomial}, thereby guaranteeing the existence of a locally unique true solution $\tilde{U} \in H_1$ such that 
\begin{align*}
    \|\tilde{U} - \bar{U}\|_{H_1} \leq r_0, 
\end{align*}
for some rigorously computed, strictly positive radius $r_0$. We decompose this true solution as $\tilde{U} = (\tilde{Q},\tilde{a})$ and let $\tilde{\eta}$ be the physical function representation of the Fourier sequence $\tilde{a}$. The following result ensures that the dual non-degeneracy constraints \eqref{eq : nondegeneracy_block} are automatically satisfied upon a successful application of Theorem \ref{th : radii polynomial}.

\begin{lemma}\label{lem : proof of non-degeneracy}
    Let $Z_1$ and $Z_2(r_0)$ be given as in Lemmas \ref{lem : Z1 bound} and \ref{lem : Z2 single wave}, respectively. If the contraction condition $Z_1 + Z_2(r_0) \cdot r_0 < 1$ holds, then the true wave profile satisfies the non-degeneracy condition \eqref{eq : nondegeneracy_block}.
\end{lemma}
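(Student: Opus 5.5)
The plan is to deduce non-degeneracy from the \emph{invertibility} of the linearization at the true solution $\tilde U$, which the contraction condition provides. The principal symbol of $DF$ is built from $(Q-2g\eta)$ and $\nabla\eta$, so it cannot degenerate when $DF(\tilde U)$ is bounded below.

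\textbf{Step 1: $DF(\tilde U)$ is bounded below.} Since $\|\tilde U-\bar U\|_{H_1}\le r_0$, Theorem \ref{th : radii polynomial} gives
\[
\|I-ADF(\tilde U)\|_{\mathcal B(H_1)}\le \|I-ADF(\bar U)\|+\|A(DF(\tilde U)-DF(\bar U))\|\le Z_1+Z_2(r_0)r_0<1 .
\]
A Neumann series then shows that $ADF(\tilde U)$ is invertible on $H_1$. Because $A$ is bounded, this yields
\[
\|DF(\tilde U)V\|_{H_0}\ge c\|V\|_{H_1},\qquad c=\frac{1-Z_1-Z_2(r_0)r_0}{\|A\|}>0 .
\]

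\textbf{Step 2: the principal coefficient cannot vanish on the surface.} As in the proof of Lemma \ref{lem : A infinity properties}, on positive frequencies the top-order part of $D_af(\tilde U)$ is
\[
M_{iv_3+v_4}\hat D_y,\qquad iv_3+v_4=-2(\tilde Q-2g\tilde\eta)(\tilde\eta_y+i\tilde\eta_x)\ \text{ evaluated at } y=0 .
\]
The terms $M_{v_1}\mathcal C[\tilde a]$ and $M_{v_2}$ are bounded operators on $X$, using the analogue of Lemma \ref{lem : operator C} for $\tilde a$, which is smooth by Lemma \ref{lem : solutions are C infinity}. The difference $D_y-\hat D_y$ is also bounded.

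Suppose $(\tilde Q-2g\tilde\eta)(\tilde\eta_y+i\tilde\eta_x)$ vanishes at some $x_0$. I would take even test functions $u_N(x)=\phi_N(x)\cos(N(x-x_0))$, symmetrized, with $\phi_N$ a bump of width $\epsilon_N\to0$ chosen so that $N\epsilon_N\to\infty$. Then $\|u_N\|_X\sim N\|\hat\phi_N\|_{\ell^1}$. The principal part of $D_af(\tilde U)u_N$ has $\ell^1$ norm at most roughly $N\|\hat\phi_N\|_{\ell^1}\,\sup_{|x-x_0|\le\epsilon_N}|iv_3+v_4|$, which is $o(N\|\hat\phi_N\|_{\ell^1})$. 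The bounded remainder terms are $O(\|\hat\phi_N\|_{\ell^1})$, and the $Q$-component of $DF$ stays bounded.

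This contradicts Step 1, so $(\tilde Q-2g\tilde\eta)|\nabla\tilde\eta|^2$ does not vanish anywhere on $y=0$. The main technical obstacle is controlling Wiener-algebra norms of the localized products uniformly in $N$, since $\ell^1$ norms are not local. For this I would use the commutator estimate
\[
\|(c-c(x_0))\phi_N\|_{\ell^1}\lesssim \epsilon_N\|c\|_{C^2}\,\|\hat\phi_N\|_{\ell^1},
\]
for a smooth coefficient $c$, valid with the bump scaling above.

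\textbf{Step 3: signs.} From the dynamic condition \eqref{eq:pde_dynamic},
\[
(\zeta_y-\gamma\eta\eta_y)^2=(\tilde Q-2g\tilde\eta)|\nabla\tilde\eta|^2\ge 0 .
\]
Since $|\nabla\tilde\eta|^2>0$ on the surface by Step 2, this forces $\tilde Q-2g\tilde\eta\ge0$ there, and Step 2 upgrades it to $>0$. Compactness then gives a uniform $\delta>0$.

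\textbf{Step 4: the interior.} Extend $f=\xi+i\eta$ by odd reflection of $\eta$ across $y=-h$; this is allowed because $\eta=0$ on the bottom. The result is holomorphic on a periodic strip of height $2h$, with $\overline{f'(x-2hi)}=f'(x)$, so $f'$ does not vanish on either boundary line. By the argument principle, the number of zeros of $f'$ in a period cell is twice the winding number of $x\mapsto f'(x,0)$. I would show this winding number is $0$ by a homotopy along $\bar U+t(\tilde U-\bar U)$, $t\in[0,1]$. Step 2 applies at every point of the segment, since the contraction bound holds on the whole ball, so $f'$ stays nonzero on the surface throughout. At $t=0$ the winding number of $\bar a$ is computed rigorously as a finite trigonometric computation. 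Hence $f'$ has no zeros, and compactness of $\overline\Omega$ gives $\eta_x^2+\eta_y^2=|f'|^2\ge\epsilon>0$.
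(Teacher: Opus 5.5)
Your proposal is sound in outline, but on the key step it takes a different route from the paper.

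The paper never uses that $DF(\tilde U)$ is bounded below. It works directly with the weight $w$ of Lemma \ref{lem : A infinity properties} and bounds $\|w*(i\tilde v_3+\tilde v_4)-e_0\|_{\ell^1}\le Z_1+Z_2(r_0)r_0<1$ at the true solution. The unperturbed part is controlled by $\delta\le Z_\infty\le Z_1$, and the perturbation by the $Z_{D^2}$ part of $Z_2$. A Neumann series in the Wiener algebra $\ell^1$ then makes $i\tilde v_3+\tilde v_4$ invertible, so the function $-2(\tilde Q-2g\tilde\eta)(\tilde\eta_y+i\tilde\eta_x)$ has no zero on $y=0$. The sign of $\tilde Q-2g\tilde\eta$ then follows from $f(\tilde U)=0$ exactly as in your Step 3. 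That is a two-line algebraic argument with no localization.

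Your Steps 1--2 instead prove the general principle that an operator bounded below must have a non-vanishing principal symbol, using concentrated wave packets. This does not depend on the specific form of $A_\infty$, and it uses only the defining inequalities of Theorem \ref{th : radii polynomial}. That is arguably cleaner than the paper's ``by definition'' bookkeeping, but the argument is much heavier.

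As written, Step 2 has one circular input that you must remove. You take smoothness of $\tilde a$ from Lemma \ref{lem : solutions are C infinity} and then use a $C^2$ commutator estimate. In the paper, that lemma is deduced from the present one, since elliptic regularity needs non-degeneracy. A priori, $\tilde a\in X_e$ only puts $c=iv_3+v_4$ in the Wiener algebra.

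Smoothness is not actually needed:
\begin{itemize}
\item \emph{Lower-order terms.} They satisfy $\|\mathcal{C}[\tilde a]u\|_{\ell^1}\lesssim\|\tilde a\|_X\|u\|_{\ell^1}$, because $|\omega_k-\omega_n-\omega_{k-n}|\le 2|k-n|+3/h$. So they are $O(\|u_N\|_X/N)$.
\item \emph{Principal part.} Take $\phi_\epsilon=\psi((\cdot-x_0)/\epsilon)$. Then $\|(c-c(x_0))\phi_\epsilon\|_{\ell^1}\to 0$ for every $c$ in the Wiener algebra. To see this, approximate $c$ by a trigonometric polynomial $P$, for which the quantity is $O(\epsilon)$, and use $\sup_\epsilon\|\hat{\phi}_\epsilon\|_{\ell^1}<\infty$.
\end{itemize}
With these two facts your contradiction goes through at every point of the segment $\bar U+t(\tilde U-\bar U)$, which is what Step 4 requires.

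Your Step 4 proves more than the paper's proof does. The paper only obtains non-vanishing on $y=0$, although \eqref{eq : nondegeneracy_block} requires $\eta_x^2+\eta_y^2\ge\epsilon$ on all of $\overline{\Omega}$. Your reflection and argument-principle argument is correct. However, its base case, zero winding number of $\bar\eta_y+i\bar\eta_x$, is an extra rigorous computation that does not follow from $Z_1+Z_2(r_0)r_0<1$. So the interior part of the lemma genuinely needs that additional verified input.
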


\begin{proof}
	Let $w$ be the high-frequency weight from Lemma \ref{lem : A infinity properties}, and define the continuous velocity coefficients corresponding to the true solution $\tilde{U}$:
	\begin{align*}
	\tilde{v}_3 &= -2(\tilde{Q}-2g \tilde{a})*(D_x \tilde{a}), \qquad \tilde{v}_4 = -2(\tilde{Q}-2g \tilde{a})*(D_y \tilde{a}).
	\end{align*}
	By the definitions of the bounds $Z_1$ and $Z_2(r_0)$, the total convolution defect is bounded by
	\begin{align*}
	\|w*(i\tilde{v}_3 + \tilde{v}_4) - e_0\|_{\ell^1} \leq Z_1 + Z_2(r_0) \cdot r_0 < 1.
	\end{align*}
	Because $\ell^1$ is a Banach algebra, this bound guarantees via a Neumann series that the sequence $i\tilde{v}_3 + \tilde{v}_4 = -2(\tilde{Q}-2g \tilde{a})*(iD_x \tilde{a} + D_y \tilde{a})$ is invertible. Consequently, neither of its factors can vanish, ensuring both $\tilde{Q}-2g \tilde{\eta}$ and $(D_x\tilde{\eta})^2 + (D_y\tilde{\eta})^2$ are strictly non-zero globally.
	
	Since the physical mapping is real-valued, the sum of squares $(D_x\tilde{\eta})^2 + (D_y\tilde{\eta})^2$ must be strictly positive. Finally, because $\tilde{U}$ is an exact root, it satisfies the dynamic boundary condition $f(\tilde{U}) = 0$. Rearranging this identity yields $\tilde{Q} - 2g\tilde{\eta} = g(\tilde{a})^2 / \big((D_x\tilde{\eta})^2 + (D_y\tilde{\eta})^2\big)$. As the quotient of a non-negative square and a strictly positive denominator, we conclude $\tilde{Q} - 2g\tilde{\eta} > 0$, completing the proof.
\end{proof}

 \subsubsection{Regularity and control of the second derivative}
 Before we can rigorously evaluate bounds on the pointwise second derivatives, we must first establish that our abstract sequence corresponds to a smooth, classical solution.

\begin{lemma}\label{lem : solutions are C infinity}
    Suppose Theorem \ref{th : radii polynomial} guarantees the existence of a true solution $\tilde{U} = (\tilde{Q}, \tilde{a}) \in H_1$. Let $\tilde{\eta}$ be the spatial function corresponding to the Fourier sequence $\tilde{a} \in X_e$. Then $\tilde{\eta}$ is an even, infinitely differentiable periodic function ($\tilde{\eta} \in C^\infty_{\mathrm{per}}$).
\end{lemma}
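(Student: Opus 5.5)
Since $\tilde a\in X_e$, the profile $\tilde\eta(x,0)=\sum_n \tilde a_n e^{inx}$ is even and $C^1$: the weight $1+|n|$ in the $X$-norm gives absolute convergence of the series and of its term-by-term $x$-derivative. I plan a bootstrap on the decay of $\tilde a$ in weighted spaces $X^s$ with norm $\sum_n (1+|n|)^s|a_n|$, $s\ge 1$. Showing $\tilde a\in X^s$ for every $s$ gives $\tilde\eta\in C^{\lfloor s\rfloor}$ for every $s$, hence $C^\infty_{\mathrm{per}}$. The same weight computation as in Lemma \ref{lem:Banach_algebra} shows that $X^s$ is a Banach algebra. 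For the induction I will also use the tame (Leibniz-type) estimate: $(1+|k|)^s\le 2^{s}\big((1+|n|)^s+(1+|k-n|)^s\big)$, so $\|a*b\|_{X^s}\lesssim \|a\|_{X^s}\|b\|_{\ell^1}+\|a\|_{\ell^1}\|b\|_{X^s}$.

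The key structure is the principal part identified in Lemmas \ref{lem : operator C} and \ref{lem : A infinity properties}. The equation $f(\tilde Q,\tilde a)=0$ is quadratic in first derivatives of $\tilde a$. I rewrite it in quasilinear form by exploiting that it is a polynomial identity in $(\tilde a, D_x\tilde a, D_y\tilde a)$. I differentiate the identity in $x$, which formally applies $D_x$ to the sequence identity, and collect the highest-order terms. These are exactly $\tilde L\,(D_x\tilde a)$, where
\[
\tilde L \bydef M_{\tilde v_1}\mathcal{C}[\tilde a]+M_{\tilde v_3}D_x+M_{\tilde v_4}D_y ,
\]
with coefficients $\tilde v_i$ formed from $\tilde U$ as in \eqref{eq : functions v1 to v4 and C}. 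The remainder contains only products of $\tilde a$, $D_x\tilde a$, $D_y\tilde a$ and $D_x\tilde a$ with bounded multipliers. By the proof of Lemma \ref{lem : proof of non-degeneracy}, the symbol $i\tilde v_3+\tilde v_4=-2(\tilde Q-2g\tilde a)*(iD_x\tilde a+D_y\tilde a)$ is invertible in $\ell^1$. This holds because $\tilde Q-2g\tilde\eta>0$ and $|\nabla\tilde\eta|^2>0$ pointwise, so Wiener's lemma applies. On the positive and negative high-frequency tails, $M_{\tilde v_3}D_x+M_{\tilde v_4}D_y$ equals $M_{\pm i\tilde v_3+\tilde v_4}\hat D_y$ up to the exponentially small discrepancy $D_y-\hat D_y$. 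Lemma \ref{lem : operator C} shows that $\mathcal C[\tilde a]$ is of order zero.

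Hence $\tilde L$ is a first-order elliptic operator with invertible symbol. Mimicking $\hat A_\infty$ with the exact inverse weight $w=(i\tilde v_3+\tilde v_4)^{-1}$ (and its conjugate for negative modes) gives a parametrix $P$ such that $P\tilde L=I+R$ on high modes. Here $R$ is of order $-1$ up to commutator terms, and these commutators $[\hat D_y^{-1},M_w]$ gain one derivative when $w\in X^s$. Then $D_x\tilde a=-P(\text{remainder})+(\text{lower order})$. If $\tilde a\in X^s$, the remainder lies in $X^{s-1}$ and $P$ gains one derivative, so $D_x\tilde a\in X^{s}$, that is, $\tilde a\in X^{s+1}$. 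Induction on $s$ closes the argument. At each stage the coefficients $\tilde v_i$ and $w$ inherit $X^{s}$-regularity from $\tilde a$, using the algebra property and the fact that the inverse of an invertible element of $X^s$ lies in $X^s$ (Wiener–Lévy in weighted algebras).

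The main obstacle is the first step from $s=1$. At that level the coefficients $\tilde v_3,\tilde v_4$ are only in $\ell^1$, not in $X$, so the commutator gain is not available and the differentiated identity is only formal. To handle this first step, I would use difference quotients $\tau_h\tilde a-\tilde a$ in place of $D_x$. Applying the exact Neumann-series bound of Lemma \ref{lem : A infinity properties}, with $\delta<1$ for the true coefficients, uniformly in $h$ shows that these quotients are bounded in $X$. This gives $\tilde a\in X^2$, after which the coefficients are regular enough for the bootstrap above. An alternative at this step would be to invoke classical elliptic regularity for the conformal free-boundary problem with nondegenerate data, as in \cite{constantin_global}; given the non-degeneracy \eqref{eq : nondegeneracy_block}, this yields analyticity of the profile and in particular smoothness.
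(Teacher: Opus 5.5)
Your argument is sound in outline, but it follows a different route from the paper. The paper's proof is a citation. Lemma \ref{lem : proof of non-degeneracy} supplies the non-degeneracy conditions \eqref{eq : nondegeneracy_block}, smoothness is then taken from the elliptic regularity arguments in Section 4 of \cite{constantin_2011}, and evenness comes from $X_e$; this is essentially your fallback option. You instead stay inside the sequence-space framework. Translation invariance gives $Df(\tilde a)D_x\tilde a=(\tilde L+M_{\tilde v_2})D_x\tilde a=0$. The symbol $i\tilde v_3+\tilde v_4$ is invertible in $\ell^1$ and, by inverse-closedness, in each $X^s$. A parametrix modelled on $\hat{A}_\infty$ then gains one derivative per step.

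This buys a self-contained proof with quantitative decay of $\tilde a$, built on the paper's own principal-symbol structure. Your difference-quotient step $X\to X^2$ is a real bonus. Membership $\tilde a\in X$ alone only gives $\tilde\eta\in C^1$ with $\tilde\eta_x$ in the Wiener algebra, which is contained in no Hölder space $C^{0,\alpha}$. Placing $\tilde\eta$ in $C^2$ therefore also removes any doubt about whether the starting regularity assumed by the classical theory is met. The price is length, and the bootstrap with polynomial weights stops at $C^\infty$, which is all the lemma asks.

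Three points need tightening.

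\textbf{The first step.} Lemma \ref{lem : A infinity properties} is proved only for finitely supported $v_3$, $v_4$, $w$, where $K_1\geq 2K_0$ prevents positive and negative frequencies from interacting. Your first step applies it to the true $\ell^1$ coefficients and to the averaged linearization $\int_0^1 Df(\tilde a+t(\tau_h\tilde a-\tilde a))\,dt$, whose coefficients are not even. To get a high-frequency a priori estimate that is uniform in small $h$, you must do three things:
\begin{enumerate}
\item approximate the symbol by trigonometric polynomials;
\item raise the frequency threshold;
\item absorb the order-zero terms $M_{v_1}\mathcal{C}[\cdot]$ and $M_{v_2}$ into the high modes.
\end{enumerate}

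\textbf{The cross blocks.} With the exact weight $w=(i\tilde v_3+\tilde v_4)^{-1}$, the same-sign blocks reduce to the identity up to the infinitely smoothing $D_y-\hat{D}_y$ term, so no commutator $[\hat{D}_y^{-1},M_w]$ is needed. What does appear are cross blocks such as $\hat{\pi}^{k<0}\hat{D}_y^{-1}M_{w^*}M_{i\tilde v_3+\tilde v_4}\hat{D}_y\hat{\pi}^{k>0}$, which vanish in the paper's finite-support setting but not here. They are Hankel-type: only coefficients of index $|k|+|n|$ enter. Hence they map $X$ into $X^s$ as soon as their symbol $w^* *(i\tilde v_3+\tilde v_4)$ lies in $X^{s-1}$.

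\textbf{Coefficient regularity.} At stage $s$ the coefficients $\tilde v_i$ and $w$ lie in $X^{s-1}$, not in $X^s$ as you wrote, because they involve first derivatives of $\tilde a$. This still suffices, since $\hat{D}_y^{-1}M_w$ maps $X^{s-1}$ into $X^s$ and the cross blocks are handled as above.
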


\begin{proof}
    Because Lemma \ref{lem : proof of non-degeneracy} ensures the non-degeneracy conditions \eqref{eq : nondegeneracy_block} hold, the infinite smoothness $\tilde{\eta} \in C^\infty_{\mathrm{per}}$ is a direct application of the elliptic regularity arguments in Section 4 of \cite{constantin_2011}. The even symmetry is inherent to the subspace $X_e$.
\end{proof}

With the gap between our abstract sequence and a classical solution bridged, we now compute explicit bounds on the second derivatives, which are required to close the continuation arguments in the subsequent sections.

\begin{lemma}\label{lem : enclosure second derivative of U}
	Let $\tilde{U}= (\tilde{Q}, \tilde{a}) \in H_1$ be a true zero of $F$ whose existence is guaranteed by Theorem \ref{th : radii polynomial}, satisfying the bound $\|\tilde{U} - \bar{U}\|_{H_1} \leq r_0$. Let $T_0, T : H_1 \to \ell^1_e$ be the nonlinear mappings defined by
	\begin{align*}
	T_0(U) &\bydef (Q - 2g a) * \bigg[ \left(\frac{\gamma}{2} D_y(a^2) - \gamma a * (D_y a)\right) * \left(\frac{m}{h} + \frac{\gamma}{2}\big(D_y(a^2) - a * (D_y a)\big)\right) \\
	&\qquad - g (D_x a) * \big((D_x a)^2 + (D_y a)^2\big) \bigg], \\
	T(U) &\bydef T_0(U) * \big((Q - 2ga)^2 * ((D_x a)^2 + (D_y a)^2)\big)^{-1}.
	\end{align*}
	Then, the exact second derivative of the true profile satisfies the identity
	\begin{align*}
	D_x^2 \tilde{a} = T(\tilde{U}) * (D_x \tilde{a}) + (D_y \tilde{a}) * (H_h T(\tilde{U})). 
	\end{align*}
	Recall the sequence $w \in X$ defined in Lemma \ref{lem : A infinity properties}, and define the approximate sequence operator $\bar{T} : H_1 \to \ell^1_e$ as
	\begin{align*}
	\bar{T}(U) \bydef |w|^2 * T_0(U), \quad \text{ where } |w|^2 = w*w^*.
	\end{align*}
	Moreover, define the numerical second derivative proxy $\bar{a}_{xx} \in X_e$ by
	\begin{align*}
	\bar{a}_{xx} \bydef \bar{T}(\bar{U}) * (D_x \bar{a}) + (D_y \bar{a}) * (H_h \bar{T}(\bar{U})).
	\end{align*}
	Suppose $\epsilon_1, \epsilon_2 > 0$ are constants satisfying the inequalities
	\begin{align*}
	\epsilon_1 &\geq \delta + \|w\|_{\ell^1}\Big[ \max\{1,\, 2g\} \big(1+\coth(h)\big) \|\bar{a}\|_X + \big(|\bar{Q}| + r_0 + 2g (\|\bar{a}\|_{\ell^1}+r_0)\big)\big(1 + \coth(h)\big) \Big] r_0, \\
	\epsilon_2 &\geq \frac{1}{1- 2\epsilon_1 - \epsilon_1^2} \left( \big\||w|^2 * \big(T_0(\tilde{U}) - T_0(\bar{U})\big)\big\|_{\ell^1} + (2\epsilon_1 + \epsilon_1^2) \big\||w|^2 * T_0(\bar{U})\big\|_{\ell^1} \right),
	\end{align*}
	under the assumption that $2\epsilon_1 + \epsilon_1^2 < 1$. Then, the true second derivative is rigorously enclosed by the bound
	\begin{align}
	\|D_x^2 \tilde{a} - \bar{a}_{xx}\|_{\ell^1} \leq \|\bar{a}\|_{X} \big(1 + \coth(h)\big) \epsilon_2 + \|\bar{T}(\bar{U})\|_{\ell^1} \big(1 + \coth(h)\big) r_0.
	\end{align}  
\end{lemma}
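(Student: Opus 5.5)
The proof splits into two parts: an exact identity for $D_x^2\tilde a$, and then a perturbative enclosure of that identity around $\bar U$.

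\textbf{The identity.} Consider the holomorphic map $z=\xi+i\eta$ on $\Omega$, so that $z_x=\xi_x+i\eta_x=\eta_y+i\eta_x$ by the Cauchy--Riemann equations. Non-degeneracy (Lemma \ref{lem : proof of non-degeneracy}) gives $z_x\neq 0$ on $\overline\Omega$, so $\log z_x=\tfrac12\log(\eta_x^2+\eta_y^2)+i\theta$ is holomorphic. Then $z_{xx}=z_x(\log z_x)_x$, and taking imaginary parts gives
\[
\eta_{xx}=\eta_x\,\partial_x\big(\tfrac12\log|\nabla\eta|^2\big)+\eta_y\,\theta_x .
\]
The function $\theta_x$ is a harmonic conjugate of $\partial_x\tfrac12\log|\nabla\eta|^2$ in the strip. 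Since $\eta=0$ on $y=-h$, we have $\eta_x=0$ there, so $z_x$ is real on the bottom and $\theta$ vanishes (mod $\pi$) there. Hence on $y=0$, $\theta_x$ is obtained from the boundary trace of $\partial_x\tfrac12\log|\nabla\eta|^2$ by the strip conjugation operator, which is the operator I take $H_h$ to denote. It then remains to identify $\partial_x\tfrac12\log|\nabla\eta|^2$ on $y=0$ with $T(\tilde U)$. For this I use the dynamic condition $|\nabla\eta|^2=(\zeta_y-\gamma\eta\eta_y)^2/(Q-2g\eta)$, differentiate the logarithm in $x$, and eliminate $\partial_x(\zeta_y-\gamma\eta\eta_y)$ using the kinematic relation \eqref{eq:b_fourier} ($b=\tfrac\gamma2 a^{*2}+me_0$). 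Clearing the common denominator $(Q-2g\eta)^2|\nabla\eta|^2$ should produce exactly $T_0/\big((Q-2ga)^2*|\nabla a|^2\big)$.

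The smoothness from Lemma \ref{lem : solutions are C infinity} justifies every pointwise manipulation. Translating into Fourier sequences, with pointwise products becoming convolutions and $\eta_x,\eta_y$ becoming $D_x\tilde a, D_y\tilde a$, gives the stated identity. I expect this step to be the main obstacle. The difficulty is bookkeeping rather than conceptual: the orientation and sign of $H_h$, the factor $\tfrac12$, and the precise form of $T_0$ all have to match the stated definitions, and I would check them on a first-order Stokes wave.

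\textbf{Controlling the inverse.} Set $S(U)=(Q-2ga)^{*2}*\big((D_xa)^{*2}+(D_ya)^{*2}\big)$. Since $i v_3+v_4=-2(Q-2ga)*(iD_xa+D_ya)$, and $\ell^1$ is a commutative algebra with conjugation, $S$ equals $(i\tilde v_3+\tilde v_4)*(i\tilde v_3+\tilde v_4)^*$ up to the normalisation absorbed into $w$.

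First, write $w*(i\tilde v_3+\tilde v_4)-e_0$ as $w*(i\bar v_3+\bar v_4)-e_0$ plus a Lipschitz difference. The first term is bounded by $\delta$, using Lemma \ref{lem : A infinity properties} together with $K_1\ge 2K_0$. For the difference, apply Lemma \ref{lem:Banach_algebra}, the estimates \eqref{eq : estimates Dx and Dy} with the bound $\coth h$ for $D_y$, and $\|\tilde U-\bar U\|_{H_1}\le r_0$. Together these give $\|E_1\|_{\ell^1}\le\epsilon_1$, where $E_1\bydef w*(i\tilde v_3+\tilde v_4)-e_0$.

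Next, $|w|^2*S(\tilde U)=(e_0+E_1)*(e_0+E_1^*)=e_0+E$ with $\|E\|_{\ell^1}\le 2\epsilon_1+\epsilon_1^2<1$. A Neumann series then gives
\[
S^{-1}=|w|^2*(e_0+E)^{-1},\qquad \|(e_0+E)^{-1}-e_0\|_{\ell^1}\le\frac{2\epsilon_1+\epsilon_1^2}{1-2\epsilon_1-\epsilon_1^2}.
\]

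\textbf{Bounding $T(\tilde U)-\bar T(\bar U)$.} Decompose
\[
T(\tilde U)-\bar T(\bar U)=|w|^2*\big(T_0(\tilde U)-T_0(\bar U)\big)*(e_0+E)^{-1}+|w|^2*T_0(\bar U)*\big((e_0+E)^{-1}-e_0\big).
\]
Using $\|(e_0+E)^{-1}\|_{\ell^1}\le 1/(1-2\epsilon_1-\epsilon_1^2)$, both terms fit under the common factor, and $\|T(\tilde U)-\bar T(\bar U)\|_{\ell^1}\le\epsilon_2$.

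\textbf{The final enclosure.} Subtract $\bar a_{xx}$ from the identity and split
\[
D_x^2\tilde a-\bar a_{xx}=(T(\tilde U)-\bar T(\bar U))*D_x\tilde a+\bar T(\bar U)*D_x(\tilde a-\bar a)+(\cdots)_{D_y,\,H_h}.
\]
To bound the pieces, use $\|D_xv\|_{\ell^1}\le\|v\|_X$, $\|D_yv\|_{\ell^1}\le\coth(h)\|v\|_X$, and the fact that $H_h$ is a bounded Fourier multiplier on $\ell^1$ with symbol of modulus at most $1$. One caveat: the stated bound carries the factor $\|\bar a\|_X$ in front of $\epsilon_2$. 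For that, the $\epsilon_2$ terms must be paired with $D_x\bar a, D_y\bar a$, and the remaining cross terms with $D(\tilde a-\bar a)$ must be absorbed into the $r_0$ term. If they cannot be absorbed cleanly, I would reorganise the splitting so that $\bar T(\bar U)$ multiplies the $r_0$-differences and $T(\tilde U)-\bar T(\bar U)$ multiplies $\bar a$.
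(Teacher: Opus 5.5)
Your plan follows the paper's proof step for step. The paper writes $\tilde\xi_x=f\cos\theta$, $\tilde\eta_x=f\sin\theta$ with $f=|\nabla\tilde\eta|$ and differentiates, which is your identity $z_{xx}=z_x(\log z_x)_x$. It then uses the same triangle-inequality estimate for $\epsilon_1$, the same bound $2\epsilon_1+\epsilon_1^2$ for the product with the conjugate, the same Neumann series with $|w|^2$ factored out, and the same bilinear split. Your decomposition of $T(\tilde U)-\bar T(\bar U)$ is in fact the correct form of the paper's step. The paper instead detours through $T(\bar U)$ and then bounds it with the Neumann factor that belongs to $\tilde U$. However, the two points on which you hedge do not resolve the way you expect. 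The paper's proof passes over both, so they are defects of the statement rather than of your method, but you cannot close them as stated.

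\emph{The formula for $T_0$.} Carry out exactly the computation you describe, with $G\bydef\frac{m}{h}e_0+\frac{\gamma}{2}D_y(a^{*2})-\gamma\,a*D_ya$, the trace of $\zeta_y-\gamma\eta\eta_y$. The dynamic condition gives $\log f=\log|G|-\frac12\log(Q-2g\eta)$, hence
\[
\frac{f_x}{f}=\frac{G_x}{G}+\frac{g\,\eta_x}{Q-2g\eta},
\qquad\text{i.e.}\qquad
T_0=(Q-2ga)*\Big[(D_xG)*G+g\,(D_xa)*\big((D_xa)^{*2}+(D_ya)^{*2}\big)\Big].
\]
The stated $T_0$ differs in three places:
\begin{itemize}
\item its first product contains no $D_x$, so $T$ acquires an even part, while $f_x/f$ is odd;
\item its second factor has $\frac{\gamma}{2}$ instead of $\gamma$ in front of $a*D_ya$;
\item its gravity term has the opposite sign.
\end{itemize}
The first-order check you propose exposes this. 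Take a small wave $\eta=h+s\cos x+O(s^2)$ on the local branch. The even part of $T$ only produces odd terms, so the even part of the identity reads $-s\cos x=H_hT_{\mathrm{odd}}+O(s^2)$, where $T_{\mathrm{odd}}$ is the odd part of $T$. With $H_h$ as your construction defines it, $H_h\sin(kx)=\tanh(kh)\cos(kx)$. This forces $T_{\mathrm{odd}}=-s\coth(h)\sin x+O(s^2)$, which is indeed $f_x/f$. The stated $T_0$ instead gives $T_{\mathrm{odd}}=-g\eta_x/(Q-2g\eta)=+\frac{g}{Q-2gh}\,s\sin x+O(s^2)$, which has the wrong sign. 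So the identity must be stated and proved with the corrected $T_0$. Nothing downstream uses the specific form of $T_0$, so the rest of your argument is unaffected. The paper merely asserts that ``this exact differentiation yields $f_x/f=T(\tilde U)$''.

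\emph{The cross term.} Write $T=T(\tilde U)$ and $\bar T=\bar T(\bar U)$. For any splitting,
\[
T*D_x\tilde a-\bar T*D_x\bar a=(T-\bar T)*D_x\bar a+\bar T*D_x(\tilde a-\bar a)+(T-\bar T)*D_x(\tilde a-\bar a),
\]
and similarly for the $D_y$, $H_h$ term. The last piece is only controlled by $(1+\coth h)\,\epsilon_2 r_0$ and is not dominated by the stated right-hand side. Reorganising the split only moves it from one term to another. What your argument proves is the enclosure with $\|\bar a\|_X+r_0$ in place of $\|\bar a\|_X$. The paper reaches the stated form only by bounding $\|D_x\tilde a\|_{\ell^1}+\coth(h)\|D_y\tilde a\|_{\ell^1}$ by $(1+\coth h)\|\bar a\|_X$, which drops the same $r_0$.

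\emph{The normalisation of $w$.} Make your ``normalisation absorbed into $w$'' explicit. Lemma \ref{lem : A infinity properties} makes $w$ an approximate inverse of $iv_3+v_4=-2(\bar Q-2g\bar a)*(iD_x\bar a+D_y\bar a)$, so for that $w$ one has $|w|^2*S(\tilde U)\approx\frac14 e_0$. The statement is consistent only if $w$ is read as $-2$ times that sequence. Relative to Lemma \ref{lem : A infinity properties}, $\bar T$ then carries a factor $4$ and the $r_0$-term of $\epsilon_1$ a factor $2$. The paper's proof drops this factor silently as well.
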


\begin{proof}
	Let $\tilde{\eta}$ be the spatial function representation of the true Fourier coefficients $\tilde{a} \in X_e$. By Lemma \ref{lem : solutions are C infinity}, we know that $\tilde{\eta} \in C^\infty_{\mathrm{per}}$. 
	
	Following Section 4 of \cite{constantin_2011}, we define the continuous function $f \bydef (\tilde{\eta}_x^2 + \tilde{\eta}_y^2)^{\frac{1}{2}}$ and the angle function $\theta \bydef \mathcal{H}_h(\log(f) - [\log(f)])$, where $\mathcal{H}_h$ denotes the physical finite-depth Hilbert transform. The gradients of the conformal mapping are entirely determined by these functions:
	\begin{align*}
	\tilde{\xi}_x = f \cos(\theta), \qquad \tilde{\eta}_x = f \sin(\theta).
	\end{align*}
	Furthermore, equation (4.6) of \cite{constantin_2011} provides an explicit formula for $f$ via the dynamic boundary condition:
	\begin{align*}
	f = \frac{\frac{m}{h} + \gamma\big( \tilde{\eta}_y \tilde{\eta} - \tilde{\eta} \tilde{\eta}_y \big)}{\big(\tilde{Q}-2g \tilde{\eta}\big)^{\frac{1}{2}}} = \frac{\frac{m}{h} + \frac{\gamma}{2} \tilde{\eta}_y^2 - \gamma \tilde{\eta} \tilde{\eta}_y}{\big(\tilde{Q}-2g \tilde{\eta}\big)^{\frac{1}{2}}},
	\end{align*}
	where we identify the physical partial derivative $\tilde{\eta}_y$ with the evaluation of the sequence $D_y \tilde{a}$.
	Because $\tilde{\eta}$ is smooth, we can differentiate $\tilde{\eta}_x = f \sin(\theta)$ with respect to $x$:
	\begin{align*}
	\tilde{\eta}_{xx} = f_x \sin(\theta) + f \theta_x \cos(\theta) = \frac{f_x}{f} \tilde{\eta}_x + \tilde{\xi}_x \theta_x = \frac{f_x}{f} \tilde{\eta}_x + \tilde{\eta}_y \mathcal{H}_h\left(\frac{f_x}{f}\right),
	\end{align*}
	where we have applied the Cauchy--Riemann relation $\tilde{\xi}_x = \tilde{\eta}_y$. To compute the ratio $f_x/f$, we differentiate our expression for $f$ and use the physical identity $\frac{m}{h} + \frac{\gamma}{2}\tilde{\eta}_y^2 - \gamma \tilde{\eta}\tilde{\eta}_y = (\tilde{Q}-2g \tilde{\eta})^{\frac{1}{2}} (\tilde{\eta}_x^2 + \tilde{\eta}_y^2 )^{\frac{1}{2}}$ to clear the fractional powers. Passing from the physical functions to their rigorous sequence convolutions, this exact differentiation yields:
	\begin{align*}
	\frac{f_x}{f} = T(\tilde{U}),
	\end{align*}
	which validates our formula $D_x^2 \tilde{a} = T(\tilde{U})*(D_x \tilde{a}) + (D_y \tilde{a})*(H_h T(\tilde{U}))$.
	
	We now bound the deviation of this operator. Recalling Lemma \ref{lem : A infinity properties}, the approximate inverse satisfies the finite-dimensional residual bound:
	\begin{align*}
	\|e_0 - w*(i \bar{v}_3 + \bar{v}_4)\|_{\ell^1} = \|e_0 - w * (\bar{Q}-2g \bar{a})*(i D_x \bar{a} + D_y \bar{a} )\|_{\ell^1} \leq \delta.
	\end{align*}
	Expanding the exact operator around this approximate center, we obtain:
	\begin{align*}
	\|e_0 - w *(\tilde{Q}-2g \tilde{a})*(i D_x \tilde{a} + D_y \tilde{a} )\|_{\ell^1} &\leq \|e_0 - w *(\bar{Q}-2g \bar{a})*(i D_x \bar{a} + D_y \bar{a} )\|_{\ell^1} \\
	&\qquad + \|w* \big(\bar{Q}- \tilde{Q} + 2g(\bar{a}-\tilde{a})\big)*(i D_x \bar{a} + D_y \bar{a} )\|_{\ell^1} \\
	&\qquad + \|w*(\tilde{Q}- 2g\tilde{a})*\big(i D_x (\bar{a}-\tilde{a}) + D_y (\bar{a}-\tilde{a}) \big)\|_{\ell^1}\\
	&\leq \delta + \|w\|_{\ell^1}\Big[ \max\{1,\, 2g\} \big(1+\coth(h)\big) \|\bar{a}\|_X \\
	&\qquad + \big(|\bar{Q}| + r_0 + 2g (\|\bar{a}\|_{\ell^1}+r_0)\big)\big(1 + \coth(h)\big) \Big]r_0\\
	&\leq \epsilon_1.
	\end{align*}
	
	To invert the square of this operator, let $u, v \in \ell^1$ be arbitrary sequences and define the defect sequence $S \bydef e_0 - u*v$. Using that $\|S\|_{\ell^1} = \|S^*\|_{\ell^1}$, we evaluate the Banach algebra difference of squares
	\begin{align*}
	\|e_0 - (u*u^*)*(v*v^*)\|_{\ell^1} = \|e_0 - (u*v)*(u^**v^*)\|_{\ell^1} &= \|e_0 - (e_0-S)*(e_0- S^*)\|_{\ell^1} \\
	&\leq 2\|S\|_{\ell^1} + \|S\|_{\ell^1}^2.
	\end{align*}
	Setting $u = w$ and $v = (\tilde{Q}-2g \tilde{a})*(i D_x \tilde{a} + D_y \tilde{a})$, this implies
	\begin{align*}
	\|e_0 - |w|^2*(\tilde{Q}-2g \tilde{a})^2*\big((D_x \tilde{a})^2 + (D_y \tilde{a})^2 \big)\|_{\ell^1} \leq 2\epsilon_1 + \epsilon_1^2.
	\end{align*}
	Because $2\epsilon_1 + \epsilon_1^2 < 1$, a Neumann series argument guarantees that the sequence $(\tilde{Q}-2g \tilde{a})^2*\big((D_x \tilde{a})^2 + (D_y \tilde{a})^2 \big)$ possesses an inverse $b$, which can be factored as
	\begin{align}\label{eq : expression for b inverse}
	b = \sum_{k=0}^\infty\left(e_0 - |w|^2*(\tilde{Q}-2g \tilde{a})^2*\big((D_x \tilde{a})^2 + (D_y \tilde{a})^2 \big)\right)^k * |w|^2 = b_0 * |w|^2,
	\end{align}
	where the power $k$ operation is understood in the sense of discrete convolutions. Consequently, the true operator simplifies to
	\begin{align*}
	T(\tilde{U}) = b * T_0(\tilde{U}) = b_0 * |w|^2 * T_0(\tilde{U}).
	\end{align*}
	Using the Neumann series bound $\|b_0\|_{\ell^1} \leq (1-2\epsilon_1-\epsilon_1^2)^{-1}$, we control the total perturbation
	\begin{align*}
	\|T(\tilde{U}) - \bar{T}(\bar{U})\|_{\ell^1} &\leq \|T(\tilde{U}) - {T}(\bar{U})\|_{\ell^1} + \|\bar{T}(\bar{U}) - {T}(\bar{U})\|_{\ell^1}\\
	&\leq \|b_0 * |w|^2 * \big(T_0(\tilde{U}) - T_0(\bar{U})\big)\|_{\ell^1} + \|e_0 - b_0\|_{\ell^1} \||w|^2 * T_0(\bar{U})\|_{\ell^1} \\
	&\leq \frac{1}{1- 2\epsilon_1 - \epsilon_1^2} \left( \big\||w|^2*\big(T_0(\tilde{U}) - T_0(\bar{U})\big)\big\|_{\ell^1} + (2\epsilon_1 + \epsilon_1^2) \big\||w|^2*T_0(\bar{U})\big\|_{\ell^1}\right)\\
	&\leq \epsilon_2.
	\end{align*}
	Finally, assembling the bounds for the second derivative yields
	\begin{align*}
	\|D_x^2 \tilde{a} - \bar{a}_{xx}\|_{\ell^1} &\leq \big(\|D_x \tilde{a}\|_{\ell^1} + \coth(h) \|D_y \tilde{a}\|_{\ell^1} \big) \|T(\tilde{U}) - \bar{T}(\bar{U})\|_{\ell^1} \\
	&\qquad + \|\bar{T}(\bar{U})\|_{\ell^1}\big(\|D_x(\tilde{a}-\bar{a})\|_{\ell^1} + \coth(h) \|D_y(\tilde{a}-\bar{a})\|_{\ell^1}\big)\\
	&\leq \|\bar{a}\|_{X} \big(1 + \coth(h)\big) \epsilon_2 + \|\bar{T}(\bar{U})\|_{\ell^1}\big(1 + \coth(h)\big) r_0,
	\end{align*}
	which completes the proof.
\end{proof}

\begin{remark}
	In practice, the numerical algorithm cannot evaluate $T_0(\tilde{U})$ directly. To bypass this, the nonlinear defect $\||w|^2*\big(T_0(\tilde{U}) - T_0(\bar{U})\big)\|_{\ell^1}$ is explicitly bounded via Lemma \ref{lem : estimate difference products}, controlling  the unknown true solution with the sharp and explicit bound $\|\tilde{U} - \bar{U}\|_{H_1} \leq r_0$.
\end{remark}

% \begin{remark}\label{rem : infinite depth}
%     Observe that our analysis also applies in the case of an infinite depth, that is $h \to \infty$. Indeed, the only difference lies in the operator $D_y$ which becomes (cf. \eqref{eq: Dy definition}) 
%     \begin{align*}
%     (D_ya)_n = \begin{cases}
%         0 &\text{ if } n = 0 \\
%         |n| a_n &\text{ if } n \in \mathbb{Z}\setminus\{0\}.
%     \end{cases}
% \end{align*}
% Using the above definition, the analysis presented this section can be carried out similarly. We illustrate this special case in .......
% \end{remark}
%
%
%

\section{Formulation of the Local Branch}\label{sec : local bif}

Based on the analytical results of \cite{constantin_2011,constantin_global}, we know that nontrivial steady waves bifurcate from the trivial branch of flat solutions via a pitchfork bifurcation. The objective of this section is to provide a fully constructive and quantitative proof of this phenomenon. By rigorously proving the existence of the bifurcation point and its associated local branch, we move away from the trivial solution. Advancing sufficiently far along this local branch provides the necessary starting point for the global continuation framework detailed in Section \ref{sec : global bif}.

Before presenting our analysis, we briefly recall the local structure near the bifurcation points established in \cite{constantin_2011,constantin_global}. To highlight the role of the relative mass flux as the continuation parameter, we explicitly denote its dependence by writing $F(U,m)$ and $f(U,m)$. Because $F$ depends polynomially on $m$ (cf. \eqref{def : zero finding F}), the augmented map $F : H_1 \times \mathbb{R} \to H_0$ is smooth.

For any $m \in \mathbb{R}$, \cite{constantin_2011} shows that \eqref{def : zero finding F} admits a trivial branch of flat solutions $\bar{U}_m = (\bar{Q}_m, \bar{a}) \in \mathbb{R} \times X_e$, given by
\begin{equation}
\bar{Q}_m \bydef 2gh + \left( \frac{m}{h} - \frac{\gamma h}{2} \right)^2 \quad \text{and} \quad \bar{a}_n = \begin{cases}
h &\text{if } n=0,\\
0 &\text{otherwise.}
\end{cases}
\end{equation}
As demonstrated in \cite{constantin_2011}, non-trivial branches of solutions emerge from this trivial branch at specific bifurcation points. The explicit parameters for these bifurcations are:
\begin{equation}
\begin{aligned}
\tilde{m}_n &= \frac{\gamma h^2}{2} - \frac{\gamma h \tanh(nh)}{2n} + h\sqrt{\frac{\gamma^2 \tanh^2(nh)}{4n^2} + g \frac{\tanh(nh)}{n}},\\
\tilde{Q}_n &= 2gh + \left( \frac{\tilde{m}_n}{h} - \frac{\gamma h}{2} \right)^2.
\end{aligned}
\end{equation}
The expression for $\tilde{m}_n$ acts as the exact dispersion relation for the fluid, establishing the critical threshold at which the flat surface loses stability. Consequently, each bifurcation point $(\tilde{Q}_n, \tilde{m}_n)$ triggers the emergence of a distinct non-trivial branch of periodic waves of frequency $n$.

To formalize the bifurcation, let us define the wave profile $\bar{z} \in X_e$ by
\begin{equation}
\bar{z}_{-n} = 1, \quad \bar{z}_{n} = 1, \quad \text{and} \quad \bar{z}_k = 0 \text{ for all } |k| \neq n.
\end{equation}
As shown in \cite{constantin_2011}, the kernel of the linearized surface operator at the bifurcation point is exactly $\operatorname{Ker}(D_af(\bar{U}_{\tilde{m}_n},\tilde{m}_n)) = \operatorname{span}(\bar{z})$. Defining the augmented vector $\bar{Z} = (0, \bar{z}) \in H_1$, we similarly obtain $\operatorname{Ker}(D_UF({U}_{\tilde{m}_n},\tilde{m}_{n})) = \operatorname{span}(\bar{Z})$.

By the Crandall-Rabinowitz theorem, a local non-trivial branch of solutions bifurcates from this point, parameterized as $\{(m(s), U(s)) : s \in [0,1]\}$, where $U(s) \approx \bar{U}_{\tilde{m}_n} + s \bar{Z}$ for sufficiently small $s$. However, while this analytical result guarantees the existence of the branch, it does not provide the explicit, quantitative dependence of $U$ and  $m$ on $s$. Our constructive framework resolves this by computing the branch directly.

For simplicity, we focus on the bifurcation point corresponding to $n=1$. The same reasoning can easily be applied to the other bifurcation points.
Let $\bar{m} = \tilde{m}_1$  and let $\epsilon_0 > 0$. To capture the pitchfork nature of the bifurcation, we will rigorously construct this local branch of non-trivial solutions parameterized by $m = \bar{m}-\epsilon^2$ for all $\epsilon \in [0, \epsilon_0]$.

\subsection{Definition of the zero-finding problem}

Let $\mathcal{P} : H_1 \to \operatorname{span}(\bar{Z})$ denote the orthogonal projection onto $\operatorname{span}(\bar{Z})$. Specifically, for all $W = (\alpha, w) \in H_1$, we define
\begin{align}\label{eq : definition projection P}
\mathcal{P}W = (0, \hat{\mathcal{P}}w) \quad  \text{ where }  (\hat{\mathcal{P}}w)_n \bydef \begin{cases}
w_n &\text{ if } n \in \{-1, 1\},\\
0 &\text{ otherwise.}
\end{cases}
\end{align}
By isolating the $n \in \{-1, 1\}$ modes, $\mathcal{P}$ extracts the exact one-dimensional kernel of the bifurcation, separating the primary bifurcation direction from the higher-order harmonics.

We further define the complementary projection $\mathcal{P}^\perp \bydef I - \mathcal{P} : H_1 \to \operatorname{span}(\bar{Z})^\perp$. Given an amplitude parameter $\epsilon>0$, we introduce the scaling operator $P(\epsilon)$ as
\begin{align}\label{def : operator P epsilon}
P(\epsilon) \bydef \frac{1}{\epsilon}\mathcal{P} + \mathcal{P}^\perp.
\end{align}
This operator desingularizes the bifurcation by scaling the fundamental mode by $\epsilon$ and the higher harmonics by $\epsilon^2$. Using this scaling operator, we define our desingularized zero-finding problem $G_\epsilon : H_1 \to H_0$ as
\begin{align}\label{def : zero finding G}
G_\epsilon(V) = \frac{1}{\epsilon^2} P(\epsilon) F\left(\bar{U}_{\bar{m}-\epsilon^2} + \epsilon^2 P(\epsilon)V, \, \bar{m} - \epsilon^2\right).
\end{align}
By construction, any root $V \in {H}_1$ of $G_\epsilon$ provides a corresponding root $\bar{U}_{\bar{m}-\epsilon^2} + \epsilon^2 P(\epsilon)V$ of $F$ at the mass flux $m = \bar{m} - \epsilon^2$. 

Our goal is to rigorously prove that $G_\epsilon$ possesses a zero $\tilde{V}(\epsilon) \in H_1$ for all $\epsilon \in [0, \epsilon_0]$.  To achieve this, we employ a uniform Newton-Kantorovich contraction mapping theorem (see \cite{cont_global_bif_diag, cont_equilibria_pde, cont_suspension_bridge} for a proof and applications).

\begin{theorem}\label{th : radii polynomial continuation epsilon}
	Let $\epsilon_0 >0$. For all $\epsilon \in [0, \epsilon_0]$, let $\bar{V}(\epsilon) \in H_1$ and  let ${A}(\epsilon) : {H}_0 \to {H}_1$ be an injective bounded linear operator, and let $Y, Z_1,$ and $Z_2$ be non-negative constants such that
	\begin{align}\label{eq: definition Y0 Z1 Z2 continuation epsilon}
	\sup_{\epsilon \in [0,\epsilon_0]} \|{A}(\epsilon){G}_\epsilon(\bar{V}(\epsilon))\|_{{H}_1} &\leq Y, \nonumber \\
	\sup_{\epsilon \in [0,\epsilon_0]} \|I - {A}(\epsilon)D{{G}}_\epsilon(\bar{V}(\epsilon))\|_{\mathcal{B}({H}_1)} &\leq Z_1, \\
	\sup_{\epsilon \in [0,\epsilon_0]} \|{A}(\epsilon)\left( D{{G}}_\epsilon(\bar{V}(\epsilon) + h) - {D}G_\epsilon(\bar{V}(\epsilon))\right)\|_{\mathcal{B}({H}_1)} &\leq Z_2(r)r, \quad \text{for all } h \in \overline{B_r(0)} \subset {H}_1. \nonumber
	\end{align}  
	If there exists an $r>0$ such that
	\begin{equation}\label{eq : condition contraction continuation epsilon}
	\frac{1}{2}Z_2(r)r^2 - (1-Z_1)r + Y < 0 \quad \text{ and } \quad Z_1 + Z_2(r) r < 1,
	\end{equation}
	then for every $\epsilon \in [0,\epsilon_0]$, there exists a unique $\tilde{V}(\epsilon) \in \overline{B_r(\bar{V}(\epsilon))} \subset {H}_1$ solving $G_\epsilon(\tilde{V}(\epsilon)) = 0$. Moreover, the map $\epsilon \mapsto \tilde{V}(\epsilon)$ is of class $C^\infty$ from $[0,\epsilon_0]$ to $H_1$.
\end{theorem}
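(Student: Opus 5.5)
The plan is to reduce the statement, for each fixed $\epsilon$, to the Newton--Kantorovich theorem (Theorem~\ref{th : radii polynomial}). Regularity in $\epsilon$ will then come from the implicit function theorem, with the uniqueness part of the contraction argument used to glue the local branches together.

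\textbf{Existence and uniqueness at each $\epsilon$.} Fix $\epsilon\in[0,\epsilon_0]$. The three uniform bounds in \eqref{eq: definition Y0 Z1 Z2 continuation epsilon} hold for this $\epsilon$ with the same constants $Y,Z_1,Z_2(r)$. Hence the hypotheses \eqref{eq : bounds general radii polynomial} of Theorem~\ref{th : radii polynomial} are satisfied with $F=G_\epsilon$, $\bar u=\bar V(\epsilon)$, $A=A(\epsilon)$, $X_1=H_1$ and $X_2=H_0$. The radii conditions \eqref{eq : condition contraction continuation epsilon} are independent of $\epsilon$, so Theorem~\ref{th : radii polynomial} yields a unique zero $\tilde V(\epsilon)\in\overline{B_r(\bar V(\epsilon))}$.

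I would also record a slightly stronger form of this step. Both inequalities in \eqref{eq : condition contraction continuation epsilon} are strict and the left-hand sides are continuous in $r$, since $Z_2$ is monotone; I would use the version of the theorem applied with $r$ replaced by a radius $r'<r$ still satisfying the first inequality, taking $Z_2$ right-continuous or working with $Z_2(r)$ as an upper bound. This places $\tilde V(\epsilon)$ in the smaller closed ball $\overline{B_{r'}(\bar V(\epsilon))}$, which lies in the interior of $B_r(\bar V(\epsilon))$, while uniqueness still holds in the larger ball of radius $r$.

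\textbf{Invertibility of the linearization.} Write $h=\tilde V(\epsilon)-\bar V(\epsilon)$, so $\|h\|\le r$. Then
\[
\|I-A(\epsilon)DG_\epsilon(\tilde V(\epsilon))\|_{\mathcal B(H_1)}\le Z_1+Z_2(r)r<1 .
\]
A Neumann series argument shows that $A(\epsilon)DG_\epsilon(\tilde V(\epsilon))$ is invertible on $H_1$, so $A(\epsilon)$ is surjective. Since $A(\epsilon)$ is injective by assumption, it is a bounded bijection $H_0\to H_1$, with bounded inverse by the open mapping theorem. It follows that
\[
DG_\epsilon(\tilde V(\epsilon))=A(\epsilon)^{-1}\bigl(A(\epsilon)DG_\epsilon(\tilde V(\epsilon))\bigr)
\]
is an isomorphism $H_1\to H_0$.

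\textbf{Smoothness of the map $(\epsilon,V)\mapsto G_\epsilon(V)$.} I expect this to be the main obstacle. $F$ is polynomial in $(U,m)$, and $\bar U_{\bar m-\epsilon^2}$ is polynomial in $\epsilon^2$. Expanding $F(\bar U_{\bar m-\epsilon^2}+\epsilon^2P(\epsilon)V,\bar m-\epsilon^2)$ in powers of $\epsilon$, the singular factors $\epsilon^{-1}$ from $P(\epsilon)$ and $\epsilon^{-2}$ must cancel. The cancellation should come from three facts:
\begin{itemize}
\item $F(\bar U_m,m)=0$ for every $m$;
\item the linear part at the flat state is diagonal in Fourier modes;
\item the $n=\pm1$ component of the linear part vanishes to first order at $\bar m$, because $\operatorname{Ker}D_UF(\bar U_{\bar m},\bar m)=\operatorname{span}(\bar Z)$ and the flat-state Jacobian is diagonal, so $\mathcal P D_UF\,\mathcal P$ is $O(\epsilon^2)$.
\end{itemize}
Given these, I would show that $G_\epsilon(V)$ extends to a polynomial in $(\epsilon,V)$, with values in $H_0$ and coefficients bounded multilinear maps $H_1^k\to H_0$, defined for all $\epsilon\in\mathbb R$. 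That makes the map $C^\infty$ on $(-\delta,\epsilon_0+\delta)\times H_1$. In particular $G_0$ is understood as this limit, which is the sense in which the statement includes $\epsilon=0$.

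\textbf{Regularity of the branch.} With the extension in hand, the implicit function theorem at each $\epsilon_*\in[0,\epsilon_0]$ gives a $C^\infty$ curve $\epsilon\mapsto V_*(\epsilon)$ of zeros near $\epsilon_*$ with $V_*(\epsilon_*)=\tilde V(\epsilon_*)$; at the endpoints the extension lets us take a two-sided neighbourhood. To identify $V_*$ with $\tilde V$, I also need $\epsilon\mapsto\bar V(\epsilon)$ continuous, which I would add as a mild hypothesis (in practice $\bar V$ is a Chebyshev polynomial in $\epsilon$). Then $V_*(\epsilon)$ stays inside $B_r(\bar V(\epsilon))$ for $\epsilon$ near $\epsilon_*$, because $\tilde V(\epsilon_*)$ lies in the interior ball of radius $r'$. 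Uniqueness in that ball forces $V_*(\epsilon)=\tilde V(\epsilon)$, so $\tilde V$ is $C^\infty$ near every point of $[0,\epsilon_0]$, hence on the whole interval.
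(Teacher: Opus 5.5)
\textbf{The gap.} Your three listed facts do not give the cancellation in the smoothness step, which you yourself identify as the main obstacle. Write $\epsilon^2P(\epsilon)V=\epsilon\alpha\bar Z+\epsilon^2W$ with $\alpha\bar Z=\mathcal P V$, $W=\mathcal P^\perp V$ and $m=\bar m-\epsilon^2$. The coefficient of $\epsilon^2$ in $F(\bar U_m+\epsilon\alpha\bar Z+\epsilon^2W,m)$ is
\[
D_UF(\bar U_m,m)W+\tfrac{\alpha^2}{2}D_U^2F(\bar U_m,m)(\bar Z,\bar Z).
\]
Your diagonality fact removes $\mathcal P D_UF(\bar U_m,m)W$. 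The second piece, however, leaves
\[
\frac{\alpha^2}{2\epsilon}\,\mathcal P D_U^2F(\bar U_m,m)(\bar Z,\bar Z)
\]
after multiplication by $\epsilon^{-3}\mathcal P$. Nothing in your list controls this quadratic term: not $F(\bar U_m,m)=0$, not the diagonal structure of the linearization, and not $\mathcal P D_UF\,\mathcal P=O(\epsilon^2)$.

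You need the additional identity $\mathcal P D_U^2F(\bar U_m,m)(\bar Z,\bar Z)=0$, which is the second half of the paper's Lemma~\ref{lem : identity projections}. It holds for every $m$ because of the flat background: $D_x\bar a=0$, $D_y\bar a=e_0$, and both $mD_ye_0+\frac{\gamma}{2}D_y(\bar a^{*2})-\gamma\bar a*D_y\bar a$ and $\bar Q_m e_0-2g\bar a$ are multiples of $e_0$. Hence every term of the second derivative in the direction $(\bar Z,\bar Z)$ is a convolution of two sequences supported on $\{\pm1\}$, and so is supported on $\{0,\pm2\}$.

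This frequency doubling is what makes the bifurcation a pitchfork and what justifies the $\epsilon^{-1}$ scaling of the kernel component in $P(\epsilon)$. Without it, $G_\epsilon$ is singular like $1/\epsilon$, there is no $C^\infty$ extension to $\epsilon=0$, and your implicit function step at $\epsilon_*=0$ fails. Once you add this identity, $G_\epsilon(V)$ is a polynomial in $(\epsilon,V)$ with bounded multilinear coefficients, and the rest of your argument closes.

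\textbf{The rest of your argument.} It is correct and takes a different route from the paper for the regularity part. The paper cites the uniform contraction theorem of \cite{cont_suspension_bridge} and only proves that $G_\epsilon$ is polynomial in $\epsilon$ (Appendix, using Lemma~\ref{lem : identity projections}). You instead:
\begin{itemize}
\item apply Theorem~\ref{th : radii polynomial} pointwise in $\epsilon$;
\item show that $DG_\epsilon(\tilde V(\epsilon))$ is an isomorphism, using $Z_1+Z_2(r)r<1$, the injectivity of $A(\epsilon)$ and the open mapping theorem;
\item obtain regularity from the implicit function theorem together with local uniqueness.
\end{itemize}
This requires no regularity of $A(\epsilon)$ in $\epsilon$, only continuity of $\bar V$. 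The price is the extra invertibility argument and the gluing step, both of which you handle correctly.

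A minor point: monotonicity of $Z_2$ does not imply continuity, although your fallback of freezing $Z_2(r)$ works. In fact you do not need $r'$ at all. The standard estimate gives $\|T(V)-\bar V(\epsilon)\|\le Y+Z_1r+\tfrac12Z_2(r)r^2<r$ on the closed ball, so the fixed point already lies in the open ball.
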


\begin{proof}
	The proof of the Theorem is given in \cite{cont_suspension_bridge}. To guarantee the regularity of the curve of solutions, we must establish the regularity of $G_\epsilon$ with respect to $\epsilon$. This is achieved in Section \ref{sec : G is polynomial in epsilon}, where we demonstrate that $\epsilon \mapsto G_\epsilon(V)$ is polynomial (and hence smooth) in $\epsilon$ for all $V \in H_1$.
\end{proof}

To apply Theorem \ref{th : radii polynomial continuation epsilon}, we must construct a suitable approximate solution branch $\epsilon \mapsto \bar{V}(\epsilon)$ and a parameterized approximate inverse $\epsilon \mapsto A(\epsilon) \approx DG_\epsilon(\bar{V}(\epsilon))^{-1}$.

\subsection{Approximate objects}

In this section, we construct the approximate objects required to apply Theorem \ref{th : radii polynomial continuation epsilon}. To achieve this, we employ a spectral approach based on a Chebyshev expansion in the amplitude parameter $\epsilon$. Following the methodology introduced in \cite{breden_polynomial_chaos, henot_marchal} and developed specifically in Section 4.6 of \cite{whitham_cadiot} for the Whitham equation, we obtain a highly accurate representation of the solution branch. Additional technical details regarding this expansion are deferred to Appendix \ref{sec : appendix Chebyshev norm}.

Let $N \in \mathbb{N}$ denote the truncation order for our expansion in the amplitude parameter $\epsilon$. We approximate the solution branch $\bar{V}$ via the truncated Chebyshev series
\begin{equation}\label{eq : approx sol local bifurcation}
\bar{V}(\epsilon) = \bar{V}_0 + 2\sum_{n=1}^N \bar{V}_n T_n\left(\frac{2\epsilon}{\epsilon_0} - 1\right),
\end{equation}
where $T_n : [-1,1] \to \mathbb{R}$ is the $n$-th Chebyshev polynomial of the first kind, and the coefficients are given by $\bar{V}_n = (\bar{Q}_n, (\bar{a}_{n,k})_{k \in \mathbb{Z}}) \in H_1$ for $n \in \{0, \dots, N\}$. To ensure a finite-dimensional and computable representation, we truncate the spatial Fourier series at order $K_0 \in \mathbb{N}$, enforcing $\bar{V}_n = \pi^{\leq K_0} \bar{V}_n$. Consequently, each state $\bar{V}_n$ is practically encoded as a vector of size $K_0+2$.

We now construct the parameterized approximate inverse $A(\epsilon) \approx DG_\epsilon(\bar{V}(\epsilon))^{-1}$. Following the framework established for a single wave in Section \ref{ssec : approximate inverse single wave}, we decompose $A(\epsilon)$ into a finite-dimensional block $A_K(\epsilon)$ and an infinite-dimensional tail $A_\infty(\epsilon)$ as follows:
\begin{align*}
A(\epsilon) &= A_K(\epsilon) - A_K(\epsilon) DG_\epsilon(\bar{V}(\epsilon)) A_\infty(\epsilon) + A_\infty(\epsilon), \\
\text{where} \quad A_K(\epsilon) &=  \pi^{\leq K_1}A_K(\epsilon)\pi^{\leq K_1} \quad \text{and} \quad A_\infty(\epsilon) =  \pi^{> K_1}A_\infty(\epsilon)
\end{align*}
for all $\epsilon \in [0, \epsilon_0]$. 

The finite-dimensional component $A_K(\epsilon)$ is expanded as a Chebyshev polynomial matrix:
\begin{align*}
A_K(\epsilon) = (A_K)_0 + 2\sum_{n=1}^N (A_K)_n T_n\left(\frac{2\epsilon}{\epsilon_0} - 1\right),
\end{align*}
where each coefficient $(A_K)_n : {H}_0 \to {H}_1$ is a bounded linear operator satisfying $(A_K)_n = \pi^{\leq K_1}(A_K)_n\pi^{\leq K_1}$. Consequently, each $(A_K)_n$ is practically represented as a finite matrix. 

By Lemma \ref{lem : tail of DG local} (see Appendix \ref{sec : appendix proofs}), we must construct an operator ${A}_\infty(\epsilon) : H_0 \to H_1$ that approximates the left inverse of $D_U F(\bar{U}(\epsilon), \bar{m}-\epsilon^2) \pi^{>K_1}$. Since this high-frequency tail is unaffected by the bifurcation scaling, we simply apply the construction from Section \ref{ssec : approximate inverse single wave} pointwise for each $\epsilon$.

Let us decompose the base state as $\bar{U}(\epsilon) = (\bar{Q}(\epsilon), \bar{a}(\epsilon)) \in H_1$ for all $\epsilon \in [0,\epsilon_0]$. We define $w : [0, \epsilon_0] \to X$ via the Chebyshev expansion
\begin{align}
w(\epsilon) \bydef w_0 + 2 \sum_{n=1}^{N} w_n T_n\left(\frac{2\epsilon}{\epsilon_0} -1 \right),
\end{align}
where $w(\epsilon)$ is chosen to approximate the inverse of the function $iv_3(\epsilon) + v_4(\epsilon)$, given by
\begin{align*}
v_3(\epsilon) &= -2(\bar{Q}(\epsilon)-2g \bar{a}(\epsilon))*(D_x \bar{a}(\epsilon)),\\
v_4(\epsilon) &= -2(\bar{Q}(\epsilon)-2g \bar{a}(\epsilon))*(D_y \bar{a}(\epsilon)).
\end{align*}
In practice, we evaluate the local inverse $w(\epsilon_j^{(N)}) \approx \left[ iv_3(\epsilon_j^{(N)}) + v_4(\epsilon_j^{(N)}) \right]^{-1}$ at each Chebyshev node $j \in \{0, \dots, N\}$, and compute the polynomial coefficients $(w_n)_{n = 0}^N$ via the Discrete Fourier Transform (see Appendix \ref{sec : appendix Chebyshev norm}).

With $w(\epsilon)$ established, we define the high-frequency inverse $\hat{A}_\infty(\epsilon) : \ell^1_e \to X_e$ as 
\begin{align*}
\hat{A}_\infty(\epsilon) \bydef \pi^{k<-K_1} \hat{D}_y^{-1} M_{w(\epsilon)^*}  + \pi^{k>K_1} \hat{D}_y^{-1} M_{w(\epsilon)},
\end{align*}
analogously to \eqref{def : A infinity}. By construction, $\hat{A}_\infty$ is a matrix polynomial of degree $N$ in $\epsilon$. We then extend this to the full operator ${A}_\infty(\epsilon) : H_0 \to H_1$ by setting
\begin{align*}
{A}_\infty(\epsilon)(Q, a) = \begin{pmatrix}
0\\
\hat{A}_\infty(\epsilon) a
\end{pmatrix} \quad \text{for all } (Q,a) \in \mathbb{R} \times \ell^1_e = H_0.
\end{align*}
Finally, the complete approximate inverse curve $A(\epsilon)$ is obtained via the formula
\begin{align}\label{eq : approx inverse local bif}
A(\epsilon) = A_K(\epsilon) - A_K(\epsilon) DG_\epsilon(\bar{V}(\epsilon)) A_\infty(\epsilon) + A_\infty(\epsilon)
\end{align}
for all $\epsilon \in [0, \epsilon_0]$. 

Having constructed all the required objects for Theorem \ref{th : radii polynomial continuation epsilon}, it remains to verify their accuracy. In the following sections, we derive explicit formulas for the bounds required by Theorem \ref{th : radii polynomial continuation epsilon}, which can be rigorously verified using interval arithmetic.

\subsection{Computation of the bounds}\label{ssec : bounds local bif}

Our goal in this section is to provide quantitative estimates for the bounds required by Theorem \ref{th : radii polynomial continuation epsilon}. To achieve this, we derive estimates that depend strictly on finite-dimensional, polynomial objects in the parameter $\epsilon$. These estimates can be computed explicitly using the rigorous numerical framework detailed in Appendix \ref{sec : appendix Chebyshev norm}. 

The following three lemmas yield the necessary estimates for the bounds $Y$, $Z_1$, and $Z_2$. Because we rely heavily on the analytical bounds already derived for a single wave in Section \ref{ssec : computation bounds single wave}, we first define the state variable $\bar{U} : [0, \epsilon_0] \to H_1$ as 
\begin{equation}
\bar{U}(\epsilon) \bydef \bar{U}_{\bar{m} - \epsilon^2} + \epsilon^2 P(\epsilon) \bar{V}(\epsilon) \quad \text{for all } \epsilon \in [0, \epsilon_0].
\end{equation}
By scaling back the fundamental mode and adding the flat-water background, this variable $\bar{U}(\epsilon)$ recovers the actual physical wave, meaning we can apply the single-wave analytical bounds directly to our continuous curve.

We defer the proofs of the following lemmas to the Appendix \ref{sec : appendix proofs}.

\begin{lemma}\label{lem : Z1 epsilon}
	Define $\delta$ as 
	\begin{equation*}
	\delta \bydef \frac{K_1+2}{K_1+1}\left( \sup_{\epsilon \in [0, \epsilon_0]}\|w(\epsilon)*(iv_3(\epsilon)+v_4(\epsilon)) - e_0\|_{\ell^1} + \sup_{\epsilon \in [0, \epsilon_0]}\|w(\epsilon)*v_4(\epsilon)\|_{\ell^1} \frac{2e^{-2hK_1}}{1-e^{-2hK_1}}\right).
	\end{equation*}
	Let $Z_\infty, Z_{10}, Z_{11}, Z_{12}$ be positive constants satisfying
	\begin{align*}
	Z_\infty &\geq \frac{1}{K_1+1} \sup_{\epsilon \in [0, \epsilon_0]}\|w(\epsilon)*(v_1(\epsilon)*(\mathcal{D}\bar{a}(\epsilon)) + v_2(\epsilon))\|_{\ell^1} \\
	&\quad + \frac{K_1+2}{K_1+1} \left( \sup_{\epsilon \in [0, \epsilon_0]}\|w(\epsilon)*v_1(\epsilon)\|_{\ell^1} \right) \frac{4K_0e^{-2hK_0}}{1-e^{-2hK_0}} \sup_{\epsilon \in [0, \epsilon_0]}\|\bar{a}(\epsilon)\|_X + \delta, \\
	Z_{10} &\geq \sup_{\epsilon \in [0,\epsilon_0]} \|\pi^{\leq K_1} - A_K(\epsilon) P(\epsilon) DF(\bar{U}(\epsilon), \bar{m}-\epsilon^2) (\pi^{\leq K_1} - A_\infty(\epsilon) DF(\bar{U}(\epsilon), \bar{m}-\epsilon^2))P(\epsilon)\pi^{\leq K_1}\|_{\mathcal{B}(H_1)}, \\
	Z_{11} &\geq \sup_{\epsilon \in [0,\epsilon_0]} \|(\hat{\pi}^{k \leq K_1 + 4K_0} - \hat{\pi}^{\leq K_1}) \hat{D}_y^{-1} \hat{M}_{w(\epsilon)} \pi^{K_1 + 2K_0} D_aF(\bar{U}(\epsilon),\bar{m}-\epsilon^2)\hat{\pi}^{\leq K_1}\|_{\mathcal{B}(X_e)}, \\
	Z_{12} &\geq \sup_{\epsilon \in [0,\epsilon_0]} \|A_K(\epsilon) DF(\bar{U}(\epsilon),\bar{m}-\epsilon^2)\pi^{>K_1}\|_{\mathcal{B}(H_1)}.
	\end{align*}
	Finally, defining $Z_1$ as 
	\begin{equation}
	Z_1 \bydef \max\left\{Z_{10}+Z_{11}, ~  (1+Z_{12})Z_\infty\right\},
	\end{equation}
	we obtain the uniform bound
	\begin{equation*}
	\sup_{\epsilon \in [0,\epsilon_0]}\|I - A(\epsilon)DG_{\epsilon}(\bar{U}(\epsilon))\|_{\mathcal{B}(H_1)} \leq Z_1.
	\end{equation*}
\end{lemma}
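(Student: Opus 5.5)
The plan is to reduce the statement pointwise in $\epsilon$ to the single-wave estimate of Lemma \ref{lem : Z1 bound}, and then take suprema. The extra ingredient is an explicit description of how the scaling $P(\epsilon)$ enters $DG_\epsilon$.

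\textbf{Step 1: the derivative of $G_\epsilon$.} Differentiating \eqref{def : zero finding G} gives
\begin{equation*}
DG_\epsilon(V) = P(\epsilon)\, D_UF\big(\bar U_{\bar m-\epsilon^2}+\epsilon^2P(\epsilon)V,\ \bar m-\epsilon^2\big)\, P(\epsilon).
\end{equation*}
Here we use that the inner argument is affine in $V$ with linear part $\epsilon^2P(\epsilon)$, which cancels the prefactor $\epsilon^{-2}$. Evaluated at $\bar V(\epsilon)$, the inner argument is exactly $\bar U(\epsilon)$.

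The key structural fact is that $P(\epsilon)$ differs from the identity only on the modes $n=\pm1$, and $K_1\ge 2K_0\ge 1$. Hence $P(\epsilon)\pi^{>K_1}=\pi^{>K_1}P(\epsilon)=\pi^{>K_1}$. This also holds on the range side for the components that $A_\infty(\epsilon)$ sees, because $A_\infty(\epsilon)=\pi^{>K_1}A_\infty(\epsilon)$ and $\hat A_\infty$ only reads through $M_{w}$, $M_{w^*}$ followed by a projection onto $|k|>K_1$. The relevant statement is Lemma \ref{lem : tail of DG local} in the appendix, which I would invoke. Its content is that $A_\infty(\epsilon)\,DG_\epsilon(\bar V(\epsilon))\pi^{>K_1}$ coincides with $A_\infty(\epsilon)\,D_UF(\bar U(\epsilon),\bar m-\epsilon^2)\pi^{>K_1}$ up to terms that either vanish or are absorbed into the finite blocks. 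Formally, one has to check that $\pi^{>K_1}\hat D_y^{-1}M_{w}P(\epsilon)$ differs from $\pi^{>K_1}\hat D_y^{-1}M_w$ only through the modes $\pm1$, and these are mapped by $M_w$ into $|k|\le K_0+1<K_1$. So the difference vanishes, and $A_\infty(\epsilon)P(\epsilon)=A_\infty(\epsilon)$.

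\textbf{Step 2: pointwise block splitting.} For fixed $\epsilon$, I would repeat the proof of Lemma \ref{lem : Z1 bound} verbatim with $A=A(\epsilon)$ and $DG_\epsilon(\bar V(\epsilon))$ in place of $DF(\bar U)$. I split $H_1=\pi^{\le K_1}H_1\oplus\pi^{>K_1}H_1$ and bound the $1$-norm by the maximum of the two column blocks.

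The tail residual is $N_\infty(\epsilon)=A_\infty(\epsilon)DF(\bar U(\epsilon),\bar m-\epsilon^2)\pi^{>K_1}-\pi^{>K_1}$. It is bounded by Lemmas \ref{lem : operator C} and \ref{lem : A infinity properties} applied to $\bar a(\epsilon)$ and $w(\epsilon)$, which gives precisely the $\epsilon$-dependent version of the $Z_\infty$ expression. Taking the supremum in $\epsilon$, with $\delta$ redefined with suprema, yields $\|N_\infty(\epsilon)\|\le Z_\infty$ uniformly.

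The high-frequency column block is handled with the algebraic identity from Lemma \ref{lem : Z1 bound}. With $P(\epsilon)\pi^{>K_1}=\pi^{>K_1}$ it becomes $-N_\infty+A_K(\epsilon)P(\epsilon)DF\,N_\infty$. The factor $A_K(\epsilon)P(\epsilon)DF\pi^{>K_1}$ is controlled by $Z_{12}$. I would note that $P(\epsilon)$ acting before $A_K$ can be absorbed into the definition of $Z_{12}$ by applying the same localization, or simply by bounding the computed quantity including $P(\epsilon)$. This gives $(1+Z_{12})Z_\infty$.

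The low-frequency column block splits, exactly as before, into the Galerkin residual appearing in $Z_{10}$, where $P(\epsilon)$ is kept explicitly, and the leakage $\pi^{>K_1}\hat D_y^{-1}M_{w(\epsilon)}D_aF\,\hat\pi^{\le K_1}$. The leakage is supported in the finite window $K_1<|k|\le K_1+4K_0$ and is bounded by $Z_{11}$. I would verify there that the right factor $P(\epsilon)\pi^{\le K_1}$ can be dropped from the leakage term, or is harmless, since the leakage term is measured after projecting onto high modes.

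\textbf{Main obstacle.} The delicate point is the bookkeeping of $P(\epsilon)$, with its $1/\epsilon$ factor, in the cross terms. The $1/\epsilon$ factor blows up as $\epsilon\to0$, so every appearance outside $Z_{10}$ must be shown to cancel exactly via the support argument rather than be bounded. Inside $Z_{10}$ it is handled by the polynomial-in-$\epsilon$ structure, as in Section \ref{sec : G is polynomial in epsilon}: the $1/\epsilon$ multiplies only terms of order $\epsilon$, and the uniform supremum is then computable with the Chebyshev enclosures of Appendix \ref{sec : appendix Chebyshev norm}. Once these cancellations are established, the remainder is a direct transcription of Lemma \ref{lem : Z1 bound} with suprema over $[0,\epsilon_0]$.
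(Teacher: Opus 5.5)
Your overall route is the paper's: its proof is a one-line appeal to the proofs of Lemmas~\ref{lem : Z1 bound} and~\ref{lem : tail of DG local}. Your tail-side bookkeeping is correct. Indeed $P(\epsilon)\pi^{>K_1}=\pi^{>K_1}$, and $A_\infty(\epsilon)P(\epsilon)=A_\infty(\epsilon)$ because $M_{w(\epsilon)}$ moves modes $\pm1$ only to $|k|\le K_0+1\le K_1$. Hence the high-frequency column is $-N_\infty+A_K(\epsilon)P(\epsilon)D_UF\,N_\infty$.

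The step that fails as you describe it is the leakage block. Using $A_\infty(\epsilon)P(\epsilon)=A_\infty(\epsilon)$,
\[
\pi^{>K_1}\big(I-A(\epsilon)DG_\epsilon(\bar V(\epsilon))\big)\pi^{\le K_1}=-A_\infty(\epsilon)\,D_UF(\bar U(\epsilon),\bar m-\epsilon^2)\,P(\epsilon)\pi^{\le K_1}.
\]
This $P(\epsilon)$ acts on the \emph{input}: it multiplies the columns $n=\pm1$ by $1/\epsilon$. Projecting the output onto $|k|>K_1$ does nothing to an input-side scaling. The support argument would require $A_\infty(\epsilon)D_UF\,\mathcal{P}=0$, which is false in general. $M_{v_3}D_x$ and $M_{v_4}D_y$ already carry mode $1$ up to frequency $2K_0+1$, and $M_{v_1}\mathcal{C}[\bar a]$ carries it up to $3K_0+1$. $M_{w(\epsilon)}$ then adds $K_0$ more, which exceeds $K_1$ for the paper's choice $K_0=200$, $K_1=402$. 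So those columns are $\epsilon^{-1}A_\infty(\epsilon)D_UF$ applied to the modes $\pm1$, and $Z_{11}$ as written (without $P(\epsilon)$) does not control them. Contrary to your ``main obstacle'' paragraph, there is no exact cancellation here.

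What rescues it is vanishing, not support. At $\epsilon=0$ the state $\bar U(0)=\bar U_{\bar m}$ is flat, so $D_UF(\bar U_{\bar m},\bar m)$ maps modes $\pm1$ to modes $\pm1$, and $M_{w(0)}$ cannot lift them past $K_0+1<K_1$. Hence $A_\infty(\epsilon)D_UF(\bar U(\epsilon),\bar m-\epsilon^2)\mathcal{P}$ is a polynomial in $\epsilon$ vanishing at $0$. Its quotient by $\epsilon$ is again a polynomial and can be enclosed with the Chebyshev tools of Appendix~\ref{sec : appendix Chebyshev norm}. But this must be built into the constant, by keeping $P(\epsilon)\pi^{\le K_1}$ on the right of the leakage operator defining $Z_{11}$. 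So the lemma with $Z_{11}$ exactly as stated does not follow, and the paper's one-line proof passes over the same point.

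A milder instance affects the left $P(\epsilon)$ in $A_K(\epsilon)P(\epsilon)D_UF\pi^{>K_1}$, which you remove via Lemma~\ref{lem : tail of DG local}. That lemma counts a spread of $2K_0$, but $M_{v_1}\mathcal{C}[\bar a]$ spreads by $3K_0$, as the proof of Lemma~\ref{lem : Z1 bound} itself notes. So for $K_1=2K_0+2$ the rows $\pm1$ are reached. They also vanish at $\epsilon=0$, so the $1/\epsilon$ is harmless, but again by this vanishing argument rather than by exact cancellation.
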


\begin{lemma}\label{lem: Y bounds}
	Let $Z_{12}$ be defined as in Lemma \ref{lem : Z1 epsilon}. We define $Y$ as
	\begin{align*}
	Y &\bydef \sup_{\epsilon \in [0,\epsilon_0]} \frac{1}{\epsilon^2} \|A_K(\epsilon)(I - DG_\epsilon(\bar{V}(\epsilon)) A_\infty(\epsilon)) P(\epsilon) F(\bar{U}(\epsilon), \bar{m}-\epsilon^2)\|_{H_1} \\
	&\quad + \frac{K_1+2}{K_1+1} \left( \sup_{\epsilon \in [0,\epsilon_0]}\|\pi^{k>K_1}w(\epsilon)*f(\bar{U}(\epsilon), \bar{m}-\epsilon^2)\|_{\ell^1} + \sup_{\epsilon \in [0,\epsilon_0]}\|\pi^{k<-K_1}w(\epsilon)^**f(\bar{U}(\epsilon), \bar{m}-\epsilon^2)\|_{\ell^1}\right).
	\end{align*}
	Then, the approximate inverse applied to the residual satisfies the uniform bound
	\begin{equation*}
	\sup_{\epsilon \in [0, \epsilon_0]}\|A(\epsilon)G_{\epsilon}(\bar{V}(\epsilon))\|_{H_1} \leq Y.
	\end{equation*}
\end{lemma}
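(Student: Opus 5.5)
We follow the single-wave argument of Lemma \ref{lem : Y bound single wave} and carry it out pointwise in $\epsilon$, taking suprema at the end.

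Fix $\epsilon \in (0,\epsilon_0]$; the case $\epsilon=0$ is recovered by continuity, since $G_\epsilon$ is polynomial in $\epsilon$. Substituting the decomposition \eqref{eq : approx inverse local bif} of $A(\epsilon)$ gives
\[
A(\epsilon)G_\epsilon(\bar V(\epsilon)) = A_K(\epsilon)\big(I - DG_\epsilon(\bar V(\epsilon))A_\infty(\epsilon)\big)G_\epsilon(\bar V(\epsilon)) + A_\infty(\epsilon)G_\epsilon(\bar V(\epsilon)).
\]
The triangle inequality then splits the bound into a finite-rank term and a tail term.

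For the first term, we use the definition \eqref{def : zero finding G} together with $\bar U(\epsilon) = \bar U_{\bar m-\epsilon^2} + \epsilon^2 P(\epsilon)\bar V(\epsilon)$, which gives the identity
\[
G_\epsilon(\bar V(\epsilon)) = \frac{1}{\epsilon^2} P(\epsilon) F(\bar U(\epsilon),\bar m-\epsilon^2).
\]
Inserting this reproduces exactly the first supremum in the definition of $Y$. That expression is a Chebyshev polynomial in $\epsilon$ acting on finitely supported sequences, so it is computable. The apparent $1/\epsilon^2$ singularity cancels because $F$ vanishes to the appropriate order along the flat branch, which is what the polynomial structure of Section \ref{sec : G is polynomial in epsilon} guarantees.

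For the tail term, note that $A_\infty(\epsilon) = \pi^{>K_1}A_\infty(\epsilon)$, and that $P(\epsilon)$ differs from the identity only on the modes $n=\pm1$ (and acts trivially on the $Q$-component). Since $\hat A_\infty(\epsilon)$ only outputs modes with $|k|>K_1$, while $w(\epsilon)$ has support at most $K_0$, the inputs to $\hat A_\infty(\epsilon)$ that contribute have $|k| > K_1-K_0 \geq K_0 > 1$. These are untouched by the $1/\epsilon$ scaling. Hence $A_\infty(\epsilon)P(\epsilon) = A_\infty(\epsilon)$.

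A second factor of $\epsilon^2$ is still needed. Here we use that the flat state $\bar U_{\bar m-\epsilon^2}$ is an exact zero and that the constructed profile is $\epsilon^2 P(\epsilon)\bar V$. Then $f(\bar U(\epsilon),\bar m-\epsilon^2)$ is $\epsilon^2$ times a polynomial in $\epsilon$, so the residual $\epsilon^{-2} f$ carries the same high-frequency structure. We then repeat the computation of Lemma \ref{lem : Y bound single wave}: commute $\hat{\mathcal W}$ through $\hat D_y^{-1}$, bound the multiplier ratio $(1+|k|)/|k|$ by $\frac{K_1+2}{K_1+1}$, and use the disjoint positive and negative supports. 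Taking the supremum over $\epsilon$ then yields the second bracket in $Y$.

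The main obstacle is the bookkeeping of the factor $\epsilon^{-2}$ in the tail term, that is, reconciling the stated tail bound (written with $f$ rather than $\epsilon^{-2}f$) with the scaling. I would try first to verify that on modes $|k|>K_1$ the residual $f(\bar U(\epsilon),\cdot)$ consists only of products of higher harmonics, each of order at least $\epsilon^2$. If this fails, I would interpret the supremum as being taken of the rescaled polynomial residual, which is what the Chebyshev arithmetic evaluates in practice. In either case, the remaining estimates are identical to the single-wave lemma.
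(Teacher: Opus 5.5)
Your route is the same as the paper's. You substitute $G_\epsilon(\bar V(\epsilon))=\epsilon^{-2}P(\epsilon)F(\bar U(\epsilon),\bar m-\epsilon^2)$ into $A(\epsilon)=A_K(\epsilon)\big(I-DG_\epsilon(\bar V(\epsilon))A_\infty(\epsilon)\big)+A_\infty(\epsilon)$, note that the tail does not see $P(\epsilon)$, and rerun Lemma \ref{lem : Y bound single wave}. Your finite-rank term is exactly the first supremum in $Y$. Your justification of $A_\infty(\epsilon)P(\epsilon)=A_\infty(\epsilon)$ is correct: the tail only reads modes $|k|>K_1-K_0\ge K_0>1$ and discards the $Q$-component. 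It is also more explicit than the paper, whose proof only says the remainder ``follows identically''.

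The gap is your sentence that taking suprema ``yields the second bracket in $Y$''. It does not. Since $A_\infty(\epsilon)G_\epsilon(\bar V(\epsilon))=\epsilon^{-2}A_\infty(\epsilon)F(\bar U(\epsilon),\bar m-\epsilon^2)$, the single-wave computation gives
\[
\|A_\infty(\epsilon)G_\epsilon(\bar V(\epsilon))\|_{H_1}\le\frac{K_1+2}{K_1+1}\,\frac{1}{\epsilon^{2}}\Big(\|\pi^{k>K_1}(w(\epsilon)*f)\|_{\ell^1}+\|\pi^{k<-K_1}(w(\epsilon)^{*}*f)\|_{\ell^1}\Big),\qquad f=f(\bar U(\epsilon),\bar m-\epsilon^2).
\]
Your plan (1) cannot remove the factor $\epsilon^{-2}$. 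At every fixed $\epsilon$, the quantity you need is exactly $\epsilon^{-2}$ times the corresponding term in $Y$, and $\epsilon^{-2}\ge\epsilon_0^{-2}=(\bar m+0.1)^{-1}>1$. Showing that the high modes of $f$ vanish to order two or more in $\epsilon$ only shows that the rescaled tail is a polynomial, i.e.\ that the singularity is removable. It does not show that the unscaled supremum dominates the rescaled one.

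So the definition of $Y$ as stated is missing a factor $\epsilon^{-2}$ in its two tail terms. Equivalently, the prefactor $\frac{1}{\epsilon^2}$ should act on the whole expression. The paper's one-line proof makes the same slip, and neither your argument nor the paper's proves the lemma as literally written.

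Commit to your option (2), and state it as a correction of the lemma rather than an interpretation. Replace the tail terms by $\sup_{\epsilon}\epsilon^{-2}\|\pi^{k>K_1}(w(\epsilon)*f)\|_{\ell^1}$ and its negative-frequency analogue. These are suprema of polynomials in $\epsilon$, because $f(\bar U(\epsilon),\bar m-\epsilon^2)=O(\epsilon^2)$ by the expansion of Section \ref{sec : G is polynomial in epsilon}. They can therefore be bounded with the same Chebyshev arithmetic.
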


\begin{lemma}\label{lem : Z2 epsilon}
	Let $r>0$ and denote the physical background flow by $(Q(\epsilon), a(\epsilon)) = \bar{U}(\epsilon) \bydef \bar{U}_{\bar{m}-\epsilon^2} + \epsilon^2 P(\epsilon) \bar{V}(\epsilon)$. Let $Z_{A}, Z_{D^2,1}, Z_{D^2,2}$, and $Z_{D^3}(r)$ be positive bounds satisfying 
	\begin{align*}
	Z_{A} &\geq  \sup_{\epsilon \in [0,\epsilon_0]} \epsilon \|\mathcal{W}A(\epsilon)P(\epsilon)\|_{\mathcal{B}(\ell^1)}, \\
	Z_{D^2,1} &\geq \sup_{\epsilon \in [0, \epsilon_0]} \|A(\epsilon) P(\epsilon) D^2F(\bar{U}(\epsilon),\bar{m}-\epsilon^2)(\bar{Z},\bar{Z})\|_{\mathcal{B}(H_1)}, \\
	Z_{D^2,2} &\geq 2\gamma^2\left(\frac{2\sup_{\epsilon \in [0,\epsilon_0]}\|\bar{a}(\epsilon)\|_{X} + \sup_{\epsilon \in [0,\epsilon_0]}\|\bar{a}(\epsilon)\|_{\ell^1}}{h}\right)^2 \\
	&\quad + \frac{6}{h}|\gamma| \sup_{\epsilon \in [0,\epsilon_0]} \left\|mD_y e_0 + \frac{\gamma}{2} D_y(\bar{a}^2(\epsilon))  - \gamma \bar{a}(\epsilon)*(D_y\bar{a}(\epsilon))\right\|_{\ell^1}\\
	&\quad + \left(1 + \frac{1}{h^2}\right)\left( 4(1 + 2g) \sup_{\epsilon \in [0,\epsilon_0]}\|\bar{a}(\epsilon)\|_X + 2\sup_{\epsilon \in [0,\epsilon_0]}\|\bar{Q}(\epsilon)e_0 - 2g\bar{a}(\epsilon)\|_{\ell^1}\right), \\ 
	Z_{D^3}(r) &\geq  27\frac{|\gamma|^2}{h^2}\left(\sup_{\epsilon \in [0,\epsilon_0]}\|\bar{a}(\epsilon)\|_X +r\right) + 4g\left(1 + \frac{1}{h^2}\right)  \\
	&\quad + \left(\sup_{\epsilon\in [0,\epsilon_0]}|\bar{Q}(\epsilon)| + 2g\right)\left(1 + \frac{1}{h^2}\right) + 3\left(\sup_{\epsilon\in [0,\epsilon_0]} |\bar{Q}(\epsilon)| +r\right)\left(1 + \frac{1}{h^2}\right).
	\end{align*}
	Furthermore, define $Z_2(r)$ as 
	\begin{equation*}
	Z_2(r) \bydef \max\left\{Z_{D^2,1}, ~ Z_{A} Z_{D^2,2},~ \epsilon_0 Z_{A} Z_{D^2,2}\right\} + Z_{A} Z_{D^3}(r) r.
	\end{equation*}
	Then, for all $\epsilon \in [0, \epsilon_0]$ and for all $h \in B_r(0)$, the scaled Jacobian satisfies the Lipschitz bound
	\begin{equation*}
	\|A(\epsilon)\left[ DG_\epsilon(\bar{V}(\epsilon) +h,\bar{m}-\epsilon^2) - DG_\epsilon(\bar{V}(\epsilon), \bar{m}-\epsilon^2) \right]\|_{\mathcal{B}(H_1)} \leq  Z_2(r)r.
	\end{equation*}
\end{lemma}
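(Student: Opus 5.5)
The plan is to reduce everything to the single-wave estimates of Lemma~\ref{lem : Z2 single wave} through the explicit structure of $G_\epsilon$. Differentiating \eqref{def : zero finding G} with the chain rule, the factor $\epsilon^{-2}$ cancels against the $\epsilon^2$ in the argument, so for every $V\in H_1$
\[
DG_\epsilon(V) = P(\epsilon)\, D_UF\big(\bar U_{\bar m-\epsilon^2}+\epsilon^2P(\epsilon)V,\ \bar m-\epsilon^2\big)\, P(\epsilon).
\]
With $\bar U(\epsilon)=\bar U_{\bar m-\epsilon^2}+\epsilon^2P(\epsilon)\bar V(\epsilon)$ and $\|h\|_{H_1}\le r$, the difference to be bounded is therefore
\[
A(\epsilon)P(\epsilon)\big[DF(\bar U(\epsilon)+\epsilon^2P(\epsilon)h)-DF(\bar U(\epsilon))\big]P(\epsilon).
\]
Since $f$ is a polynomial of degree four in $(Q,a)$, I will expand by Taylor's formula with integral remainder. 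This gives a bilinear term $D^2F(\bar U(\epsilon))(\epsilon^2P(\epsilon)h,\,P(\epsilon)u)$ and a cubic remainder $\tfrac12 D^3F(\cdot)(\epsilon^2P(\epsilon)h,\epsilon^2P(\epsilon)h,P(\epsilon)u)$, each followed by $A(\epsilon)P(\epsilon)$.

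Next I split each argument along $\mathcal P\oplus\mathcal P^\perp$. This gives $\epsilon^2P(\epsilon)h=\epsilon\,\mathcal Ph+\epsilon^2\mathcal P^\perp h$ and $P(\epsilon)u=\epsilon^{-1}\mathcal Pu+\mathcal P^\perp u$. The range of $\mathcal P$ is $\mathrm{span}(\bar Z)$, so $\mathcal Ph=\alpha\bar Z$ and $\mathcal Pu=\beta\bar Z$, with $|\alpha|,|\beta|$ controlled by the $H_1$ norms (note $\|\bar Z\|_{H_1}=4$, which has to be tracked or normalised).

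The only product without a spare power of $\epsilon$ is the kernel--kernel term. There $\epsilon\cdot\epsilon^{-1}=1$, leaving $\alpha\beta\,A(\epsilon)P(\epsilon)D^2F(\bar U(\epsilon))(\bar Z,\bar Z)$, which is bounded by $Z_{D^2,1}\,r$.

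Every other product carries at least one factor $\epsilon$. I will write such a term as $\epsilon\,\|\mathcal W A(\epsilon)P(\epsilon)\|_{\mathcal B(\ell^1)}$ times an $\ell^1$ norm of $D^2f(\bar U(\epsilon))(\cdot,\cdot)$. The $\ell^1$ norm is estimated exactly as in the proof of Lemma~\ref{lem : Z2 single wave}, using formula \eqref{eq : second derivative F}, the Banach algebra property (Lemma~\ref{lem:Banach_algebra}) and \eqref{eq : estimates Dx and Dy}, with suprema over $\epsilon$ of $\|\bar a(\epsilon)\|_X$ and $|\bar Q(\epsilon)|$. This produces $Z_AZ_{D^2,2}\,r$. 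In the perp--perp term the power is $\epsilon^2=\epsilon\cdot\epsilon$, and the extra $\epsilon\le\epsilon_0$ explains the alternative $\epsilon_0Z_AZ_{D^2,2}$ in the maximum. Taking the largest coefficient among these blocks (using the $\ell^1$ block structure of the operator norm) gives $\max\{Z_{D^2,1},Z_AZ_{D^2,2},\epsilon_0Z_AZ_{D^2,2}\}\,r$.

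For the cubic remainder, the worst factor is $\epsilon^2\cdot\epsilon^{-1}=\epsilon$, which is again absorbed into $Z_A$. The trilinear norm is bounded by the $\sup_{a\in B_r}$ estimates \eqref{eq : estimate 3rd deriv g}, \eqref{eq : estimate 3rd deriv p with a} and \eqref{eq : estimate third derivative p with Q}, which give $Z_AZ_{D^3}(r)\,r^2$. Summing the quadratic and cubic contributions yields $Z_2(r)\,r$.

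The main obstacle is the kernel--kernel term: the $\epsilon^{-1}$ from the outer $P(\epsilon)$ makes it look singular. I will handle it as the definition of $Z_{D^2,1}$ suggests, keeping $A(\epsilon)P(\epsilon)D^2F(\bar Z,\bar Z)$ as one object and checking its boundedness as $\epsilon\to0$. Because $F$ is polynomial in $\epsilon$ (Section~\ref{sec : G is polynomial in epsilon}), this operator is a polynomial matrix in $\epsilon$. I therefore expect to verify, with the Chebyshev enclosures of Appendix~\ref{sec : appendix Chebyshev norm}, that its $\epsilon^{-1}$ part cancels. This cancellation reflects the pitchfork symmetry: $\mathcal P D^2F(\bar Z,\bar Z)$ contains only modes $0,\pm2$ at leading order, so its projection onto the $\pm1$ modes is $O(\epsilon)$.
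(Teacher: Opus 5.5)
Your proposal is correct and follows the paper's proof essentially step by step. Both use the chain-rule identity $DG_\epsilon = P(\epsilon)D_UF(\cdot)P(\epsilon)$ and a second-order mean-value/Taylor expansion. Both then split each argument along $\mathcal P\oplus\mathcal P^\perp$, so that only the kernel--kernel block lacks a spare power of $\epsilon$ and gives $Z_{D^2,1}$, while the mixed and perp--perp blocks give $Z_AZ_{D^2,2}$ and $\epsilon_0Z_AZ_{D^2,2}$. Finally, the cubic term is absorbed into $Z_AZ_{D^3}(r)$ through $\|\epsilon P(\epsilon)U\|_{H_1}\le\|U\|_{H_1}$.

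The one difference is emphasis. The paper takes $Z_{D^2,1}$ as a hypothesis, so the $\epsilon^{-1}$ cancellation you plan to verify is not part of the proof of this lemma. That cancellation is only what makes $Z_{D^2,1}$ finite, via Lemma~\ref{lem : identity projections}.
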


\subsubsection{Explicit computation of the bounds}

In Section \ref{sec : G is polynomial in epsilon}, we established that the residual $G_\epsilon$ is a polynomial of degree $4N$ in $\epsilon$. By the definition of $G_\epsilon$ in \eqref{def : zero finding G}, it immediately follows that the Jacobian $DG_\epsilon(\bar{V}(\epsilon))$ is a polynomial of degree $3N$ in $\epsilon$, which consequently allows us to construct the approximate inverse $A(\epsilon)$ as a polynomial of degree $5N$. 

Crucially, because all the estimates derived in the previous section rely on bounding the supremum of these objects over the continuous interval $[0,\epsilon_0]$, the problem reduces to bounding polynomials in $\epsilon$. This allows us to directly apply the rigorous Chebyshev arithmetic detailed in Section \ref{sec : appendix Chebyshev norm}. The full implementation details and the automated rigorous computation of these bounds are provided in \cite{julia_cadiot}. 

Finally, we note that the $\epsilon^{-2}$ scaling in the residual introduces a removable singularity at the flat-water limit. To evaluate the bounds at $\epsilon = 0$, the software utilizes the operator $\bar{G}$ defined in \eqref{def : operator bar G}, which rigorously captures the continuous limit $\displaystyle\lim_{\epsilon \to 0}G_\epsilon$, thereby avoiding singular behaviors.

\section{Formulation of the Global Branch}\label{sec : global bif}
In the previous sections, we developed the framework to prove the existence of a solution for a fixed parameter $m$, as well as the existence of a local branch of solutions bifurcating from the flat state. In principle, the implicit function theorem ensures the local existence of a branch of solutions parameterized by $m$. However, the implicit function theorem is purely qualitative; it does not quantify the length of the branch, nor does it provide bounds on how the solution evolves with the parameter.

Our goal in this section is to develop a rigorous parameter continuation framework, allowing us to constructively prove the existence of macroscopic branches of solutions smoothly parameterized by $m$. This will allow us to globally extend the local branch obtained in Section \ref{sec : local bif}. 

Throughout this section, we explicitly emphasize the dependence of the residual on the mass flux by writing $F(U,m)$. Recalling that $F$ depends polynomially on $m$ (cf. \eqref{def : zero finding F}), it follows that $F : H_1 \times \mathbb{R} \to H_0$ is a smooth map. We adopt the same notation for the high-frequency residual, writing $f(U,m)$.

Our approach relies on the uniform contraction mapping theorem, similar to the methodology presented in Section \ref{sec : local bif}. For clarity, we recall the relevant parameter-dependent formulation of the Newton-Kantorovich theorem.

\begin{theorem}\label{th : radii polynomial continuation}
	Let $m_0 < m_1$ be real numbers and consider $F = F(U,m)$ as defined in \eqref{def : zero finding F}. For all $m \in [m_0, m_1]$, let $A(m) : H_0 \to H_1$ be an injective, bounded linear operator, and let $Y, Z_1,$ and $Z_2(r)$ be non-negative bounds satisfying
	\begin{equation}\label{eq: definition Y0 Z1 Z2 continuation}
	\begin{aligned}
	\sup_{m \in [m_0,m_1]} \|A(m)F(\bar{U}(m),m)\|_{H_1} &\leq Y, \\
	\sup_{m \in [m_0,m_1]} \|I - A(m)D_UF(\bar{U}(m),m)\|_{\mathcal{B}(H_1)} &\leq Z_1, \\
	\sup_{m \in [m_0,m_1]} \|A(m)\left[D_UF(\bar{U}(m) + h,m) - D_UF(\bar{U}(m),m) \right]\|_{\mathcal{B}(H_1)} &\leq Z_2(r)r,
	\end{aligned}
	\end{equation}  
	for all $h \in \overline{B_r(0)} \subset H_1$. If there exists an $r>0$ such that the radii polynomials satisfy
	\begin{equation}\label{eq : condition contraction continuation}
	\frac{1}{2}Z_2(r)r^2 - (1-Z_1)r + Y < 0 \quad \text{and} \quad Z_1 + Z_2(r)r < 1,
	\end{equation}
	then for every $m \in [m_0,m_1]$, there exists a unique $\tilde{U}(m) \in \overline{B_r(\bar{U}(m))} \subset H_1$ solving $F(\tilde{U}(m),m) = 0$. Moreover, the mapping $[m_0, m_1] \to H_1 : m \mapsto \tilde{U}(m)$ is of class $C^\infty$.
\end{theorem}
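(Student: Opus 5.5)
The plan is to reduce to the fixed-parameter Newton--Kantorovich theorem (Theorem \ref{th : radii polynomial}) applied pointwise in $m$, and then obtain smoothness of $m\mapsto\tilde U(m)$ from the implicit function theorem together with uniqueness in the balls.

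\textbf{Existence and uniqueness for each $m$.} Fix $m\in[m_0,m_1]$ and set $X_1=H_1$, $X_2=H_0$, $\bar u=\bar U(m)$, and $A=A(m)$. The three uniform bounds in \eqref{eq: definition Y0 Z1 Z2 continuation} imply the three hypotheses of Theorem \ref{th : radii polynomial} at this $m$ with the same constants $Y$, $Z_1$ and function $Z_2(r)$. The radii polynomial conditions \eqref{eq : condition contraction continuation} are therefore exactly \eqref{condition radii polynomial}. Theorem \ref{th : radii polynomial} then gives a unique zero $\tilde U(m)$ of $F(\cdot,m)$ in $\overline{B_r(\bar U(m))}$. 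For later use, recall why: $T_m(U)=U-A(m)F(U,m)$ is a contraction on that ball with Lipschitz constant $\kappa\bydef Z_1+Z_2(r)r<1$, and this constant does not depend on $m$.

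\textbf{Invertibility of the linearization.} For every $U\in\overline{B_r(\bar U(m))}$ we have $\|I-A(m)D_UF(U,m)\|\le Z_1+Z_2(r)r<1$. A Neumann series argument then shows that $A(m)D_UF(\tilde U(m),m)$ is invertible on $H_1$. It remains to see that $D_UF(\tilde U(m),m):H_1\to H_0$ is itself an isomorphism. Injectivity of $D_UF$ is immediate. For surjectivity, use that $A(m)$ is injective and $A(m)D_UF$ is onto $H_1$; these give $H_0=\operatorname{Ran}D_UF\oplus\ker A(m)=\operatorname{Ran}D_UF$. In the setting of the paper, $A$ is built in Lemma \ref{lem : Z1 bound} from an invertible tail and a finite block, so $A$ is in fact bijective, and the decomposition can be justified directly; alternatively one can note that $D_UF$ is Fredholm of index zero. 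This is the step I expect to need the most care, since mere injectivity of $A$ is too weak to conclude.

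\textbf{Continuity and smoothness of the branch.} $F$ is polynomial in $(U,m)$, hence $C^\infty$ from $H_1\times\mathbb R$ to $H_0$. The implicit function theorem, applied at each $(\tilde U(m^\ast),m^\ast)$, gives a local $C^\infty$ curve $m\mapsto U^\ast(m)$ of zeros through $\tilde U(m^\ast)$. To identify it with $\tilde U(m)$, I would first show that $\tilde U$ is continuous. This follows from the uniform contraction principle: $T_m(U)$ depends continuously on $m$ for fixed $U$, since $\bar U(m)$ and $A(m)$ are continuous (polynomial, via Chebyshev expansions) in $m$, and a fixed point varying under a uniform contraction constant $\kappa$ inherits this continuity. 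The condition $\frac12 Z_2 r^2-(1-Z_1)r+Y<0$ is strict, so $\tilde U(m^\ast)$ lies in the open ball $B_r(\bar U(m^\ast))$. Hence, for $m$ near $m^\ast$, both $U^\ast(m)$ and $\tilde U(m)$ lie in $\overline{B_r(\bar U(m))}$, and uniqueness there forces $U^\ast(m)=\tilde U(m)$. Consequently $\tilde U$ is $C^\infty$ near every $m^\ast\in[m_0,m_1]$, with one-sided versions at the endpoints, and therefore $C^\infty$ on the whole interval.
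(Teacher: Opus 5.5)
Your argument is correct, but it obtains smoothness by a different mechanism than the paper. The paper gives no separate proof of this theorem. It treats it as the uniform contraction mapping theorem from the literature (cf.\ \cite{cont_suspension_bridge}). In that argument the recentred Newton map $V\mapsto V-A(m)F(\bar U(m)+V,m)$ is a contraction of the fixed ball $\overline{B_r(0)}\subset H_1$ with constant $Z_1+Z_2(r)r$, uniformly in $m$. Since $F$, $\bar U(m)$ and $A(m)$ are all polynomial in $m$, the fixed point inherits $C^\infty$ dependence. The only linearization involved is $A(m)D_UF$ acting on $H_1$, which is invertible by a Neumann series.

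You instead apply Theorem~\ref{th : radii polynomial} pointwise and upgrade invertibility of $A(m)D_UF$ to invertibility of $D_UF(\tilde U(m),m):H_1\to H_0$. You then apply the implicit function theorem to $F$ itself. Finally, you identify the local implicit-function curves with $\tilde U$, using uniqueness in the ball and the strict radii inequality, which places $\tilde U(m^\ast)$ in the open ball.

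The paper's route is shorter and never has to compare the two spaces, but it needs $A(m)$ to be smooth in $m$. Yours needs only continuity of $\bar U(m)$. That hypothesis is omitted from the statement but is genuinely needed, and you rightly supply it from the Chebyshev construction. As a by-product, your route shows that $D_UF(\tilde U(m),m)$ is an isomorphism. Lemma~\ref{lem : enclosure of DmU(m)} tacitly relies on this when it writes $D_UF(\tilde U(m),m)^{-1}$.

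Two remarks on your write-up. First, your closing worry is misplaced: injectivity of $A(m)$ is exactly what is needed. Set $L=D_UF(\tilde U(m),m)$ and take $y\in H_0$. Then $x=(A(m)L)^{-1}A(m)y$ satisfies $A(m)(Lx-y)=0$, hence $Lx=y$. In fact $L(A(m)L)^{-1}A(m)$ is a projection with range $\operatorname{Ran}L$ and kernel $\ker A(m)$, so your decomposition $H_0=\operatorname{Ran}L\oplus\ker A(m)$ holds in general. The appeals to bijectivity of $A$, which you do not justify, or to a Fredholm index are therefore unnecessary.

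Second, your preliminary proof that $\tilde U$ is continuous is never used. The identification $U^\ast(m)=\tilde U(m)$ only requires continuity of the implicit-function curve $U^\ast$ and of $\bar U$ at $m^\ast$. That step, and with it any use of regularity of $A(m)$, can be dropped.
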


In the remainder of this section, we detail how we computationally construct the approximate solution curve $\bar{U}(m)$ and the inverse $A(m)$ using Chebyshev series expansions in the parameter $m$. We then provide the explicit construction of the bounds $Y, Z_1,$ and $Z_2$, which rely on a combination of analytical estimates and rigorous numerics. The full application of this framework to our water wave problem is presented in Section \ref{ssec : proof of an entire branch}.

\subsection{Construction of approximate objects}\label{ssec : approx objects global branch}

In order to apply Theorem \ref{th : radii polynomial continuation}, we must construct an approximate branch of solutions $\{\bar{U}(m), ~ m \in [m_0,m_1]\}$ alongside a branch of approximate inverses $\{A(m), ~ m \in [m_0, m_1]\}$ for the Jacobians $\{D_UF(\bar{U}(m),m), ~ m \in [m_0, m_1]\}$. To achieve this, we follow the methodology introduced in Section \ref{sec : local bif}, with further technical details provided in Appendix \ref{sec : appendix Chebyshev global}.

Specifically, for a given $N \in \mathbb{N}$, we seek an approximate continuous branch of the form
\begin{equation}\label{eq : approx sol global branch}
\bar{U}(m) \bydef \bar{U}_0 + 2 \sum_{n=1}^{N} \bar{U}_n T_n\left(\frac{2(m-m_1)}{m_1-m_0} +1 \right), 
\end{equation}
where $T_n : [-1,1] \to \mathbb{R}$ is the $n$-th Chebyshev polynomial of the first kind, and the coefficients are given by $\bar{U}_n = (Q_n,(\bar{a}_{n,k})_{k \in \mathbb{Z}}) \in H_1$ for all $n \in \{0, \dots, N\}$. 

Now we focus on constructing the branch of approximate inverses $A(m)$. Similar to the approach taken in Section \ref{sec : local bif}, the global approximate inverse $A(m)$ relies on a decomposition into a finite-dimensional block $A_K(m)$ and an infinite-dimensional tail $A_\infty(m)$. We begin with the construction of the tail portion, $A_\infty(m)$.

First, we introduce an approximate inverse map $w : [m_0, m_1] \to X$, represented by the Chebyshev expansion
\begin{equation}\label{eq : w global branch}
w(m) \bydef w_0 + 2 \sum_{n=1}^{N} w_n T_n\left(\frac{2(m-m_1)}{m_1-m_0} +1 \right).
\end{equation}
Here, $w(m)$ is specifically constructed to serve as an approximate inverse for the asymptotic symbol $i v_3(m) + v_4(m)$, where the components are defined by the physical background flow as
\begin{align*}
v_3(m) &= -2(\bar{Q}-2g \bar{a}(m))*(D_x \bar{a}(m)), \\
v_4(m) &= -2(\bar{Q}-2g \bar{a}(m))*(D_y \bar{a}(m)).
\end{align*}

Now, we construct the infinite-dimensional tail of the approximate inverse, $\hat{A}_\infty(m)$, as 
\begin{equation*}
\hat{A}_\infty(m) \bydef \pi^{k<-K_1} \hat{D}_y^{-1} M_{w(m)^*}  + \pi^{k>K_1} \hat{D}_y^{-1} M_{w(m)},
\end{equation*}
similar to the definition in \eqref{def : A infinity}. By construction, $\hat{A}_\infty(m)$ is a polynomial of degree $N$ in $m$. We then construct the full-space operator $A_\infty(m) : H_0 \to H_1$ exactly as in \eqref{eq : def A infinity full space}. Recall that, by design, $A_\infty(m)$ approximates the inverse of the high-frequency tail of the Jacobian $D_UF(\bar{U}(m),m)$.

Next, we compute the finite-dimensional approximation block $A_K(m)$ as a Chebyshev expansion:
\begin{equation}
A_K(m) = (A_K)_0 + 2\sum_{n=1}^{N} (A_K)_n T_n\left(\frac{2(m-m_1)}{m_1-m_0} +1 \right),
\end{equation}
where each coefficient $(A_K)_n = \pi^{\leq K_1}(A_K)_n\pi^{\leq K_1}$ has a $(2K_1+1) \times (2K_1+1)$ matrix representation. 

Consequently, $A_K(m)$ is a polynomial of degree $N$ in $m$. Finally, the global approximate inverse $A(m)$ is assembled using the standard preconditioning formula \eqref{def : approx inverse}, yielding
\begin{equation}\label{eq : approx inverse global bif}
A(m) = A_K(m) - A_K(m) D_UF(\bar{U}(m),m) A_\infty(m) + A_\infty(m) \quad \text{for all } m \in [m_0, m_1].
\end{equation}
Since the Jacobian $D_UF(\bar{U}(m),m)$ is a polynomial in $m$ of degree $3N$, it immediately follows from \eqref{eq : approx inverse global bif} that the global inverse $A(m)$ is a polynomial in $m$ of degree $5N$. 

This concludes the construction of the approximate continuation objects. Because this construction closely mirrors the framework established in Sections \ref{ssec : Newton-K single wave} and \ref{ssec : approximate inverse single wave}, the rigorous computation of the bounds required for Theorem \ref{th : radii polynomial continuation} follows the same methodologies detailed in Section \ref{ssec : computation bounds single wave}, which were previously employed in Section \ref{sec : local bif}.
\subsection{Computation of bounds}\label{ssec : bounds global bif}
This section focuses on the rigorous computation of the bounds $Y, Z_1$ and $Z_2$ required for Theorem \ref{th : radii polynomial continuation}. In fact, we heavily rely on Section \ref{ssec : computation bounds single wave} for developing formulas for these bounds. We summarize the obtained results for $Y$, $Z_1$ and $Z_2$ in the next Lemmas \ref{lem : Y bound global branch}, \ref{lem : Z1 bound global branch} and \ref{lem : Z2 global branch} for convenience, but omit the proof as it straightforwardly follows the analysis of Section \ref{ssec : computation bounds single wave}.
\begin{lemma}\label{lem : Y bound global branch}
	Let $Y$ be the constant defined as 
	\small{
		\begin{align*}
		Y \bydef &\sup_{m \in [m_0, m_1]}\|A_K(m)(I - D_UF(\bar{U}(m),m)A_\infty(m)) F(\bar{U}(m),m)\|_{H_1} \\
		&+\frac{K_1+2}{K_1+1}  \left(\sup_{m \in [m_0, m_1]}\|\pi^{k>K_1}(w(m) * f(\bar{U}(m),m))\|_{\ell^1} + \sup_{m \in [m_0, m_1]}\|\pi^{k<-K_1}(w^*(m) * f(\bar{U}(m),m))\|_{\ell^1}\right),
		\end{align*}
	}
	\normalsize
	then $\displaystyle \sup_{m \in [m_0,m_1]} \|A(m)F(\bar{U}(m),m)\|_{H_1} \leq Y.$
\end{lemma}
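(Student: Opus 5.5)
The plan is to repeat the proof of Lemma \ref{lem : Y bound single wave} pointwise in $m$ and then take suprema. Fix $m \in [m_0,m_1]$. The approximate inverse is given by \eqref{eq : approx inverse global bif}, which we rewrite as
\[
A(m) = A_K(m)\big(I - D_UF(\bar{U}(m),m)A_\infty(m)\big) + A_\infty(m).
\]
Applying it to $F(\bar{U}(m),m)$ and using the triangle inequality in $H_1$ gives
\[
\|A(m)F(\bar{U}(m),m)\|_{H_1} \leq \|A_K(m)(I - D_UF(\bar{U}(m),m)A_\infty(m))F(\bar{U}(m),m)\|_{H_1} + \|A_\infty(m)F(\bar{U}(m),m)\|_{H_1}.
\]
The first term is bounded directly by its supremum over $[m_0,m_1]$, which is exactly the first summand in the definition of $Y$.

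For the tail term, recall that $A_\infty(m)$ annihilates the scalar component and acts on the sequence component via $\hat{A}_\infty(m)$. Therefore
\[
\|A_\infty(m)F(\bar{U}(m),m)\|_{H_1} = \|\hat{\mathcal{W}}\hat{A}_\infty(m) f(\bar{U}(m),m)\|_{\ell^1}.
\]
The two pieces of $\hat{A}_\infty(m)$ are $\pi^{k>K_1}\hat{D}_y^{-1}M_{w(m)}$ and $\pi^{k<-K_1}\hat{D}_y^{-1}M_{w(m)^*}$. These have disjoint output supports, so the $\ell^1$ norm splits as a sum. On each piece, $\hat{\mathcal{W}}\hat{D}_y^{-1}$ is diagonal with symbol $(1+|k|)/|k|$. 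On $|k|>K_1$ this symbol is bounded by $\frac{K_1+2}{K_1+1}$, with the maximum attained at $|k| = K_1+1$. This yields
\[
\frac{K_1+2}{K_1+1}\Big(\|\pi^{k>K_1}(w(m)*f(\bar{U}(m),m))\|_{\ell^1} + \|\pi^{k<-K_1}(w^*(m)*f(\bar{U}(m),m))\|_{\ell^1}\Big).
\]
Each of the two norms is bounded by its supremum over $m$, and the supremum of a sum is at most the sum of the suprema. Taking the supremum over $m$ of the full inequality then gives $\sup_m\|A(m)F(\bar{U}(m),m)\|_{H_1}\leq Y$.

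The only point requiring care is that the quantities in $Y$ are finite, and in practice computable. Because $\bar{U}(m)$ and $w(m)$ are Chebyshev polynomials in $m$ with Fourier support $K_0$, the residual $f(\bar{U}(m),m)$ has Fourier support bounded by a fixed multiple of $K_0$ and is polynomial in $m$. The projected convolutions are therefore finite trigonometric polynomials whose coefficients are polynomial in $m$. The same holds for the finite-rank term, since $A_K(m)$ has range in $\pi^{\leq K_1}H_1$.

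Bounding the suprema over the continuous interval is thus reduced to bounding polynomial coefficients in Chebyshev arithmetic (Appendix \ref{sec : appendix Chebyshev global}). I expect this to be the only nontrivial step, and it is computational rather than analytic.
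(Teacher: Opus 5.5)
Your proof is correct and is essentially the argument the paper intends. The paper omits the proof, saying it follows the single-wave analysis; your pointwise-in-$m$ repetition of the proof of Lemma~\ref{lem : Y bound single wave}, followed by taking suprema, is exactly that argument: the triangle inequality on $A(m)=A_K(m)(I-D_UF(\bar{U}(m),m)A_\infty(m))+A_\infty(m)$, the disjoint supports of the two tail branches, and the bound $(1+|k|)/|k|\le \frac{K_1+2}{K_1+1}$ on $|k|>K_1$. Your closing remarks on finiteness and Chebyshev computability are not needed for the inequality itself, which holds trivially if $Y=\infty$, but they correctly describe how $Y$ is evaluated in practice.
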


\begin{lemma}\label{lem : Z1 bound global branch}
	Given $m \in [m_0, m_1]$ and $i \in \{1,2,3,4\}$, let $v_i(m) \in X$ be sequences defined as 
	\begin{align*}
	v_1(m) &= 2\gamma \left(mD_y e_0 + \frac{\gamma}{2} D_y(\bar{a}(m)^2)  - \gamma \bar{a}(m)*(D_y\bar{a}(m))\right)\\
	v_2(m) &= 2g ( (D_x\bar{a}(m))^2 + (D_y\bar{a}(m))^2)\\
	{v}_3(m) &= -2(\bar{Q}(m)-2g \bar{a}(m))*(D_x \bar{a}(m))\\
	{v}_4(m) &= -2(\bar{Q}(m)-2g \bar{a})*(D_y \bar{a}(m)).
	\end{align*}
	Moreover, we define the positive constant $\delta$ as 
	\begin{align*}
	\delta \bydef \frac{K_1+2}{K_1+1}\left(\sup_{m \in [m_0,m_1]} \|w(m)*(iv_3(m)+v_4(m)) - e_0\|_{\ell^1} +  \sup_{m \in [m_0,m_1]}\|w*v_4(m)\|_{\ell^1} \frac{2e^{-2hK_1}}{1-e^{-2hK_1}}\right).
	\end{align*}
	Recalling the linear operator $\mathcal{D} : X \to \ell^1$ in Lemma \ref{lem : operator C},  let  $Z_\infty, Z_{10}, Z_{11}, Z_{12}$ be positive constants satisfying
	\begin{align*}
	Z_\infty &\geq \sup_{m \in [m_0,m_1]} \frac{\|w(m)*(v_1(m)*(\mathcal{D}\ba(m)) + v_2(m))\|_{\ell^1}}{K_1+1}  + \frac{K_1+2}{K_1+1} \|w(m)*v_1(m)\|_{\ell^1}\frac{4K_0e^{-2hK_0}}{1-e^{-2hK_0}} \|\ba(m)\|_X + \delta\\
	Z_{10} &\geq \sup_{m \in [m_0,m_1]} \|\pi^{\leq K_1} - A_K(m) \pi^{\leq K_1} ( D_UF(\bar{U}(m),m) - D_UF(\bar{U}(m),m)A_\infty(m) D_UF(\bar{U}(m),m) )\pi^{\leq K_1}\|_{\mathcal{B}(H_1)} \\
	Z_{11} &\geq \sup_{m \in [m_0,m_1]}\|(\hpi^{k \leq K_1 + 4K_0} - \hpi^{\leq K_1}) \hat{D}_y^{-1} \hat{M}_{w(m)} \hat{\pi}^{\leq K_1 + 2K_0} D_af(\bar{U}(m),m)\hpi^{\leq K_1}\|_{\mathcal{B}(X_e)}\\
	Z_{12} &\geq \sup_{m \in [m_0,m_1]}\|A_K(m) D_UF(\bar{U}(m),m)\pi^{>K_1}\|_{\mathcal{B}(H_1)}.
	\end{align*}
	Moreover, define $Z_1$ as 
	\begin{align}
	Z_1 \bydef \max\left\{Z_{10}+Z_{11}, ~  (1+Z_{12})Z_\infty\right\}.
	\end{align}
	Then, we have $\displaystyle \sup_{m\in [m_0,m_1]}\|I - A(m)DF(\bar{U}(m),m)\|_{\mathcal{B}(H_1)} \leq Z_1$.
\end{lemma}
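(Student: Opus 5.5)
The plan is to reduce Lemma \ref{lem : Z1 bound global branch} to the single-wave Lemma \ref{lem : Z1 bound}, applied pointwise in $m$, and then take suprema. Fix $m \in [m_0,m_1]$. By construction, $\bar{U}(m)=(\bar{Q}(m),\bar{a}(m))$ is a finite trigonometric polynomial supported on $|k|\le K_0$ with $\bar{a}_0(m)=h$, and $w(m)=\hat{\pi}^{\le K_0}w(m)$. The operator $A(m)$ has exactly the Schur-complement form \eqref{def : approx inverse}, with $A_K(m)=\pi^{\le K_1}A_K(m)\pi^{\le K_1}$ and $A_\infty(m)$ built from $w(m)$ as in \eqref{def : A infinity}. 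Also, $D_UF(\bar{U}(m),m)$ admits the decomposition \eqref{def : DF}, with multipliers $v_1(m),\dots,v_4(m)$ as listed in the statement; note that $m$ enters only through $v_1(m)$ via $mD_ye_0$. All hypotheses of Lemmas \ref{lem : operator C}, \ref{lem : A infinity properties} and \ref{lem : Z1 bound} therefore hold at this frozen $m$, with $K_1\ge 2K_0$.

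The first step is to define $\delta(m)$ and $Z_\infty(m)$, $Z_{10}(m)$, $Z_{11}(m)$, $Z_{12}(m)$ by the single-wave formulas at this $m$. Each of these is bounded above by the corresponding supremum in the statement. For $\delta(m)$ this holds because each term is nonnegative and monotone in the two norms. For $Z_\infty(m)$ it holds because the supremum of a sum is at most the sum of the suprema, together with $\delta(m)\le\delta$. For $Z_{10},Z_{11},Z_{12}$ it is immediate.

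The second step is to rerun the proof of Lemma \ref{lem : Z1 bound} verbatim at fixed $m$. The tail residual $N_\infty(m)=A_\infty(m)DF\pi^{>K_1}-\pi^{>K_1}$ is controlled by $\delta(m)$ through Lemma \ref{lem : A infinity properties}, plus the commutator contribution through Lemma \ref{lem : operator C}. The low block gives $Z_{10}(m)+Z_{11}(m)$, and the high block gives $(1+Z_{12}(m))\|N_\infty(m)\|$. This yields
$$\|I-A(m)D_UF(\bar{U}(m),m)\|_{\mathcal{B}(H_1)}\le \max\{Z_{10}(m)+Z_{11}(m),(1+Z_{12}(m))Z_\infty(m)\}.$$
The right-hand side is monotone in each argument, so by the first step it is at most $Z_1$. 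Taking the supremum over $m$ concludes the proof.

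The main obstacle is bookkeeping rather than new analysis. One must check that the spectral-support arguments hold uniformly in $m$: the Chebyshev coefficients $\bar{U}_n$ and $w_n$ must all be truncated at $K_0$, so that the supports of $v_i(m)$ ($\le 2K_0$) and of $v_1(m)*\mathcal{D}\bar{a}(m)$ ($\le 3K_0$) do not depend on $m$. One should also flag that the statement's $v_4(m)$ contains a typo ($\bar{a}$ written for $\bar{a}(m)$), and that the $\|w*v_4(m)\|$ term should read $w(m)$. Finally, the block-max identity for the $\ell^1$-type operator norm is a column-wise estimate, so it passes through the supremum unchanged.
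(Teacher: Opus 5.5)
Your proposal is correct and follows the same route as the paper. The paper omits the proof, saying it follows directly from the single-wave analysis of Lemma \ref{lem : Z1 bound}; your argument makes this explicit by applying that lemma at each frozen $m$, with the supremum constants dominating the pointwise ones and the bound monotone in each constant, which is exactly what that omitted proof amounts to. Your remarks on the typos in $v_4(m)$ and $w*v_4(m)$, and on requiring the Chebyshev coefficients of $\bar{U}$ and $w$ to be supported on $|k|\le K_0$, are also apt.
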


\begin{lemma}\label{lem : Z2 global branch}
	Let $Z_{D^2}$ and $Z_{D^3}(r)$ be bounds satisfying
	\begin{align*}
	Z_{D^2} &\geq 2\gamma^2\max\left\{1, ~ \frac{1}{h^2}\right\}\left(2\sup_{m\in [m_0,m_1]}\|\bar{a}(m)\|_{X} + \sup_{m\in [m_0,m_1]}\|\bar{a}(m)\|_{\ell^1}\right)^2 \\
	& ~~~~ + 6\max\left\{1, ~ \frac{1}{h}\right\}|\gamma| \sup_{m\in [m_0,m_1]}\left\|mD_y e_0 + \frac{\gamma}{2} D_y(\ba(m)^2)  - \gamma \ba(m)*(D_y\ba(m))\right\|_{\ell^1}\\
	& ~~~~ + \left(1 + \max\left\{1, ~ \frac{1}{h^2}\right\}\right)\left( 4(1 + 2g) \sup_{m\in [m_0,m_1]}\|\ba(m)\|_X + 2\sup_{m\in [m_0,m_1]}\|\bar{Q}(m)e_0 - 2g\ba(m)\|_{\ell^1}\right) \\
	Z_{D^3}(r) &\geq 27\frac{|\gamma|^2}{h^2}(\sup_{m\in [m_0,m_1]}\|\bar{a}(m)\|_X +r) + 4g\left(1 + \max\left\{1, ~ \frac{1}{h^2}\right\}\right)\\
	& ~~~~+ (\sup_{m\in [m_0,m_1]}|\bar{Q}(m)| + 2g)\left(1 + \max\left\{1, ~ \frac{1}{h^2}\right\}\right) + 3(\sup_{m\in [m_0,m_1]}|\bar{Q}(m)| +r)\left(1 + \max\left\{1, ~ \frac{1}{h^2}\right\}\right).
	\end{align*}
	Recalling the constant $Z_{12}$ introduced in Lemma \ref{lem : Z1 bound global branch}, we define $Z_2(r)$ as 
	\begin{align*}
	Z_2(r) \bydef \max\bigg\{&\sup_{m\in [m_0,m_1]}\|\mathcal{W}(A_K(m) - A_K(m) D_UF(\bar{U}(m),m)A_\infty(m)  + A_\infty(m))\pi^{\leq K_1} \|_{\mathcal{B}(\ell^1)},\\
	&~ (1+Z_{12})\frac{K_1+2}{K_1+1} \sup_{m\in [m_0,m_1]}\|w(m)\|_{\ell^1}\bigg\}\left(Z_{D^2} + Z_{D^3}(r) \cdot r\right).
	\end{align*}
	Then, we obtain that
	$\displaystyle\sup_{m\in [m_0,m_1]}\|A(m)(D_UF(\bar{U}(m)+h,m) - D_UF(\bar{U}(m),m))\|_{\mathcal{B}(H_1)} \leq Z_2(r) \cdot r$
	for all $h \in B_r(0) \subset H_1$.
\end{lemma}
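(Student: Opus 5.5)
The plan is to reduce Lemma \ref{lem : Z2 global branch} to the single-wave estimate of Lemma \ref{lem : Z2 single wave}, applied pointwise in $m$, and then take suprema. Fix $m \in [m_0,m_1]$ and $h \in B_r(0)\subset H_1$. By the construction \eqref{eq : approx inverse global bif}, $A(m)$ has exactly the structure \eqref{def : approx inverse}, with $A_K$, $A_\infty$, $w$, $\bar U$ replaced by $A_K(m)$, $A_\infty(m)$, $w(m)$, $\bar U(m)$. Moreover, $m$ enters $F$ only through the term $mD_ye_0$ inside $g(a)$. The pointwise bound is
\begin{equation*}
\|A(m)(D_UF(\bar U(m)+h,m)-D_UF(\bar U(m),m))\|_{\mathcal{B}(H_1)} \le \|\mathcal{W}A(m)\|_{\mathcal{B}(\ell^1)}\Big(\sup_{\|u\|,\|v\|\le1}\|D^2f(\bar U(m),m)(u,v)\|_{\ell^1} + Z_{D^3}\, r\Big) r .
\end{equation*}
This follows from the mean value inequality combined with a second-order Taylor expansion of $D_UF$ in $U$, exactly as in the proof of Lemma \ref{lem : Z2 single wave}. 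Since $D^3 F$ does not depend on $m$ at all, the formulas \eqref{eq: derivatives of g}, \eqref{eq : derivates of p} and \eqref{eq : second derivatives of p second} carry over verbatim.

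The three factors are then controlled uniformly in $m$.

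\textbf{The $D^2f$ factor.} From \eqref{eq : second derivative F}, with $\bar a$, $\bar Q$, $m$ now depending on $m$, the Banach algebra property (Lemma \ref{lem:Banach_algebra}) and \eqref{eq : estimates Dx and Dy} give a bound for each fixed $m$ whose $\bar a(m)$, $\bar Q(m)$ dependence is monotone in the norms $\|\bar a(m)\|_X$, $\|\bar a(m)\|_{\ell^1}$, $\|\bar Q(m)e_0-2g\bar a(m)\|_{\ell^1}$ and $\|mD_ye_0+\dots\|_{\ell^1}$. Replacing each norm by its supremum over $[m_0,m_1]$ yields $Z_{D^2}$.

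\textbf{The $D^3$ factor.} For $(Q,a)\in B_r(\bar U(m))$ we have $\|a\|_X\le \sup_m\|\bar a(m)\|_X+r$ and $|Q|\le\sup_m|\bar Q(m)|+r$. Inserting these into \eqref{eq : estimate 3rd deriv g}, \eqref{eq : estimate 3rd deriv p with a} and \eqref{eq : estimate third derivative p with Q} gives $Z_{D^3}(r)$.

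\textbf{The factor $\|\mathcal{W}A(m)\|$.} Split $H_0=\pi^{\le K_1}H_0\oplus\pi^{>K_1}H_0$ and take the maximum of the column norms.
\begin{itemize}
\item On $\pi^{\le K_1}$ one simply keeps the finite-dimensional term, whose supremum is a polynomial in $m$ and can be bounded by the Chebyshev arithmetic of Appendix \ref{sec : appendix Chebyshev global}.
\item On $\pi^{>K_1}$, $A_K(m)\pi^{>K_1}=0$, so $A(m)\pi^{>K_1}=(I-A_K(m)D_UF(\bar U(m),m))A_\infty(m)\pi^{>K_1}$. By definition of $Z_{12}$ this is bounded by $(1+Z_{12})\|\mathcal{W}A_\infty(m)\|$.
\item The latter is at most $\frac{K_1+2}{K_1+1}\|w(m)\|_{\ell^1}$. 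This follows because $\hat{\mathcal W}\hat D_y^{-1}$ on $|k|>K_1$ has multiplier sup $\frac{K_1+2}{K_1+1}$, and $\|M_{w(m)}\|_{\mathcal B(\ell^1)}=\|w(m)\|_{\ell^1}=\|w(m)^*\|_{\ell^1}$.
\end{itemize}
Taking suprema over $m$ gives the maximum appearing in $Z_2(r)$.

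The main obstacle I expect is the high-frequency column in the $\|\mathcal W A(m)\|$ step: one must check that $Z_{12}$ really controls $A_K(m)D_UF(\bar U(m),m)$ composed with $A_\infty(m)$ in the weighted norm, i.e.\ that the $H_1$-to-$H_1$ bound $Z_{12}$ composes correctly with the $\ell^1$-to-$X$ map $A_\infty(m)$. I would handle this by writing $\mathcal W A_\infty\pi^{>K_1}$ as a map into $H_1$ via the isometry \eqref{def : W and hat W}, so that the factorization $(I - A_KD_UF\pi^{>K_1})\circ A_\infty$ has both factors measured in $\mathcal{B}(H_1)$ and $\mathcal{B}(H_0,H_1)$ respectively. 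Everything else is a routine uniform-in-$m$ repetition of Lemma \ref{lem : Z2 single wave}.
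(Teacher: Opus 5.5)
Your proposal is sound and is the argument the paper intends: the paper omits this proof as a pointwise-in-$m$ rerun of Lemma \ref{lem : Z2 single wave}, which consists of the Taylor/mean-value expansion of $D_UF$, the column splitting of $\|\mathcal{W}A(m)\|_{\mathcal{B}(\ell^1)}$ with the tail block $A(m)\pi^{>K_1}=(I-A_K(m)D_UF(\bar U(m),m))A_\infty(m)\pi^{>K_1}$ bounded by $(1+Z_{12})\frac{K_1+2}{K_1+1}\|w(m)\|_{\ell^1}$, the $m$-independence of $D^3F$, and suprema over $[m_0,m_1]$. One caveat, which you inherit from the paper's single-wave proof: \eqref{eq : estimate 3rd deriv g} yields $27|\gamma|^2\max\{1,h^{-2}\}(\sup_m\|\bar a(m)\|_X+r)$, not the $27|\gamma|^2h^{-2}(\sup_m\|\bar a(m)\|_X+r)$ in $Z_{D^3}(r)$, so for $h=2$ your step ``gives $Z_{D^3}(r)$'' is off by a factor $h^2$ in that term unless you sharpen the bounds on $Dg$ and $D^2g$ via the commutator cancellation $|\omega_{j+l}-\omega_j-\omega_l|\le 2\min\{|j|,|l|\}+2/h\le \frac{3}{h}(1+|j|)(1+|l|)$, which holds for $h=2$.
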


In order to get a precise control of the geometric evolution of the branch of solutions, we might need to accurately control the evolution of $\tilde{U}(m)$. In practice, this control is achieved locally by a rigorous enclosure of  $\partial_m \tilde{U}(m)$. This section, we demonstrate how to achieve this enclosure, a posteriori to a successful application of Theorem \ref{th : radii polynomial continuation}.

\begin{lemma}\label{lem : enclosure of DmU(m)}
	Suppose that $\{m \in [m_0, m_1], ~ \bar{U}(m)\}$ is a branch of approximate solutions in $H_1$ constructed as in  \eqref{eq : approx sol global branch}. Moreover, suppose that $Y, Z_1, Z_2$ are the  bounds given in Lemmas \ref{lem : Y bound global branch}, \ref{lem : Z1 bound global branch} and \ref{lem : Z2 global branch}, and suppose that there exists $r >0$ such that they satisfy \eqref{eq : condition contraction continuation} for $r$. Then, let $\tilde{U}(m) \in B_r(\bar{U}(m)) \subset H_1$ be the solution given by Theorem \ref{th : radii polynomial continuation} for all $m \in [m_0,m_1]$. 
	
	Now, let $\epsilon$ be a positive constant satisfying
	\begin{align*}
	\epsilon \geq & \sup_{m \in [m_0, m_1]}\frac{\|A(m)\|_{\ell^1 \to X} \frac{4}{h} \gamma \coth(h) (\|\bar{a}(m)\|_X + r_0) r_0}{1 - Z_1 - Z_2(r_0)r_0} + \sup_{m \in [m_0, m_1]}\frac{Z_2(r_0)\|\partial_m \bar{U}(m)\|_X r_0}{1 - Z_1 - Z_2(r_0)r_0}\\
	& ~~~~ + \sup_{m \in [m_0, m_1]} \frac{\| A(m)(D_U F(\bar{U}(m),m) \partial_m \bar{U}(m) +  D_m F(\bar{U}(m),m))\|_X}{1 - Z_1 - Z_2(r_0)r_0}.
	\end{align*}
	Then, we have
	\begin{align*}
	\|\partial_m \tilde{U}(m) - \partial_m \bar{U}(m)\|_{H_1} \leq \epsilon \quad  \text{ for all } m \in [m_0, m_1].
	\end{align*}
\end{lemma}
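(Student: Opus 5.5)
We differentiate the identity $F(\tilde U(m),m)=0$ in $m$. By Theorem \ref{th : radii polynomial continuation} the map $m\mapsto\tilde U(m)$ is $C^\infty$, so
\[
D_UF(\tilde U(m),m)\,\partial_m\tilde U(m) + D_mF(\tilde U(m),m)=0 .
\]
Setting $E(m)\bydef\partial_m\tilde U(m)-\partial_m\bar U(m)$ and subtracting $D_UF(\tilde U,m)\partial_m\bar U$, we obtain
\[
D_UF(\tilde U,m)E = -\big(D_UF(\tilde U,m)\partial_m\bar U + D_mF(\tilde U,m)\big).
\]
We apply $A(m)$ to both sides and write $A D_UF(\tilde U,m) = I - \big(I - AD_UF(\bar U,m)\big) + A\big(D_UF(\tilde U,m)-D_UF(\bar U,m)\big)$. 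By the $Z_1$ and $Z_2$ bounds, with $\|\tilde U-\bar U\|_{H_1}\le r_0$, the operator $AD_UF(\tilde U,m)$ differs from $I$ by at most $Z_1+Z_2(r_0)r_0<1$. A Neumann series argument then gives
\[
\|E\|_{H_1}\le \frac{\|A(D_UF(\tilde U,m)\partial_m\bar U + D_mF(\tilde U,m))\|_{H_1}}{1-Z_1-Z_2(r_0)r_0}.
\]

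Next we split the numerator by inserting $\bar U$:
\[
A\big(D_UF(\bar U)\partial_m\bar U + D_mF(\bar U)\big) + A\big(D_UF(\tilde U)-D_UF(\bar U)\big)\partial_m\bar U + A\big(D_mF(\tilde U)-D_mF(\bar U)\big).
\]
The first term is exactly the computable third term in $\epsilon$; it is a polynomial in $m$ and is bounded by Chebyshev arithmetic. The second term is bounded by $Z_2(r_0)r_0\|\partial_m\bar U\|$ using Lemma \ref{lem : Z2 global branch}, which is the second term in $\epsilon$.

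For the third term we compute $D_mF$. Since $m$ enters only through $mD_ye_0=\frac{m}{h}e_0$ inside $g(a)$, we have
\[
D_mf(U,m)=\frac{2}{h}\,g(a,m).
\]
Hence
\[
D_mF(\tilde U)-D_mF(\bar U)=\tfrac{2}{h}\big(g(\tilde a)-g(\bar a)\big),
\]
which equals $\tfrac{2}{h}\gamma\big(\tfrac12 D_y(\tilde a^2-\bar a^2)-(\tilde a*D_y\tilde a-\bar a*D_y\bar a)\big)$. We then factor the differences as $(\tilde a-\bar a)*(\tilde a+\bar a)$ and the analogous telescoping for the second product. Using Lemma \ref{lem:Banach_algebra} and $\|D_y v\|_{\ell^1}\le\coth(h)\|v\|_X$, we bound the $\ell^1$ norm by roughly $2|\gamma|\coth(h)(2\|\bar a\|_X+r_0)r_0\cdot\frac{2}{h}$. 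This is of the stated form $\frac4h\gamma\coth(h)(\|\bar a\|_X+r_0)r_0$, after bookkeeping of constants. Multiplying by $\|A(m)\|_{\ell^1\to X}$ yields the first term.

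The main obstacle is this last step: getting the constants to match exactly. The zero mode of $D_y$ carries $1/h$ rather than $\coth$, so we need $\max\{1/h,\coth h\}=\coth h$, which holds for $h=2$. The symmetric factorization must also be arranged carefully to obtain $(\|\bar a\|_X+r_0)$ rather than $(2\|\bar a\|_X+r_0)$. If necessary, the $\epsilon$ condition can simply be taken with the cruder constant, since only an upper bound is required. A minor point is that the estimate in $X$ versus $H_1$ norms agrees here, because $\partial_m\tilde U$ and $\partial_m\bar U$ share the $Q$-component structure and the $H_1$ norm dominates.
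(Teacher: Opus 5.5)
Your argument has the same structure as the paper's proof. You differentiate $F(\tilde U(m),m)=0$ and use $\|I-A(m)D_UF(\tilde U(m),m)\|\le Z_1+Z_2(r_0)r_0<1$ with a Neumann series. Inverting $AD_UF(\tilde U)$ directly is slightly cleaner than the paper's $D_UF(\tilde U)^{-1}=B(m)A(m)$. You then split the numerator into the same three pieces and treat the first two exactly as the paper does.

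The gap is the third piece. The estimate you derive is $\frac4h|\gamma|\coth(h)(2\|\bar a\|_X+r_0)r_0$. A careful term-by-term count gives $\frac6h|\gamma|\coth(h)(\|\bar a\|_X+\frac{r_0}{2})r_0$. Both exceed the stated $\frac4h|\gamma|\coth(h)(\|\bar a\|_X+r_0)r_0$ whenever $2\|\bar a\|_X>r_0$, which always holds here, so no bookkeeping closes this. Nor can you "take the cruder constant". The lemma's hypothesis is a lower bound on $\epsilon$, so a larger constant strengthens the hypothesis and proves a weaker statement than the one given. (The statement must be read with $|\gamma|$; literally, $\gamma=-5$ makes that term negative.)

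The missing idea is to keep the commutator structure rather than splitting it into three terms. Set $B(u,v)\bydef\frac12\big(D_y(u*v)-u*D_yv-v*D_yu\big)$. This is symmetric and bilinear, and $g(a,m)=mD_ye_0+\gamma B(a,a)$. So with $s=\tilde a+\bar a$ and $\delta=\tilde a-\bar a$,
\[
g(\tilde a,m)-g(\bar a,m)=\gamma B(s,\delta)=\tfrac{\gamma}{2}\,\mathcal{C}[s]\delta,\qquad (\mathcal{C}[s]\delta)_k=\sum_{n\in\mathbb{Z}}\big(\omega_k-\omega_{k-n}-\omega_n\big)s_{k-n}\delta_n .
\]
Now bound the symbol as a whole, using three facts:
\begin{itemize}
\item $\omega_0=1/h\le\coth h$, which holds for every $h>0$ since $\tanh h\le h$, so your concern about $h=2$ is unnecessary;
\item $\omega_n\le|n|\coth h$ for $n\neq0$;
\item $|x-y|\le\max\{x,y\}$ for $x,y\ge0$.
\end{itemize}
A short case check ($j=n=0$; exactly one of $j,n$ zero; $j,n\neq0$ with $j+n=0$; $j,n,j+n$ all nonzero) gives $|\omega_{j+n}-\omega_j-\omega_n|\le\coth(h)(1+|j|)(1+|n|)$ for all $j,n$. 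Hence $\|\mathcal{C}[s]\delta\|_{\ell^1}\le\coth(h)\|s\|_X\|\delta\|_X$, and
\[
\|D_mf(\tilde U,m)-D_mf(\bar U,m)\|_{\ell^1}=\tfrac2h\|g(\tilde a,m)-g(\bar a,m)\|_{\ell^1}\le\tfrac2h|\gamma|\coth(h)\big(\|\bar a\|_X+\tfrac{r_0}{2}\big)r_0 .
\]
This lies inside the stated bound with a factor of two to spare.

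Splitting $\mathcal{C}[s]$ into its three terms charges each one a full $\coth(h)\|s\|_X\|\delta\|_X$, which is why your count triples the constant. The paper's one-line justification cites only $\|\tilde a-\bar a\|_X\le r_0$ and $\|D_yv\|_{\ell^1}\le\coth(h)\|v\|_X$, which applied term by term give the constant $6$. So it is this symbol estimate that actually supports the printed constant.
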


The rigorous proof of this geometric control lemma is deferred to Appendix \ref{sec : appendix proofs}.
\section{Verification of the geometric properties of the branch}\label{sec : geometric properties branch}

\subsection{Smoothness and gluing of the global branch}

Notice that \eqref{eq : condition contraction continuation} consists of two polynomial inequalities which, when satisfied, yield an interval of valid radii $r \in (r^-, r^+)$. Theorem \ref{th : radii polynomial continuation} then guarantees the uniqueness of the exact solution $\tilde{U}(m)$ within $B_{r}(\bar{U}(m))$ for all $r \in (r^-, r^+)$. Conceptually, $r^-$ dictates the strictly guaranteed accuracy of the existence proof, while $r^+$ defines the maximal radius of local uniqueness. 

We can exploit this isolation property to rigorously "glue" together adjacent segments of the branch whose existence was established via separate applications of Theorem \ref{th : radii polynomial continuation}. We summarize this continuation  strategy (exposed in details in \cite{cont_suspension_bridge} for instance) in the following lemma.

\begin{lemma}\label{lem : gluing of two branches}
    Let $m_0 < m_1 < m_2$ be real numbers, and let $\{\bar{U}_1(m), ~  m \in [m_0, m_1]\}$ and $\{\bar{U}_2(m), ~  m \in [m_1, m_2]\}$ be two branches of approximate solutions to $F(U,m) = 0$ in $H_1$, defined as in \eqref{eq : approx sol global branch}. 
    
    Suppose there exist positive numbers $r_1^- < r_2^+$ and exact solution branches $\{\tilde{U}_1(m), ~  m \in [m_0, m_1]\}$ and $\{\tilde{U}_2(m), ~  m \in [m_1, m_2]\}$ such that:
    \begin{itemize}
        \item For all $m \in [m_0, m_1]$, $\tilde{U}_1(m)$ is the unique zero of $F$ in $B_{r_1^-}(\bar{U}_1(m))$.
        \item For all $m \in [m_1, m_2]$, $\tilde{U}_2(m)$ is the unique zero of $F$ in $B_{r_2^+}(\bar{U}_2(m))$.
    \end{itemize}
    If the existence neighborhood of the first branch falls within the uniqueness neighborhood of the second branch at the boundary, i.e., $B_{r_1^-}(\bar{U}_1(m_1)) \subset B_{r_2^+}(\bar{U}_2(m_1))$, then $\tilde{U}_1(m_1) = \tilde{U}_2(m_1)$.
\end{lemma}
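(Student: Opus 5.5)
The argument is essentially a one-line uniqueness argument, and I would write it as such. The only input is the hypothesis on the two branches at the shared endpoint $m_1$; no further properties of $F$ are needed beyond the fact that $\tilde{U}_1(m_1)$ and $\tilde{U}_2(m_1)$ are zeros of $F(\cdot,m_1)$.

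First, I place $\tilde{U}_1(m_1)$ in the larger ball. By the first bullet at $m=m_1$, the point $\tilde{U}_1(m_1)$ is a zero of $F(\cdot,m_1)$ and lies in $B_{r_1^-}(\bar{U}_1(m_1))$. The inclusion hypothesis $B_{r_1^-}(\bar{U}_1(m_1)) \subset B_{r_2^+}(\bar{U}_2(m_1))$ then gives $\tilde{U}_1(m_1) \in B_{r_2^+}(\bar{U}_2(m_1))$.

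Second, I invoke the uniqueness of the second branch. By the second bullet at $m=m_1$, $\tilde{U}_2(m_1)$ is the unique zero of $F(\cdot,m_1)$ in $B_{r_2^+}(\bar{U}_2(m_1))$. Since $\tilde{U}_1(m_1)$ is also a zero of $F(\cdot,m_1)$ in that ball, uniqueness forces $\tilde{U}_1(m_1)=\tilde{U}_2(m_1)$.

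The one point needing care is that both bullets must refer to the same parameter value $m_1$, i.e.\ the same function $F(\cdot,m_1)$. This holds because $m_1$ is a common endpoint of $[m_0,m_1]$ and $[m_1,m_2]$.

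In practice, the inclusion hypothesis is checked with the triangle inequality. It suffices to verify
\[
\|\bar{U}_1(m_1)-\bar{U}_2(m_1)\|_{H_1} + r_1^- \le r_2^+ .
\]
This is a finite computation on the Chebyshev coefficients, which can be evaluated with interval arithmetic. Here $r_1^-$ is the existence radius from Theorem~\ref{th : radii polynomial continuation} applied on $[m_0,m_1]$, and $r_2^+$ is the uniqueness radius from the same theorem applied on $[m_1,m_2]$.

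Consequently the two smooth arcs agree at $m_1$, so their concatenation is a continuous branch on $[m_0,m_2]$. There is no genuine obstacle in this lemma. The only subtlety is that uniqueness must be taken in the larger radius $r_2^+$ rather than the accuracy radius. This is legitimate because Theorem~\ref{th : radii polynomial continuation} provides uniqueness for every $r$ in the interval $(r^-,r^+)$.
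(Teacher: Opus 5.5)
Your proof is correct and is the standard uniqueness argument the paper relies on; the paper gives no proof of its own and refers to \cite{cont_suspension_bridge}. Since $\tilde{U}_1(m_1)$ is a zero of $F(\cdot,m_1)$ lying in $B_{r_1^-}(\bar{U}_1(m_1))\subset B_{r_2^+}(\bar{U}_2(m_1))$, uniqueness in the larger ball forces $\tilde{U}_1(m_1)=\tilde{U}_2(m_1)$. Your practical criterion $\|\bar{U}_1(m_1)-\bar{U}_2(m_1)\|_{H_1}+r_1^-\le r_2^+$ is a worthwhile addition. When the lemma is applied in Theorem~\ref{th : global branch}, the paper only compares the radii, and that establishes the inclusion hypothesis only if the distance between the two approximate solutions at the shared node is also controlled.
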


In practice, to prove the existence of a global branch over a large interval $[m_0, m_N]$, we partition the domain into sub-intervals $[m_0,m_N] = \cup_{j=1}^{N} [m_{j-1}, m_j]$ and apply Theorem \ref{th : radii polynomial continuation} independently on each segment. By applying Lemma \ref{lem : gluing of two branches} at each boundary node, we a posteriori verify that the global branch is continuous in $m$ across the entire domain. Furthermore, since Theorem \ref{th : radii polynomial continuation} guarantees that the solutions are locally infinitely differentiable with respect to $m$, the glued global branch inherits this $C^\infty$ regularity. This overlapping-domain methodology allows us to construct macroscopic parameter branches while keeping the numerical and polynomial complexity of each individual sub-branch computationally tractable.
\subsection{The conditions $Q - 2g \eta > 0$ and $\eta_x^2 + \eta_y^2 > 0$}

\begin{lemma}
    Let $m_0 < m_1$ and let $\{\bar{U}_1(m), ~  m \in [m_0, m_1]\}$ be a branch of approximate solutions defined as in \eqref{eq : approx sol global branch}. Let $Y, Z_1$, and $Z_2$ be the bounds given in Lemmas \ref{lem : Y bound global branch}, \ref{lem : Z1 bound global branch}, and \ref{lem : Z2 global branch}, respectively. Suppose that these bounds satisfy the contraction condition \eqref{eq : condition contraction continuation} for some $r > 0$, and let $\{\tilde{U}(m) = (\tilde{Q}(m), \tilde{a}(m)), ~  m \in [m_0, m_1]\}$ be the resulting branch of exact solutions to $F(U,m) = 0$ provided by Theorem \ref{th : radii polynomial continuation}. 
    
    If $\tilde{\eta}(m)$ denotes the function representation of the Fourier coefficients $\tilde{a}(m)$, then we have
    \begin{align*}
        \tilde{Q}(m) - 2g \tilde{\eta}(m) > 0 \quad \text{and} \quad (\partial_x \tilde{\eta}(m))^2 + (D_y \tilde{\eta}(m))^2 > 0 \quad \text{for all } m \in [m_0, m_1].
    \end{align*}
\end{lemma}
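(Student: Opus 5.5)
The plan is to repeat the argument of Lemma \ref{lem : proof of non-degeneracy} pointwise in $m$, with the single-wave bounds replaced by the uniform bounds of Lemmas \ref{lem : Z1 bound global branch} and \ref{lem : Z2 global branch}.

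Fix $m \in [m_0,m_1]$. Write $\bar{U}(m) = (\bar{Q}(m),\bar{a}(m))$ and $\tilde{U}(m)=(\tilde Q(m),\tilde a(m))$, with $\|\tilde U(m)-\bar U(m)\|_{H_1}\le r$. Define the exact kinetic coefficients
\[
\tilde v_3(m) = -2(\tilde Q(m)-2g\tilde a(m))*D_x\tilde a(m), \qquad \tilde v_4(m) = -2(\tilde Q(m)-2g\tilde a(m))*D_y\tilde a(m).
\]

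The first step is to show that $w(m)$ nearly inverts $i\tilde v_3(m)+\tilde v_4(m)$, namely
\[
\|w(m)*(i\tilde v_3(m)+\tilde v_4(m)) - e_0\|_{\ell^1} \le Z_1 + Z_2(r)r < 1 .
\]
By the triangle inequality, this quantity is at most $\|w(m)*(iv_3(m)+v_4(m))-e_0\|_{\ell^1}$ plus the perturbation $\|w(m)*\big((i\tilde v_3+\tilde v_4)-(iv_3+v_4)\big)\|_{\ell^1}$.

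The first piece is bounded by $\frac{K_1+1}{K_1+2}\delta \le \delta \le Z_\infty \le Z_1$. This uses the definition of $\delta$ in Lemma \ref{lem : Z1 bound global branch} together with $Z_1\ge (1+Z_{12})Z_\infty$.

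The second piece is handled by expanding the difference of products. I would use the Banach algebra property (Lemma \ref{lem:Banach_algebra}), the bounds \eqref{eq : estimates Dx and Dy}, and $\|\tilde U-\bar U\|_{H_1}\le r$. This gives a bound by $\|w(m)\|_{\ell^1}$ times a multiple of $(|\bar Q|+2g\|\bar a\|_X+r)\,r$. That multiple is dominated by the $Z_2(r)r$ factor, since $Z_2$ contains $\frac{K_1+2}{K_1+1}\sup_m\|w(m)\|_{\ell^1}\,(Z_{D^2}+Z_{D^3}(r)r)$, and $Z_{D^2}$ and $Z_{D^3}$ contain the terms $2\|\bar Q e_0-2g\bar a\|_{\ell^1}$, $4(1+2g)\|\bar a\|_X$ and $3(|\bar Q|+r)$.

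This comparison is the step I expect to be the main obstacle. The constants must be matched carefully, because the factor $\max\{1,1/h^2\}$ and the exact form of the $D_y$ estimate have to absorb the $D_y$ multiplier bound. If the crude comparison fails, I would instead bound the perturbation term by an explicit computable quantity, as in the $\epsilon_1$ estimate of Lemma \ref{lem : enclosure second derivative of U}, and require that it be less than $1-\delta$. That only needs $\delta+\text{(explicit)}\,r<1$, which is verified by interval arithmetic.

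Given this near-inversion, a Neumann series in the Banach algebra $\ell^1$ shows that $i\tilde v_3+\tilde v_4$ is invertible in $\ell^1$. By Wiener's lemma its function representation $-2(\tilde Q-2g\tilde\eta)(i\tilde\eta_x+\tilde\eta_y)$ then vanishes nowhere on $[-\pi,\pi]$. Hence $\tilde Q-2g\tilde\eta\neq0$ and $i\tilde\eta_x+\tilde\eta_y\neq0$ pointwise. Since $\tilde\eta_x$ and $\tilde\eta_y$ are real, this gives $\tilde\eta_x^2+\tilde\eta_y^2>0$.

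Finally, since $f(\tilde U(m),m)=0$, we have $(\tilde Q-2g\tilde\eta)(\tilde\eta_x^2+\tilde\eta_y^2) = (\zeta_y-\gamma\tilde\eta\tilde\eta_y)^2\ge0$. Dividing by the positive factor $\tilde\eta_x^2+\tilde\eta_y^2$ gives $\tilde Q-2g\tilde\eta\ge0$, and since $\tilde Q-2g\tilde\eta$ never vanishes it is strictly positive. As $m$ was arbitrary, both conditions hold on all of $[m_0,m_1]$.
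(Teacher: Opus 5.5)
Your proposal is correct and follows the paper's proof. Like the paper, you split $\|w(m)*(i\tilde v_3+\tilde v_4)-e_0\|_{\ell^1}$ into the defect $\|w(m)*(iv_3+v_4)-e_0\|_{\ell^1}\le\delta\le Z_1$ plus $\sup_m\|w(m)\|_{\ell^1}$ times the perturbation, bounded by $Z_2(r)r$, then apply a Neumann series, deduce nonvanishing of the factors, and use the dynamic boundary condition to fix the sign of $\tilde Q-2g\tilde\eta$. The constant comparison you flag does go through, using $\max\{1,1/h\}\le\max\{1,1/h^2\}$, $2\max\{1,2g\}\le 4(1+2g)$, and $Z_{D^3}(r)r$ for the $O(r^2)$ term (checked for $g=1$, $h=2$), and your identity $|i\tilde\eta_x+\tilde\eta_y|^2=\tilde\eta_x^2+\tilde\eta_y^2$ makes explicit a step the paper only asserts; note that deducing pointwise nonvanishing from invertibility in $\ell^1$ is the trivial direction, not Wiener's lemma.
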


\begin{proof}
    Let $w(m)$ be the approximate inverse defined in \eqref{eq : w global branch}, and define the functions
    \begin{align*}
        \tilde{v}_3(m) &= -2(\tilde{Q}(m)-2g \tilde{a}(m))*(D_x \tilde{a}(m)),\\
        \tilde{v}_4(m) &= -2(\tilde{Q}(m)-2g \tilde{a}(m))*(D_y \tilde{a}(m)).
    \end{align*}
    Using the uniform bounds from Lemma \ref{lem : Z1 bound global branch} and Lemma \ref{lem : Z2 global branch}, we obtain the estimate
    \begin{align*}
       \|w(m)*(i\tilde{v}_3(m) +  \tilde{v}_4(m)) - e_0\|_{\ell^1}
        &\leq \|w(m)*(i{v}_3(m) +  {v}_4(m)) - e_0\|_{\ell^1} \\
        & ~ + \sup_{m \in [m_0,m_1]}\|w(m)\|_{\ell^1}  \|i(\tilde{v}_3(m) - v_3(m)) +  (\tilde{v}_4(m) - v_4(m))\|_{\ell^1}\\
        &\leq Z_1 + Z_2(r) r.
    \end{align*}
    Because $Z_1 + Z_2(r) r < 1$ by assumption, a standard Neumann series argument guarantees that the mapping $x \mapsto i\tilde{v}_3(m) +  \tilde{v}_4(m)$ is invertible for all $m \in [m_0, m_1]$. Consequently, the underlying factors $\tilde{Q}(m) - 2g \tilde{\eta}(m)$ and $(\partial_x\tilde{\eta}(m))^2 + (D_y\tilde{\eta}(m))^2$ must be non-vanishing everywhere.

    Since $(\partial_x \tilde{\eta})^2 + (D_y \tilde{\eta})^2$ is a sum of real squares, its non-vanishing property directly implies it is strictly positive. Furthermore, the dynamic boundary condition $F_1(\tilde{U}, m) = 0$ dictates that $\tilde{Q} - 2g\tilde{\eta}$ is equal to the strictly positive velocity-squared terms along the free surface. Combined with the fact that it is rigorously proven to never evaluate to zero, this guarantees that $\tilde{Q}(m) - 2g \tilde{\eta}(m) > 0$ for all $m \in [m_0, m_1]$, which concludes the proof.
\end{proof}
\subsection{Monotonicity of the waves on the local branch}\label{ssec : monotonicity local branch}

In order to successfully apply Corollary \ref{cor:global_monotonicity}, we must first prove that the local branch constructed in Section \ref{sec : local bif} consists of monotone waves. This property follows as a direct consequence of the desingularized construction of the solution.

\begin{lemma}
    Let $\epsilon_0 > 0$ and let $\bar{U}(\epsilon)$ be the approximate solution defined in \eqref{eq : approx sol local bifurcation} for all $\epsilon \in [0, \epsilon_0]$. Suppose Theorem \ref{th : radii polynomial continuation epsilon} is successfully applied; that is, there exists $r > 0$ and an exact zero $\tilde{U}(\epsilon) = (\tilde{Q}(\epsilon), \tilde{a}(\epsilon)) \in B_r(\bar{U}(\epsilon)) \subset H_1$ of $G_\epsilon$ for all $\epsilon \in [0, \epsilon_0]$. 

    Let $\tilde{\eta}(\epsilon)$ denote the continuous function representation of the physical free surface, given by the Fourier coefficients $h e_0 + (\epsilon \hat{\mathcal{P}} + \epsilon^2\hat{\mathcal{P}}^\perp )\tilde{a}(\epsilon)$. If the fundamental mode is strictly positive, $(\tilde{a}(0))_1 > 0$, then for all $\epsilon > 0$ sufficiently small, $\tilde{\eta}(\epsilon)$ is strictly monotonically increasing on $(-\pi,0)$.
\end{lemma}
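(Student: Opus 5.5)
The plan is to write the surface profile as a small perturbation of a pure cosine and to show that, after dividing by $\epsilon$, its derivative is uniformly close to a multiple of $-\sin x$ in a $C^1$ sense.

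By evenness of the sequences in $X_e$ and the normalization $(\hat{\mathcal{P}}\tilde a)_{\pm1} = (\tilde a(\epsilon))_1$, the profile is
\[
\tilde\eta(\epsilon)(x) = h + 2\epsilon\,(\tilde a(\epsilon))_1\cos x + \epsilon^2 \rho_\epsilon(x),
\]
where $\rho_\epsilon$ is the function with Fourier coefficients $\hat{\mathcal{P}}^\perp\tilde a(\epsilon)$. The $n=0$ coefficient of $\hat{\mathcal{P}}^\perp\tilde a(\epsilon)$ is either zero, by the gauge constraint, or a bounded constant. In either case it does not affect derivatives. Differentiating gives
\[
\tilde\eta_x = -2\epsilon\,(\tilde a(\epsilon))_1\sin x + \epsilon^2 \rho_\epsilon'(x).
\]
The key point is that we work in $X$, where $\|\rho_\epsilon'\|_{C^0}\le \|D_x\tilde a(\epsilon)\|_{\ell^1}\le \|\tilde a(\epsilon)\|_X$. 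Moreover $\|\tilde a(\epsilon)\|_X\le \|\bar V(\epsilon)\|_{H_1}+r$ is bounded uniformly in $\epsilon\in[0,\epsilon_0]$, since $\bar V$ is a polynomial in $\epsilon$ and Theorem \ref{th : radii polynomial continuation epsilon} gives a uniform radius $r$. Similarly $\|\rho_\epsilon''\|_{C^0}\le \|D_x^2\tilde a(\epsilon)\|_{\ell^1}$. This second derivative is not a priori controlled by the $X$ norm, so I will handle it through smoothness in $\epsilon$, as explained below.

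The main obstacle is the endpoints $x=0$ and $x=-\pi$. There $\sin x$ vanishes, so a uniform $C^1$ bound on $\epsilon^2\rho'_\epsilon$ alone cannot beat $2\epsilon(\tilde a(\epsilon))_1|\sin x|$. I plan to factor out $\sin x$ as follows. By evenness, $\rho_\epsilon'(x) = -2\sum_{n\ge 2} n\,c_n(\epsilon)\sin(nx)$, and $|\sin(nx)|\le n|\sin x|$. Hence
\[
|\rho_\epsilon'(x)|\le 2|\sin x|\sum_{n\ge2} n^2|c_n(\epsilon)|,
\]
and so
\[
\tilde\eta_x = -\sin x\,\big(2\epsilon(\tilde a(\epsilon))_1 + \epsilon^2\theta_\epsilon(x)\big), \qquad |\theta_\epsilon|\le 2\textstyle\sum n^2|c_n(\epsilon)|.
\]
This requires a uniform bound on a weighted $\ell^1$ norm with weight $n^2$. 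I expect to obtain it in one of two ways. The first is from the regularity of Lemma \ref{lem : solutions are C infinity}, made uniform by continuity in $\epsilon$: the solution map is $C^\infty$ into $H_1$, and elliptic bootstrapping of $f=0$ at fixed $\epsilon$ gives decay of the tail. The second is more robustly a posteriori. One reruns the second-derivative enclosure of Lemma \ref{lem : enclosure second derivative of U} uniformly in $\epsilon$ to bound $\|D_x^2\tilde a(\epsilon)\|_{\ell^1}$, which controls $\sum n^2|c_n|$ up to the factor $2$. If that proves awkward, a fallback is a local argument near the endpoints. There one would use the strict curvature signs $\eta_{xx}(0)<0$ and $\eta_{xx}(\pi)>0$, which follow from $2\epsilon(\tilde a)_1 \mp \epsilon^2 \rho''_\epsilon(0)$, together with the interior estimate.

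With $\sup_\epsilon|\theta_\epsilon|\le C$ established, I then use continuity of $\epsilon\mapsto(\tilde a(\epsilon))_1$ and the hypothesis $(\tilde a(0))_1>0$. These give $\epsilon_1>0$ with $(\tilde a(\epsilon))_1\ge \tfrac12(\tilde a(0))_1$ for $\epsilon\le\epsilon_1$. Then, for $0<\epsilon<\min\{\epsilon_1,(\tilde a(0))_1/C\}$, the bracket $2\epsilon(\tilde a(\epsilon))_1 + \epsilon^2\theta_\epsilon(x)$ is strictly positive. Since $-\sin x>0$ on $(-\pi,0)$, this yields $\tilde\eta_x>0$ there, which is strict monotonic increase on $(-\pi,0)$. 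By evenness this is equivalent to the monotonicity condition \eqref{eq : monotonicity_block}, so the local branch lies in the admissible set required for Corollary \ref{cor:global_monotonicity}.
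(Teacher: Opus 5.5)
Your factorization $\tilde\eta_x=-\sin x\,\bigl(2\epsilon(\tilde a(\epsilon))_1+\epsilon^2\theta_\epsilon(x)\bigr)$, obtained from $|\sin nx|\le n|\sin x|$, is correct. It isolates exactly what the endpoint analysis needs: $\epsilon\sum_{n\ge 2}n^2|c_n(\epsilon)|\to 0$ as $\epsilon\to 0^+$.

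The paper uses the same scaling idea but passes over this point. Convergence in $X_e$ gives only uniform convergence of $\epsilon^{-1}\tilde\eta_x$ to $-2(\tilde a(0))_1\sin x$. It does not give the convergence of $\epsilon^{-1}\tilde\eta_{xx}$ to $-2(\tilde a(0))_1\cos x$ that the paper invokes near $x=0,-\pi$.

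The gap is that you never establish this bound, and none of your three routes yields it as stated.
\begin{itemize}
\item Lemma~\ref{lem : solutions are C infinity} is qualitative and pointwise in $\epsilon$. Finiteness of $\sum n^2|c_n(\epsilon)|$ for each $\epsilon$, together with continuity of $\epsilon\mapsto\tilde a(\epsilon)$ in the weaker $X$ norm, gives no uniform control. The weight-$n^2$ norm is only lower semicontinuous under $X$-convergence.
\item Lemma~\ref{lem : enclosure second derivative of U} concerns zeros of $F$, i.e.\ the physical profile, whose $\hat{\mathcal P}^\perp$ part is $\epsilon^2\hat{\mathcal P}^\perp\tilde a(\epsilon)$. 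An $O(1)$ bound on $\|D_x^2a_{\mathrm{phys}}\|_{\ell^1}$ therefore only gives $\sum n^2|c_n(\epsilon)|=O(\epsilon^{-2})$. To do better you would need the enclosure error to be $o(\epsilon)$ uniformly as $\epsilon\to0$, which that lemma does not provide; a desingularized version would have to be derived.
\item The curvature fallback needs $\epsilon\sup_{|x|<\delta}|\rho_\epsilon''(x)|\to 0$ on a fixed neighbourhood of the endpoints, which is the same missing estimate. The sign of $\tilde\eta_{xx}$ at the endpoint alone does not control a neighbourhood.
\end{itemize}

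One way to close the gap is to redo the continuation in a stronger space.
\begin{itemize}
\item Let $X^{(2)}_e$ consist of even sequences with $\sum_n(1+|n|)^2|a_n|<\infty$. The proof of Lemma~\ref{lem:Banach_algebra} shows it is a Banach algebra, and $D_x,D_y$ map it boundedly into $X_e$. Hence $(\epsilon,V)\mapsto G_\epsilon(V)$ is polynomial, and so smooth, from $\mathbb{R}\times(\mathbb{R}\times X^{(2)}_e)$ into $\mathbb{R}\times X_e$.
\item At $\epsilon=0$, $D\bar G$ at the zero $(\tilde Q(0),\tilde a(0))$ of $G_0$ is a finite-rank perturbation, with finitely supported range, of the first-order Fourier multiplier $D_UF(\bar U_{\bar m},\bar m)$. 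It is therefore Fredholm of index zero. Injectivity follows from $\|I-A(0)D\bar G\|<1$, so it is invertible $H_1\to H_0$, and this transfers to invertibility $\mathbb{R}\times X^{(2)}_e\to\mathbb{R}\times X_e$.
\item The point $(\tilde Q(0),\tilde a(0))$ itself is finitely supported, since its $\mathcal P^\perp$ part solves a diagonal equation with finitely supported data.
\item The implicit function theorem then gives a branch, continuous into $\mathbb{R}\times X^{(2)}_e$ for small $\epsilon$, which coincides with your branch by local uniqueness in $H_1$.
\end{itemize}
This yields $\sup_{0\le\epsilon\le\epsilon_2}\sum n^2|c_n(\epsilon)|<\infty$, after which your estimate completes the proof.
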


\begin{proof}
    Let $\tilde{b}(\epsilon) \bydef h e_0 + (\epsilon \hat{\mathcal{P}} + \epsilon^2\hat{\mathcal{P}}^\perp )\tilde{a}(\epsilon)$ be the sequence of Fourier coefficients for the wave profile. Taking the spatial derivative yields $D_x \tilde{b}(\epsilon) = \epsilon D_x \hat{\mathcal{P}}\tilde{a}(\epsilon) + \epsilon^2 D_x\hat{\mathcal{P}}^\perp \tilde{a}(\epsilon)$. 
    
    Because the projection $\hat{\mathcal{P}}$ isolates the $n \in \{-1, 1\}$ modes and the profile is even, the function representation of the fundamental term $\epsilon D_x \hat{\mathcal{P}}\tilde{a}(\epsilon)$ is exactly $-2 \epsilon (\tilde{a}(\epsilon))_1 \sin(x)$. By the smoothness of the branch with respect to $\epsilon$ established in Theorem \ref{th : radii polynomial continuation epsilon}, $\tilde{a}(\epsilon) \to \tilde{a}(0)$ in $X_e$ as $\epsilon \to 0$. 

    Factoring out $\epsilon$, we obtain the uniform limit:
    \begin{equation*}
        \lim_{\epsilon \to 0} \left\| \frac{1}{\epsilon} \tilde{\eta}_x(\epsilon) + 2 (\tilde{a}(0))_1 \sin(x) \right\|_{\infty} = 0.
    \end{equation*}
    Since $-\sin(x) > 0$ on the open interval $(-\pi, 0)$ and $(\tilde{a}(0))_1 > 0$, the scaled derivative $\frac{1}{\epsilon} \tilde{\eta}_x(\epsilon)$ becomes strictly positive in the interior of the domain. Furthermore, because the convergence occurs in the sequence space $X_e$ (which embeds continuously into $C^1$), the behavior near the boundary points $x=0$ and $x=-\pi$ is governed by the derivative limit $-2(\tilde{a}(0))_1 \cos(x)$, preventing any anomalous critical points near the boundaries. Thus, $\tilde{\eta}_x(\epsilon) > 0$ on $(-\pi,0)$ for all $\epsilon$ sufficiently small, concluding the proof.
\end{proof}

Combining this local result with Corollary \ref{cor:global_monotonicity} and the branch-gluing mechanism from Lemma \ref{lem : gluing of two branches}, we can rigorously propagate this monotonicity property to establish that $\eta$ is monotonically increasing on $(-\pi,0)$ along the entire applicable global branch.
\subsection{Shape of the wave}\label{ssec : shape of the wave}

Given a parameter segment $[m_0,m_1]$, suppose Theorem \ref{th : radii polynomial continuation} has been successfully applied. That is, there exists $r > 0$ and an exact continuous branch of solutions $\tilde{U}(m) = (\tilde{Q}(m), \tilde{a}(m)) \in B_r(\bar{U}(m)) \subset H_1$ such that $F(\tilde{U}(m),m)=0$ for all $m \in [m_0,m_1]$.

We wish to determine whether the resulting wave profiles, represented by $\tilde{a}(m)$, are classical graphs or overhanging waves. To this end, it suffices to monitor the sign of the spatial derivative $\xi_x = D_y \eta$ along the continuous parameter interval $[m_0,m_1]$.

In practice, we construct a partition of the continuous domain $I = (-\pi,0) \times [m_0, m_1]$ into a finite collection of disjoint small rectangles, $I = \bigcup_{j=1}^M I_j$. Using rigorous interval arithmetic \cite{julia_interval}, we evaluate $D_y\bar{\eta}$ on each sub-domain $I_j$, yielding interval enclosures $\mathcal{I}_j = [a_j, b_j]$ such that $D_y\bar{\eta}(I_j) \subset \mathcal{I}_j$.

Viewing $\tilde{\eta}$ as a continuous function from $I$ to $\mathbb{R}$, we use the Newton-Kantorovich error bound $\|\bar{a}(m) - \tilde{a}(m)\|_X \leq r$ to strictly bound the true derivative. Specifically, the action of the derivative operator yields the uniform spatial bound $\|D_y\tilde{\eta} - D_y\bar{\eta}\|_{\infty} \leq \coth(h) r$. Consequently, we obtain the rigorous, continuous enclosure:
\begin{align*}
    D_y\tilde{\eta}(I_j) \subset [a_j - \coth(h) r, ~ b_j + \coth(h) r].
\end{align*}

By choosing a sufficiently fine partition of $I$, we obtain sharp control over the evolution of the sign of $D_y \tilde{\eta}$. For wave profiles close to the local bifurcation, this procedure verifies that the waves remain strictly graphs (i.e., $D_y\tilde{\eta} > 0$ on $(-\pi,0)$). However, as we continue along the branch, there may exist a critical point where the wave begins to overhang, meaning $D_y\tilde{\eta}(x) < 0$ for some $x \in (-\pi,0)$. To rigorously validate the exact transition between these two geometric regimes, we must control the sign of the mixed derivative $\partial_m D_y \tilde{\eta}$, which is achieved using the geometric parameter bounds established in Lemma \ref{lem : enclosure of DmU(m)}.
\subsubsection{Verifying the geometric transition}\label{ssec : transition shape}

To rigorously characterize the geometric transition of the branch from classical graphs to overhanging profiles, we must demonstrate that the condition $\xi_x(x, m) > 0$ is violated in a controlled manner. Suppose we have numerically established that the profile is a graph (i.e., $\xi_x > 0$ for all $x \in (-\pi,0)$) for a parameter regime $m \in [m_0, m_1]$, and that there exists a local region $[x^*-\delta, x^*+\delta]$ where the profile becomes overhanging (i.e., $\xi_x(x^*, m) < 0$) for a parameter regime $m \in [m_2, m_3]$ with $m_0 > m_3$.

To prove that the branch undergoes a strict transition between these two regimes, we demonstrate that the spatial derivative $\xi_x$ is strictly monotonic with respect to the continuation parameter $m$ within the transition zone. Specifically, we verify that:
\begin{align*}
    \partial_m \xi_x(x, m) > 0 \quad \text{for all } m \in [m_3, m_0] \text{ and } x \in [x^*-\delta, x^*+\delta].
\end{align*}
By verifying this monotonicity, we confirm that the values of $\xi_x$ on the interval $[x^*-\delta, x^*+\delta]$ evolve monotonically in $m$, crossing zero exactly once. This rigorously confirms a strict transition of the wave profile from a graph to an overhanging shape at a unique (locally) transition parameter $\tilde{m}_1 \in [m_3, m_0]$. In practice, this rigorous enclosure of $\partial_m \xi_x$ is computed a posteriori using the derivative bounds derived in Lemma \ref{lem : enclosure of DmU(m)}.
\subsection{Injectivity and self-intersection}\label{ssec : injectivity}

Building upon the results of the preceding sections, we have established that $\eta$ is monotonically increasing on $(-\pi,0)$ along the valid portion of the global branch. By Lemma \ref{lem:crest_injectivity}, this monotonicity restricts any potential loss of injectivity to a specific geometric failure: the profile can only self-intersect if there exists some $x \in (-\pi,0)$ such that $\tilde{\xi}(x) < -\pi$. 

By construction, $\tilde{\xi}$ is odd and satisfies the fixed boundary conditions $\tilde{\xi}(-\pi) = -\pi$ and $\tilde{\xi}(0) = 0$. Consequently, as long as the wave profile remains a classical graph (verified via the methods in Section \ref{ssec : shape of the wave}), injectivity is inherently guaranteed. 

However, once the branch transitions into the overhanging regime, this geometric guarantee is lost. To preclude non-physical self-intersection, we must rigorously enforce the strict lower bound $\tilde{\xi}(x) > -\pi$ for all $x \in (-\pi,0)$. In practice, we achieve this by applying the same interval arithmetic framework used in Section \ref{ssec : shape of the wave}. We partition the parameter domain $I = (-\pi,0) \times [m_0, m_1]$ into disjoint rectangular sub-domains $(I_j)_{j=1}^M$ and compute rigorous enclosures for $\tilde{\xi}(I_j)$, applying the Newton-Kantorovich error bound $\|\tilde{\xi} - \bar{\xi}\|_\infty \leq \coth(h)r$. 

As with the onset of overhanging profiles, the condition of injectivity is eventually violated for extreme values of $m$, marking the termination of the physically valid wave branch. To prove that this transition to non-physical waves is strict, we compute a rigorous enclosure for the sign of $\partial_m \tilde{\xi}$ to ensure that the mapping $m \mapsto \tilde{\xi}(x, m) + \pi$ is strictly monotonic in the parameter $m$. The practical implementation of this transition verification follows the identical continuous-parameter logic presented in Section \ref{ssec : transition shape}.
\section{Constructive existence proofs}\label{sec : existence proofs results}

In this section, we illustrate the power of our analytical framework by providing multiple constructive computer-assisted proofs of existence. The rigorous computational methodology, error bounds, and technical analytic proofs supporting all following results are detailed in Appendices~\ref{sec : appendix computational} and \ref{sec : appendix proofs}. First, we constructively prove the existence of a highly nonlinear single wave at a fixed value of the mass flux $m$, rigorously verifying that this wave is strictly overhanging. 

Next, we rigorously establish the existence of an entire continuous branch of waves parameterized by $m$. This global branch originates at the local bifurcation point (emerging from the trivial branch of flat solutions) and is continued until it terminates at an extreme overhanging wave characterized by self-intersections on the trough line. Crucially, this result partially resolves the conjecture formulated in Section 4 of \cite{constantin_global}, which hypothesized the limiting shape of the water wave at the absolute extreme of the global solution branch.

\bigskip

Before presenting the theorems, we formalize a crucial optimization used in our computational implementation. Let $\mathbb{N}_0 = \mathbb{N} \cup \{0\}$. We observe that the even sequence spaces $\ell^1_e$ and $X_e$ are isometrically isomorphic to the restricted spaces $\ell^1_e(\mathbb{N}_0)$ and $X_e(\mathbb{N}_0)$, respectively, defined as:
\begin{align*}
    \ell^1_e(\mathbb{N}_0) &= \left\{a = (a_n)_{n \in \mathbb{N}_0}, \, a_n \in \mathbb{R}, \, \|a\|_{\ell^1_e(\mathbb{N}_0)} < \infty \right\} \quad \text{where} \quad \|a\|_{\ell^1_e(\mathbb{N}_0)} = |a_0| + 2\sum_{n \in \mathbb{N}} |a_n|, \\
    X_e(\mathbb{N}_0) &= \left\{a = (a_n)_{n \in \mathbb{N}_0}, \, a_n \in \mathbb{R}, \, \|a\|_{X_e(\mathbb{N}_0)} < \infty \right\} \quad \text{where} \quad \|a\|_{X_e(\mathbb{N}_0)} = |a_0| + 2\sum_{n \in \mathbb{N}} (1+|n|) |a_n|.
\end{align*}
To rigorously establish this equivalence, we define the mapping $\mathcal{G} : \ell^1_e(\mathbb{N}_0) \to \ell^1_e$ simply by
\begin{equation*}
    (\mathcal{G}a)_n = a_{|n|} \quad \text{for all } n \in \mathbb{Z}.
\end{equation*}
By the construction of the norms on $\ell^1_e(\mathbb{N}_0)$ and $X_e(\mathbb{N}_0)$, the extensions $\mathcal{G} : \ell^1_e(\mathbb{N}_0) \to \ell^1_e$ and $\mathcal{G} : X_e(\mathbb{N}_0) \to X_e$ are exact isometric isomorphisms. This structural symmetry is highly advantageous computationally: exploiting the even symmetry of the functions requires storing only non-negative indices, effectively halving the memory footprint and the numerical complexity of discrete convolutions. Consequently, all numerical objects (sequences and matrices) in our software are constructed exclusively in $X_e(\mathbb{N}_0)$, and the rigorous evaluation of the bounds derived in the previous sections is seamlessly projected back to the full space via the mapping $\mathcal{G}$.

To apply our computer-assisted bounding framework, we must first numerically compute an approximate solution $\bar{U}$ for the residual $F$. Specifically, we initialize the computation at the explicit bifurcation point $(m_1, \bar{U}_{m_1})$ corresponding to the flat-water state, as derived in Section \ref{sec : local bif} (with $n=1$). Using standard predictor-corrector parameter continuation coupled with Newton's method, we sequentially step away from this bifurcation point to systematically compute highly nonlinear approximate solutions. For this implementation, we truncate our sequences at a numerical size of $K_0 = 200$, and we size our finite-dimensional matrix projections at $K_1 = 402$. Crucially, the configuration $K_1 \geq 2K_0$ ensures that the functional analysis requirements of Lemmas \ref{lem : A infinity properties} and \ref{lem : Z1 bound} are strictly satisfied.

\subsection{Constructive proof of a single extreme overhanging wave}
While Section \ref{ssec : proof of an entire branch} details the constructive proof for the entire global branch, this section focuses on the rigorous verification of a specific, isolated extreme solution. Physically characterizing such isolated limiting solutions is of significant independent interest. Here, we constructively prove the existence of a highly extreme, overhanging wave. Notably, we provide the first rigorous existence proof of an overhanging wave that entirely lacks a stagnation point within the fluid domain. Due to the extreme geometric bulging of the crest over the trough, we will hereafter refer to these profiles as ``$\Omega$''-shaped solutions.

We now state our first main result, which establishes the precise geometric and analytical properties of this extreme wave for a specific configuration of physical parameters
\begin{theorem}\label{th : existence single wave overhanging 085}
    Let the physical parameters be given by $m = -0.85$, $\gamma = -5$, $g = 1$, and $h = 2$, and define the isolation radii $r^- = 8.839 \times 10^{-36}$ and $r^+ = 1.857 \times 10^{-7}$. Furthermore, let $\bar{U} =(\bar{Q},\bar{a}) \in H_1$ be the approximate numerical solution generated by the software \cite{julia_cadiot}, where specifically $\bar{Q} = 69.094\dots$ and the physical profile $\bar{a}$ is illustrated in Figure \ref{fig : omega wave}. 
    
    Then, there exists a unique, exact solution $\tilde{U} = (\tilde{Q}, \tilde{a}) \in H_1$ such that $F(\tilde{U}) = 0$ and the distance to the approximate solution satisfies $\|\bar{U} - \tilde{U}\|_{H_1} \leq r^-$. In fact, $\tilde{U}$ is the unique zero of $F$ in the larger neighborhood $B_{r^+}(\bar{U}) \subset H_1$. Moreover, the parametric free surface coordinates $(\xi,\eta)$, given as in \eqref{eq:fourier_series_eta_and_zeta}, are infinitely differentiable, and the wave profile is strictly overhanging.
\end{theorem}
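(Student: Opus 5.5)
The proof will be a direct application of Theorem~\ref{th : radii polynomial} in the setting $X_1 = H_1$, $X_2 = H_0$. We use the zero-finding map $F$ from \eqref{def : zero finding F} with $m=-0.85$, $\gamma=-5$, $g=1$, $h=2$. The approximate solution $\bar U = \pi^{\leq K_0}\bar U$ is taken with $K_0=200$ and $\bar a_0 = h$ imposed exactly. The approximate inverse $A$ is the Schur-complement operator \eqref{def : approx inverse} with $K_1 = 402 \geq 2K_0$. For the tail, we compute numerically a weight $w=\hat\pi^{\leq K_0}w$ approximating $(iv_3+v_4)^{-1}$ and insert it into \eqref{def : A infinity}. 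For the finite block, $A_K$ is a floating-point inverse of the Galerkin block $\pi^{\leq K_1}(DF(\bar U)-DF(\bar U)A_\infty DF(\bar U))\pi^{\leq K_1}$. All of these objects are then converted to interval matrices and vectors in the reduced space $X_e(\mathbb{N}_0)$ via the isometry $\mathcal{G}$.

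The bounds come from the earlier lemmas. Lemma~\ref{lem : Y bound single wave} gives $Y$: $F(\bar U)$ has finite support (at most $4K_0$ modes, since $f$ is quartic), so every norm there is a finite sum that interval arithmetic can enclose. Lemma~\ref{lem : A infinity properties} gives $\delta$, and Lemma~\ref{lem : Z1 bound} then gives $Z_\infty$, $Z_{10}$, $Z_{11}$, $Z_{12}$. In each case only norms of finitely supported sequences or of finite matrices appear; the $X$-operator norms become weighted column sums after conjugation by $\hat{\mathcal W}$. Lemma~\ref{lem : Z2 single wave} supplies $Z_2(r)$. With these constants in hand, we solve the radii polynomial inequalities \eqref{condition radii polynomial} to get the interval $(r^-,r^+)$. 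Existence in $B_{r^-}(\bar U)$ then follows, and uniqueness in $B_{r^+}(\bar U)$ follows from the second inequality together with monotonicity of $Z_2$.

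The qualitative conclusions are a posteriori. Since $Z_1+Z_2(r^-)r^-<1$, Lemma~\ref{lem : proof of non-degeneracy} gives \eqref{eq : nondegeneracy_block}. Lemma~\ref{lem : solutions are C infinity} then upgrades $\tilde\eta$, and hence $\xi$ through the Cauchy--Riemann relations, to $C^\infty$. For the overhang we use Lemma~\ref{lem : overhanging criterion}. First we pick a point $x^*$ where $D_y\bar\eta(x^*) = 1+2\sum_{n} n\coth(nh)\bar a_n\cos(nx^*)$ is clearly negative, and enclose this finite sum rigorously. The true value satisfies $|D_y\tilde\eta(x^*)-D_y\bar\eta(x^*)|\le \max\{1,\coth h\}\,\|\tilde a-\bar a\|_X\le \coth(2)\, r^-$, which is negligible, so $\tilde\xi_x(x^*,0)<0$ strictly.

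The main obstacle will be making $Z_1<1$ for such an extreme profile. Near the overhang the symbol $iv_3+v_4$, which is essentially $(Q-2g\eta)(\eta_y+i\eta_x)$ up to sign, has a small modulus, and its winding number must be zero for $w$ to exist in the algebra. Two things can then go wrong: the defect $\|w*(iv_3+v_4)-e_0\|_{\ell^1}$ may not be small with $w$ supported on only $K_0$ modes, and $\|A_K\|$ is large, which inflates $Z_{12}$ and the $Z_2$ prefactor. To control the defect, we would first compute $w$ as the truncated Fourier series of $1/(iv_3+v_4)$ evaluated pointwise on a fine grid, and refine it by a few Newton steps for the inverse in $\ell^1$. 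If that is still insufficient, we would increase $K_0$, together with $K_1$, until the Fourier tails of $\bar a$ (and hence $Y$) reach machine-precision levels. This is consistent with the tiny $r^-$ that is reported.
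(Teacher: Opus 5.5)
Your proposal is correct and follows the paper's proof essentially step by step. The common steps are: the $Y$, $Z_1$, $Z_2$ bounds of Lemmas \ref{lem : Y bound single wave}, \ref{lem : Z1 bound} and \ref{lem : Z2 single wave}, checked in interval arithmetic against the radii polynomial; Lemmas \ref{lem : proof of non-degeneracy} and \ref{lem : solutions are C infinity} for non-degeneracy and smoothness; and a rigorous point enclosure of $D_y\tilde\eta(x^*)$ with the $\coth(h)\,r^-$ error bound for the overhang. The only difference is that the paper's proof additionally certifies strict monotonicity of $\tilde\eta$ on $(-\pi,0)$ and injectivity, which the statement as written does not require but which back the ``physically valid, injective'' claim of Theorem \ref{thm : main result single wave}: monotonicity comes from grid enclosures of $\tilde\eta_x$, with the second-derivative enclosure of Lemma \ref{lem : enclosure second derivative of U} on the end intervals where $\tilde\eta_x$ vanishes, and injectivity from Lemma \ref{lem:crest_injectivity} plus a grid check of $\tilde\xi+\pi>0$.
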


\begin{proof}
    We evaluated the explicit bounding formulas derived in Section \ref{ssec : computation bounds single wave} using rigorous interval arithmetic \cite{julia_cadiot}, yielding the following uniform bounds:
    \begin{equation*}
        Y = 8.784 \times 10^{-36}, \quad Z_1 = 6.236 \times 10^{-3}, \quad \text{and} \quad Z_2(r_0) = 5.354 \times 10^6.
    \end{equation*}
    Substituting these values into the Newton-Kantorovich condition \eqref{condition radii polynomial}, we verify that the contraction condition is strictly satisfied for all radii $r \in [r^-, r^+]$. This guarantees the existence of a unique exact solution $\tilde{U}$ within the $r$-neighborhood of $\bar{U}$. Using Lemma \ref{lem : proof of non-degeneracy}, we establish that the non-degeneracy conditions \eqref{eq : nondegeneracy_block} are satisfied. Combined with Lemma \ref{lem : solutions are C infinity}, this implies the infinite smoothness of the wave. 
    
    Next, we verify the monotonicity condition \eqref{eq : monotonicity_block}. We construct a spatial partition $(I_j)_{j=1}^M$ of $[-\pi,0]$ for some $M \in \mathbb{N}$. Using interval arithmetic \cite{julia_interval}, we compute rigorous enclosures $\mathcal{I}_j = [a_j, b_j]$ such that $\bar{\eta}_x(I_j) \subset [a_j, b_j]$. Since the analytical defect is bounded by $\|\bar{\eta}_x - \tilde{\eta}_x\|_\infty \leq r^-$, we obtain the rigorous continuous enclosure $\tilde{\eta}_x(I_j) \subset [a_j-r^-, ~b_j+r^-].$ For all interior sub-domains $j \in \{2, \dots, M-1\}$, we rigorously verify that $a_j - r^- > 0$, ensuring $\tilde{\eta}_x(x) > 0$. For the boundary intervals $I_1$ and $I_M$, a more delicate analysis is required since $\tilde{\eta}_x$ is odd and vanishes at the endpoints $\tilde{\eta}_x(0) = \tilde{\eta}_x(-\pi) = 0$. Using the rigorous enclosure provided by Lemma \ref{lem : enclosure second derivative of U}, we prove that the second derivative satisfies $\tilde{\eta}_{xx}(x) > 0$ for all $x \in I_1 \cup I_M$. This concludes the proof that $\tilde{\eta}$ is strictly monotonic on the open interval $(-\pi,0)$. 

    To prove the injectivity of the profile, we apply Lemma \ref{lem:crest_injectivity}, which reduces the geometric requirement to proving that $\tilde{\xi}(x) + \pi > 0$ for all $x \in (-\pi,0)$. Following the partition strategy used for the derivative, we evaluate the approximate coordinate $\bar{\xi}$ on the grid. Applying the conformal mapping defect $\|\tilde{\xi} - \bar{\xi}\|_\infty = \|H_h^{-1}(\tilde{\eta} - \bar{\eta})\|_\infty \leq \coth(h) r^-$, we compute a rigorous interval enclosure for the exact function $\tilde{\xi}$. This confirms that $\tilde{\xi}(x) + \pi > 0$ globally on the domain, explicitly verifying the injectivity of the wave profile.
    
    Finally, to rigorously classify the wave as overhanging, it suffices to prove the existence of at least one point $\tilde{x} \in (-\pi,0)$ where $D_y \tilde{\eta}(\tilde{x}) < 0$. We numerically identify a candidate coordinate $\tilde{x}$ and compute a rigorous enclosure of $D_y \tilde{\eta}(\tilde{x})$. By combining the interval evaluation of the approximate solution with the uniform error bound $\|D_y \tilde{\eta} - D_y \bar{\eta}\|_\infty \leq \coth(h) r^-$, we rigorously verify that $D_y \tilde{\eta}(\tilde{x}) < 0$. Thus, the profile is strictly overhanging. The complete algorithmic implementation and interval evaluations for these geometric proofs are provided in \cite{julia_cadiot}.
\end{proof}
\subsection{Constructive proof of an entire branch of waves}\label{ssec : proof of an entire branch}
\subsubsection{The local branch}
The following theorem establishes the rigorous existence of the local branch emerging from the trivial flat-water state, verifying that the waves along this curve are non-trivial, monotone graphs.

\begin{theorem}\label{th : local branch}
    Let $r^- = 1.062 \times 10^{-6}$, $r^+ = 8.20 \times 10^{-6}$, and let $\epsilon_0 = \sqrt{\bar{m} + 0.1}$. Furthermore, let the approximate local branch $\bar{V} : [0, \epsilon_0] \to H_1$ be defined as in \eqref{eq : approx sol local bifurcation} and generated numerically via the software \cite{julia_cadiot}.
    
    Then, for all $\epsilon \in [0,\epsilon_0]$, there exists a unique exact solution $\tilde{V}(\epsilon) \in B_{r^-}(\bar{V}(\epsilon))$ such that $\tilde{U}_\epsilon \bydef \bar{U}_{\bar{m}-\epsilon^2} + \epsilon^2 P(\epsilon) \tilde{V}(\epsilon)$ is an exact solution to \eqref{def : zero finding F} with mass flux $m = \bar{m} - \epsilon^2$. Moreover, $\tilde{V}(\epsilon)$ is the unique solution within the larger neighborhood $B_{r^+}(\bar{V}(\epsilon))$. 
    
    Furthermore, we have the strict lower bound $\|\mathcal{P}\tilde{V}({\epsilon})\|_{H_1} \geq 1$ for all $\epsilon \in [0, \epsilon_0]$. This strictly bounds the primary bifurcating mode away from zero, rigorously proving that $\{\tilde{U}_\epsilon, ~ \epsilon \in [0, \epsilon_0]\}$ constitutes a non-trivial branch of wave solutions bifurcating from the trivial flat-water branch at $(m_1, \bar{U}_{m_1})$. Finally, the profiles of these local waves are monotone graphs on $(-\pi,0)$.
\end{theorem}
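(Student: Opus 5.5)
The plan is to invoke the uniform contraction Theorem \ref{th : radii polynomial continuation epsilon} for the desingularized map $G_\epsilon$ of \eqref{def : zero finding G}. We take the Chebyshev approximate branch $\bar{V}(\epsilon)$ of \eqref{eq : approx sol local bifurcation} and the approximate inverse $A(\epsilon)$ of \eqref{eq : approx inverse local bif}. Since $\epsilon \mapsto G_\epsilon(V)$ is polynomial in $\epsilon$, all suprema over $[0,\epsilon_0]$ of the quantities in Lemmas \ref{lem: Y bounds}, \ref{lem : Z1 epsilon} and \ref{lem : Z2 epsilon} reduce to sup-norms of finite-degree polynomials with finite-dimensional coefficients. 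We enclose these with the Chebyshev interval arithmetic of Appendix \ref{sec : appendix Chebyshev norm}. The removable $\epsilon^{-2}$ singularity at $\epsilon=0$ is handled through the limit operator $\bar{G}$, so the bounds are genuinely uniform up to the flat state.

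With the computed $Y$, $Z_1$, $Z_2(r)$, we verify the two radii-polynomial inequalities \eqref{eq : condition contraction continuation} at $r=r^-$ and at $r=r^+$. Since the first inequality is a quadratic with positive leading coefficient, it then holds on all of $[r^-,r^+]$, and the second follows by monotonicity of $Z_2$. This gives existence in $B_{r^-}(\bar V(\epsilon))$, uniqueness in $B_{r^+}(\bar V(\epsilon))$, and smoothness in $\epsilon$. By construction of $G_\epsilon$, $\tilde U_\epsilon=\bar U_{\bar m-\epsilon^2}+\epsilon^2P(\epsilon)\tilde V(\epsilon)$ is then a zero of $F(\cdot,\bar m-\epsilon^2)$.

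Non-triviality is obtained from the triangle inequality
\[
\|\mathcal{P}\tilde V(\epsilon)\|_{H_1}\ge \inf_{\epsilon\in[0,\epsilon_0]}\|\mathcal{P}\bar V(\epsilon)\|_{H_1}-r^-,
\]
where the infimum is enclosed rigorously by evaluating the two fundamental Fourier coefficients of the Chebyshev polynomial $\bar V(\epsilon)$ and checking the result exceeds $1+r^-$. Then the amplitude of $\tilde U_\epsilon-\bar U_{\bar m-\epsilon^2}$ in the $\pm1$ modes is $\epsilon\|\mathcal{P}\tilde V(\epsilon)\|_{H_1}\ge\epsilon>0$ for $\epsilon>0$. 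Hence the branch is distinct from the flat branch and emanates from $(\bar m,\bar U_{\bar m})$.

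For monotonicity, we first check $(\tilde a(0))_1>0$ via an interval enclosure of $(\bar a(0))_1\pm r^-$. The monotonicity lemma of Section \ref{ssec : monotonicity local branch} then gives strict monotonicity for small $\epsilon$. To cover the whole interval $[0,\epsilon_0]$ rather than only small $\epsilon$, we use the scaled derivative $\epsilon^{-1}\tilde\eta_x(\epsilon)$. On a partition of $(-\pi,0)\times[0,\epsilon_0]$ into rectangles, we enclose this quantity as in Section \ref{ssec : shape of the wave} and check positivity. Near the endpoints $x=0,-\pi$, we instead check the sign of the second derivative, which behaves like $-2(\tilde a)_1\cos x$ at leading order. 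Alternatively, Corollary \ref{cor:global_monotonicity} propagates monotonicity once non-stagnation is known along the branch; non-stagnation follows from the Neumann-series argument of Lemma \ref{lem : proof of non-degeneracy}.

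The main obstacle is obtaining a $Z_1<1$ that is uniform down to $\epsilon=0$. Near the bifurcation point $DF$ has a kernel, so the only source of invertibility is the rescaling by $P(\epsilon)$ together with the augmented unknown $Q$. The finite block $A_K(\epsilon)$ must therefore approximate the inverse of the limiting operator $D\bar G$ accurately. The $\epsilon^{-1}$ factors in $P(\epsilon)$ must also cancel exactly in $Z_{10}$ and in $Z_A$ rather than be bounded crudely. I would first verify at $\epsilon=0$ that $D\bar G(\bar V(0))$ is boundedly invertible, and then choose $N$ large enough for the Chebyshev enclosures to keep this invertibility over the whole interval.
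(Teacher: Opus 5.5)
Your treatment of existence in $B_{r^-}(\bar V(\epsilon))$, uniqueness in $B_{r^+}(\bar V(\epsilon))$, smoothness in $\epsilon$, and non-triviality follows the paper. The paper in fact only asserts $\|\mathcal P\tilde V(\epsilon)\|_{H_1}\ge 1$. Your triangle-inequality argument, using $\|\mathcal P\|\le 1$ on $H_1$ and a rigorous lower bound on $4|\bar a_1(\epsilon)|$, is a valid way to supply it. One small correction: $Z_2$ depends on $r$ through $Z_A Z_{D^3}(r)r$, so the first radii condition is a quartic in $r$, not a quadratic. All its coefficients of degree $\ge 2$ are nonnegative, so it is convex on $[0,\infty)$, and checking it at $r^\pm$ still suffices.

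The gap is in the final claim. The theorem asserts that the profiles are monotone \emph{graphs}. Besides $\tilde\eta_x>0$ on $(-\pi,0)$, you must show $\tilde\xi_x=D_y\tilde\eta>0$, i.e.\ that the surface is a graph over the horizontal axis. Monotonicity of $\eta$ in the conformal variable does not give this: the wave of Theorem~\ref{th : existence single wave overhanging 085} is strictly monotone yet overhanging. Your proposal never examines $\tilde\xi_x$. The paper closes this with the grid partition of Section~\ref{ssec : shape of the wave}: it encloses $D_y\bar\eta$ on rectangles of $(-\pi,0)\times[0,\epsilon_0]$ and absorbs the error $\|D_y\tilde\eta-D_y\bar\eta\|_\infty\le\coth(h)\,\epsilon\,r^-$ (using $\epsilon_0<1$). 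Unlike $\tilde\eta_x$, the even function $\tilde\xi_x$ is not forced to vanish at $x=0,-\pi$, so a plain interval enclosure suffices.

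The omission also undermines your monotonicity argument. Your primary route does not close near $x=0,-\pi$. The $H_1$ enclosure controls only one derivative, so it gives no sup-norm bound on $\epsilon D_x^2\hat{\mathcal P}^\perp(\tilde a(\epsilon)-\bar a(\epsilon))$. The phrase ``behaves like $-2(\tilde a)_1\cos x$ at leading order'' is not rigorous for $\epsilon$ up to $\epsilon_0$; you would need a branch analogue of Lemma~\ref{lem : enclosure second derivative of U}. Your alternative route, Corollary~\ref{cor:global_monotonicity} combined with the small-$\epsilon$ lemma, is the one the paper uses. However, that corollary also assumes injectivity along the branch, not just non-stagnation. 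On the local branch, injectivity comes exactly from the missing graph check: $\tilde\xi_x>0$ makes $\tilde\xi$ increase from $-\pi$ to $0$.
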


\begin{proof}
    We evaluated the explicit bounding formulas derived in Section \ref{ssec : bounds local bif} using the rigorous computational framework provided in \cite{julia_cadiot}. Specifically, the uniform bounds in $\epsilon$ are computed via Chebyshev polynomial interpolation, as detailed in Appendix \ref{sec : appendix Chebyshev norm}. We obtain the following rigorous uniform bounds:
    \begin{equation*}
        Y = 9.88 \times 10^{-7}, \quad Z_1 = 5.27 \times 10^{-3}, \quad \text{and} \quad Z_2 = 1.22 \times 10^{5}.
    \end{equation*}
    Substituting these values into the criteria of Theorem \ref{th : radii polynomial continuation epsilon}, the contraction mapping conditions are strictly satisfied for $r^-$ and $r^+$. This guarantees the existence of a locally unique exact zero $\tilde{V}(\epsilon) \in H_1$ of the desingularized operator $G_{\epsilon}$ for all $\epsilon \in [0, \epsilon_0]$. Because $G_\epsilon$ depends smoothly on $\epsilon$, the solution branch $\tilde{V}(\epsilon)$ is likewise smooth with respect to $\epsilon$. By the exact construction of $G_\epsilon$, this directly implies the existence of a locally unique, non-trivial solution branch $\tilde{U}_\epsilon \bydef \bar{U}_{\bar{m}-\epsilon^2} + \epsilon^2 P(\epsilon) \tilde{V}(\epsilon)$ for all $\epsilon \in [0, \epsilon_0]$. 
    
    The strict monotonicity of the physical wave profiles on the local branch follows directly from the methodology described in Section \ref{ssec : monotonicity local branch}, combined with Corollary \ref{cor:global_monotonicity}. Finally, to rigorously verify that these profiles remain classical graphs, we apply the geometric grid partition analysis derived in Section \ref{ssec : shape of the wave}. The complete numerical details for these geometric bounds are provided in \cite{julia_cadiot}.
\end{proof}

\subsubsection{Main part of the global branch}

In the preceding section, we constructively proved the existence of a local branch of steady solutions bifurcating from the trivial state, valid for all $m \in [-0.1, \bar{m}]$. In this section, we rigorously continue this branch further, validating the existence of steady waves down to an extreme mass flux of $m = -1$. As we will demonstrate in the subsequent section, the solutions eventually become geometrically non-physical (self-intersecting) near $m \approx -0.88$. To execute this continuation, we partition the parameter domain $[-1, -0.1]$ into nine uniform sub-intervals. For all $i \in \{0, \dots, 9\}$, we define the boundary nodes:
\begin{align*}
    m_i \bydef -(i+1)0.1.
\end{align*}

Our objective is to constructively prove the existence of a continuous branch of exact solutions across all intervals $[m_{i+1}, m_i]$ for $i \in \{0, \dots, 8\}$. Using our Newton-Kantorovich contraction framework, we prove not only that a unique exact branch exists on each sub-interval, but also that adjacent branches seamlessly match at their shared boundaries $m_{i+1}$. Furthermore, we verify that the branch on $[m_1, m_0]$ perfectly coincides with the local branch at $m_0 = -0.1$. This rigorously establishes the existence and $C^\infty$-smoothness of a single, unified global branch of solutions parameterized by $m$, extending from the bifurcation point $\bar{m}$ down to $m_9 = -1$.

\begin{table}[ht]
  \centering
  \begin{tabular}{|c|c|c|c|c|c|}
    \toprule
    $[m_{i+1}, m_i]$ & $r^-_i$ & $r^+_i$ & $Y$ & $Z_1$ & $Z_2$ \\
    \midrule
    $[-0.2,-0.1]$ & $7.08\times10^{-13}$ & $5.83\times10^{-6}$ & $7.03\times10^{-13}$ & $5.78\times10^{-3}$ & $1.70\times10^{5}$ \\
    $[-0.3,-0.2]$ & $2.06\times10^{-15}$ & $3.83\times10^{-6}$ & $2.04\times10^{-15}$ & $8.30\times10^{-3}$ & $2.59\times10^{5}$ \\
    $[-0.4,-0.3]$ & $4.05\times10^{-17}$ & $2.67\times10^{-6}$ & $4.00\times10^{-17}$ & $1.12\times10^{-2}$ & $3.70\times10^{5}$ \\
    $[-0.5,-0.4]$ & $2.09\times10^{-18}$ & $1.95\times10^{-6}$ & $2.06\times10^{-18}$ & $1.46\times10^{-2}$ & $5.06\times10^{5}$ \\
    $[-0.6,-0.5]$ & $1.93\times10^{-19}$ & $1.42\times10^{-6}$ & $1.89\times10^{-19}$ & $1.84\times10^{-2}$ & $6.92\times10^{5}$ \\
    $[-0.7,-0.6]$ & $2.78\times10^{-20}$ & $1.01\times10^{-6}$ & $2.69\times10^{-20}$ & $2.40\times10^{-2}$ & $9.66\times10^{5}$ \\
    $[-0.8,-0.7]$ & $7.01\times10^{-20}$ & $7.34\times10^{-7}$ & $6.77\times10^{-20}$ & $3.04\times10^{-2}$ & $1.32\times10^{6}$ \\
    $[-0.9,-0.8]$ & $2.36\times10^{-18}$ & $5.42\times10^{-7}$ & $2.27\times10^{-18}$ & $3.79\times10^{-2}$ & $1.78\times10^{6}$ \\
    $[-1.0,-0.9]$ & $5.27\times10^{-17}$ & $4.06\times10^{-7}$ & $5.02\times10^{-17}$ & $4.64\times10^{-2}$ & $2.35\times10^{6}$ \\
    \bottomrule
  \end{tabular}
  \caption{Rigorous bounds for the constructive proof of the branch of steady waves for $m\in[-1,-0.1]$. On each sub-interval, $Y$, $Z_1$, and $Z_2$ are the uniform Newton--Kantorovich bounds, and a unique exact solution is established for every radius $r\in[r^-,r^+]$.}
  \label{tab:branch_bounds}
\end{table}

\begin{theorem}\label{th : global branch}
    Let $i \in \{0, \dots, 8\}$ and let $r \in [r_i^-, r_i^+]$, where the isolation radii $r_i^-$ and $r_i^+$ are specified in Table \ref{tab:branch_bounds}. Then, for all $m \in [m_{i+1}, m_i]$, there exists a unique exact solution $\tilde{U}_i(m)\in H_1$ to $F(U, m) = 0$ satisfying $\|\tilde{U}_i(m) - \bar{U}_i(m)\|_{H_1} \leq r$. 
    
    Furthermore, the global branch is continuous: adjacent segments satisfy $\tilde{U}_i(m_{i+1}) = \tilde{U}_{i+1}(m_{i+1})$ for all $i \in \{0, \dots, 7\}$. Finally, at the boundary $m_0 = -0.1$, the global branch perfectly matches the local branch, satisfying $\tilde{U}_0(m_0) = \tilde{U}_{\epsilon_0}$, where $\tilde{U}_{\epsilon_0}$ is the exact local solution defined in Theorem \ref{th : local branch}.
\end{theorem}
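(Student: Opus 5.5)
The proof splits into three parts: existence on each segment, gluing between adjacent segments, and matching with the local branch at $m_0=-0.1$.

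\textbf{Existence on each segment.} For each $i\in\{0,\dots,8\}$ we apply Theorem \ref{th : radii polynomial continuation} on $[m_{i+1},m_i]$ with the following objects:
\begin{itemize}
\item the Chebyshev approximate branch $\bar{U}_i(m)$ of the form \eqref{eq : approx sol global branch}, with $K_0=200$;
\item the approximate inverse $A_i(m)$ from \eqref{eq : approx inverse global bif}, with $K_1=402\geq 2K_0$, so that Lemmas \ref{lem : A infinity properties} and \ref{lem : Z1 bound} apply.
\end{itemize}
The bounds $Y$, $Z_1$ and $Z_2(r)$ are computed from the formulas of Lemmas \ref{lem : Y bound global branch}, \ref{lem : Z1 bound global branch} and \ref{lem : Z2 global branch}. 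Every supremum over $m$ is a supremum of a polynomial in $m$ (of degree at most $5N$), which we enclose rigorously with the Chebyshev arithmetic of Appendix \ref{sec : appendix Chebyshev global}. All finite blocks, including $Z_{10}$, $Z_{11}$, $Z_{12}$ and $\|\mathcal{W}A_K\|$, are enclosed with interval arithmetic.

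Next we check the radii polynomial inequalities \eqref{eq : condition contraction continuation} at $r=r_i^-$ and at $r=r_i^+$. The first inequality is a quadratic in $r$ with monotone coefficients, and $Z_2$ is non-decreasing in $r$. Hence the set of admissible $r$ is an interval, and checking the two endpoints establishes the inequalities for every $r\in[r_i^-,r_i^+]$. Theorem \ref{th : radii polynomial continuation} then gives, for every $m$, a unique zero $\tilde{U}_i(m)$ in $\overline{B_r(\bar{U}_i(m))}$, and this zero depends $C^\infty$ on $m$. Uniqueness in the largest ball $B_{r_i^+}$ forces the solutions obtained for different $r$ to coincide. The values in Table \ref{tab:branch_bounds} are the output of this verification.

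\textbf{Continuity across segment boundaries.} At each node $m_{i+1}$ we use Lemma \ref{lem : gluing of two branches}. It suffices to verify the inequality
\[
\|\bar{U}_i(m_{i+1})-\bar{U}_{i+1}(m_{i+1})\|_{H_1}+r_i^- < r_{i+1}^+ .
\]
By the triangle inequality this gives $B_{r_i^-}(\bar{U}_i(m_{i+1}))\subset B_{r_{i+1}^+}(\bar{U}_{i+1}(m_{i+1}))$. The point $\tilde{U}_i(m_{i+1})$ is then a zero of $F$ lying in the uniqueness ball of segment $i+1$, so it equals $\tilde{U}_{i+1}(m_{i+1})$. The difference of the two approximate solutions is a finite sequence, since both Chebyshev branches are evaluated at the endpoint $T_n(\pm1)=(\pm1)^n$. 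Its norm is therefore computed exactly in interval arithmetic. In the table, $r_i^-$ is at most of order $10^{-12}$ while $r_{i+1}^+$ is of order $10^{-7}$ to $10^{-6}$. The inequality only asks that the two numerical branches agree at their shared endpoint to well below $10^{-7}$, which we enforce by computing both from the same Newton-corrected endpoint value.

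\textbf{Matching with the local branch.} The main obstacle is the junction with the local branch at $m_0=-0.1$, because the local solution is given in the desingularized variables. We write
\[
\tilde{U}_{\epsilon_0}=\bar{U}_{\bar{m}-\epsilon_0^2}+\epsilon_0^2P(\epsilon_0)\tilde{V}(\epsilon_0).
\]
This gives the estimate
\[
\|\tilde{U}_{\epsilon_0}-\bar{U}(\epsilon_0)\|_{H_1}\le \epsilon_0^2\|P(\epsilon_0)\|_{\mathcal{B}(H_1)}\,r^-_{\mathrm{loc}}\le \epsilon_0 r^-_{\mathrm{loc}},
\]
using $\|P(\epsilon)\|\le \max(1/\epsilon,1)$. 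The resulting existence radius $\epsilon_0 r^-_{\mathrm{loc}}\approx 10^{-6}$ is only just below $r_0^+=5.83\times10^{-6}$. We then verify
\[
\|\bar{U}(\epsilon_0)-\bar{U}_0(m_0)\|_{H_1}+\epsilon_0 r^-_{\mathrm{loc}}<r_0^+ .
\]
The margin here is tight, so the rescaled radius must be tracked carefully and the left-hand side enclosed rigorously. Lemma \ref{lem : gluing of two branches} then gives $\tilde{U}_0(m_0)=\tilde{U}_{\epsilon_0}$. If the margin fails, the fallback is to recompute the local proof at a slightly smaller $r^+$ to shrink $r^-_{\mathrm{loc}}$, or to refine the first global segment.
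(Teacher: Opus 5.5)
Your proposal is correct and follows the paper's route: apply Theorem \ref{th : radii polynomial continuation} on each segment with the tabulated bounds, then use Lemma \ref{lem : gluing of two branches} at the nodes and at $m_0=-0.1$. You are in fact more careful than the paper on two points, namely including the center mismatch $\|\bar U_i(m_{i+1})-\bar U_{i+1}(m_{i+1})\|_{H_1}$ in the ball inclusion and converting the local radius via $\|\epsilon_0^2P(\epsilon_0)\|_{\mathcal B(H_1)}\le\epsilon_0$, whereas the paper only compares radii; two harmless slips remain. First, the polynomials in $m$ reach degree $9N$, not $5N$, so the Chebyshev grid must be sized accordingly. Second, since $\epsilon_0=\sqrt{\bar m+0.1}\approx 0.2$, the rescaled radius is $\epsilon_0 r^-_{\mathrm{loc}}\approx 2\times10^{-7}$, so the junction margin against $r_0^+=5.83\times10^{-6}$ is comfortable rather than tight.
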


\begin{proof}
    The existence of the exact solution branch on each sub-interval is obtained via a rigorous application of Theorem \ref{th : radii polynomial continuation}, utilizing the computer-assisted bounds $Y, Z_1$, and $Z_2$ provided in Table \ref{tab:branch_bounds}. 
    
    To establish global continuity, we apply the gluing mechanism established in Lemma \ref{lem : gluing of two branches}. Because the uniqueness neighborhoods $r_i^+$ are strictly larger than the existence neighborhoods $r_{i+1}^-$ at the boundary nodes, Lemma \ref{lem : gluing of two branches} rigorously guarantees that $\tilde{U}_i(m_{i+1}) = \tilde{U}_{i+1}(m_{i+1})$ for all $i \in \{0, \dots, 7\}$. We apply this same gluing logic at the junction $m_0 = -0.1$, utilizing the radii $r^-, r^+$ from Theorem \ref{th : local branch}, to rigorously prove that $\tilde{U}_0(m_0) = \tilde{U}_{\epsilon_0}$. This concludes the proof.
\end{proof}

Combining Theorem \ref{th : local branch} and Theorem \ref{th : global branch}, we define the unified global branch of exact solutions $\tilde{U} : [-1, \bar{m}] \to H_1$ as
\begin{align}\label{eq : global branch tilde U}
    \tilde{U}(m) \bydef \begin{cases}
        \tilde{U}_{\sqrt{\bar{m}-m}} &\text{ if } m \in [m_0, \bar{m}], \\
        \tilde{U}_i(m) &\text{ if } m \in [m_{i+1},m_i].
    \end{cases}
\end{align}
The continuous gluing results established above guarantee that the global parameterization $m \mapsto \tilde{U}(m)$ is infinitely differentiable across the entire domain.

\begin{theorem}\label{th : the branch exists}
    Let $\left\{\tilde{U}(m) = (\tilde{Q}(m), \tilde{a}(m)), ~ m \in [-1, \bar{m}]\right\}$ be the global branch of exact solutions defined in \eqref{eq : global branch tilde U}, and represented in Figure \ref{fig : branch of solutions}. There exist (locally) unique  critical mass flux parameters $\tilde{m}_1 \in [-0.698, -0.6707]$ and $\tilde{m}_2 \in [-0.8884, -0.8781]$ such that:
    \begin{itemize}
        \item The steady waves represented by $\tilde{a}(m)$ are classical graphs for $m \in (\tilde{m}_1, \bar{m}]$. At $m = \tilde{m}_1$, the waves become strictly overhanging, and they remain overhanging for all $m \in [-1, \tilde{m}_1)$.
        \item The steady waves remain physically valid (injective) for $m \in (\tilde{m}_2, \bar{m}]$. At $m = \tilde{m}_2$, the waves become geometrically non-physical, remaining self-intersecting along the trough line for all $m \in [-1, \tilde{m}_2)$.
    \end{itemize}
\end{theorem}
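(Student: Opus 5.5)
The proof works on the unified branch $\tilde U(m)$ from \eqref{eq : global branch tilde U}, which is $C^\infty$ in $m$ by Theorems \ref{th : local branch} and \ref{th : global branch}. It also uses the $H_1$ enclosures $\|\tilde U(m)-\bar U(m)\|_{H_1}\le r_i^-$ on each segment. The two transitions are handled by the same three-step scheme: a sign certificate on the left region, a sign certificate on the right region, and strict monotonicity in $m$ on the transition window. The monotonicity step is the hardest.

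\textbf{Graph regime.} On $(\tilde m_1,\bar m]$ I show that $\xi_x=D_y\tilde\eta>0$ on $(-\pi,0)$. Near the bifurcation this follows from Theorem \ref{th : local branch}. On each segment $[m_{i+1},m_i]$ with $m_i\ge -0.6707$, I partition $(-\pi,0)\times[m_{i+1},m_i]$ into rectangles $I_j$ as in Section \ref{ssec : shape of the wave}. On each rectangle I enclose $D_y\bar\eta$ with interval arithmetic, using the Chebyshev representation of $\bar U(m)$, and widen the enclosure by $\coth(h)r_i^-$. Near the endpoints $x=0,-\pi$ we have $\xi_x\to$ a positive value ($\xi_x$ is even and equals $1+\dots$ there), so no degeneracy occurs and a direct enclosure suffices.

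\textbf{Overhanging regime.} On $[-1,-0.698]$ I exhibit, on each parameter sub-rectangle, a point $x^*(m)$ (a fixed $x^*$ per small $m$-subinterval) with certified $D_y\tilde\eta(x^*,m)<0$. This is done exactly as in the proof of Theorem \ref{th : existence single wave overhanging 085}, and Lemma \ref{lem : overhanging criterion} then gives overhanging.

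\textbf{Transition window $[-0.698,-0.6707]$.} Following Section \ref{ssec : transition shape}, I take a strip $[x^*-\delta,x^*+\delta]$ containing the minimum of $\xi_x$ and certify $\partial_m\xi_x>0$ there. I compute $\partial_m D_y\bar\eta$ from the Chebyshev derivative of $\bar U$, enclose it by intervals, and add the error $\coth(h)\epsilon$ from Lemma \ref{lem : enclosure of DmU(m)}. Outside the strip I still certify $\xi_x>0$ on the whole window. Hence the sign change can only happen in the strip and occurs at a unique $\tilde m_1$. Monotonicity in $m$ also gives persistence: once negative at the left, $\xi_x$ stays negative further left, which covers the gap to $-0.698$.

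\textbf{Self-intersection.} For $\tilde m_2$ the argument repeats with $\tilde\xi+\pi$ in place of $\xi_x$. Corollary \ref{cor:global_monotonicity} and Lemma \ref{lem:crest_injectivity} show that injectivity can only fail at the trough axis, since non-stagnation is certified along the branch by the lemma in Section \ref{sec : geometric properties branch}. On $[-0.8781,\bar m]$ I certify $\tilde\xi(x)>-\pi$ for all $x\in(-\pi,0)$ by interval enclosures widened by $\coth(h)r$. Near $x=-\pi$, where $\tilde\xi+\pi\to0$, I use $\xi_x(-\pi)>0$ before overhanging, or more simply on the certified grid away from a neighbourhood where $\xi_x>0$. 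On $[-1,-0.8884]$ I exhibit a point with $\tilde\xi<-\pi$. In the window $[-0.8884,-0.8781]$, I certify $\partial_m\tilde\xi>0$ near the minimizer of $\tilde\xi$ via Lemma \ref{lem : enclosure of DmU(m)}, which gives a unique crossing $\tilde m_2$. For $m<\tilde m_2$ the curve then crosses $X=-\pi$, and by symmetry it self-intersects on the trough line.

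\textbf{Main obstacle.} I expect the hardest step to be the monotonicity certificate in the transition windows. The bound $\epsilon$ from Lemma \ref{lem : enclosure of DmU(m)} contains $1/(1-Z_1-Z_2r_0)$ and a $Z_2\|\partial_m\bar U\|r_0$ term, so it may be too coarse. My first remedy is to subdivide the $m$-window and recompute the Newton–Kantorovich bounds locally so that $r_0$ shrinks. The second is to keep a moving strip $x^*(m)$ that tracks the numerical minimizer of $\xi_x$ or $\xi$.
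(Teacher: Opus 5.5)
Your proposal is correct and follows essentially the same route as the paper. Both arguments use interval-arithmetic sign certificates on $(x,m)$-rectangles widened by $\coth(h)r$, and both certify monotonicity in $m$ via Lemma~\ref{lem : enclosure of DmU(m)} on the windows $[-0.698,-0.6707]$ and $[-0.8884,-0.8781]$. Both also use Corollary~\ref{cor:global_monotonicity} together with Lemma~\ref{lem:crest_injectivity} to reduce injectivity to the trough axis. Your extra steps make explicit details the paper leaves implicit: certifying positivity off the strip throughout each transition window, handling the endpoint $x=-\pi$ where $\tilde\xi+\pi\to0$, and the reflection argument that turns $\tilde\xi<-\pi$ into a genuine self-intersection.
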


\begin{proof}
    The geometric properties of the global branch are established by applying the analytical framework derived in Section \ref{sec : geometric properties branch}. The rigorous interval evaluations supporting this geometric analysis are implemented in the script \texttt{prove\_geometry.jl}, available at \cite{julia_cadiot}.

    First, by combining Theorem \ref{th : local branch}, Theorem \ref{th : global branch}, and Corollary \ref{cor:global_monotonicity}, we establish that all wave profiles on the branch are strictly monotonic on $(-\pi,0)$, provided that the energy condition $\tilde{Q} - 2g\tilde{\eta} > 0$ and injectivity are satisfied.

    To classify the wave shapes, we apply the grid partition strategy from Section \ref{ssec : shape of the wave}. For all $m \in [-0.6, -0.1]$, rigorous interval evaluations confirm that $\tilde{\xi}_x(x,m) > 0$ for all $x \in (-\pi,0)$, proving the waves are strictly graphs. In the adjacent interval $[-0.7, -0.6]$, we isolate the geometric transition. Specifically, we rigorously verify that the waves remain graphs for $m \in [-0.6707, -0.6]$ and are strictly overhanging for $m \in [-0.7, -0.698]$. Applying the transition methodology from Section \ref{ssec : transition shape}, we bound the mixed derivative $\partial_m \tilde{\xi}_x$ to verify a strict, monotonic transition from graphs to overhanging waves at a unique critical parameter $\tilde{m}_1 \in [-0.698, -0.6707]$. Further evaluations using Section \ref{ssec : shape of the wave} confirm that the waves remain overhanging for all $m \in [-1, -0.7]$.

    Finally, we track the physical validity of the overhanging profiles. Using the methodology from Section \ref{ssec : injectivity}, we verify that $\tilde{\xi}(x,m) + \pi > 0$ for all $x \in (-\pi,0)$, proving the profiles remain injective for all $m \in [-0.8781, -0.6707]$. However, for all $m \in [-1, -0.8884]$, rigorous evaluations guarantee the existence of an $x \in (-\pi,0)$ such that $\tilde{\xi}(x,m) + \pi < 0$, proving that these extreme waves self-intersect on the trough line and are inherently non-physical. Applying the strict transition criteria from Section \ref{ssec : transition shape}, we establish that this topological breaking occurs at a unique transition parameter $\tilde{m}_2 \in [-0.8884, -0.8781]$, concluding the proof.
\end{proof}

\appendix

\section{Computational methodology and rigorous arithmetic}\label{sec : appendix computational}

The rigorous computational framework is implemented in the Julia programming language \cite{julia_fresh_approach_bezanson} and heavily relies on the interval arithmetic package \texttt{IntervalArithmetic.jl} \cite{julia_interval}. Furthermore, the sequence manipulations, linear operators, and rigorous discrete Fourier transforms are executed using the \texttt{RadiiPolynomial.jl} package \cite{julia_olivier}. The complete source code reproducing the numerical proofs and complementing this paper is publicly available at \cite{julia_cadiot}. 

In the following subsections, we detail some of the algorithms and discrete numerical structures implemented within this framework to evaluate the Newton-Kantorovich bounds.

\subsection{Computation of supremum norm for polynomials objects}\label{sec : appendix Chebyshev norm}

In this section, we derive the computer-assisted methodology used to rigorously compute supremum norms over the continuous parameter $m$ for objects that depend polynomially on $m$. Let $P[X]$ denote the space of polynomials taking values in a Banach space $X$ equipped with the norm $\|\cdot\|_X$. Given a degree $N \in \mathbb{N}$, let $P_N[X]$ be the subspace of polynomials of degree at most $N$. Specifically, we aim to compute a rigorous upper bound for $\sup_{m \in [m_0, m_1]} \|U(m)\|_{X}$, where $U \in P_N[X]$ and $-\infty < m_0 < m_1 < \infty$. 

Any polynomial $U \in P_N[X]$ admits an exact Chebyshev expansion of the form
\begin{equation*}
U(m) = U_0 + 2\sum_{n=1}^N U_n T_n\left(\frac{2(m-m_1)}{m_1-m_0} +1\right) \quad \text{for all } m \in [m_0, m_1],
\end{equation*}
where $(U_n)_{n=0}^N$ is a sequence of coefficients in $X$. These Chebyshev coefficients can be efficiently and exactly reconstructed from the pointwise evaluations of $U$ on the Chebyshev grid. Let us define the Chebyshev nodes (extended to $N+1$ points to fully resolve degree $N$ polynomials) as
\begin{equation*}
m_j^{(N)} \bydef \frac{m_0+m_1}{2} + \frac{m_1-m_0}{2}\cos\left(\frac{j\pi}{N}\right), \quad \text{for } j = 0, \dots, N.
\end{equation*}
There exists a bijection $\mathcal{F}_N : X^{N+1} \to X^{N+1}$, with inverse $\mathcal{F}_N^{-1}$, defined by
\begin{equation*}
\mathcal{F}_N\left( (U(m_j^{(N)}))_{j=0}^{N} \right) = (U_n)_{n=0}^N \quad \text{and} \quad \mathcal{F}_N^{-1}\left( (U_n)_{n=0}^N \right) = (U(m_j^{(N)}))_{j=0}^{N} \quad \text{for all } U \in P_N[X].
\end{equation*}
We refer to $\mathcal{F}_N$ as the Discrete Fourier Transform (DFT) and $\mathcal{F}_N^{-1}$ as the inverse DFT (iDFT). 

\begin{lemma}
	Let $U \in P_N[X]$. The supremum norm over the interval is strictly bounded by
	\begin{equation*}
	\sup_{m \in [m_0, m_1]} \|U(m)\|_{X} \leq \min\{\theta_1(U), ~ \theta_2(U)\},
	\end{equation*}
	where $\theta_1(U)$ and $\theta_2(U)$ are bounds satisfying
	\begin{align*}
	\theta_1(U) &\geq   \|U_0\|_{X} + 2 \sum_{n=1}^N \|U_n\|_{X}, \quad \text{where } (U_n)_{n=0}^N = \mathcal{F}_N\left( (U(m_j^{(N)}))_{j=0}^{N} \right), \\
	\theta_2(U) &\geq   \Lambda_N \max_{j \in \{0, \dots, N\}}\{ \|U(m_j^{(N)})\|_{X} \},
	\end{align*}
	and where the Lebesgue constant is given by $\Lambda_N = \frac{1}{N} \sum_{k=0}^{N-1} \cot\left(\frac{2k+1}{4N}\pi\right)$.
\end{lemma}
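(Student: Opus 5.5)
The lemma is proved by bounding the two quantities separately; the minimum of two valid upper bounds is again an upper bound. Set $t = \frac{2(m-m_1)}{m_1-m_0}+1 \in [-1,1]$, which is an affine bijection of $[m_0,m_1]$ onto $[-1,1]$. The estimate then becomes a statement about vector-valued polynomials on $[-1,1]$.

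For $\theta_1$, I use the exact Chebyshev expansion $U(m)=U_0+2\sum_{n=1}^N U_n T_n(t)$. The coefficients are the ones returned by $\mathcal{F}_N$: since $U\in P_N[X]$ is determined by its $N+1$ values on the extended Chebyshev grid, the DFT reconstructs the expansion exactly. The triangle inequality in $X$, together with $|T_n(t)| = |\cos(n\arccos t)|\le 1$ on $[-1,1]$, gives $\|U(m)\|_X \le \|U_0\|_X + 2\sum_{n}\|U_n\|_X \le \theta_1(U)$ for every $m$. In practice the $U_n$ are enclosed with interval arithmetic, and that enclosure is what makes the inequality defining $\theta_1$ rigorous.

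For $\theta_2$, I write $U$ in Lagrange form on the nodes, $U(m) = \sum_{j=0}^N U(m_j^{(N)})\,\ell_j(t)$. This identity is exact because $U$ has degree at most $N$ and there are $N+1$ distinct nodes. The triangle inequality then gives $\|U(m)\|_X \le \big(\sup_{t\in[-1,1]}\sum_j|\ell_j(t)|\big)\max_j\|U(m_j^{(N)})\|_X$. It remains to show that the Lebesgue function of the grid $\cos(j\pi/N)$ is bounded by $\Lambda_N=\frac1N\sum_{k=0}^{N-1}\cot\big(\frac{(2k+1)\pi}{4N}\big)$. To do this, I would quote the classical result that $\Lambda_N$ (Ehlich--Zeller/Brutman) is exactly the Lebesgue constant of the Chebyshev--Lobatto (extrema) nodes on $[-1,1]$. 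A self-contained alternative is to substitute $t=\cos\theta$: the Lobatto interpolant corresponds to trigonometric interpolation of an even function on $2N$ equispaced points. The Lebesgue constant of that trigonometric interpolant is computed from the Dirichlet-type kernel, and its maximum sits at the midpoints between nodes. That is where the cotangent sum appears.

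The main obstacle is this identification of the Lebesgue constant. The other steps are triangle inequalities plus exactness of interpolation and of the DFT. I would cite the literature rather than rederive the midpoint maximization.
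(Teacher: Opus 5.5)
Your proposal is correct and follows the paper's proof: the triangle inequality with $|T_n|\le 1$ gives $\theta_1$, and the Lebesgue-function estimate on the Chebyshev--Lobatto nodes, with the classical Ehlich--Zeller result cited rather than rederived, gives $\theta_2$. One correction to your citation: $\frac{1}{N}\sum_{k=0}^{N-1}\cot\frac{(2k+1)\pi}{4N}$ equals the Chebyshev--Lobatto Lebesgue constant only for odd $N$ (it is the constant for the $N$ zeros of $T_N$, and the midpoint value of your $2N$-point trigonometric Lebesgue function), while for even $N$ it is a strict upper bound (e.g.\ $5/4<\sqrt{2}$ when $N=2$); an upper bound is all the lemma needs, and it is what your folding argument gives, since restricting trigonometric interpolation to even data cannot increase its norm.
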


\begin{proof}
	The first inequality ($\theta_1$) is a direct consequence of the triangle inequality and the fact that $|T_n| \leq 1$ over the domain for all $n \in \mathbb{N}_0$. For the second inequality ($\theta_2$), which rigorously controls the inter-node polynomial oscillations via the Lebesgue constant, we utilize the classical bounds provided in \cite{lebesgue_cst_Zeller}.
\end{proof}

In practice, the Julia package \texttt{RadiiPolynomial.jl} provides a rigorously verified implementation of the DFT and its inverse using interval arithmetic. Consequently, by providing the pointwise evaluations of $U$ on the Chebyshev grid as interval enclosures, we can rigorously compute an enclosure for its Chebyshev coefficients $(U_n)_{n=0}^N$. Rigorous interval arithmetic is then applied to compute both $\theta_1$ and $\theta_2$, allowing us  to provide a tight  upper bound for $\sup_{m \in [m_0, m_1]} \|U(m)\|_{X}$.

% \subsection{Chebyshev Expansion and DFT Implementation}\label{sec : appendix Chebyshev}

% To efficiently compute these branch coefficients $\bar{U}_n$, we use the Discrete Fourier Transform (DFT). This is made possible by the classic trigonometric identity $T_n(\cos \theta) = \cos(n\theta)$ for $\theta \in \mathbb{R}$, which allows standard DFT algorithms to be applied directly to Chebyshev polynomial expansions.

% To compute these matrix coefficients, we sample the interval $[0, \epsilon_0]$ at the $N+1$ Chebyshev nodes, defined by
% \begin{equation*}
% {\epsilon}_j^{(N)} \bydef \frac{\epsilon_0}{2} + \frac{\epsilon_0}{2}\cos\left(\frac{j\pi}{N}\right) \quad \text{for} \quad j = 0, \dots, N.
% \end{equation*}
% At each node $j \in \{0, \dots, N\}$, we numerically construct $A_K({\epsilon}_j^{(N)})$ as the explicit matrix inverse of the Galerkin projection $\pi^{\leq K_1}DG_{\epsilon_j^{(N)}}(\bar{V}(\epsilon_j^{(N)})) (I - A_\infty(\epsilon_j^{(N)}) DG_{\epsilon_j^{(N)}}(\bar{V}(\epsilon_j^{(N)}))) \pi^{\leq K_1}$. The continuous polynomial coefficients $(A_K)_n$ are then efficiently recovered using the Discrete Fourier Transform (DFT), as detailed in Appendix \ref{sec : appendix Chebyshev norm}.

\subsection{Chebyshev Implementation Details for the Global Branch}\label{sec : appendix Chebyshev global}

Recall that the Chebyshev nodes of order $N$ required for the Discrete Fourier Transform (DFT) are explicitly given by
\begin{equation}\label{eq : chebyshev nodes}
m_j^{(N)} \bydef \frac{m_0+m_1}{2} + \frac{m_1-m_0}{2}\cos\left(\frac{j\pi}{N}\right), \quad \text{for } j = 0, \dots, N.
\end{equation}

Using this discretization, we compute the approximate objects $\bar{U}(m_j^{(N)})$ at each node. Each $\bar{U}(m_j^{(N)})$ is deliberately chosen such that $\bar{U}(m_j^{(N)}) = \pi^{\leq K_0} \bar{U}(m_j^{(N)})$, guaranteeing that the evaluation at each node is strictly finite-dimensional in $H_1$. We then apply a rigorous DFT (see Section \ref{sec : appendix Chebyshev norm}), utilizing rigorous numerics (cf. \cite{julia_cadiot}), to compute the polynomial coefficients $(\bar{U}_n)_{n=0}^N$ from the nodal evaluations $(\bar{U}(m_j^{(N)}))_{j=0}^{N}$. By construction, this ensures that $\bar{U}_n = \pi^{\leq K_0} \bar{U}_n$ for all $n \in \{0, \dots, N\}$, concluding the construction of the approximate branch of solutions.

Computationally, $w$ is first evaluated pointwise on the Chebyshev grid \eqref{eq : chebyshev nodes}, where at each node $m^{(N)}_j$ we explicitly construct $w(m^{(N)}_j)$ to approximate the inverse of $i v_3(m^{(N)}_j) + v_4(m^{(N)}_j)$. Subsequently, rigorous interval arithmetic and a rigorous DFT are applied to these nodal evaluations to extract the continuous polynomial coefficients $(w_n)_{n=0}^{N}$.

Similar to the previous constructions, $A_K$ is first evaluated pointwise on the Chebyshev grid \eqref{eq : chebyshev nodes}, where each matrix $A_K(m_j^{(N)})$ approximates the inverse of the finite projection $\pi^{\leq K_1} D_UF(\bar{U}(m_j^{(N)}), m_j^{(N)}) (I - A_\infty(m_j^{(N)}) D_UF(\bar{U}(m_j^{(N)}), m_j^{(N)})) \pi^{\leq K_1}$. Applying a rigorous DFT to these nodal matrices yields the polynomial coefficients $(A_K)_n$. 

We demonstrate how the above formulas rely on a rigorous application of the DFT, detailed in Appendix \ref{sec : appendix Chebyshev norm}, combined with the analytical framework established in Section \ref{ssec : computation bounds single wave}. 

Crucially, our computer-assisted approach relies on the fact that all objects evaluated in the bounds $Y, Z_1$ and $Z_2$ are explicitly polynomials in the parameter $m$. For instance, since the approximate solution $\bar{U}(m)$ is a polynomial of degree $N$ in $m$, the residual $F(\bar{U}(m),m)$ is a polynomial of degree $4N$. Combined with the fact that the approximate inverse $A(m)$ is a polynomial of degree $5N$, it follows by construction that the mapping $m \mapsto A(m)F(\bar{U}(m),m)$ is a polynomial of degree $9N$. Similarly, for a given $h \in H_1$, the mappings $m \mapsto  I - A(m)D_U F(\bar{U}(m),m)$ and $ m \mapsto A(m)\left[D_U F(\bar{U}(m),m) - D_U F(\bar{U}(m) + h,m)\right]$ are bounded by polynomials of degree at most $9N$. 

Because the maximum possible polynomial degree required for our analysis is $9N$, we can rigorously transition between pointwise grid evaluations and continuous Chebyshev coefficient representations without introducing spectral contamination. To do so, we evaluate the sequences and operators present in the estimates of Section \ref{ssec : computation bounds single wave} exclusively on a Chebyshev grid of order $9N$ (which contains $9N+1$ nodes). By applying rigorous DFT computations using the packages \texttt{RadiiPolynomial.jl} \cite{julia_olivier} and \texttt{IntervalArithmetic.jl} \cite{julia_interval} in the Julia programming language \cite{julia_fresh_approach_bezanson}, we obtain the continuous polynomial coefficients. We then compute rigorous uniform bounds for all $m \in [m_0, m_1]$ using the Chebyshev supremum arithmetic outlined in Appendix \ref{sec : appendix Chebyshev norm}. Similar reasoning was previously applied in the construction of the bounds in Section \ref{ssec : bounds local bif}.

To illustrate this algorithm, we describe the computation of the bound $Y$. The computation of the remaining bounds follows identical logic. Let $m \in [m_0, m_1]$. Using Lemma \ref{lem : Y bound global branch}, we have 
\begin{align*}
\|A(m) &F(\bar{U}(m),m)\|_{H_1} 
\leq \|A_K(m)(I - D_UF(\bar{U}(m),m) A_\infty) F(\bar{U}(m),m)\|_{H_1}\\
&+\frac{K_1+2}{K_1+1} \left(\|\hat{\pi}^{k>K_1}(w(m)*f(\bar{U}(m),m))\|_{\ell^1} 
+ \|\hat{\pi}^{k<-K_1}(w(m)^**f(\bar{U}(m),m))\|_{\ell^1}\right).
\end{align*}
Note that the finite-dimensional component $A_K(m)(I - D_UF(\bar{U}(m),m) A_\infty) F(\bar{U}(m),m)$ is a polynomial of degree $9N$ in $m$. We evaluate $A_K(m)(I - D_UF(\bar{U}(m),m) A_\infty) F(\bar{U}(m),m)$ pointwise at each node on the order $9N$ Chebyshev grid \eqref{eq : chebyshev nodes}. We then execute a rigorous DFT to obtain an enclosure of its polynomial coefficients. Finally, we apply the techniques from Appendix \ref{sec : appendix Chebyshev norm} to calculate an absolute upper bound for $\sup_{m \in [m_0, m_1]}\|A_K(m)(I - D_UF(\bar{U}(m),m) A_\infty) F(\bar{U}(m),m)\|_{H_1}$. 

We employ an identical strategy for  $\sup_{m \in [m_0,m_1]}\|\pi^{k>K_1}(w(m)*f(\bar{U}(m),m))\|_{\ell^1}$—where $f(\bar{U}(m),m)$ and $w(m)$ are polynomials of degree $4N$ and $N$, respectively—yields the fully assembled, rigorous uniform bound $Y$ satisfying \eqref{eq: definition Y0 Z1 Z2 continuation}.

\section{Analytic proofs and technical estimates}\label{sec : appendix proofs}

\subsection{$G_\epsilon$ is polynomial in $\epsilon$}\label{sec : G is polynomial in epsilon}

Let $U \in H_1$ be fixed. We begin by expanding the residual $F\left(\bar{U}_{\bar{m}-\epsilon^2} + \epsilon^2 P(\epsilon) U, \bar{m} - \epsilon^2\right)$ using a Taylor series. By the definition of $P(\epsilon)$ in \eqref{def : operator P epsilon}, we can rewrite the perturbation as
\begin{equation*}
\epsilon^2 P(\epsilon) U = \alpha \epsilon \bar{Z} + \epsilon^2 W,
\end{equation*}
where $\alpha \bar{Z} = \mathcal{P}U$ and $W = \mathcal{P}^\perp U$. 

To simplify the notation, we denote $V = \alpha \bar{Z} + \epsilon W$ and define the shifted mass flux as $\bar{m}_\epsilon \bydef \bar{m}-\epsilon^2$. First, recall that $F(\bar{U}_{\bar{m}_\epsilon}, \bar{m}_\epsilon) = 0$ by the exact construction of the trivial branch $\bar{U}_{\bar{m}_\epsilon}$. Since the nonlinearities in $F$ are strictly of polynomial degree 4 in its first component, we obtain the following exact, finite Taylor expansion:
\begin{equation}\label{eq: exact taylor F}
\begin{aligned}
F\left(\bar{U}_{\bar{m}_\epsilon} + \epsilon V, \bar{m}_\epsilon\right) &= \epsilon D_U F(\bar{U}_{\bar{m}_\epsilon},\bar{m}_\epsilon)V + \frac{\epsilon^2}{2} D_U^2 F(\bar{U}_{\bar{m}_\epsilon},\bar{m}_\epsilon)(V,V) \\
&\quad + \frac{\epsilon^3}{6}D_U^3 F(\bar{U}_{\bar{m}_\epsilon},\bar{m}_\epsilon) (V,V,V) + \frac{\epsilon^4}{24}D_U^4 F(\bar{U}_{\bar{m}_\epsilon},\bar{m}_\epsilon) (V,V,V,V).
\end{aligned}
\end{equation}

We now systematically expand each term of the above expression in $\epsilon$. Starting with the first-order derivative, we expand around $\bar{m}$:
\begin{equation*}
D_U F(\bar{U}_{\bar{m}_\epsilon},\bar{m}_\epsilon)V = D_U F(\bar{U}_{\bar{m}},\bar{m})V - \epsilon^2 \frac{d}{dm} D_U F(\bar{U}_{\bar{m}},\bar{m})V + O(\epsilon^3). 
\end{equation*}
Substituting $V = \alpha \bar{Z} + \epsilon W$ and critically utilizing the fact that $\bar{Z} \in \ker(D_U F(\bar{U}_{\bar{m}},\bar{m}))$, the $O(\epsilon)$ term vanishes, yielding
\begin{equation*}
D_U F(\bar{U}_{\bar{m}_\epsilon},\bar{m}_\epsilon)V = \epsilon D_U F(\bar{U}_{\bar{m}},\bar{m})W - \alpha \epsilon^2 D^2_U F(\bar{U}_{\bar{m}},\bar{m})(\bar{Z},\partial_m \bar{U}_{\bar{m}}) - \alpha\epsilon^2 D_m D_U F(\bar{U}_{\bar{m}},\bar{m})\bar{Z} + M_{1,\epsilon}(\alpha,W),
\end{equation*}
where $\|M_{1,\epsilon}(\alpha,W)\|_{H_1} = O(\epsilon^3)$. Switching to the second-order term, we obtain
\begin{equation*}
D_U^2 F(\bar{U}_{\bar{m}_\epsilon},\bar{m}_\epsilon)(V,V) = \alpha^2 D_U^2 F(\bar{U}_{\bar{m}},\bar{m})(\bar{Z},\bar{Z}) + 2\alpha \epsilon D_U^2 F(\bar{U}_{\bar{m}},\bar{m})(\bar{Z},W)  + M_{2,\epsilon}(\alpha,W),
\end{equation*}
where $\|M_{2,\epsilon}(\alpha,W)\|_{H_1} = O(\epsilon^2)$. Finally, for the third-order term, we have
\begin{equation*}
D_U^3 F(\bar{U}_{\bar{m}_\epsilon},\bar{m}_\epsilon) (V,V,V) = \alpha^3 D_U^3 F(\bar{U}_{\bar{m}},\bar{m}) (\bar{Z},\bar{Z},\bar{Z}) + M_{3,\epsilon}(\alpha,W),
\end{equation*}
where $\|M_{3,\epsilon}(\alpha,W)\|_{H_1} = O(\epsilon)$. 

Assembling these expansions, we find that the lowest non-zero terms are strictly of order $\epsilon^2$:
\begin{align*}
F\left(\bar{U}_{\bar{m}_\epsilon} + \epsilon V, \bar{m}_\epsilon\right) &= \epsilon^2 D_U F(\bar{U}_{\bar{m}},\bar{m})W - \alpha \epsilon^3   D^2_U F(\bar{U}_{\bar{m}},\bar{m})(\bar{Z},\partial_m \bar{U}_{\bar{m}}) - \alpha\epsilon^3  D_m D_U F(\bar{U}_{\bar{m}},\bar{m})\bar{Z}\\
&\quad + \epsilon^2\frac{\alpha^2}{2} D_U^2 F(\bar{U}_{\bar{m}},\bar{m})(\bar{Z},\bar{Z}) + \epsilon^3 \alpha D_U^2 F(\bar{U}_{\bar{m}},\bar{m})(\bar{Z},W)\\
&\quad + \epsilon^3 \frac{\alpha^3}{6}  D_U^3 F(\bar{U}_{\bar{m}},\bar{m}) (\bar{Z},\bar{Z},\bar{Z}) + M_{\epsilon}(\alpha,W),
\end{align*}
where the higher-order remainder is defined as
\begin{align*}
M_{\epsilon}(\alpha,W) &\bydef \epsilon M_{1,\epsilon}(\alpha,W) + \frac{\epsilon^2}{2} M_{2,\epsilon}(\alpha,W) + \frac{\epsilon^3}{6} M_{3,\epsilon}(\alpha,W) + \frac{\epsilon^4}{24} M_{4,\epsilon}(\alpha,W), \\
\text{with} \quad M_{4,\epsilon}(\alpha,W) &= D_U^4 F(\bar{U}_{\bar{m}_\epsilon},\bar{m}_\epsilon) (V,V,V,V).
\end{align*}
In particular, we have bounded the remainder strictly as $\|M_{\epsilon}(\alpha,W)\|_{H_1} = O(\epsilon^4)$. Since $F$ possesses no terms of order $\epsilon^0$ or $\epsilon^1$, the scaled residual $G_\epsilon = \epsilon^{-2}F$ is a well-defined polynomial in $\epsilon$.
In order to study $G_{\epsilon}$, we first analyze the influence of the projection operator $\mathcal{P}$ on the terms of the Taylor expansion derived above. To this end, we recall the following properties established in Section 5 of \cite{constantin_2011}.

\begin{lemma}\label{lem : identity projections}
	The low-frequency projection $\mathcal{P}$ annihilates the first and second derivatives at the bifurcation point:
	\begin{equation*}
	\mathcal{P} D_UF(\bar{U}_{\bar{m}},\bar{m}) = 0 \quad \text{and} \quad \mathcal{P} D_U^2 F(\bar{U}_{\bar{m}},\bar{m})(\bar{Z},\bar{Z}) = 0.
	\end{equation*}
\end{lemma}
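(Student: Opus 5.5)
The plan is to compute both quantities directly at the flat state $\bar{U}_{\bar m}=(\bar Q_{\bar m}, h e_0)$. The key observation is that every background sequence appearing in the derivatives of $F$ at this state is a multiple of $e_0$. Recall that $\mathcal{P}$ acts only on the sequence component and keeps only the modes $n=\pm1$. Both identities therefore reduce to bookkeeping of Fourier frequencies, together with one scalar check that uses the dispersion relation defining $\bar m=\tilde m_1$.

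\textbf{First identity.} For $(\beta,u)\in H_1$, the first component of $DF(\bar U_{\bar m})(\beta,u)$ is $u_0$, which $\mathcal{P}$ discards. For the second component I use the splitting $f=g(a)^2-p(Q,a)$ from the proof of Lemma \ref{lem : Z2 single wave}.
\begin{itemize}
\item At $a=he_0$ we have $D_x a=0$ and $D_y a=e_0$ (since $\omega_0=1/h$). Hence $g(he_0)=g_0e_0$ with $g_0=\bar m/h-\gamma h/2$.
\item From \eqref{eq: derivatives of g}, $(Dg(he_0)u)_k=\gamma h\omega_k u_k-\gamma u_k-\gamma h\omega_k u_k=-\gamma u_k$.
\item From \eqref{eq : derivates of p}, $(D_ap\,u)_k=-2g u_k+2(\bar Q_{\bar m}-2gh)\omega_k u_k$.
\item From \eqref{eq : second derivatives of p second}, $D_Qp\,\beta=\beta e_0$, which is supported on mode $0$.
\end{itemize}
Since $\bar Q_{\bar m}-2gh=g_0^2$, the operator $D_af(\bar U_{\bar m})$ is diagonal with symbol
\[
\lambda_k=-2\left(g_0^2\omega_k+\gamma g_0-g\right).
\]
It then suffices to show $\lambda_{\pm1}=0$. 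This amounts to solving the quadratic $\omega_1 g_0^2+\gamma g_0-g=0$ with $\omega_1=\coth h$, and checking that the root selected by $\tilde m_1$ gives $g_0=\bar m/h-\gamma h/2$. I expect this algebraic matching with the explicit formula for $\tilde m_1$, including the choice of root and sign conventions, to be the only delicate step. I would verify it by substituting $\tilde m_1$ and simplifying $\tanh h=1/\omega_1$. The same computation confirms $\bar Z\in\ker DF$, consistent with the statement recalled from \cite{constantin_2011}. Since $D_Qp$ and the $a_0$-row only produce mode $0$, this gives $\mathcal{P}DF(\bar U_{\bar m})=0$.

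\textbf{Second identity.} I evaluate \eqref{eq : second derivative F} at $\bar a=he_0$ with $U=V=\bar Z=(0,\bar z)$, so that $\alpha=\beta=0$. Every term is then a convolution of two factors built from $\bar z$ by the diagonal operators $D_x$, $D_y$ (each supported on $\{\pm1\}$), multiplied by sequences supported on $\{0\}$ (namely $g_0e_0$, $\bar Q_{\bar m}e_0-2ghe_0$, $he_0$, $D_y\bar a=e_0$). Diagonal operators preserve support, so each such product is supported in $\{-1,1\}+\{-1,1\}=\{-2,0,2\}$. The same holds after the outer $D_y$ in $D_y(u*v)$, again because $D_y$ is diagonal. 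Hence the $\pm1$ modes vanish, and the first component is zero since $D^2$ of an affine map vanishes. This gives $\mathcal{P}D^2_UF(\bar U_{\bar m},\bar m)(\bar Z,\bar Z)=0$.
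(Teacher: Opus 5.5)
Your proof is correct, and it takes a different route from the paper. The paper gives no argument of its own for this lemma and simply cites Section 5 of \cite{constantin_2011}; you instead verify both identities directly for the specific Fourier operator $F$ used here.

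At the flat state, $D_af(\bar U_{\bar m},\bar m)$ is the diagonal multiplier $\lambda_k=-2(g_0^2\omega_k+\gamma g_0-g)$. The $Q$-derivative only feeds mode $0$, and the gauge row lands in the scalar component, which $\mathcal{P}$ discards. The step you flagged as delicate goes through with no sign issue: dividing $\lambda_1=0$ by $\coth h$ gives $g_0^2+\gamma\tanh(h)\,g_0-g\tanh h=0$, and its larger root is exactly $\tilde m_1/h-\gamma h/2$. The second identity is then pure frequency bookkeeping.

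What your route buys is a check of the identities for the exact formulation of this paper: the squared Bernoulli residual, $Q$ as an unknown, and the extra equation $a_0-h=0$. That formulation is not literally the operator of \cite{constantin_2011}, so the citation implicitly requires a translation that your computation makes unnecessary. Your computation also makes visible that only the first identity uses the dispersion relation. The second holds at every flat state $\bar U_m$, for every $m$, since all background sequences there are supported at mode $0$.

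The citation, in turn, places the lemma inside the Crandall--Rabinowitz analysis (simple kernel, transversality) already carried out in that reference. The paper relies on that analysis anyway in its desingularized expansion.
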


Applying Lemma \ref{lem : identity projections} to the Taylor expansion, we observe that the lowest-order terms vanish, allowing us to factor out an additional $\epsilon$ from the low-frequency projection:
\begin{align*}
\frac{1}{\epsilon^3} \mathcal{P} F\left(\bar{U}_{\bar{m}_\epsilon} + \epsilon V, \bar{m}_\epsilon\right) &=  - \alpha \mathcal{P} D^2_U F(\bar{U}_{\bar{m}},\bar{m})(\bar{Z},\partial_m \bar{U}_{\bar{m}}) - \alpha \mathcal{P} D_m D_U F(\bar{U}_{\bar{m}},\bar{m})\bar{Z}\\
&\quad +  \alpha \mathcal{P} D_U^2 F(\bar{U}_{\bar{m}},\bar{m})(\bar{Z},W) +  \frac{\alpha^3}{6} \mathcal{P} D_U^3 F(\bar{U}_{\bar{m}},\bar{m}) (\bar{Z},\bar{Z},\bar{Z}) + \frac{1}{\epsilon^3} \mathcal{P} M_{\epsilon}(\alpha,W).
\end{align*}
Conversely, for the high-frequency projection, using $\mathcal{P}^{\perp} = I -\mathcal{P}$ and Lemma \ref{lem : identity projections}, the $\epsilon^2$ scaling is preserved:
\begin{align*}
\frac{1}{\epsilon^2} \mathcal{P}^\perp F\left(\bar{U}_{\bar{m}_\epsilon} + \epsilon V, \bar{m}_\epsilon\right) &=   D_U F(\bar{U}_{\bar{m}},\bar{m})W - \frac{\alpha^2}{2} \mathcal{P}^\perp D_U^2 F(\bar{U}_{\bar{m}},\bar{m})(\bar{Z},\bar{Z}) + \frac{1}{\epsilon^2} \mathcal{P}^\perp M_{\epsilon}(\alpha,W) + \bar{M}_\epsilon(\alpha,W),
\end{align*}
where the higher-order residual is grouped as
\begin{align*}
\bar{M}_\epsilon(\alpha,W) &\bydef  - \alpha \epsilon \mathcal{P}^\perp D^2_U F(\bar{U}_{\bar{m}},\bar{m})(\bar{Z},\partial_m \bar{U}_{\bar{m}}) - \alpha\epsilon \mathcal{P}^\perp  D_m D_U F(\bar{U}_{\bar{m}},\bar{m})\bar{Z}\\
&\quad +  \epsilon \alpha \mathcal{P}^\perp D_U^2 F(\bar{U}_{\bar{m}},\bar{m})(\bar{Z},W) + \epsilon \frac{\alpha^3}{6} \mathcal{P}^\perp D_U^3 F(\bar{U}_{\bar{m}},\bar{m}) (\bar{Z},\bar{Z},\bar{Z}).
\end{align*}

To evaluate the limit at the removable singularity ($\epsilon = 0$), we define the completely $\epsilon$-independent operator $\bar{G} : H_1 \to H_0$ by isolating the leading-order terms from both projections:
\begin{equation}\label{def : operator bar G}
\begin{aligned}
\bar{G}(U) &=  - \alpha \mathcal{P} D^2_U F(\bar{U}_{\bar{m}},\bar{m})(\bar{Z},\partial_m \bar{U}_{\bar{m}}) - \alpha \mathcal{P} D_m D_U F(\bar{U}_{\bar{m}},\bar{m})\bar{Z} +  \alpha \mathcal{P} D_U^2 F(\bar{U}_{\bar{m}},\bar{m})(\bar{Z},W) \\
&\quad +  \frac{\alpha^3}{6} \mathcal{P} D_U^3 F(\bar{U}_{\bar{m}},\bar{m}) (\bar{Z},\bar{Z},\bar{Z}) + D_U F(\bar{U}_{\bar{m}},\bar{m})W + \frac{\alpha^2}{2} D_U^2 F(\bar{U}_{\bar{m}},\bar{m})(\bar{Z},\bar{Z}),
\end{aligned}
\end{equation}
where $U$ is decomposed uniquely as $U = \alpha \bar{Z} + W$ with $W = \mathcal{P}^\perp U$. 

The Fréchet derivatives of $\bar{G}$, evaluated at $U$ in the directions $V, Z, W \in H_1$ (with scalar components $\beta, \gamma, \delta$ and orthogonal components $v, z, w$ respectively), follow directly from differentiation:
\begin{align*}
D\bar{G}(U)V &= \beta \mathcal{P} D^2_U F(\bar{U}_{\bar{m}},\bar{m})(\bar{Z}, W - \partial_m \bar{U}_{\bar{m}}) - \beta \mathcal{P} D_m D_U F(\bar{U}_{\bar{m}},\bar{m})\bar{Z} +  \alpha \mathcal{P} D_U^2 F(\bar{U}_{\bar{m}},\bar{m})(\bar{Z},v) \\
&\quad +  \beta \frac{\alpha^2}{2} \mathcal{P} D_U^3 F(\bar{U}_{\bar{m}},\bar{m}) (\bar{Z},\bar{Z},\bar{Z}) + D_U F(\bar{U}_{\bar{m}},\bar{m})v + \beta \alpha D_U^2 F(\bar{U}_{\bar{m}},\bar{m})(\bar{Z},\bar{Z}), \\[1em]
D^2\bar{G}(U)(V,Z) &= \beta \mathcal{P} D^2_U F(\bar{U}_{\bar{m}},\bar{m})(\bar{Z},z) +  \gamma \mathcal{P} D_U^2 F(\bar{U}_{\bar{m}},\bar{m})(\bar{Z},v) \\
&\quad +  \beta \gamma \alpha \mathcal{P} D_U^3 F(\bar{U}_{\bar{m}},\bar{m}) (\bar{Z},\bar{Z},\bar{Z}) + \beta \gamma D_U^2 F(\bar{U}_{\bar{m}},\bar{m})(\bar{Z},\bar{Z}), \\[1em]
D^3\bar{G}(U)(V,W,Z) &= \beta \gamma \delta \mathcal{P} D_U^3 F(\bar{U}_{\bar{m}},\bar{m}) (\bar{Z},\bar{Z},\bar{Z}).
\end{align*}

The following lemma rigorously justifies the use of $\bar{G}$ as the exact limit of $G_\epsilon$ as $\epsilon \to 0$.

\begin{lemma}
	Let $r_0 > 0$. There exist constants $C_0(r_0), C_1(r_0), C_2(r_0), C_3(r_0)$, dependent only on $r_0$, such that for all $\epsilon \in [0,\epsilon_0]$ and for all $U \in H_1$ satisfying $\|U - \bar{U}(\epsilon) \|_{H_1} \leq r_0$, the following error bounds hold:
	\begin{align*}
	\|G_{\epsilon}(U) - \bar{G}(U)\|_{H_0} &\leq C_0(r_0) \epsilon, \\
	\sup_{V \in B_1(0)} \|DG_{\epsilon}(U)V - D\bar{G}(U)V\|_{H_0} &\leq C_1(r_0) \epsilon, \\
	\sup_{V,W \in B_1(0)}\|D^2G_{\epsilon}(U)(V,W) - D^2\bar{G}(U)(V,W)\|_{H_0} &\leq C_2(r_0) \epsilon, \\
	\sup_{V,W,Z \in B_1(0)}\|D^3G_{\epsilon}(U)(V,W,Z) - D^3\bar{G}(U)(V,W,Z)\|_{H_0} &\leq C_3(r_0)\epsilon.
	\end{align*}
\end{lemma}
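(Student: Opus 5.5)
The plan is to exploit that $G_\epsilon(U)$ is an explicit polynomial in $\epsilon$, as established just above. I will write $G_\epsilon(U) = \bar{G}(U) + \epsilon R_\epsilon(U)$ with $R_\epsilon$ polynomial in $\epsilon$. Then each bound reduces to a uniform bound on $R_\epsilon$ and its first three Fréchet derivatives over $\epsilon \in [0,\epsilon_0]$ and over the ball $\|U - \bar{U}(\epsilon)\|_{H_1}\le r_0$.

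\textbf{Step 1: identify $R_\epsilon$.} Decompose $U=\alpha\bar{Z}+W$ with $W=\mathcal{P}^\perp U$. Substitute the expansion of $F(\bar{U}_{\bar{m}_\epsilon}+\epsilon V,\bar{m}_\epsilon)$ into $G_\epsilon(U)=\epsilon^{-3}\mathcal{P}F+\epsilon^{-2}\mathcal{P}^\perp F$. By construction of $\bar{G}$ in \eqref{def : operator bar G} and Lemma \ref{lem : identity projections}, the difference $G_\epsilon(U)-\bar{G}(U)$ collects three groups of terms:
\begin{itemize}
\item $\epsilon^{-3}\mathcal{P}M_\epsilon(\alpha,W)$;
\item $\epsilon^{-2}\mathcal{P}^\perp M_\epsilon(\alpha,W)+\bar{M}_\epsilon(\alpha,W)$;
\item the $\mathcal{P}$-part of $\frac{\alpha^2}{2}D_U^2F(\bar{U}_{\bar{m}},\bar{m})(\bar{Z},\bar{Z})$, which vanishes by Lemma \ref{lem : identity projections}.
\end{itemize}
$\bar{M}_\epsilon$ carries an explicit factor $\epsilon$. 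For the $M_\epsilon$ contributions, I would not rely on $O(\cdot)$ notation but redo the expansion exactly. The quantities $\bar{U}_{\bar{m}_\epsilon}$, $\bar{Q}_{\bar{m}-\epsilon^2}$ and $F(\cdot,m)$ are polynomial in $\epsilon^2$ (resp. $m$), and $F$ is of degree $4$ in $U$. So every Taylor coefficient $D_U^kF(\bar{U}_{\bar{m}_\epsilon},\bar{m}_\epsilon)$ is a polynomial in $\epsilon$ whose coefficients are bounded multilinear maps $H_1^k\to H_0$. Collecting powers of $\epsilon$ shows that $\epsilon^{-3}\mathcal{P}M_\epsilon$ and $\epsilon^{-2}\mathcal{P}^\perp M_\epsilon$ are of the form $\epsilon\,\sum_{j} \epsilon^{j} \mathcal{L}_j(\alpha,W)$. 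Here each $\mathcal{L}_j$ is a polynomial map in $(\alpha,W)$ of degree at most $4$, built from $D_U^kF(\bar{U}_{\bar{m}},\bar{m})$, $D_m$-derivatives, and $\partial_m\bar{U}_{\bar{m}}$, and there are finitely many $j$.

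\textbf{Step 2: bound the polynomial coefficients.} Each $\mathcal{L}_j$ is a finite sum of bounded multilinear maps evaluated at $\bar{Z}$, $W$ and $\partial_m^i\bar{U}_{\bar{m}}$. Their operator norms $H_1^k\to H_0$ are controlled exactly as in the proof of Lemma \ref{lem : Z2 single wave}, using the Banach algebra property (Lemma \ref{lem:Banach_algebra}) and \eqref{eq : estimates Dx and Dy}. The base state $\bar{U}_{\bar{m}_\epsilon}$ is the flat state, so all these norms are explicit constants. On the ball we have $|\alpha|+\|W\|_{H_1}\le \|U\|_{H_1}\le \sup_\epsilon\|\bar{U}(\epsilon)\|_{H_1}+r_0$, which is finite because $\bar{U}(\epsilon)$ is a Chebyshev polynomial. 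This gives $\|R_\epsilon(U)\|_{H_0}\le C_0(r_0)$ uniformly in $\epsilon\in[0,\epsilon_0]$.

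\textbf{Step 3: derivatives.} $R_\epsilon$ is a polynomial map in $U$ of degree at most $4$ with coefficients polynomial in $\epsilon$, and the splitting $U\mapsto(\alpha,W)$ is linear and bounded. So $D^kR_\epsilon(U)$ for $k=1,2,3$ is obtained by symmetrized insertion of the directions into these same multilinear maps. The same norm bounds then yield constants $C_k(r_0)$ with directions in $B_1(0)$. The resulting identities $DG_\epsilon-D\bar{G}=\epsilon\, DR_\epsilon$, and likewise for higher orders, give the claim.

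\textbf{Main obstacle.} The delicate point is Step 1: verifying that after dividing by $\epsilon^3$ on the range of $\mathcal{P}$, no $\epsilon^{-1}$ or $\epsilon^0$ residual term survives beyond those already placed in $\bar{G}$. This requires checking that $\mathcal{P}D_UF(\bar{U}_{\bar{m}},\bar{m})=0$ annihilates both the $\epsilon^2 W$ term and the $O(\epsilon^2)$ corrections in $D_UF(\bar{U}_{\bar{m}_\epsilon},\bar{m}_\epsilon)$. I would handle this by writing $D_UF(\bar{U}_{\bar{m}_\epsilon},\bar{m}_\epsilon)$ exactly as $D_UF(\bar{U}_{\bar{m}},\bar{m})+\epsilon^2 L_1+\epsilon^4L_2$, which is exact since $\bar{Q}_m$ is quadratic in $m$. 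The $\epsilon^2$-corrections applied to $\epsilon W$ then produce at least $\epsilon^3\cdot\epsilon$, which suffices.
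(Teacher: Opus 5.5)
Your proposal is correct and follows the paper's route. The paper gives no separate proof of this lemma; it relies on the exact finite Taylor expansion of $F(\bar{U}_{\bar{m}_\epsilon}+\epsilon V,\bar{m}_\epsilon)$ in $\epsilon$ together with $\mathcal{P}D_UF(\bar{U}_{\bar{m}},\bar{m})=0$ and $\mathcal{P}D_U^2F(\bar{U}_{\bar{m}},\bar{m})(\bar{Z},\bar{Z})=0$. Your argument is that expansion with the $O(\epsilon^k)$ remainders made exact and polynomial in $(\epsilon,U)$, so they and their derivatives are uniformly bounded on the ball. Redoing the expansion exactly also gives the correct sign $+\frac{\alpha^2}{2}\mathcal{P}^\perp D_U^2F(\bar{U}_{\bar{m}},\bar{m})(\bar{Z},\bar{Z})$ in the $\mathcal{P}^\perp$-component. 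The paper's displayed expansion has a minus there, a typo that would otherwise contradict $G_\epsilon-\bar{G}=O(\epsilon)$.
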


\subsection{Analytic proof details}

We now turn to the construction of the infinite-dimensional tail $A_\infty$. Because this operator depends on the high-frequency behavior of the Jacobian $DG_\epsilon(\bar{U}(\epsilon))$, we first establish a preliminary lemma characterizing its tail projection. Crucially, this lemma demonstrates that the $1/\epsilon$ scaling introduced to desingularize the fundamental mode completely vanishes at high frequencies. This isolation is vital: it guarantees that our analytical tail bounds do not blow up as $\epsilon \to 0$, allowing us to safely bound the high-frequency modes uniformly across the bifurcation point.

\begin{lemma}\label{lem : tail of DG local}
	Let $K_1 \geq 2K_0 +1$ and $\epsilon \in [0,\epsilon_0]$. Defining $\bar{U}(\epsilon) \bydef \bar{U}_{\bar{m}-\epsilon^2} + \epsilon^2 P(\epsilon) \bar{V}(\epsilon)$, we have
	\begin{align}\label{eq : identity projection on G}
	DG_\epsilon(\bar{V}(\epsilon)) \pi^{>K_1} =  
	\pi^{>K_1-2K_0} D_U F(\bar{U}(\epsilon), \bar{m}-\epsilon^2) \pi^{>K_1}.
	\end{align}
\end{lemma}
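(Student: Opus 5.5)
The plan is to compute $DG_\epsilon(\bar{V}(\epsilon))$ directly from the definition \eqref{def : zero finding G} with the chain rule, and then use the spectral support of every object involved. Differentiating $G_\epsilon(V) = \epsilon^{-2} P(\epsilon) F(\bar{U}_{\bar{m}-\epsilon^2} + \epsilon^2 P(\epsilon)V, \bar{m}-\epsilon^2)$ in $V$ gives
\begin{equation*}
DG_\epsilon(\bar{V}(\epsilon)) = P(\epsilon)\, D_U F(\bar{U}(\epsilon), \bar{m}-\epsilon^2)\, P(\epsilon),
\end{equation*}
because the factor $\epsilon^2$ from the inner argument cancels the prefactor $\epsilon^{-2}$. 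Here $\bar{U}(\epsilon)$ is exactly the point defined in the statement. The proof then consists of removing the two copies of $P(\epsilon)$ when the operator acts on the tail.

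\textbf{Right factor.} Since $K_1 \geq 2K_0+1 > 1$, the projection $\mathcal{P}$ only sees the modes $n = \pm 1$, so $\mathcal{P}\pi^{>K_1} = 0$. Hence $P(\epsilon)\pi^{>K_1} = \mathcal{P}^\perp \pi^{>K_1} = \pi^{>K_1}$. This step also removes the $1/\epsilon$ singularity from the right.

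\textbf{Left factor.} This is the main step. I will use the decomposition \eqref{def : DF}: $D_a f = M_{v_1}\mathcal{C}[\bar{a}] + M_{v_2} + M_{v_3}D_x + M_{v_4}D_y$, together with the $Q$-row $a_0 - h$.
\begin{itemize}
\item The $Q$-row contributes nothing on $\pi^{>K_1}$, since it only reads the coefficient $a_0$.
\item The $Q$-column, that is $\partial_Q f$, is also irrelevant, since $\pi^{>K_1}$ sets $Q$ to zero.
\item Since $\bar{a}(\epsilon)$ is supported on $|k|\le K_0$, the sequences $v_1, \dots, v_4$ are supported on $|k| \le 2K_0$. One exception is the constant term $m D_y e_0$ in $v_1$, which sits at frequency $0$ and is harmless.
\item The operators $D_x$ and $D_y$ are diagonal. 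The commutator $\mathcal{C}[\bar{a}]$ shifts frequencies by at most $K_0$.
\end{itemize}
I would bound the total frequency shift by $2K_0$. The point to check carefully is that the shifts from $v_1$ and from $\mathcal{C}[\bar{a}]$ do not add up to $3K_0$. I expect this to be the main obstacle, and the first thing I will try is to track the supports directly.

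Concretely, I will write $D_a f(\bar{U})u = 2g(\bar{a})\,Dg(\bar{a})u - D_a p(\bar{Q},\bar{a})u$, using the notation from the proof of Lemma \ref{lem : Z2 single wave}. Then:
\begin{itemize}
\item The term $Dg(\bar a)u$ shifts $u$ by at most $K_0$.
\item $g(\bar{a})$ equals $mD_ye_0$ plus terms of support $2K_0$. Multiplying by it therefore shifts by up to $2K_0$, which would give $3K_0$ overall.
\end{itemize}
If this naive count fails, I will use $K_1 \geq 2K_0+1$ in a weaker form. I will show that the range lies in modes with $|k| > K_1 - 3K_0 \geq 1$ whenever $K_1 \geq 3K_0+1$. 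Otherwise I will refine the estimate by noting that the relevant $\mathcal{P}$ factor only requires the output to avoid the modes $\pm 1$, and $K_1 - 3K_0 > 1$ may not hold in general. In that case I would settle for the weaker statement that the output avoids $\{-1,0,1\}$, since this is what is actually used.

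Once the output is known to be supported on $|k| > K_1 - 2K_0 \geq 1$, I get $\mathcal{P}D_UF\,\pi^{>K_1} = 0$. Therefore $P(\epsilon) D_UF\,\pi^{>K_1} = D_UF\,\pi^{>K_1} = \pi^{>K_1-2K_0}D_UF\,\pi^{>K_1}$, which is \eqref{eq : identity projection on G}.

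Finally, the case $\epsilon = 0$ follows by continuity, using the limit operator $\bar{G}$ defined in \eqref{def : operator bar G}. Alternatively, the identity holds for each $\epsilon>0$ and both sides are polynomial in $\epsilon$, so it extends to $\epsilon = 0$.
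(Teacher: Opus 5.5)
\textbf{There is a genuine gap, at the step you flagged and then assumed.} Your outline matches the paper's proof. The chain rule gives $DG_\epsilon(\bar V(\epsilon))=P(\epsilon)D_UF(\bar U(\epsilon),\bar m-\epsilon^2)P(\epsilon)$. The right factor drops out because $\mathcal P\pi^{>K_1}=0$, and the left factor is removed by a support count. Your last paragraph then uses that the output is supported in $|k|>K_1-2K_0$. You never established this, and it is false.

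Write $D_af(\bar U)u=2g(\bar a)*Dg(\bar a)u-D_ap(\bar Q,\bar a)u$. The $p$-part shifts frequencies by at most $2K_0$. The other part, $2g(\bar a)*Dg(\bar a)u=M_{v_1}\mathcal C[\bar a]u$, shifts by up to $3K_0$, and in finite depth nothing cancels:
\[
g(\bar a)_{\pm 2K_0}=\gamma K_0\bar a_{K_0}^2\bigl(\coth(2K_0h)-\coth(K_0h)\bigr)=-\gamma K_0\bar a_{K_0}^2/\sinh(2K_0h).
\]
The same-sign cancellation $\omega_{j+l}=\omega_j+\omega_l$ holds exactly only in infinite depth. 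Now take $\max\{K_1,3K_0\}<n\le K_1+K_0$. The $(n-3K_0)$-th coefficient of $D_af(\bar U)(e_n+e_{-n})$ is then exactly
\[
2\gamma\, g(\bar a)_{-2K_0}\,(\omega_{n-K_0}-\omega_n-\omega_{K_0})\,\bar a_{K_0}.
\]
This is nonzero whenever $\bar a_{K_0}(\epsilon)\neq0$, because $\omega_{n-K_0}-\omega_n-\omega_{K_0}<0$. At the same time $0<n-3K_0\le K_1-2K_0$. So the identity with $\pi^{>K_1-2K_0}$ fails as stated, for every $\epsilon>0$ with $\bar a_{K_0}(\epsilon)\neq 0$.

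The paper's proof has the same flaw. It asserts that the nonlinearities ``expand the spectral support by at most $2K_0$'' and overlooks the cubic term $M_{v_1}\mathcal C[\bar a]$. The proof of Lemma \ref{lem : Z1 bound} even records that $v_1*(\mathcal D\bar a)$ is supported on $3K_0$.

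\textbf{Neither of your fallbacks rescues the lemma under $K_1\ge 2K_0+1$.}

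First fallback. If you assume $K_1\ge 3K_0+1$, your argument is complete. It proves the corrected statement
\[
DG_\epsilon(\bar V(\epsilon))\pi^{>K_1}=\pi^{>K_1-3K_0}D_UF(\bar U(\epsilon),\bar m-\epsilon^2)\pi^{>K_1},
\]
since the output then lies in $|k|\ge 2$ and both copies of $P(\epsilon)$ act as the identity. This is the version that should be stated.

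Second fallback. The claim that the output merely avoids $\{-1,0,1\}$ does not follow from $K_1\ge 2K_0+1$. The support count only gives $|k|>K_1-3K_0$. For $K_0=200$, $K_1=402$, the values quoted in Section \ref{sec : existence proofs results}, this excludes nothing. In general $\mathcal P D_UF(\bar U(\epsilon),\bar m-\epsilon^2)\pi^{>K_1}\neq 0$. Then $DG_\epsilon(\bar V(\epsilon))\pi^{>K_1}$ carries an extra finite-rank term $\epsilon^{-1}\mathcal P D_UF\pi^{>K_1}$, which would have to be bounded separately.

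Your treatment of $\epsilon=0$ by continuity is fine. It addresses a point the paper leaves implicit.
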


\begin{proof}
	Applying the chain rule to the definition of $G_\epsilon$ in \eqref{def : zero finding G}, we obtain
	\begin{align}\label{eq : chain rule DG tail}
	DG_\epsilon(\bar{V}(\epsilon)) \pi^{>K_1} = P(\epsilon) D_U F(\bar{U}(\epsilon), \bar{m}-\epsilon^2) P(\epsilon) \pi^{>K_1}.
	\end{align}
	By construction, the projection $\mathcal{P}$ isolates modes in $\{-1, 1\}$. Since $K_1 > 1$, we have $\mathcal{P} \pi^{>K_1} = 0$ and $\mathcal{P}^\perp \pi^{>K_1} = \pi^{>K_1}$. Consequently, the rightmost scaling operator acts as the identity: $P(\epsilon) \pi^{>K_1} = \pi^{>K_1}$.
	
	Next, observe that $\bar{U}(\epsilon) = \pi^{\leq K_0} \bar{U}(\epsilon)$. This holds because $\bar{U}(\epsilon) = \pi^{\leq K_0} \bar{U}(\epsilon)$ by the expansion \eqref{eq : approx sol local bifurcation}, and the trivial flat state $\bar{U}_{\bar{m}-\epsilon^2}$ possesses a non-zero coefficient only at index $0$. 
	
	Because the base state is truncated at $K_0$, the nonlinearities in $F$ (as defined in \eqref{def : DF} and \eqref{eq : functions v1 to v4 and C}) can expand the spectral support by at most $2K_0$. Thus, acting on a tail vector, we have
	\begin{align}\label{eq : step proof DG}
	D_U F(\bar{U}(\epsilon), \bar{m}-\epsilon^2) \pi^{>K_1} 
	= \pi^{>K_1 - 2K_0} D_U F(\bar{U}(\epsilon), \bar{m}-\epsilon^2) \pi^{>K_1}.
	\end{align}
	Furthermore, because we assume $K_1 \geq 2K_0 + 1$, it follows that $K_1 - 2K_0 \ge 1$. This implies that the output in \eqref{eq : step proof DG} lives entirely in the space $\pi^{>1}$, meaning it has no components in the fundamental modes $\{-1, 0, 1\}$. 
	
	Therefore, when the leftmost operator $P(\epsilon)$ in \eqref{eq : chain rule DG tail} is applied to this output, it also acts purely as the identity matrix. The scaling operators vanish entirely, yielding
	\begin{align*}
	DG_\epsilon(\bar{V}(\epsilon)) \pi^{>K_1} = \pi^{>K_1 - 2K_0} D_U F(\bar{U}(\epsilon), \bar{m}-\epsilon^2) \pi^{>K_1},
	\end{align*}
	which completes the proof.
\end{proof}

\subsection{Proofs of the bound estimates}

\begin{proof}[Proof of Lemma \ref{lem : Z1 epsilon}]
	This bound follows as a direct consequence of the arguments in the proofs of Lemmas \ref{lem : Z1 bound} and \ref{lem : tail of DG local}.
\end{proof}

\begin{proof}[Proof of Lemma \ref{lem: Y bounds}]
	Applying the definition of $G_\epsilon$ from \eqref{def : zero finding G}, we observe that
	\begin{equation*}
	\|A(\epsilon)G_{\epsilon}(\bar{V}(\epsilon))\|_{H_1} = \frac{1}{\epsilon^2}\|A(\epsilon)P(\epsilon) F(\bar{U}(\epsilon), \bar{m}-\epsilon^2)\|_{H_1}.
	\end{equation*}
	Because the high-frequency projection isolates the tail from the bifurcation scaling, the remainder of the bound follows identically from the proof of Lemma \ref{lem : Y bound single wave}.
\end{proof}

\begin{proof}[Proof of Lemma \ref{lem : Z2 epsilon}]
	Using the mean value inequality for Banach spaces, we obtain
	\begin{align*}
	\|A(\epsilon)&\left[DG_\epsilon(\bar{V}(\epsilon) + h, \bar{m}-\epsilon^2) - DG_\epsilon(\bar{V}(\epsilon), \bar{m}-\epsilon^2)\right]\|_{\mathcal{B}(H_1)} \\
	&\leq r \sup_{h, V \in B_1(0)} \|A(\epsilon)D^2G_\epsilon(\bar{V}(\epsilon),\bar{m}-\epsilon^2)(h,V)\|_{\mathcal{B}(H_1)} \\
	&\quad + \frac{1}{2} r^2 \sup_{h, V, W, Z \in B_1(0)} \|A(\epsilon)D^3G_\epsilon(\bar{V}(\epsilon) + rh,\bar{m}-\epsilon^2)(V,W,Z)\|_{\mathcal{B}(H_1)}.
	\end{align*}
	Recalling \eqref{def : zero finding G}, we can evaluate the third derivative term as
	\begin{multline*}
	\|A(\epsilon)D^3G_\epsilon(\bar{V}(\epsilon) + rh, \bar{m}-\epsilon^2)(V,W,Z)\|_{H_1} \\
	= \epsilon^4\|A(\epsilon)P(\epsilon)D^3F(\bar{U}(\epsilon) + rh, \bar{m}-\epsilon^2)(P(\epsilon)V, P(\epsilon)W, P(\epsilon)Z)\|_{H_1}.
	\end{multline*}
	Now, from the definition of the projection \eqref{eq : definition projection P}, we have
	\begin{equation*}
	\epsilon P(\epsilon) = \mathcal{P} + \epsilon \mathcal{P}^{\perp}.
	\end{equation*}
	In particular, we clearly get $\|\epsilon P(\epsilon)U\|_{H_1} \leq \|U\|_{H_1}$ for all $U \in H_1$. Using the above and the trilinearity of the third derivative, we get
	\begin{align*}
	&\epsilon^4\|A(\epsilon)P(\epsilon)D^3F(\bar{U}(\epsilon) + rh,\bar{m}-\epsilon^2)(P(\epsilon)V,P(\epsilon)W,P(\epsilon)Z)\|_{H_1} \\
	= & ~ \epsilon \|A(\epsilon)P(\epsilon)D^3F(\bar{U}(\epsilon) + rh,\bar{m}-\epsilon^2)(\epsilon P(\epsilon)V, \epsilon P(\epsilon)W, \epsilon P(\epsilon)Z)\|_{H_1} \\
	\leq & ~ \epsilon \|\mathcal{W} A(\epsilon) P(\epsilon)\|_{\ell^1} \sup_{V,W,Z,h \in B_1(0)} \|D^3F(\bar{U}(\epsilon) + rh,\bar{m}-\epsilon^2)(V,W,Z)\|_{H_1} \\
	\leq & ~ 2\sup_{\epsilon \in [0, \epsilon_0]} \epsilon \|\mathcal{W} A(\epsilon) P(\epsilon)\|_{\ell^1} Z_{D^3}(r),
	\end{align*}
	where we also used the proof of Lemma \ref{lem : Z2 single wave} for the last inequality.
	
	It remains to estimate $\sup_{h, V \in B_1(0)} \|A(\epsilon)D^2G_\epsilon(\bar{U}(\epsilon),\bar{m}-\epsilon^2)(h,V)\|_{H_1}$. Recalling \eqref{def : zero finding G}, we have
	\begin{equation*}
	\|A(\epsilon)D^2G_\epsilon(\bar{U}(\epsilon),\bar{m}-\epsilon^2)(h,V)\|_{H_1} = \epsilon^2 \|A(\epsilon)P(\epsilon)D^2F(\bar{U}(\epsilon),\bar{m}-\epsilon^2)(P(\epsilon)h,P(\epsilon)V)\|_{H_1}. 
	\end{equation*}
	By exploiting the bilinearity of $D^2F$, we can strategically distribute the $\epsilon^2$ scaling across the low and high-frequency projections of our inputs. Using the properties of the $\ell^1$-norm combined with \eqref{eq : definition projection P}, we first split the argument $h$ to obtain
	\begin{multline}\label{eq : Z2 epsilon step 1}
	\epsilon^2 \sup_{h, V \in B_1(0)} \|A(\epsilon) P(\epsilon) D^2F(\bar{U}(\epsilon),\bar{m}-\epsilon^2)(P(\epsilon)h,P(\epsilon)V)\|_{H_1} \\
	\leq \max\Bigg\{ \epsilon \sup_{h, V \in B_1(0)} \|A(\epsilon) P(\epsilon) D^2F(\bar{U}(\epsilon),\bar{m}-\epsilon^2)(\mathcal{P}h, P(\epsilon)V)\|_{H_1}, \\
	\epsilon^2 \sup_{h, V \in B_1(0)}\|A(\epsilon) P(\epsilon) D^2F(\bar{U}(\epsilon),\bar{m}-\epsilon^2)(\mathcal{P}^\perp h, P(\epsilon)V)\|_{H_1} \Bigg\}.
	\end{multline}
	Focusing on the first term of \eqref{eq : Z2 epsilon step 1} and applying a similar decomposition to the second argument $V$, we get
	\begin{multline}\label{eq : Z2 epsilon step 2}
	\epsilon \sup_{h, V \in B_1(0)} \|A(\epsilon) P(\epsilon) D^2F(\bar{U}(\epsilon),\bar{m}-\epsilon^2)(\mathcal{P}h, P(\epsilon)V)\|_{H_1} \\
	\leq \max\Bigg\{ \sup_{h, V \in B_1(0)} \|A(\epsilon) P(\epsilon) D^2F(\bar{U}(\epsilon),\bar{m}-\epsilon^2)(\mathcal{P}h, \mathcal{P}V)\|_{H_1}, \\ 
	\epsilon \sup_{h, V \in B_1(0)} \|A(\epsilon) P(\epsilon) D^2F(\bar{U}(\epsilon),\bar{m}-\epsilon^2)(\mathcal{P}h, \mathcal{P}^{\perp}V)\|_{H_1}\Bigg\}. 
	\end{multline}
	Looking now at the second term of \eqref{eq : Z2 epsilon step 1} and splitting $V$ again, we find
	\begin{multline}\label{eq : Z2 epsilon step 3}
	\epsilon^2 \sup_{h, V \in B_1(0)} \|A(\epsilon) P(\epsilon) D^2F(\bar{U}(\epsilon),\bar{m}-\epsilon^2)(\mathcal{P}^\perp h, P(\epsilon)V)\|_{H_1} \\
	\leq \max\Bigg\{ \epsilon \sup_{h, V \in B_1(0)}\|A(\epsilon) P(\epsilon) D^2F(\bar{U}(\epsilon),\bar{m}-\epsilon^2)(\mathcal{P}^\perp h, \mathcal{P}V)\|_{H_1}, \\ 
	\epsilon^2 \sup_{h, V \in B_1(0)}\|A(\epsilon) P(\epsilon) D^2F(\bar{U}(\epsilon),\bar{m}-\epsilon^2)(\mathcal{P}^\perp h, \mathcal{P}^\perp V)\|_{H_1}\Bigg\}.
	\end{multline}
	It remains to estimate the four terms on the right-hand sides of \eqref{eq : Z2 epsilon step 2} and \eqref{eq : Z2 epsilon step 3}. We start with the purely finite-dimensional term from \eqref{eq : Z2 epsilon step 2}.
	Given $h, V \in B_1(0)$, we have by the definition of the projection $\mathcal{P}$ in \eqref{eq : definition projection P} that there exist scalars $\alpha, \beta \in \mathbb{R}$ with $|\alpha| \leq 1$ and $|\beta| \leq 1$ such that $\mathcal{P}h = \alpha \bar{Z}$ and $\mathcal{P}V = \beta \bar{Z}$. 
	Consequently, by the bilinearity of $D^2F$, we have
	\begin{align*}
	\sup_{h,V \in B_1(0)} \|A(\epsilon) P(\epsilon) D^2F(\bar{U}(\epsilon),\bar{m}-\epsilon^2)(\mathcal{P}h,\mathcal{P}V)\|_{H_1} &= \|A(\epsilon) P(\epsilon) D^2F(\bar{U}(\epsilon),\bar{m}-\epsilon^2)(\bar{Z},\bar{Z})\|_{H_1} \\
	&\leq Z_{D^2,1}.
	\end{align*}
	Switching to the mixed-frequency term of \eqref{eq : Z2 epsilon step 2}, we extract the scaled approximate inverse to obtain
	\begin{multline*}
	\epsilon \sup_{h,V \in B_1(0)} \|A(\epsilon) P(\epsilon) D^2F(\bar{U}(\epsilon),\bar{m}-\epsilon^2)(\mathcal{P}h,\mathcal{P}^{\perp}V)\|_{H_1} \\
	\leq \epsilon \|\mathcal{W}A(\epsilon) P(\epsilon)\|_{\mathcal{B}(\ell^1)} \sup_{h,V \in B_1(0)} \|D^2F(\bar{U}(\epsilon),\bar{m}-\epsilon^2)(\mathcal{P}h, \mathcal{P}^{\perp}V)\|_{H_1}.
	\end{multline*}
	Using the fact that $\mathcal{P}^\perp V, \mathcal{P}h \in B_1(0)$ for all $V,h \in B_1(0)$, combined with the proof of Lemma \ref{lem : Z2 single wave}, we find
	\begin{equation*}
	\epsilon \sup_{h,V \in B_1(0)} \|A(\epsilon) P(\epsilon) D^2F(\bar{U}(\epsilon),\bar{m}-\epsilon^2)(\mathcal{P}h,\mathcal{P}^{\perp}V)\|_{H_1} \leq \sup_{\epsilon \in [0, \epsilon_0]} \epsilon \|\mathcal{W}A(\epsilon) P(\epsilon)\|_{\mathcal{B}(\ell^1)} Z_{D^2,2}.
	\end{equation*}
	By symmetry, this bound of $\sup_{\epsilon} \epsilon \|\mathcal{W}A(\epsilon) P(\epsilon)\|_{\mathcal{B}(\ell^1)} Z_{D^2,2}$ also provides an upper limit for the other mixed-frequency term in \eqref{eq : Z2 epsilon step 3}. Finally, we estimate the purely high-frequency term of \eqref{eq : Z2 epsilon step 3} by isolating the scaling factors:
	\begin{multline*}
	\epsilon^2 \sup_{h, V \in B_1(0)} \|A(\epsilon) P(\epsilon) D^2F(\bar{U}(\epsilon),\bar{m}-\epsilon^2)(\mathcal{P}^\perp h,\mathcal{P}^\perp V)\|_{H_1} \\
	\leq \epsilon_0 \sup_{\epsilon \in [0,\epsilon_0]} \epsilon \|\mathcal{W}A(\epsilon) P(\epsilon)\|_{\mathcal{B}(\ell^1)} \sup_{h,V \in B_1(0)} \| D^2F(\bar{U}(\epsilon),\bar{m}-\epsilon^2)(\mathcal{P}^\perp h,\mathcal{P}^\perp V)\|_{H_1}.
	\end{multline*}
	We conclude the bound by again using that $\mathcal{P}^\perp V, \mathcal{P}^\perp h \in B_1(0)$ combined with the analytic tail bound from the proof of Lemma \ref{lem : Z2 single wave}.
\end{proof}

\begin{proof}[Proof of Lemma \ref{lem : enclosure of DmU(m)}]
	We start by differentiating $F(\tilde{U}(m),m) = 0$ with respect to $m$ and get 
	\begin{align*}
	D_U F(\tilde{U}(m),m) \partial_m \tilde{U}(m) + D_m F(\tilde{U}(m),m) = 0 \quad \text{ for all } m \in [m_0, m_1].
	\end{align*}
	This implies that 
	\begin{align*}
	\partial_m \tilde{U}(m) = - D_U F(\tilde{U}(m),m)^{-1}  D_m F(\tilde{U}(m),m) \quad \text{ for all } m \in [m_0, m_1].
	\end{align*}
	Suppose that $\partial_m \bar{U}(m)$ is an approximation of $\partial_m \tilde{U}(m)$, then we have that 
	\begin{align*}
	\|\partial_m \bar{U}(m) - \partial_m \tilde{U}(m)\|_X &= \| D_U F(\tilde{U}(m),m)^{-1}(D_U F(\tilde{U}(m),m) \partial_m \bar{U}(m) +  D_m F(\tilde{U}(m),m))\|_X.
	\end{align*}
	Now, observe that 
	\begin{align*}
	&\|I  - A(m) D_UF(\tilde{U}(m),m)\|_{\mathcal{B}(H_1)} \\
	& ~~ \leq  \|I  - A(m) D_UF(\bar{U}(m),m)\|_{\mathcal{B}(H_1)}  + \|A(m) (D_U F(\tilde{U}(m),m) - D_UF(\bar{U}(m),m))\|_{\mathcal{B}(H_1)} \\
	& ~~\leq  Z_1 + Z_2(r_0)r_0
	\end{align*}
	using that $Z_1$ and $Z_2$ satisfy \eqref{eq: definition Y0 Z1 Z2 continuation}. But since $ Z_1 + Z_2(r_0)r_0 < 1$ thanks to \eqref{eq : condition contraction continuation}, we can use a Neumann series argument and obtain that
	\begin{align*}
	D_U F(\tilde{U}(m),m)^{-1} &= \sum_{k=0}^\infty (I - A(m) D_U F(\tilde{U}(m),m))^k A(m)\\
	&= B(m) A(m)
	\end{align*}
	where $B(m) = \sum_{k=0}^\infty (I - A(m) D_U F(\tilde{U}(m),m))^k$. In particular, we have that 
	\begin{align*}
	\|B(m)\|_{\mathcal{B}(H_1)} \leq \sum_{k=0}^\infty \|I - A(m) D_U F(\tilde{U}(m),m)\|_{\mathcal{B}(H_1)}^k \leq  \frac{1}{1 - Z_1 - Z_2(r_0)r_0}.
	\end{align*}
	This implies that 
	\begin{align*}
	&\| D_U F(\tilde{U}(m),m)^{-1}(D_U F(\tilde{U}(m),m) \partial_m \bar{U}(m) +  D_m F(\tilde{U}(m),m))\|_X\\
	\leq &\frac{1}{1 - Z_1 - Z_2(r_0)r_0} \| A(m)(D_U F(\tilde{U}(m),m) \partial_m \bar{U}(m) +  D_m F(\tilde{U}(m),m))\|_X\\
	\leq & \frac{\| A(m)(D_U F(\bar{U}(m),m) \partial_m \bar{U}(m) +  D_m F(\bar{U}(m),m))\|_X}{1 - Z_1 - Z_2(r_0)r_0}  \\
	&~~~~ + \frac{\| A(m)(D_UF(\tilde{U}(m),m) - D_U F(\bar{U}(m),m)) \partial_m \bar{U}(m)\|_X}{1 - Z_1 - Z_2(r_0)r_0}\\
	&~~~~ + \frac{\| A(m)(D_mF(\tilde{U}(m),m) - D_m F(\bar{U}(m),m))\|_X}{1 - Z_1 - Z_2(r_0)r_0}. 
	\end{align*}
	Now it remains to estimate the right hand-side of the above expression. Notice that the first term is already simplified. For the second term,  we use \eqref{eq: definition Y0 Z1 Z2 continuation} and  obtain that 
	\begin{align*}
	\frac{\| A(m)(D_UF(\tilde{U}(m),m) - D_U F(\bar{U}(m),m)) \partial_m \bar{U}(m)\|_X}{1 - Z_1 - Z_2(r_0)r_0} \leq \sup_{m \in [m_0, m_1]}\frac{Z_2(r_0)\|\partial_m \bar{U}(m)\|_X r_0}{1 - Z_1 - Z_2(r_0)r_0}.
	\end{align*}
	Finally, we focus on  the term $\| A(D_mF(\tilde{U}(m),m) - D_m F(\bar{U}(m),m))\|_X$. First, observe that 
	\begin{align*}
	D_mF(U,m) = \begin{pmatrix}
	0\\
	2(m D_y e_0 + \frac{\gamma}{2} D_y(a^2) - \gamma a * (D_ya))*(D_y e_0)
	\end{pmatrix}
	\text{ for all } U = (Q,a).
	\end{align*}
	Recalling that $(D_y e_0)_n = \frac{1}{h} e_0$, we obtain that\begin{align*}
	D_mF(U,m) = \begin{pmatrix}
	0\\
	\frac{2}{h}(m D_y e_0 + \frac{\gamma}{2} D_y(a^2) - \gamma a * (D_ya))
	\end{pmatrix}
	\text{ for all } U = (Q,a).
	\end{align*} 
	In particular, we obtain that 
	\begin{align*}
	\|A(D_mF(\tilde{U}(m),m) - D_m F(\bar{U}(m),m))\|_X &\leq \|A\|_{\ell^1 \to X} \|D_mF(\tilde{U}(m),m) - D_m F(\bar{U}(m),m)\|_{\ell^1}\\
	& \leq  \|A\|_{\ell^1 \to X} \frac{4}{h} \gamma \coth(h) (\|\bar{a}(m)\|_X + r_0) r_0
	\end{align*}
	using that $\|\tilde{a}(m) - \bar{a}(m)\|_X \leq r_0$ and that $\|D_y a\|_{\ell^1} \leq \coth(h) \|a\|_X$.
	This concludes the proof.
\end{proof}

\subsection{Error estimate for difference of products }

Using the Banach algebra property of $(X, \|\cdot\|_X)$, established in Lemma \ref{lem:Banach_algebra}, we are able to  upper bound the $X$-norm of a discrete convolution. In fact, we use this result in practice for estimating the defect between the discrete convolution of $m$ sequences, thanks to the defect on each individual sequence. This leads to an automatic strategy for computing the defect between convolutions of sequences. We summarize our result in the following lemma. 
\begin{lemma}\label{lem : estimate difference products}
	Let $m \in \mathbb{N}$ and consider sequences $(u_i)_{i=1}^m \in X^m$ and $(\bar{u}_i)_{i=1}^m \in X^m$. Moreover, let $\delta_i = \|u_i - \bar{u}_i\|_X$ and $\beta_i = \|\bar{u}_i\|_X + \delta_i$. Then, we have that 
	\begin{align*}
	\|u_1*\cdots*u_m - \bar{u}_1*\cdots*\bar{u}_m\|_X \leq \sum_{i=1}^m \delta_i \beta_1\beta_2 \cdots \beta_{i-1}\beta_{i+1} \cdots \beta_{m-1}\beta_m.
	\end{align*}
\end{lemma}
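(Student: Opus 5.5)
The plan is a telescoping decomposition combined with the submultiplicativity $\|a*b\|_X \le \|a\|_X\|b\|_X$ from Lemma \ref{lem:Banach_algebra}. I will write the difference of the two $m$-fold products as a sum of $m$ terms, in each of which exactly one factor is replaced by a difference:
\begin{align*}
u_1*\cdots*u_m - \bar{u}_1*\cdots*\bar{u}_m = \sum_{i=1}^m \bar{u}_1*\cdots*\bar{u}_{i-1}*(u_i-\bar{u}_i)*u_{i+1}*\cdots*u_m .
\end{align*}
This identity holds because consecutive terms cancel. The $i$-th summand equals $P_{i}-P_{i-1}$, where $P_i$ denotes the product with the first $m-i$ factors barred and the remaining ones unbarred. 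The sum then collapses to $P_m - P_0$, which is exactly the left-hand side. Commutativity of the convolution is not even needed here; associativity suffices.

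Next I apply the triangle inequality in $X$ and then submultiplicativity to each summand. For the $i$-th term this gives the bound $\|\bar u_1\|_X\cdots\|\bar u_{i-1}\|_X\,\delta_i\,\|u_{i+1}\|_X\cdots\|u_m\|_X$. The barred factors satisfy $\|\bar u_j\|_X \le \|\bar u_j\|_X+\delta_j=\beta_j$ trivially. For the unbarred factors, the triangle inequality gives $\|u_j\|_X\le \|\bar u_j\|_X+\|u_j-\bar u_j\|_X=\beta_j$. Hence the $i$-th term is at most $\delta_i\prod_{j\ne i}\beta_j$, and summing over $i$ yields the claimed estimate.

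The only point requiring any care is to check that the telescoping identity is correctly indexed, with the $i=1$ and $i=m$ boundary terms being empty products, interpreted as the identity $e_0$, whose norm is $1$. I would verify this by induction on $m$ as an alternative route. For $m=1$ the claim is trivial. For the induction step, I would write $u_1*\cdots*u_m-\bar u_1*\cdots*\bar u_m = (u_1*\cdots*u_{m-1}-\bar u_1*\cdots*\bar u_{m-1})*u_m + \bar u_1*\cdots*\bar u_{m-1}*(u_m-\bar u_m)$, apply the induction hypothesis to the first bracket, and bound $\|u_m\|_X\le\beta_m$ and $\|\bar u_j\|_X\le \beta_j$. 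No genuine obstacle is expected.
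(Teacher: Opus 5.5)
Your proof is correct and is exactly what the paper's one-line proof (triangle inequality plus the submultiplicativity of Lemma \ref{lem:Banach_algebra}) leaves implicit: the telescoping identity together with $\|\bar u_j\|_X\le\beta_j$ and $\|u_j\|_X\le\beta_j$ is all that is needed. One cosmetic slip: if $P_i$ has the first $m-i$ factors barred, then the $i$-th summand is $P_{m-i+1}-P_{m-i}$, not $P_i-P_{i-1}$ (alternatively, if $P_i$ has the first $i$ factors barred, the $i$-th summand is $P_{i-1}-P_i$); this does not affect the collapsed sum.
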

\begin{proof}
	The proof is a direct consequence of triangle inequalities combined with Lemma \ref{lem:Banach_algebra}.
\end{proof}

\section*{Acknowledgments}

  MC was supported by the ANR project CAPPS: ANR-23-CE40-0004-01 and  by the FMJH :  ANR-22-EXES-0013.

  SVH was partially supported by the NSF grant DMS- 2511086.

\bibliographystyle{abbrv}
\bibliography{biblio}

\end{document}